\documentclass[preprint,11pt]{article}
\usepackage{amssymb}
\usepackage{mathrsfs}
\usepackage{amsfonts}
\usepackage{graphicx}
\usepackage{colortbl,dcolumn}
\usepackage{tabularx}
\usepackage{amsmath}
\usepackage{psfrag}
\usepackage{booktabs}
\usepackage{array}
\usepackage{cite}
\numberwithin{equation}{section}
\usepackage[top=1.2in, bottom=1.2in, left=0.9in, right=0.9in]{geometry}

\newcommand{\R}{\mathbb{R}}
\newcommand{\N}{\mathbb{N}}

\newcommand{\dd}{\text{d}}
\newtheorem{theorem}{Theorem}[section]

\newtheorem{assumption}[theorem]{Assumption}
\newtheorem{proposition}[theorem]{Proposition}
\newtheorem{lemma}[theorem]{Lemma}

\begin{document}

%
 \title{Long-time error analysis of finite element fully discrete schemes for SPDEs with non-globally Lipschitz coefficients
 \footnote{R.Q. was supported  by the Reaserch Fund for Yancheng Teachers University under 204040025.   X.W. were supported by
 Natural Science Foundation of China (12471394, 12371417, 12071488) and Hunan Basic Science Research Center for Mathematical Analysis (2024JC2002).
%
}
}

\author{
Ruisheng Qi$\,^\text{a}$,
\quad Xiaojie Wang$\,^\text{b}$,
\\
\footnotesize $\,^\text{a}$ School of Mathematics and Statistics, Yancheng Teachers University, Yancheng, China\\
\footnotesize qiruisheng123@126.com\\
\footnotesize $\,^\text{b}$ School of Mathematics and Statistics, HNP-LAMA, Central South University, Changsha, China\\
\footnotesize x.j.wang7@csu.edu.cn\; and \;x.j.wang7@gmail.com\\
}
\maketitle
\begin{abstract}\hspace*{\fill}\\
  \normalsize


%
%
The present paper proposes new fully discrete schemes for long-time approximations of stochastic partial differential equations (SPDEs) with non-globally Lipschitz coefficients in a bounded domain $D \subset \R^d, d =1,2,3 $.
A novel family of linearly implicit time-stepping schemes is introduced, based on a standard Galerkin finite element spatial semi-discretization.
A distinguishing feature of the schemes is that the proposed finite element fully discrete approximations preserve uniform-in-time moment bounds in a Banach space $L^{r}(D), r >2$, without requiring any restriction on the time-space discretization stepsize ratio.
To show it, some non-standard arguments are developed. First, we derive long-time error estimates in the Banach space $L^r(D)$ for finite element fully discrete approximations of  the deterministic linear parabolic equation with non-smooth initial value, which is, to our knowledge, new for the literature on numerical PDEs and of independent interest.
These error estimates together with the contractive property of the semi-group in $L^{r}(D), r > 2$, the dissipativity of the nonlinearity and the particular benefit of the taming strategy help us establish the desired uniform-in-time moment bounds.
Then both strong and weak error bounds of the proposed schemes are carefully analyzed in a setting of low regularity, with uniform-in-time convergence rates obtained for cases of both space-time white and trace-class noises.
The analysis is highly nontrivial, due to the finite element discretization, the low regularity and the presence of the super-linearly growing nonlinearity.
Finally, numerical results are presented to verify the previous theoretical findings.

  \textbf{\bf{Key words:}}
stochastic partial differential equation, finite element method, long-time error estimates, uniform-in-time moment bounds.
\end{abstract}

\section{Introduction}
As an important class of stochastic models,  stochastic partial differential equations (SPDEs)  find many applications in the area of science and engineering, such as finance, geosciences, statistical mechanics, meteorology
and biology. Since their  analytical solutions can be rarely available, one usually relies on numerical solutions to investigate the underlying models.
%
In the present paper,  we are interested in fully discrete finite element approximations of  the following parabolic SPDEs
\begin{align}\label{eq:parabolic-SPDE}
\,\dd X(t)
=
-AX(t)+F(X(t))\,\dd t
+
\,\dd W(t),\; X(0)=X_0,
\end{align}
in a real separable Hilbert space $H:=L^2(D)$ with inner product $\left<\cdot,\cdot\right>$ and the induced  norm
$\|\cdot\|=\left<\cdot,\cdot\right>^{\frac12}$. Here $D \subset \R^d, d =1,2,3 $ is a bounded domain and $-A$
is assumed to be the Laplace operator with Dirichlet boundary condition on the domain $D$.
Moreover, the stochastic process $\{W(t)\}_{t\geq0}$ is an $H$-valued (possibly cylindrical) $Q$-Wiener process and $F$ is a nonlinear Nemytskij operator associated with a real-valued function $f:\mathbb{R}\rightarrow\mathbb{R}$, i.e.,
\[
F(u)(x):=f(u(x)), \: x\in D.
\]

In the last decades, a large amount of  work is devoted to strong and weak approximations of SPDEs   \eqref{eq:parabolic-SPDE} over finite-time horizon (see, e.g., \cite{2012Weak,2018A, 2024dA,2023Strong,2021caiWeak,Charles2020Weak,chen2025Strong,feng2017finite,
Jentzen2015Strong,Anant2018Optimal,QQQi2018Optimal,Wang2015Weak,wang2020efficient,Zhang2025Weak}, to just mention a few).
Recently, there has been a growing interest in long-time approximations of SPDEs
(e.g.,\cite{Charles2014Approximation,brehier2022approximation,Br2025Analysis,jiang2025uniform,Cui2021Weak,Wang2024Approximation,Chen2020A,liu2025Geometric,qi2026long}).
For example, in a globally Lipschitz setting for the drift term $F$, the authors of \cite{Charles2014Approximation} and \cite{Chen2020A} used the classical linear implicit Euler and exponential Euler scheme, respectively, to approximate the invariant measure of the parabolic SPDE \eqref{eq:parabolic-SPDE}, with uniform-in-time weak convergence rates obtained.
Higher-order approximations of invariant measure of SPDEs with a gradient structure can be found in a very recent preprint \cite{Brehier2025preconditioning}.

For SPDEs with polynomial growing (thus non-globally Lipschitz) $F$, the long-time approximations and their uniform-in-time error analysis turn out to be a challenging problem.
In \cite{Cui2021Weak}, the authors relied on the spectral Galerkin drift-implicit Euler scheme to approximate the invariant measure of the Allen-Cahan type SPDEs \eqref{eq:parabolic-SPDE}, where uniform-in-time weak convergence rates were derived.
The expensive computational costs of the drift-implicit Euler scheme, however, force people to look for a cheaper alternative.
In 2022, Br{\'e}hier \cite{brehier2022approximation} investigated long-time semi-discretization in time of Allen-Cahan type SPDEs \eqref{eq:parabolic-SPDE} via an explicit tamed exponential Euler scheme, where moment bounds for the numerical scheme were obtained with a polynomial dependence with respect to the time horizon.
Later in \cite{Wang2024Approximation}, by not discretizing the stochastic convolution, the authors proposed a spectral Galerkin tamed accelerated exponential Euler method for uniform-in-time weak approximations of one-dimensional ($d=1$) SPDEs \eqref{eq:parabolic-SPDE}, while computing the temporally undiscretized stochastic convolution and the $L^\infty$-norm of the numerical solution for every time step is costly.
%
More recently in \cite{jiang2025uniform}, a novel and easy-to-implement spectral Galerkin explicit fully discretizaton scheme of exponential type was introduced for long-time approximations of SPDEs \eqref{eq:parabolic-SPDE} in multiple dimensions with non-globally Lipschitz coefficients. There, uniform-in-time moment bounds of the numerical approximations and uniform-in-time weak convergence rates were derived.
%

Clearly, the above uniform-in-time convergence rates for approximations of SPDEs with non-globally Lipschitz coefficients are all revealed, based on a spectral Galerkin spatial discretetization that only works for very regular domain such as a rectangle. The analysis of uniform-in-time convergence rate for the finite element method is, to the best of our knowledge, still missing in the literature.

In this paper, we aim to fill this gap and propose a class of novel, linearly implicit fully discrete finite element methods for SPDEs \eqref{eq:parabolic-SPDE} with polynomially growing nonlinearity. Let $V_h\subset H_0^1(D)$ be the continuous piecewise linear finite element space and $P_h$ an $L^2$-projection operator from $L^2(D)$ to $V_h$. Then the fully discrete finite element approximation is to find $V_h$-valued stochastic process $X_{\tau,h}^m$, $m\in \N$ such that
\begin{align}\label{eq:full-discretization}
X_{\tau,h}^m-X_{\tau,h}^{m-1}
+
\tau A_h X_{\tau,h}^m
=
\tau F_{\tau,h}(X_{\tau,h}^{m-1})
+
P_h (W(t_m)-W(t_{m-1})),\; X_{\tau,h}^0=P_hX_0,
\end{align}
where
\begin{align}\label{eq:modification-f-i}
F_{\tau,h}(u)(x):=f_{\tau,h}(u(x)),\;x\in D,
\end{align}
with $f_{\tau,h}$ being a modification of $f$ given by
\begin{align}\label{eq:modification-f-ii}
f_{\tau,h}(v)
:
=\frac{f(v)}{\Big(1+(\beta_1 \tau^\theta+\beta_2 h^\rho)|v|^{\frac{2q-2}\alpha}\Big)^\alpha},
\end{align}
for $\alpha\in (0,1], \theta, \rho, \beta_1, \beta_2>0$.
For simplicity, one can think of $f$ as a polynomial of odd degree $2q-1, q \geq 2$ with a negative leading coefficient. Here the degree $\alpha \in (0, 1]$ and the convergence parameters $\theta, \rho$ are chosen to obey the condition $\max\{\frac12 \alpha\rho, \alpha\theta\}<1+\frac d{2q(2q-1)}-\frac d4$.
We mention that such a modification of $f$ in \eqref{eq:modification-f-ii} is inspired by the spectral Galerkin fully discretization scheme of exponential type proposed by our previous work \cite{jiang2025uniform}.

In order to conduct the long-time error analysis,  a crucial ingredient is to establish uniform-in-time moment bounds of numerical approximations in the Banach space $L^{4q-2}(D)$, $q\geq 2$. Clearly, the nonlinearity $F$ obeys the following dissipativity property
\begin{align}
\left<-F(v),v\right>
\geq
C_1\|v\|^{2q}_{L^{2q}(D)}-C_2,\; \forall v\in L^{2q}(D),
\end{align}
and $P_hF$ still preserves the dissipativity property
\begin{align}\label{eq:dissipativity-property-Phf-in-L2-norm}
\left<-P_hF(v_h),v_h\right>
\geq
C_1\|v_h\|^{2q}_{L^{2q}(D)}-C_2,\; \forall v_h\in V_h,
\end{align}
for some constants $C_1,C_2>0$.
This suffices to prove the uniform-in-time moment bound for the numerical approximations in $L^{2}(D)$ (Lemma \ref{lem:uniform-moment-L2}).
By noting that the nonlinearity $F$ also enjoys the dissipativity property in $\dot{H}^1$:
\begin{align}
\left<-\nabla F(v),\nabla v\right>
\geq
-C_3\|\nabla v\|^2, \;\forall v\in H_0^1(D), \quad C_3 >0,
\end{align}
a natural idea is to establish the uniform bound in $\dot{H}^1$, which combined with the Sobolev embedding would promise the desired bound in the Banach space $L^{4q-2}(D)$, $q\geq 2$.
Unfortunately, such an idea might work in the spectral Galerkin spatial discretization but does not work in the finite element setting. Indeed, the  presence of the finite element projection $P_h$ in front of the drift term $F$ destroys the  dissipativity property of $F$ in $\dot{H}^1$:
\begin{align}
\big \langle
-A_h^{\frac12}P_hF(v_h),A_h^{\frac12}v_h
\big \rangle
=
\big \langle
-A^{\frac12}P_hF(v_h),A^{\frac12}v_h
\big \rangle
\ngeq
-C_3\|\nabla v_h\|^2, \;\forall v_h\in V_h.
\end{align}
%
%
We mention  that the finite element approximation of stochastic Cahn-Hilliard equation faces a similar difficulty (see, e.g., the introduction of \cite{Daisuke2018strong}).
Additional difficulties come from the possibly low regularity and multiple dimension setting.
Indeed, the considered SPDEs might evolve in $\dot {H}^\gamma$ for $\gamma <\frac12$.
To overcome these difficulties, our methodology consists of three steps.
As the first step, we use \eqref{eq:dissipativity-property-Phf-in-L2-norm} to easily get the uniform-in-time moment bounds for the numerical solution $X_{\tau,h}^m$ in $L^2(D)$-norm and the discretized version of the stochastic convolution $W_A(t)$ in a Banach space (Lemmas \ref{lem:bound-discrete-stochastic-convulution}, \ref{lem:uniform-moment-L2}). As the second step, we analyze the error estimates in $L^{4q-2}(D)$-norm of the fully discrete finite element method for the corresponding deterministic linear parabolic equation with non-smooth initial value (cf. Theorem \ref{thm:infty-determing-error}).
To the best of our knowledge, such error estimates are even new for the literature on numerical PDEs and of independent interest.
For the last step, by  combining  the corresponding deterministic error estimates  with $L^2(D)$-moment bound of $X_{\tau,h}^m$, uniform-in-time moment bounds for the discretized version of the stochastic  convolution $W_A(t)$, a contractive property of the semi-group in $L^{4q-2}(D)$, the dissipativity of the nonlinearity and the particular benefit of the taming strategy, we are able to  obtain the desired uniform-in-time moment bounds of the proposed fully discrete finite element methods (Theorem \ref{th:uniform-moment-bound}). It is worthwhile to mention that we do not put any restriction on the time-space discretization stepsize ratio, which is usually required in the finite element setting in the literature, even for finite-time moment bounds.

Armed with the uniform-in-time moment bounds and the new error estimates for the fully discrete finite element method, we carefully analyze both strong and weak convergence of the proposed fully discrete schemes. More accurately, by fixing method parameters $\rho =2, \theta =1$ and $\alpha <1+\frac d{2q(2q-1)}-\frac d4$ we obtain uniform-in-time strong convergence rates (Theorem \ref{them:bound-numerical-solution}):
\begin{align}
\sup_{m\in \mathbb{N}_0}
\|X(t_m)-X_{\tau,h}^m\|_{L^p(\Omega;H)}
\leq
C(h^{\min\{\gamma, 2\}}+\tau^{\min\{\frac\gamma2,1\}})
\end{align}
and uniform-in-time weak convergence rates (Theorem \ref{them:weak-vonvergence}):
\begin{align}
\left |\mathbb{E}\big[\varphi(X(t_m))\big]-\mathbb{E}\big[\varphi(X^m_{\tau,h})\big]\right|
\leq
C(1+t_m^{-\iota}+(t_m-\tau)^{-\frac12})(h^{ 2\iota }+\tau^{ \iota } ),
 \end{align}
 where $ \iota < \min\{\gamma,1\}$ and the parameter $\gamma$ coming from the condition \ref{eq:ass-AQ-condition} characterizes the spatial regularity of the noise process and the solution of SPDE. Different from strong and weak convergence analysis in the literature, new arguments and more careful estimates are developed in the uniform-in-time error analysis. For instance, two auxiliary processes are constructed in the strong error analysis (see \eqref{eq:strong-error-decomposition}) and uniform-in-time Malliavin regularity estimates of the numerical approximations are carefully analyzed (cf. Lemma \ref{lem:eq-mallivin-derivative-numercial-solu}).


The outline of this paper is as follows. In the next section, we introduce the considered SPDEs. In Section 3, we propose the fully discrete finite element method for \eqref{eq:parabolic-SPDE}. Section 4 provides error estimates of the fully discrete finite element approximation of the corresponding deterministic linear parabolic equation with non-smooth initial data. In Section 5, we apply the deterministic error estimates to  establish  the uniform-in-time moment bounds of the numerical solution. Section 6 is devoted to the uniform-in-time strong convergence for the proposed fully discretization. In Section 7, we  study the uniform-in-time weak convergence.  Numerical results are presented in Section 8 to verify the theoretical findings. Finally, a short conclusion is given.


\section{The considered SPDEs with uniform-in-time regularity}
In this  section, we make  some assumptions for   SPDEs \eqref{eq:parabolic-SPDE} and establish the uniform-in-time spatio-temporal regularity of the mild solutions.
%
First, the following assumptions are made,
concerning the linear operator $A$,
the nonlinear term $F$, the noise process $W(t)$ and the initial data $X_0$.

\begin{assumption}\label{assum:linear-operator-A} (Linear operator $A$)
Let $D$ be a bounded convex domain in $\mathbb{R}^d$ for $d\in\{1,2,3\}$ with  smooth boundary.
Let $-A \colon dom(A)\subset H\rightarrow H$
 be  the Laplacian with homogeneous Dirichlet boundary conditions,
 defined by
$-Au = \Delta u$
 with
$
u\in dom(A) :=H^2(D)\cap H_0^1(D)
$.
\end{assumption}

Assumption \ref{assum:linear-operator-A} guarantees that $-A$ generates an analytic and contractive semi-group in $H$ and $L^p(D)$, $p\geq2$, denoted by $E(t):=e^{-tA}, t>0$. Also, this assumption implies the existence of the eigen-system $\{(\lambda_j,e_j)\}_{j \in \mathbb{N}}$ in $H$ such that
$A e_j=\lambda_je_j$ for $j\in \mathbb{N}$ and $\lambda_j \rightarrow \infty \text{ as } j \rightarrow \infty$.
By the spectral theory, we can also define the fractional powers
of $A$  on $H$ in a simple way, e.g.,
$A^\alpha v=\sum_{j=1}^\infty\lambda_j^\alpha\left<v,e_j\right>e_j$, $\alpha\in \mathbb{R}$.
Note that
 $H^\alpha:=D(A^{\frac\alpha2})$
 is a real Hilbert space  with the inner
 product
 $\langle A^{\frac\alpha 2}\cdot, A^{\frac\alpha 2}\cdot \rangle$
 and the associated norm
 $
 \|\cdot\|_{\alpha}
 :=
 \|A^{\frac\alpha 2}\cdot\|
 $.
It is well-known (see e.g. \cite{pazy1983semigroups}) that the following regularity properties of $E(\cdot)$ hold: for any $t>0$, $\mu\geq0$, $\nu\in[0, 1]$
\begin{align}\label{eq:property-E}
\|E(t)\|_{\mathcal{L}(H)}
\leq
e^{-\lambda_1t},
\;
\| A^\mu E(t)\|_{\mathcal{L}(H)}
\leq
 Ct^{-\mu},
\;
\|A^{-\nu}(I-E(t))\|_{ \mathcal{L} (H)}
 \leq
 Ct^{\nu}.
 \end{align}
 Moreover, for any $0\leq t_1<t_2<\infty$ we have
 \begin{align}
\int_{t_1}^{t_2} \|A^{\frac\varrho2} E(s) v\|^2\,\dd s
&
\leq
C|t_2-t_1|^{1-\varrho}\|v\|^2,\;\forall v\in H, \varrho\in[0,1],
\label{III-spatio-temporal-S(t)}
\\
 \Big\|A^{\varrho}\int_{t_1}^{t_2}
         E(t_2-\sigma)v\,\dd \sigma\Big\|
 &
 \leq
 C|t_2-t_1|^{1-\varrho}\|v\|,\;\forall v\in H,\; \varrho\in[0,1].
 \label{IV-spatio-temporal-S(t)}
 \end{align}
Also, for any positive integer $l$, the semi-group $E(\cdot)$ satisfies the following contractive property:
\begin{align}
\|E(t)v\|_{L^{2l}(D)}
\leq
\|v\|_{L^{2l}(D)},\, \forall \,v\in L^{2l}(D), \,t\geq 0.
\end{align}

\begin{assumption}\label{assum:nonlinearity}(Nonlinearity) Let $q\in[2,\infty)$ be an integer for $d=1,2$ and $q=2$ for $d=3$, and let $F:L^{4q-2}(D)\rightarrow H$ be a deterministic mapping given by
\begin{align}
F(v)(x)=f(v(x)),
\quad
x\in D,
\end{align}
where $f(x)=\sum_{j=0}^{2q-1} a_{j}x^j,x\in \mathbb{R}$ with $a_{2q-1}<0$ and $a_j\in \mathbb{R}$, $j\in\{0, 1,2,\cdots,2q-2\}$.
\end{assumption}

Based on the above assumption, one can easily deduce the following properties of $f$.
\begin{lemma}
Let Assumption \ref{assum:nonlinearity} be fulfilled.
 Then, there exist constants $L_f\in \mathbb{R}$ and $R_f, c_0, c_1$, $c_2, c_3, c_4, c_5>0$ such that, for all $x,y\in \mathbb{R}$
\begin{align}
f'(x)
\leq
&
 L_f,
 \label{eq:definiton-I-f-I}
 \\
 |f'(x)|\vee|f''(x)|
 \leq
 &
  R_f(1+|x|^{2q-2}),
  \label{eq:definiton-II-f}
  \\
 (x+y)f(x)
 \leq
 &
 -c_0|x|^{2q}
 +
 c_1|y|^{2q}+c_2,
  \label{eq:definiton-III-f}
 \\
 |f(x)-f(y)|
 \leq
 &
 c_3(1+|x^{2q-2}+|y|^{2q-2})|x-y|,
  \label{eq:definiton-IIII-f}
  \\
  |f(x)|
\leq
&
c_4|x|^{2q-1}+c_5.
\label{eq:assumption-f}
\end{align}
\end{lemma}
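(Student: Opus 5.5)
All five inequalities are elementary facts about a real polynomial of odd degree $2q-1$ with $a_{2q-1}<0$. The plan is to prove each one by comparing the leading monomial with the lower-order terms, using Young's inequality, and to take every constant as the maximum of finitely many explicit numbers depending only on $q$ and the $a_j$.

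\emph{Bound on $f'$, \eqref{eq:definiton-I-f-I}.} Here $f'(x)=(2q-1)a_{2q-1}x^{2q-2}+\sum_{j<2q-1}ja_jx^{j-1}$ is a polynomial of even degree $2q-2$ with negative leading coefficient. Hence $f'(x)\to-\infty$ as $|x|\to\infty$, so $f'$ is bounded above on $\mathbb{R}$; set $L_f:=\sup_{\mathbb{R}}f'<\infty$.

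\emph{Growth bounds \eqref{eq:definiton-II-f} and \eqref{eq:assumption-f}.} For every $0\le k\le n$ we have $|x|^k\le 1+|x|^n$. This gives $|f'(x)|\vee|f''(x)|\le R_f(1+|x|^{2q-2})$ with $R_f$ a multiple of $\sum_j j^2|a_j|$ (note $f''$ has degree $2q-3\le 2q-2$). It also gives $|f(x)|\le c_4|x|^{2q-1}+c_5$ with $c_4=c_5=\sum_j|a_j|$.

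\emph{Local Lipschitz bound \eqref{eq:definiton-IIII-f}.} By the mean value theorem, $f(x)-f(y)=f'(\xi)(x-y)$ for some $\xi$ between $x$ and $y$. Since $|\xi|^{2q-2}\le|x|^{2q-2}+|y|^{2q-2}$, the bound on $f'$ just obtained yields the claim with $c_3=R_f$.

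\emph{Coercivity estimate \eqref{eq:definiton-III-f}.} This is the only step needing care. First, $xf(x)=a_{2q-1}x^{2q}+\sum_{j<2q-1}a_jx^{j+1}$. Each lower term has degree $j+1\le 2q-1<2q$, so Young's inequality gives $|a_j||x|^{j+1}\le\varepsilon|x|^{2q}+C_\varepsilon$. Choosing $\varepsilon$ small yields $xf(x)\le-\tfrac{3}{4}|a_{2q-1}||x|^{2q}+C$.

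For the cross term, use $|yf(x)|\le|y|(c_4|x|^{2q-1}+c_5)$ and apply Young's inequality with exponents $\tfrac{2q}{2q-1}$ and $2q$:
\[
c_4|y||x|^{2q-1}\le\tfrac14|a_{2q-1}||x|^{2q}+C|y|^{2q},
\qquad
c_5|y|\le|y|^{2q}+C .
\]
Adding these bounds gives \eqref{eq:definiton-III-f} with $c_0=\tfrac12|a_{2q-1}|$ and suitable $c_1,c_2$.

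The only point to watch is keeping the absorbed $\varepsilon$-terms below the leading coefficient $|a_{2q-1}|$; all other steps are routine.
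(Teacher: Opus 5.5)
Your proposal is correct. All five bounds follow as you argue: $f'$ is an even-degree polynomial with negative leading coefficient, so it is bounded above; the growth bounds come from $|x|^k\le 1+|x|^n$; the local Lipschitz bound comes from the mean value theorem; and the coercivity estimate uses Young's inequality to absorb the lower-order and cross terms into $-|a_{2q-1}||x|^{2q}$. The paper gives no proof, only the remark that these properties are easily deduced from the polynomial form of $f$, and your argument is exactly the routine verification that remark points to.
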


Before coming to the noise process, we introduce additional notations and spaces.
Let $\mathcal{L}(H)$ denote the Banach space of all bounded linear operators on $H$, equipped with the usual operator norm. Also, let $\mathcal{L}_2(H)$
be the Hilbert space consisting of  all Hilbert-Schmidt operators from
$H$ to $H$, equipped with the inner product and the norm,
\begin{align}
\big<\Gamma_1,\Gamma_2\big>_{\mathcal{L}_2(H)}
=
\sum_{j=1}^\infty \big<\Gamma_1 \phi_j,\Gamma_2\phi_j\big>,
\qquad
\|\Gamma\|^2_{\mathcal{L}_2(H)}
=
\sum_{j=1}^\infty \big\|\Gamma \phi_j\|^2,
\end{align}
where $\{\phi_j\}_{j\in\N}$ is an arbitrary orthonormal basis  of $H$.
If $\Gamma\in\mathcal{L}_2(H)$ and $L\in \mathcal{L}(H)$, then $\Gamma L, L\Gamma\in \mathcal{L}_2(H)$ and
\begin{align}
\|\Gamma L\|_{\mathcal{L}_2(H)}
\leq
\|L\|_{\mathcal{L}(H)}
\|\Gamma\|_{\mathcal{L}_2(H)}
,
\|L\Gamma \|_{\mathcal{L}_2(H)}
\leq
\|L\|_{\mathcal{L}(H)}
\|\Gamma\|_{\mathcal{L}_2(H)}.
\end{align}

Now we make the following assumptions on the noise process and the initial data.
\begin{assumption}\label{assum:noise-term}(Noise process)
Let  $\{W(t)\}_{t \geq 0}$ be a standard $H$-valued (possibly cylindrical) $Q$-Wiener process with  a covariance operator
 $ \mathcal{L}(H)  \ni Q:H \rightarrow H $ being a  symmetric nonnegative operator
 such that the stochastic convolution defined by
 \begin{align}
 W_A(t):=\int_0^tE(t-s)\,\mathrm{d} W(s),
 \end{align}
 is well-defined in $L^{4q-2}(D)$ and satisfies, for any $p\geq1$
 \begin{align}\label{eq:asuum-L4q-2-W}
 \sup_{t\geq0}\|W_A(t)\|_{L^p(\Omega;L^{4q-2}(D))}<\infty.
 \end{align}
Moreover, we assume
\begin{align}\label{eq:ass-AQ-condition}
\|A^{\frac{\gamma - 1}2}Q^{\frac12}\|_{\mathcal{L}_2(H)}
<\infty,
\quad
\text{ for some } \quad \gamma\in
\big[\tfrac {(q-1)d}{2q-1},2\big].
\end{align}
\end{assumption}

%
\begin{assumption}(Initial data)
\label{assum:intial-value-data}
Let $X_0:\Omega \rightarrow H$ be $\mathcal{F}_0/\mathcal{B}(H)$-measurable and satisfy for any  $p\in \mathbb{N}$,
\begin{align}\label{eq:assum-intial-value-L4q-2}
\|X_0\|_{L^p(\Omega;L^{4q-2}(D))}<\infty.
\end{align}
Moreover, suppose that the following condition holds
\begin{align}\label{eq:assum-intial-value-Hgamma}
\|X_0\|_{L^p(\Omega;V)}
+
\|X_0\|_{L^p(\Omega;H^\gamma)}<\infty,
\end{align}
where $\gamma\in\big[\tfrac {(q-1)d}{2q-1},2\big] $ is the parameter from \eqref{eq:ass-AQ-condition}.
\end{assumption}

Before addressing the well-posedness of the mild solution of \eqref{eq:parabolic-SPDE}, we introduce a useful auxiliary lemma, concerning the Sobolev embedding theorem, which can be found in Triebel (1978, Theorem 2.8.1/Remark 2 and Theorem 4.6.1 in \cite{Triebel1978Interpolation}) and Yagi (2009, Theorem 1.36 in \cite{Yagi2009Abstract}).
\begin{lemma}
Let $D\subset \mathbb{R}^d$, $d=1,2,3$ be a bounded domain with Lipschitz boundary. Let $\beta\in[0,\infty)$ and $r\in(1,\infty)$ such that $\beta r<d$. Then there  exists a positive constant $C=C(\beta,r,d, D)$ such that
\begin{align}\label{lem:interpolation-property}
\|v\|_{L^s(D)}\leq C\|v\|_{W^{\beta,r}(D)}, \;s\in\left[r, \tfrac{r d}{d-\beta r}\right],\;\forall v\in W^{\beta,r}(D).
\end{align}
In addition,  the following embeddings hold
\begin{align}\label{eq:embedding-Hd-C}
H^\delta(D)\subset V:=C(D,\mathbb{R})\quad and\quad L^2(D) \subset W^{\delta,1}(D), \, \; \delta>\tfrac d 2,\; d=1, 2, 3.
\end{align}
\end{lemma}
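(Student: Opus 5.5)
The plan is to reduce everything to the corresponding statements on $\mathbb{R}^d$ through a bounded extension operator, and then to quote the embedding theorems of Triebel \cite{Triebel1978Interpolation} and Yagi \cite{Yagi2009Abstract}, exactly as the paper indicates before the lemma.

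\textbf{Step 1: the estimate \eqref{lem:interpolation-property}.} Since $D$ is bounded with Lipschitz boundary, Stein's (or Rychkov's universal) extension operator $\mathcal{E}$ is bounded from $W^{\beta,r}(D)$ to $W^{\beta,r}(\mathbb{R}^d)$ for every $\beta\ge0$ and $r\in(1,\infty)$. On $\mathbb{R}^d$, Triebel's Theorem 2.8.1 and Remark 2 give the critical Sobolev embedding $W^{\beta,r}(\mathbb{R}^d)\subset L^{s^*}(\mathbb{R}^d)$ with $s^*=\frac{rd}{d-\beta r}$, valid because $\beta r<d$. Restricting back to $D$ yields the bound for $s=s^*$. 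The case $s=r$ is trivial. Every intermediate $s\in[r,s^*]$ then follows from H\"older's inequality on the bounded set $D$, since $\|v\|_{L^s(D)}\le |D|^{\frac1s-\frac1{s^*}}\|v\|_{L^{s^*}(D)}$. The constant depends only on $\beta,r,d,D$.

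\textbf{Step 2: $H^\delta(D)\subset C(D,\mathbb{R})$ for $\delta>\frac d2$.} Extend $v$ to $\mathcal{E}v\in H^\delta(\mathbb{R}^d)$ and use Cauchy--Schwarz on the Fourier side:
\[
\int_{\mathbb{R}^d}|\widehat{\mathcal{E}v}(\xi)|\,\dd\xi\le\Big(\int_{\mathbb{R}^d}(1+|\xi|^2)^{-\delta}\dd\xi\Big)^{\frac12}\|\mathcal{E}v\|_{H^\delta(\mathbb{R}^d)} .
\]
The first integral is finite exactly when $\delta>\frac d2$. Hence $\mathcal{E}v$ is the inverse Fourier transform of an $L^1$ function, so it is bounded and uniformly continuous, with sup norm controlled by $C\|v\|_{H^\delta(D)}$. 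This is Triebel's Theorem 4.6.1. For the fractional-power space $\dot H^\delta=D(A^{\delta/2})$ one additionally uses that, for $\delta\le2$, this space sits continuously in $H^\delta(D)$.

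\textbf{Step 3: the inclusion $L^2(D)\subset W^{\delta,1}(D)$.} Here I will not argue independently but invoke Yagi's Theorem 1.36 in the form the paper cites it, for $\delta>\frac d2$ and $d\le3$. To use it correctly, I would first check that the index convention for $W^{\delta,1}$ in the cited result matches the one in which the inclusion is asserted. Once that is confirmed, the required norm bound $\|v\|_{W^{\delta,1}(D)}\le C\|v\|$ follows in two moves. First, $\|v\|_{L^1(D)}\le|D|^{\frac12}\|v\|$ on the bounded domain. Second, one tests against $W^{\delta,\infty}$-type functions, which by Step 2 are controlled by $H^\delta$ norms.

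\textbf{Main obstacle.} I expect Step 3 to be the only delicate point. Steps 1 and 2 are standard once the extension operator is available. Step 3, by contrast, depends entirely on matching the convention of the cited reference: read with $\delta$ as a positive smoothness index, a general $L^2$ function has no fractional derivatives at all. So the first thing I would do is verify Yagi's precise formulation before relying on it.
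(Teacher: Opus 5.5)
Your Steps 1 and 2 are correct. They are essentially a fleshed-out version of what the paper does, which is simply to cite Triebel (Theorem 2.8.1/Remark 2 and Theorem 4.6.1). The extension-operator reduction, the critical exponent $s^*=\frac{rd}{d-\beta r}$ followed by H\"older on the bounded domain, and the Fourier argument for $\delta>\frac d2$ are all fine.

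The gap is Step 3, and your suspicion there is justified: read literally, $L^2(D)\subset W^{\delta,1}(D)$ for $\delta>\frac d2$ is false. For any $\delta'\in(\frac d2,\min\{\delta,d\})$ one has $W^{\delta,1}(D)\subset W^{\delta',1}(D)\subset L^{p}(D)$ with $p=\frac{d}{d-\delta'}>2$. But $v(x)=|x-x_0|^{-a}$, with $x_0\in D$ and $a\in[\frac dp,\frac d2)$, lies in $L^2(D)$ and not in $L^p(D)$. So deferring to Yagi ``in the form the paper cites it'' cannot produce the bound $\|v\|_{W^{\delta,1}(D)}\le C\|v\|$ you announce. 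Your two moves also point the wrong way:
\begin{itemize}
\item $\|v\|_{L^1(D)}\le|D|^{\frac12}\|v\|$ is only zero-order information;
\item pairing $v$ with test functions, however they are normed, can only bound a \emph{negative}-order (dual) norm of $v$, never a positive-order one such as the $W^{\delta,1}$ norm.
\end{itemize}

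The missing idea is that the inclusion must be read as the dual of Step 2, with the roles of the indices swapped ($L^2\leftrightarrow L^1$, $W^{\delta,1}\leftrightarrow W^{-\delta,2}$). What the paper actually uses later is $L^1(D)\subset\dot H^{-\delta}$, i.e.\ $\|A^{-\frac\delta2}v\|\le C\|v\|_{L^1(D)}$ for $\delta>\frac d2$. This appears in Case I of the proof of Theorem \ref{them:property-solution}, in \eqref{eq:L_2-L_1}, and in the bounds for $J_2$, $J_{11}$ and $c_n^{3,4}$. It follows from Step 2 in one line: for $w\in H$,
\[
|\langle A^{-\frac\delta2}v,w\rangle|=|\langle v,A^{-\frac\delta2}w\rangle|\le\|v\|_{L^1(D)}\|A^{-\frac\delta2}w\|_{L^\infty(D)}\le C\|v\|_{L^1(D)}\|w\|.
\]
The last step uses $D(A^{\frac\delta2})\hookrightarrow C(\overline D)$:
\begin{itemize}
\item for $\delta\in(\frac d2,2]$, because $D(A^{\frac\delta2})\subset H^\delta(D)$;
\item for $\delta>2$, because $D(A^{\frac\delta2})\subset D(A)\subset H^2(D)$ and $2>\frac d2$ when $d\le 3$.
\end{itemize}
Read as a definition, the same identity extends $A^{-\frac\delta2}$ to all of $L^1(D)$. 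Replacing your Step 3 by this corrected statement and argument completes the lemma.
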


Under all the above assumptions, the well-posedness of the SPDE \eqref{eq:parabolic-SPDE} has been established,
(see, e.g., \cite[Chapter 6]{cerrai2001second} or \cite{wang2025linearly}).

\begin{theorem}\label{them:property-solution}
Suppose Assumptions \ref{assum:linear-operator-A}-\ref{assum:nonlinearity} and the conditions \eqref{eq:asuum-L4q-2-W} and \eqref{eq:assum-intial-value-L4q-2} hold. Then the SPDE \eqref{eq:parabolic-SPDE} admits
a unique mild solution, given by
\begin{align}\label{eq:definition-mild-solution}
X(t)=E(t)X_0
+
\int_0^tE(t-s)F(X(s))\,\mathrm{d} s
+
 W_A(t),
\end{align}
satisfying
\begin{align}\label{them:bound-L4q-2-mild-solution}
\sup_{t\geq0}\|X(t)\|_{L^p(\Omega;L^{4q-2}(D))}<\infty.
\end{align}
If the conditions \eqref{eq:ass-AQ-condition} and \eqref{eq:assum-intial-value-Hgamma} additionally hold,  the mild solution enjoys further spatial-temporal regularity as follows:
\begin{align}\label{th:spatial-regu-exac-solution}
\sup_{t \geq 0 }\|X(t)\|_{L^p(\Omega;H^\gamma)}
\leq C<\infty,
\end{align}
and, for $0<s<t<\infty$
\begin{align}\label{th:temporap-regu-mild-solution}
\|X(t)-X(s)\|_{L^p(\Omega;H)}
\leq
C(t-s)^{\frac{\min\{\gamma,1\}} 2},
\end{align}
where $C=C(\gamma, Q, X_0)$ is time independent. Additionally, if $\gamma\in(\frac d2,2)$ for $d \in \{1, 2, 3\}$ or $\gamma\in(\tfrac {q-1}{2q-1},\frac12)$ with $Q=I$ for $d =1$, then
\begin{align}\label{them:spatial-bound-in-B-mild}
\sup_{t\in[0,\infty)}\|X(t)\|_{L^p(\Omega;V)}
\leq C<\infty.
\end{align}
\end{theorem}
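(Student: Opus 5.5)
Existence, uniqueness and the uniform $L^{4q-2}$ bound \eqref{them:bound-L4q-2-mild-solution} come from the standard splitting $X(t)=Y(t)+W_A(t)$. Here $Y$ solves the random PDE
\[
\tfrac{\dd}{\dd t}Y=-AY+F(Y+W_A),\quad Y(0)=X_0,
\]
pathwise. Local existence follows from the local Lipschitz property \eqref{eq:definiton-IIII-f} in $L^{4q-2}(D)$ and the contractivity of $E(t)$ in $L^{2l}(D)$; global existence and uniqueness then follow from a priori bounds. For the uniform-in-time bound, test the equation with $|Y|^{2l-2}Y$ for $2l=4q-2$ (or first for general even $2l$). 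Integration by parts makes the $A$-term nonnegative, and it also supplies $\lambda_1$-type coercivity through the Poincaré inequality. By \eqref{eq:definiton-III-f}, applied pointwise to $f(Y+W_A)$ with $x=Y+W_A$, $y=-W_A$, and after Young's inequality, we get
\[
\tfrac{\dd}{\dd t}\|Y\|_{L^{2l}}^{2l}\le -c\|Y\|_{L^{2l}}^{2l}+C\big(1+\|W_A\|_{L^{2l+2q-2}}^{2l+2q-2}\big).
\]
Gronwall's inequality with exponential decay, combined with \eqref{eq:asuum-L4q-2-W} (applied in a slightly higher $L^p$, or obtained from the factorization method for higher Lebesgue exponents), yields $\sup_t\mathbb{E}\|Y(t)\|_{L^{4q-2}}^p<\infty$. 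Hence $\sup_t\mathbb{E}\|X(t)\|^p_{L^{4q-2}}<\infty$. The key point is that the decay rate is time independent.

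For \eqref{th:spatial-regu-exac-solution}, estimate each term of the mild formula \eqref{eq:definition-mild-solution} in $H^\gamma$ uniformly in time. The initial term is bounded by $\|E(t)X_0\|_\gamma\le\|X_0\|_\gamma$. For the stochastic convolution, the Itô isometry gives
\[
\mathbb{E}\|W_A(t)\|_\gamma^2=\int_0^t\|A^{\frac12}E(s)A^{\frac{\gamma-1}2}Q^{\frac12}\|_{\mathcal{L}_2}^2\dd s\le \tfrac12\|A^{\frac{\gamma-1}2}Q^{\frac12}\|^2_{\mathcal L_2}.
\]
Gaussianity then lifts this to $L^p(\Omega)$. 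The difficult term is the drift convolution. To handle it, use the decay $\|A^{\mu}E(t)\|\le Ct^{-\mu}e^{-\lambda_1 t/2}$ together with $\|F(X(s))\|\le C(1+\|X(s)\|^{2q-1}_{L^{4q-2}})$. This gives
\[
\Big\|\int_0^tE(t-s)F(X(s))\dd s\Big\|_{L^p(\Omega;H^\gamma)}\le C\int_0^t (t-s)^{-\gamma/2}e^{-\lambda_1(t-s)/2}\dd s\,\sup_s\|F(X(s))\|_{L^{p}(\Omega;H)},
\]
which is uniformly bounded for $\gamma<2$. For $\gamma$ close to $2$, bootstrap instead. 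First take $\gamma_0<2$ to get $X\in H^{\gamma_0}$. Then use the Sobolev embedding \eqref{lem:interpolation-property} and the chain rule, namely $\|F(u)\|_{H^{\epsilon}}\lesssim(1+\|u\|^{2q-2}_{L^\infty\text{ or }L^{r}})\|u\|_{H^\epsilon}$, to get $F(X)\in H^{\epsilon}$ for some small $\epsilon>0$. This regains the missing derivative via $(t-s)^{-(\gamma-\epsilon)/2}$. The lower bound $\gamma\ge\frac{(q-1)d}{2q-1}$ is exactly what makes the required Hölder/Sobolev pairing close in $d\le3$. I expect this bootstrap near $\gamma=2$ to be the main obstacle.

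The temporal estimate \eqref{th:temporap-regu-mild-solution} is standard. Write
\[
X(t)-X(s)=(E(t-s)-I)X(s)+\int_s^tE(t-r)F(X(r))\dd r+\int_s^tE(t-r)\dd W(r).
\]
Use \eqref{eq:property-E} with $\nu=\min\{\gamma,1\}/2$ together with the $H^\gamma$ bound for the first term. For the third term, \eqref{III-spatio-temporal-S(t)} with $\varrho=1-\min\{\gamma,1\}$ gives $(t-s)^{\min\{\gamma,1\}}$ in mean square. The middle term is $O(t-s)$. Finally, for \eqref{them:spatial-bound-in-B-mild}: when $\gamma>d/2$, apply the embedding \eqref{eq:embedding-Hd-C} directly to \eqref{th:spatial-regu-exac-solution}. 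In the $d=1$, $Q=I$ case with $\gamma<\frac12$, bound $W_A$ in $V$ uniformly via the factorization method and Kolmogorov/Garsia estimates on the heat kernel; this gives $\sup_t\mathbb{E}\|W_A(t)\|_V^p<\infty$. Then bound $Y=X-W_A$ in $H^{\delta}$ with $\delta>\frac12$ through the drift-convolution estimate above, using the $L^{4q-2}$ bound already obtained.
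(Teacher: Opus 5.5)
\textbf{The $L^{4q-2}$ energy estimate does not close.} Your arguments for \eqref{th:spatial-regu-exac-solution}, \eqref{th:temporap-regu-mild-solution} and \eqref{them:spatial-bound-in-B-mild} are standard and acceptable; the paper only cites references for these. The gap is in the core step, the uniform bound \eqref{them:bound-L4q-2-mild-solution}.

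Testing with $|Y|^{2l-2}Y$ and using \eqref{eq:definiton-III-f} and Young's inequality puts $\|W_A\|_{L^{2l+2q-2}(D)}^{2l+2q-2}$ on the right, as you wrote. For $2l=4q-2$ this term is $\|W_A\|_{L^{6q-4}(D)}^{6q-4}$. The hypothesis \eqref{eq:asuum-L4q-2-W} controls $W_A$ only in the spatial space $L^{4q-2}(D)$.

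Neither of your proposed remedies repairs this. Going to a ``slightly higher $L^p$'' raises only the moment in $\Omega$, not the spatial exponent. The factorization method cannot produce $L^{6q-4}(D)$-integrability either. The first part of the theorem assumes nothing on $Q$ beyond \eqref{eq:asuum-L4q-2-W}. Even under \eqref{eq:ass-AQ-condition} at the endpoint $\gamma=\frac{(q-1)d}{2q-1}$, $H^\gamma$ embeds exactly into $L^{4q-2}(D)$. Hence $\sup_t\|W_A(t)\|_{L^p(\Omega;L^{6q-4}(D))}<\infty$ is an extra hypothesis, not a consequence of the theorem's assumptions. As stated, your direct $L^{4q-2}$ estimate does not close.

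\textbf{What works instead.} Run your energy estimate only at the level $2l=2q$. There the noise term is exactly $\|W_A\|_{L^{4q-2}(D)}^{4q-2}$. This yields $\sup_t\|Y(t)\|_{L^p(\Omega;L^{2q}(D))}<\infty$, and hence the same bound for $X$. This is what the paper does.

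The missing idea is a bootstrap from $L^{2q}$ to $L^{4q-2}$ through the mild form $Y(t)=E(t)X_0+\int_0^tE(t-s)F(X(s))\,\dd s$:
\begin{itemize}
\item Bound $E(t)X_0$ using the contractivity of $E(t)$ in $L^{4q-2}(D)$.
\item Control $F(X(s))$ in a low Lebesgue space using the $L^{2q}$ bound, then use the dual Sobolev embedding into $H^{-s}$.
\item Apply $\|A^\mu E(t)\|_{\mathcal L(H)}\le Ct^{-\mu}e^{-\lambda_1 t/2}$ and the embedding $H^{\frac{(q-1)d}{2q-1}}\hookrightarrow L^{4q-2}(D)$, checking that the resulting time singularity is integrable.
\end{itemize}
For $d=1,2$ the paper uses $\|F(X)\|_{L^1(D)}$ with $L^1(D)\hookrightarrow H^{-\delta}$, $\delta>\frac d2$. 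For $d=3$ it goes in two steps, $L^4\to L^5\to L^6$. The exponential factor keeps all constants independent of $t$.

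\textbf{Minor point on local existence.} Your local existence argument cannot rely on contractivity of $E(t)$ in $L^{4q-2}(D)$ alone. $F$ only maps $L^{4q-2}(D)$ into $L^2(D)$, so you also need the $L^2\to L^{4q-2}$ smoothing of $E(t)$, whose singularity $t^{-\frac{d(q-1)}{2(2q-1)}}$ is integrable.
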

{\it Proof of Theorem \ref{them:property-solution}.}
For the proof of \eqref{them:spatial-bound-in-B-mild}, please refer
to \cite[Lemma 5.5]{jiang2025uniform}.
Based on the estimate \eqref{them:bound-L4q-2-mild-solution}, one can follow similar arguments used in the proof of \cite[Theorem 2.1]{QQQi2018Optimal} and carefully apply the smooth property \eqref{eq:property-E}-\eqref{IV-spatio-temporal-S(t)} of the semi-group $E(t)$ to show \eqref{th:spatial-regu-exac-solution}-\eqref{th:temporap-regu-mild-solution}. Hence it remains to show \eqref{them:bound-L4q-2-mild-solution}. Note that $X(t)$ possesses the following decomposition $X(t)=Y(t)+W_A(t)$,
where
\begin{align}\label{eq:decomp-equation}
\dd Y(t)=-AY(t)\,\dd t+F(Y(t)+W_A(t))\,\dd t,\; Y(0)=X_0.
\end{align}
Now we are in a position to derive the uniform-in-time moment bounds of $Y(t)$ in $L^{2q}(D)$-norm.
By \eqref{eq:definiton-III-f}, taking the $L^2$-inner product of the above  equation by $Y^{2q-1}(t)$ and using  integration  by parts, we have
\begin{align}
\begin{split}
&\frac1{2q}\left(\|Y(t)\|_{L^{2q}(D)}^{2q}-\|Y(0)\|_{L^{2q}(D)}^{2q}\right)
\\
=
&
\int_0^t\left<Y^{2q-1}(s),-AY(s)+F\big(Y(s)+W_A(s)\big)\right>\,\dd s
\\
=
&
-\frac{2q-1}{q^2}\int_0^t\left<\nabla Y^q(s) ,\nabla Y^q(s)\right>\,\dd s
+
\int_0^t\left<Y^{2q-1}(s), F\big(Y(s)+W_A(s)\big)\right>\,\dd s
\\
\leq
&
-\frac{2q-1}{q^2}\int_0^t\left<\nabla Y^q(s) ,\nabla Y^q(s)\right>\,\dd s
-
c_0\int_0^t\|Y(s)+W_A(s)\|_{L^{4q-2}(D)}^{4q-2}\,\dd s
\\
&
+
c_1\int_0^t\|W_A(s)\|_{L^{4q-2}(D)}^{4q-2}\,\dd s
+c_2t
\\
\leq
&
-\frac{(2q-1)\lambda_1}{q^2}\int_0^t\| Y(s)\|_{L^{2q}(D)}^{2q}\,\dd s
-
\widehat{c}_0\int_0^t\|Y(s)\|_{L^{4q-2}(D)}^{4q-2}\,\dd s
\\
&
+
\widehat{c}_1\int_0^t\|W_A(s)\|_{L^{4q-2}(D)}^{4q-2}\,\dd s
+
\widehat{c}_2t
.
\end{split}
\end{align}
An application of Gronwall's inequality with $Y(0)=X_0$ then yields
\begin{align}
\begin{split}
\|Y(t)\|_{L^{2q}(D)}^{2q}
\leq
&
e^{-\frac{(4q-2)\lambda_1t}q}\|X_0\|_{L^{2q}(D)}^{2q}
+
C_1\int_0^te^{-\frac{(4q-2)\lambda_1(t-s)}q}\|W_A(s)\|_{L^{4q-2}(D)}^{4q-2}\,\dd s
\\
&
+
C_2\int_0^te^{-\frac{(4q-2)\lambda_1(t-s)}q}\,\dd s.
\end{split}
\end{align}
Taking $\frac p {2q}$-moment, $p\geq 2q$ and then taking supreme over $t$ lead to
\begin{align}
\sup_{t\geq 0}\|Y(t)\|_{L^p(\Omega;L^{2q}(D))}^{2q}
\leq
C\|X_0\|_{L^p(\Omega;L^{2q}(D))}^{2q}
+
C\sup_{t\geq 0}\|W_A(t)\|_{L^{\frac{p(4q-2)}{2q}}(\Omega;L^{4q-2}(D))}^{4q-2}
+
C
<\infty,
\end{align}
which in combination with the decomposition $X(t)=Y(t)+W_A(t)$ implies
\begin{align}\label{eq:bound-L2q-X}
\sup_{t\geq 0}\|X(t)\|_{L^p(\Omega;L^{2q}(D))}^{2q}
\leq
C\sup_{t\geq 0}\|Y(t)\|_{L^p(\Omega;L^{2q}(D))}^{2q}
+
C\sup_{t\geq 0}\|W_A(t)\|^{2q}_{L^{p}(\Omega;L^{2q}(D))}
<
\infty
.
\end{align}

Based on \eqref{eq:bound-L2q-X}, we are now devoted to the uniform-in-time moment bound
of $X(t)$ in $L^{4q-2}(D)$. We consider two cases including Case I: dimensions one and two and Case II: dimension three.

{\bf Case I: dimensions one and two.}

By \eqref{eq:property-E}, \eqref{eq:embedding-Hd-C},  a contractive property of the semi-group in $L^{4q-2}(D)$  and \eqref{lem:interpolation-property}
with $\beta= \frac{(q-1)d}{2q-1}$, $s=4q-2$ and $r=2$, we have, for $\delta\in \left(\frac d 2, 2-\frac{(q-1)d}{2q-1}\right)$, $d\in\{1,2\}$
\begin{align}
\begin{split}
\|Y(t)\|_{L^p(\Omega;L^{4q-2}(D))}
\leq
&
\|X(0)\|_{L^p(\Omega;L^{4q-2}(D))}
+
\int_0^t\left\|E(t-s)F(X(s))\right\|_{L^p(\Omega;L^{4q-2}(D))}\,\dd s
\\
\leq
&
\|X(0)\|_{L^p(\Omega;L^{4q-2}(D))}
+
\int_0^t\left\|A^{\frac{(q-1)d}{2(2q-1)}}E(t-s)F(X(s))\right\|_{L^p(\Omega;H)}\,\dd s
\\
\leq
&
\|X(0)\|_{L^p(\Omega;L^{4q-2}(D))}
+
C\int_0^te^{-\frac{\lambda_1(t-s)}2}(t-s)^{-\frac{(q-1)d}{2(2q-1)}-\frac\delta2}
\|F(X(s))\|_{L^p(\Omega;L^1(D))}\,\dd s
\\
\leq
&
\|X(0)\|_{L^p(\Omega;L^{4q-2}(D))}
+
C \Big(1+\Big(\sup_{t\geq 0}\|X(t)\|_{L^{p(2q-1)}(\Omega;L^{2q-1}(D))}\Big)^{2q-1}\Big)
<\infty,
\end{split}
\end{align}
where in the fourth inequality
we used the fact
$0<\frac{(q-1)d}{2(2q-1)}+\frac\delta2<1$
and, for any $t>0$
\[\int_0^te^{-\frac{\lambda_1(t-s)}2}(t-s)^{-\frac{(q-1)d}{2(2q-1)}-\frac\delta2}
\,\dd s\leq C<\infty.\]
Thus we deduce
\begin{align}
\sup_{t\geq 0}\|X(t)\|_{L^p(\Omega;L^{4q-2}(D))}
\leq
\sup_{t\geq 0}\|Y(t)\|_{L^p(\Omega;L^{4q-2}(D))}
+
\sup_{t\geq 0}\|W_A(t)\|_{L^p(\Omega;L^{4q-2}(D))},
\end{align}
as required in dimensions one and two.

{\bf Case II: dimension three.}

We now show \eqref{them:bound-L4q-2-mild-solution} in dimension three.  In this case, $4q-2=6$ and by \eqref{eq:bound-L2q-X} we have
\begin{align}
\sup_{t\geq 0}\|X(t)\|_{L^p(\Omega;L^4(D))}^4<\infty.
\end{align}
This and \eqref{lem:interpolation-property}
with $\beta= \frac12$, $s=2$ and $r=\frac65$ enable us to derive
\begin{align}
\|A^{-\frac12}F(X(s))\|_{L^p(\Omega;H)}
\leq
C\|F(X(s))\|_{L^p(\Omega;L^{\frac 65}(D))}
\leq
C\left(1+\sup_{s\geq 0}\|X(s)\|_{L^{3p}(\Omega;L^4(D))}\right)^3
<\infty.
\end{align}
Applying again \eqref{eq:property-E}, the contractive property of the semi-group in $L^6(D)$  and \eqref{lem:interpolation-property}
but with $\beta= \frac9 {10}$, $s=5$ and $r=2$, it follows
\begin{align}
\begin{split}
\|Y(t)\|_{L^p(\Omega;L^5(D))}
\leq
&
\|E(t-s)X_0\|_{L^p(\Omega;L^5(D))}
+
C\int_0^t\|A^{\frac9{20}}E(t-s)F(X(s))\|_{L^p(\Omega;H)}\,\dd s
\\
\leq
&\|X_0\|_{L^p(\Omega;L^6(D))}
+
\int_0^t(t-s)^{-\frac{19}{20}}e^{-\frac{\lambda_1(t-s)}2}\|A^{-\frac12}F(X(s))\|_{L^p(\Omega;H)}\,\dd s
\\
\leq
&
\|X_0\|_{L^p(\Omega;L^6(D))}
+
C\left(1+\sup_{s\geq 0}\|X(s)\|_{L^{3p}(\Omega;L^4(D))}\right)^3
<\infty.
\end{split}
\end{align}
As a consequence, we have
\begin{align}
\sup_{t\geq 0}\|X(t)\|_{L^p(\Omega;L^5(D))}<\infty.
\end{align}
Similarly as above, one can acquire
\begin{align}
\begin{split}
\|Y(t)\|_{L^p(\Omega;L^6(D))}
\leq
&
\|E(t-s)X_0\|_{L^p(\Omega;L^6(D))}
+
C\int_0^t\|A^{\frac12}E(t-s)F(X(s))\|_{L^p(\Omega;H)}\,\dd s
\\
\leq
&\|X_0\|_{L^p(\Omega;L^6(D))}
+
C\int_0^t\|A^{\frac12+\frac3{20}}E(t-s)A^{-\frac3{20}}F(X(s))\|_{L^p(\Omega;H)}\,\dd s
\\
\leq
&
\|X_0\|_{L^p(\Omega;L^6(D))}
+
C\int_0^t(t-s)^{-\frac12-\frac3{20}}e^{-\frac{\lambda_1(t-s)}2}\,\|F(X(s))\|_{L^p(\Omega;L^{\frac53}(D))}\dd s
\\
\leq
&
\|X_0\|_{L^p(\Omega;L^6(D))}
+
C\left(1+\sup_{s\geq 0}\|X(s)\|_{L^{3p}(\Omega;L^5(D))}\right)^3
<\infty,
\end{split}
\end{align}
which combined with \eqref{eq:asuum-L4q-2-W} shows \eqref{them:bound-L4q-2-mild-solution} in dimension three. Hence, this finishes the proof. $\square$

\section{The fully discrete schemes}
%
In this section, we propose fully discrete finite element methods for SPDEs \eqref{eq:parabolic-SPDE} and show the uniform-in-time  moment bounds of the solution to the fully discrete problem, which will be used later in the convergence analysis.

%
Let $V_h\subset H_0^1(D)$ be the space of continuous
functions that are piecewise linear over the quasi-uniform  triangulation $\mathcal{T}_h$ of $D$.
Define a discrete Laplace operator $-A_h:V_h\rightarrow V_h$ by
\begin{align}\label{eq:definition-discrete-A}
\left<A_hv_h,\chi_h\right>=a(v_h,\chi_h):=\left<\nabla v_h, \nabla \chi_h\right>,\quad \forall v_h,\;\chi_h\in V_h.
\end{align}
The operator $A_h$ is self-adjoint, positive definite on
 $V_h$, and has an orthonormal eigenbasis $\{e_{j,h}\}_{j=1}^{\mathcal{N}_h}$ in ${V}_h$ with
the  corresponding eigenvalues $\big\{\lambda_{j,h}\big\}_{j=1}^{\mathcal{N}_h}$, satisfying
\begin{align}
0 < \lambda_{1,h} < \lambda_{2,h} \leq \cdots \leq \lambda_{j,h}\leq\cdots\leq \lambda_{\mathcal{N}_h, h},
\end{align}
where $\dim(V_h)=\mathcal{N}_h$.
Moreover, we introduce a discrete norm  on $V_h$, defined by
\begin{align}
|v_h|_{\alpha,h}=\|A_h^{\frac\alpha2}v_h\|
=
\Big(\sum_{j=1}^{\mathcal{N}_h}\lambda_{j,h}^\alpha|
\left<v_h,e_{j,h}\right>|^2\Big)^{\frac12}, \;v_h\in V_h,\;\alpha\in \mathbb{R},
\end{align}
which is induced by the discrete inner product
$(v_h,w_h)_{\alpha,h}:=\left<A_h^\alpha v_h, w_h\right>$, $\forall v_h,w_h\in V_h$. Note that
\begin{align}\label{eq-eqvilent-discrete-norem-H1-norm}
|v_h|_1=\|A^{\frac12}v_h\|=\|\nabla v_h\|=\|A_h^{\frac12}v_h\|=|v_h|_{1,h},
\quad v_h\in V_h.
\end{align}
In addition, we introduce a Riesz representation operator $R_h:H_0^1(D)\rightarrow V_h$ defined by
\begin{align}\label{eq:definition-Rh}
a(R_hv,\chi_h)=a(v,\chi_h),
\quad
\forall v\in H_0^1(D),
\,
\chi_h\in V_h,
\end{align}
and a generalized projection operator $P_h:H\rightarrow V_h$ given by
\begin{align}\label{eq:definition-Ph}
\left<P_h v,\,\chi_h\right>=\left<v,\,\chi_h\right>,
\quad
\forall v\in H,
\,
\chi_h\in V_h.
\end{align}
Then the operators $P_h$ and $R_h$ have the following error estimates
\begin{align}\label{eq:property-Ih}
\|(I-P_h)v\|_{-s}
\leq
Ch^{s+r}\|v\|_r, 0\leq s\leq r\leq 2,
\end{align}
and
\begin{align}\label{eq:property-R_h}
\|(I-R_h)v\|_s
\leq Ch^{r-s}\|v\|_r,\; s\in[0,1], r\in[1,2].
\end{align}
It is clear that $P_h$  is also a projection operator from $H^{-1}$ to $V_h$ and
\begin{align}\label{eq:relation-A-Ah-Rh-Ph}
 P_hA=A_hR_h.
 \end{align}
If the mesh $\mathcal{T}_h$ is quasi-uniform, $P_h$ is bounded with respect to  $L^\infty(D)$ and $L^1(D)$ norms:
\begin{align}
\|P_hv\|_{L^\infty(D)}
&\leq
C\|v\|_{L^\infty(D)},\;\forall v\in L^\infty(D),
\label{eq:bound-Ph-Linfty}
\\
\|P_hv\|_{L^1(D)}
&\leq
 C\|v\|_{L^1(D)}, \;\;\forall v\in L^1(D).
 \label{eq:bound-Ph-L1}
\end{align}
For the proof of \eqref{eq:bound-Ph-Linfty} in dimension two,  we  refer to \cite[Lemma 6.1]{thomee2007galerkin}.  The same arguments can be adapted to establish   \eqref{eq:bound-Ph-Linfty} in dimensions one and three. The estimate \eqref{eq:bound-Ph-L1} follows by duality:
\begin{align}
\|P_h v\|_{L^1(D)}
=
\sup_{\chi\in L^\infty(D)}\frac{\left<P_hv,\chi\right>}{\|\chi\|_{L^\infty(D)}}
=
\sup_{\chi\in L^\infty(D)}\frac{\left<v,P_h\chi\right>}{\|\chi\|_{L^\infty(D)}}
=
C\|v\|_{L^1(D)},\;\forall v\in L^1(D).
\end{align}
By the Sobolev interpolation theory, the boundedness of the projection operator $P_h$ in $L^1(D)$ and $L^\infty(D)$ norms implies that there exists a constant $C=C(p)$ such that for any $p\in (1,\infty)$,
\begin{align}\label{eq:bound-P_h-p-norm}
\|P_hv\|_{L^p(D)}
\leq
C\|v\|_{L^p(D)},\, \forall\,v\in L^p(D).
\end{align}
Thanks to \eqref{eq:bound-P_h-p-norm}, $\eqref{eq:embedding-Hd-C}$  with $s=2$, $r=\frac p{p-1}$ and $\beta=\frac{d(p-2)}{2p}$, the inverse inequality $\|v_h\|\leq Ch^{d(\frac12-\frac {p-1}{p})}\|v_h\|_{L^{\frac p{p-1}}(D)}$, for $p\geq 2$
and
\cite[Lemma 5.3]{thomee2007galerkin}, we obtain
\begin{align}\label{eq:l2-lp-relation}
\begin{split}
\| A_h^{-\frac{d(p-2)}{4p}}P_hw\|
\leq
&
Ch^{\frac{d(p-2)}{2p}}\|P_hw\|
+
\|A^{-\frac{d(p-2)}{4p}}P_hw\|
\\
\leq
&
Ch^{\frac{d(p-2)}{2p}} h^{d\left(\frac12-\frac{p-1}p\right)}\|P_h  w\|_{L^{\frac p{p-1}}(D)}
+
C\|P_hw\|_{L^{\frac p{p-1}}(D)}
\\
\leq
&
C\|w\|_{L^{\frac p{p-1}}(D)}.
\end{split}
\end{align}
By duality,  we also have for  $p>2$ and $q=\frac p{p-1}$
\begin{align}\label{eq:relation-Lp-Ah-relatoin}
\begin{split}
\|v_h\|_{L^p(D)}
=
&
\sup_{w\in L^q(D)}\frac{\left<v_h,w\right>}{\|w\|_{L^q(D)}}
=
\sup_{w\in L^q(D)}\frac{\left<A_h^{\frac{d(p-2)}{4p}}v_h, A_h^{-\frac{d(p-2)}{4p}}P_hw\right>}{\|w\|_{L^q(D)}}
\leq
C\|A_h^{\frac{d(p-2)}{4p}}v_h\|.
\end{split}
\end{align}
In addition, the operators $A$ and $A_h$ obey
\begin{align}\label{eq:relation-A-Ah}
C_1\|A_h^{\frac \mu2}P_hv\|
\leq
\|A^{\frac \mu2}v\|
\leq
C_2
\|A_h^{\frac \mu2}P_hv\|,
\quad
\forall
v\in H^{\mu},
\:
\mu\in[-1,1].
\end{align}
Moreover, the operator $A_hP_hA^{-1}$ is bounded, that is
\begin{align}
\|A_hP_hv\|
\leq
C\|v\|_2, \;\;\forall v\in H^2.
\label{eq:bound-Ph-H2}
\end{align}
Indeed,   the inverse inequality $\|A_hP_h\|_{\mathcal{L}(H)}\leq Ch^{-2}$  and $A_hR_h=P_hA$ help us to obtain:
\begin{align}
\|A_hP_hv\|
\leq
\|A_hP_h(I-R_h)v\|
+
\|P_hAv\|
\leq
Ch^{-2}\|(I-R_h)v\|
+
C\|v\|_2
\leq
C\|v\|_2.
\end{align}
Combining \eqref{eq:bound-Ph-H2} and \eqref{eq:relation-A-Ah} gives
\begin{equation}
\label{eq:Ah-A-bound}
\|A_h^{\frac \mu2}P_hA^{-\frac \mu2}\|_{\mathcal{L}(H)}
=
\|(A_h^{\frac \mu2}P_hA^{-\frac \mu2})^*\|_{\mathcal{L}(H)}
=
\|A^{-\frac \mu2}A_h^{\frac \mu2}P_h\|_{\mathcal{L}(H)}
<\infty
,\;
\mu \in [-1, 2].
\end{equation}
Similar as in \eqref{eq:l2-lp-relation}, we have, the inverse inequality
$\|v_h\|\leq Ch^{-\frac d2}\|v_h\|_{L^1(D)}$ and
$\|A^{-\frac \kappa2} v\|
\leq C\|v\|_{L^1(D)}$, for any $\kappa>\frac d2$
\begin{align}\label{eq:L_2-L_1}
\|A_h^{-\frac\kappa2}P_h\chi\|
\leq
C\|A^{-\frac\kappa2}P_h\chi\|+Ch^\kappa\|P_h\chi\|
\leq
C\|v\|_{L^1(D)}.
\end{align}
By the duality, the above estimates  implies, for any $ \kappa\in (\frac d2,2)$
\begin{align}\label{eq:V-norm-control-by-Ah-norm}
\begin{split}
\|v_h\|_V
=&
\|v_h\|_{L^\infty(D)}
=
\sup_{\chi\in L^1(D)}\frac{\left<A_h^{\frac\kappa2} v_h,A_h^{-\frac\kappa2}P_h\chi\right>}{\|\chi\|_{L^1(D)}}
\\
\leq
&
C\sup_{\chi\in L^1(D)}\frac{\|A_h^{\frac\kappa2} v_h\|\|A_h^{-\frac\kappa2}P_h\chi\|}{\|\chi\|_{L^1(D)}}
\leq
C\|A_h^{\frac\kappa2} v_h\|,\;
\forall v_h\in V_h.
\end{split}
\end{align}

 Let
 $\tau$
 be a uniform time step-size and
 $t_m=m\tau,\, m\in \mathbb{Z}^+$.
Then the fully discrete finite element approximation of  the problem \eqref{eq:parabolic-SPDE} is to find $X_{\tau,h}^m\in V_h$ such that
\begin{align}\label{eq:full-discretization}
X_{\tau,h}^m-X_{\tau,h}^{m-1}
+
\tau A_h X_{\tau,h}^m
=
\tau F_{\tau,h}(X_{\tau,h}^{m-1})
+
P_h \Delta W_m,\; X_h^0=P_hX_0,\;
\end{align}
where we denote $\Delta W_m := W(t_m)-W(t_{m-1})$ for brevity and
$F_{\tau,h}:L^{2q-2}(D)\rightarrow H$ is given by
\begin{align}
F_{\tau,h}(u)(x):=f_{\tau,h}(u(x)),\;x\in D.
\end{align}
Here $f_{\tau,h}:\mathbb{R}\rightarrow \mathbb{R}$ is a modification of the mapping $f$, defined by
\begin{align}\label{eq:definiton-I-ftauh}
f_{\tau,h}(u):=\frac{f(u)}{\big(1+(\beta_1\tau^\theta+\beta_2 h^\rho)|u|^{\frac{2q-2}{\alpha}}\big)^\alpha},
\end{align}
where the parameters $\theta, \rho, \beta_1, \beta_2>0$ and we require $\max\{\alpha \theta,\frac12\alpha \rho\}<1+\frac d{4q(2q-1)}-\frac d4$.
By iteration,
the solution of \eqref{eq:full-discretization} can be rewritten as
\begin{align}\label{eq:fully-discrete-problem}
X_{\tau,h}^m=E_{\tau,h}^mP_h X_0
+
\tau \sum_{j=0}^{m-1}E_{\tau,h}^{m-j}P_hF_{\tau,h}(X_{\tau,h}^j)
+
\sum_{j=1}^{m}E_{\tau,h}^{m+1-j}P_h\Delta W_j,
\end{align}
for $m\in \mathbb{N}^+$, where the operator $E_{\tau,h}^m$ is defined by
$
E_{\tau,h}^m :=(I+\tau A_h)^{-m} P_h.$

We now present two useful results that play an important role in proving moment bounds of the numerical solution and the convergence analysis below. The first one concerns properties of the operator family $\{ E_{\tau,h}^m \}_{m \geq 1}$.
\begin{lemma}\label{lem:eq-smooth-ENm}
 Let Assumption \ref{assum:linear-operator-A} be fulfilled . Then there exists a constant $C$ independent of $h, \tau$ and $m\in\{1, 2,\cdots,\}$ such that for any $x \in H$
\begin{align}
\|A_h^{\frac\mu2}E_{\tau,h}^mP_hx\|
\leq
&
C\min\{t_m^{-\frac\mu2},t_m^{-2}\}\|x\|,\;\mu\in [0,2],
\label{lem:eq-spatial-regu-ENm}
\\
\Big(\tau\sum_{i=n}^m\|A_h^{\frac\varrho2}E_{\tau,h}^iP_hx\|^2\Big)^{\frac12}
\leq
&
Ct_{m-n}^{\frac{1-\varrho}2}\|x\|,\;\varrho\in[0,1],
\label{lem:eq-spatial-regu-ENm-sum-II}
\\
\Big\|\tau\sum_{i=n}^mA_h^{\varrho}E_{\tau,h}^iP_hx\Big\|
\leq
&
Ct_{m-n}^{1-\varrho}\|x\|,\;\varrho\in[0,1],
\label{lem:eq-spatial-regu-ENm-sum}
\\
\|A_h^{-\frac\nu2}(I-E_{\tau,h}^m)P_hx\|
\leq
&
C t_m^{\frac\nu2} \|x\|,\;\nu\in[0,2].
\label{lem:eq-temporal-regu-ENm}
\end{align}

\end{lemma}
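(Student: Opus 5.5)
Since $E_{\tau,h}^m=(I+\tau A_h)^{-m}P_h$ and $A_h$ is self-adjoint and positive definite on $V_h$ with orthonormal eigenbasis $\{e_{j,h}\}$, I would work spectrally. For $x\in H$ write $P_hx=\sum_{j=1}^{\mathcal N_h}x_je_{j,h}$ with $\sum_j x_j^2=\|P_hx\|^2\le\|x\|^2$. Then each estimate reduces to a uniform-in-$j$ bound on a scalar function of $\lambda=\lambda_{j,h}\in[\lambda_{1,h},\infty)$ built from $r(\tau\lambda):=(1+\tau\lambda)^{-1}$. The one input beyond elementary calculus is a uniform lower bound $\lambda_{1,h}\ge c_*>0$ independent of $h$. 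I would get it from the min--max principle, since $V_h\subset H_0^1(D)$ gives $\lambda_{1,h}=\min_{v_h}\|\nabla v_h\|^2/\|v_h\|^2\ge\lambda_1$. This bound is what produces the long-time decay, so every constant is independent of $m$.

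For \eqref{lem:eq-spatial-regu-ENm} I must bound $\sup_{\lambda\ge\lambda_1}\lambda^{\mu/2}(1+\tau\lambda)^{-m}$ by $C\min\{t_m^{-\mu/2},t_m^{-2}\}$.
\begin{itemize}
\item The standard bound is $\lambda^{\mu/2}(1+\tau\lambda)^{-m}\le Ct_m^{-\mu/2}$ for $\mu\in[0,2]$. It follows from $(1+y)^{m}\ge 1+my$ (or $\ge (my)^k/\binom{m}{k}^{-1}$-type estimates), for example $\lambda^{\mu/2}(1+\tau\lambda)^{-m}\le \lambda^{\mu/2}(1+m\tau\lambda)^{-1}\cdot$ interpolation, for $\mu\le2$.
\item For the large-time bound I split $(1+\tau\lambda)^{-m}=(1+\tau\lambda)^{-m/2}(1+\tau\lambda)^{-m/2}$. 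The first factor gives $C t_m^{-\mu/2}$. The second is at most $(1+\tau\lambda_1)^{-m/2}$.
\item The second factor decays like $e^{-ct_m}$ when $\tau\le 1$. When $\tau>1$ it is at most $(1+\lambda_1)^{-m/2}$, which decays geometrically in $m$ and hence is $\le Ct_m^{-2}$ times a factor bounded in terms of $t_m$. This case needs care: I would use $\log(1+\tau\lambda_1)\ge c\min\{\tau,1\}\cdot$ and handle $\tau$ large separately. Alternatively, I would assume $\tau\le 1$, as is standard.
\end{itemize}
I expect this uniform handling of $\tau$ to be the only mildly delicate point.

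For \eqref{lem:eq-spatial-regu-ENm-sum-II}, squaring and summing over modes reduces the claim to $\tau\sum_{i=n}^m\lambda^{\varrho}(1+\tau\lambda)^{-2i}\le Ct_{m-n}^{1-\varrho}$. I would argue as follows.
\begin{itemize}
\item For $\varrho=1$: the geometric sum gives $\tau\lambda\sum_{i\ge0}(1+\tau\lambda)^{-2i}=\tau\lambda/(1-(1+\tau\lambda)^{-2})\le C(1+\tau\lambda)$. That is not bounded, so I would keep the factor $(1+\tau\lambda)^{-2n}$ with $n\ge1$. This yields $\le C$.
\item For $\varrho=0$: the sum is $\le\tau(m-n+1)$. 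There is an index offset: $t_{m-n}$ versus $m-n+1$ terms, and I would absorb it by reading the bound with $t_{m-n+1}$, or bound the $i=n$ term separately.
\item For intermediate $\varrho$: interpolation by Hölder, $\sum a_i\lambda^\varrho\le(\sum a_i\lambda)^\varrho(\sum a_i)^{1-\varrho}$.
\end{itemize}

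For \eqref{lem:eq-spatial-regu-ENm-sum}, the same idea applies without squares: $\tau\sum_{i=n}^m\lambda^\varrho(1+\tau\lambda)^{-i}\le\lambda^{\varrho-1}\min\{1,\tau\lambda(m-n+1)\}$. Using $\min\{1,s\}\le s^{1-\varrho}$, this is $\le Ct_{m-n+1}^{1-\varrho}$.

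For \eqref{lem:eq-temporal-regu-ENm}, I need $\lambda^{-\nu/2}(1-(1+\tau\lambda)^{-m})\le Ct_m^{\nu/2}$. This holds because $1-(1+y)^{-m}\le\min\{1,my\}\le (my)^{\nu/2}$ for $\nu\in[0,2]$, using $(1+y)^{-m}\ge 1-my$ by Bernoulli.

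Finally, in each item the scalar bound is combined with Parseval in $V_h$, together with $\|P_hx\|\le\|x\|$.
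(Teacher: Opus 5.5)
Your proof is correct in substance, but it is organized differently from the paper's. The paper proves little from scratch. It cites \cite[Lemma 5.2]{qi2020error} for \eqref{lem:eq-temporal-regu-ENm}, for the short-time part $t_m\le1$ of \eqref{lem:eq-spatial-regu-ENm}, and for \eqref{lem:eq-spatial-regu-ENm-sum} with $\varrho=1$; the case $\varrho=0$ follows by stability and the rest by interpolation. It does not address \eqref{lem:eq-spatial-regu-ENm-sum-II} explicitly. The only thing it proves directly is the long-time bound $\lambda_{j,h}^{\mu/2}r(\tau\lambda_{j,h})^m\le Ct_m^{-2}$ for $t_m\ge1$, via a case split: $\tau\lambda_{j,h}\le1$, using $r(z)\le e^{-cz}$, versus $\tau\lambda_{j,h}>1$, using geometric decay $2^{1-m}$.

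You instead reduce all four estimates to scalar inequalities in $\lambda\ge\lambda_{1,h}\ge\lambda_1$ and prove them with Bernoulli's inequality and exact geometric sums. This makes the lemma self-contained, covers \eqref{lem:eq-spatial-regu-ENm-sum-II} as well, and gives $\min\{t_m^{-\mu/2},t_m^{-2}\}$ in one stroke without splitting on $\tau\lambda$. Giving half of the power $m$ to smoothing and half to decay through $\lambda\ge\lambda_1$ is exactly what the paper's case $\tau\lambda_{j,h}\le1$ needs. As written, that step replaces $\lambda_{j,h}$ by $\lambda_1$ in the exponent, bounds the result by $C\tau^{-\mu/2}t_m^{-2}$, and then drops $\tau^{-\mu/2}$, which is not legitimate for small $\tau$. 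Your min--max justification of $\lambda_{1,h}\ge\lambda_1$ is also something the paper uses only tacitly.

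Three things to tidy:
\begin{enumerate}
\item For $m=1$ your halving uses the exponent $1/2$. There Bernoulli fails, and $\lambda^{\mu/2}(1+\tau\lambda)^{-1/2}$ is not bounded uniformly in $h$ when $\mu>1$. Split $m=\lceil m/2\rceil+\lfloor m/2\rfloor$ and attach $\lambda^{\mu/2}$ to the $\lceil m/2\rceil$ factor.
\item Boundedness of $\tau$ is not optional, so drop the attempt to treat arbitrary $\tau>1$. For fixed $h$, $m=1$, $\mu=0$ and $x=e_{1,h}$, one has $\|E_{\tau,h}P_hx\|=(1+\tau\lambda_{1,h})^{-1}$, which is not bounded by $C\tau^{-2}$ as $\tau\to\infty$. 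The paper relies on bounded $\tau$ too, through its factor $\tau^{2-\mu/2}$.
\item The off-by-one in \eqref{lem:eq-spatial-regu-ENm-sum-II}--\eqref{lem:eq-spatial-regu-ENm-sum} is a defect of the statement, not something you can absorb. For $m=n$ and $\varrho<1$ the right-hand side vanishes while the left does not, so treating the $i=n$ term separately cannot help. For $m>n$, use $t_{m-n+1}\le2t_{m-n}$. As you observed, $n\ge1$ is needed.
\end{enumerate}
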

The proof of this lemma will be given in the Appendix.

Based on the above results on $f$, we can derive some properties  of $f_{\tau,h}(\cdot)$, which can be shown by a slight modification of the proof of \cite[(3.8)-(3.11)]{jiang2025uniform}.
\begin{lemma}\label{assum:mofification-f} Let Assumption \ref{assum:nonlinearity} be fulfilled and let the mapping $f_{\tau,h}(\cdot)$ be given by \eqref{eq:definiton-I-ftauh}.
Then, there exist constants $\tau^*, h^* \in (0,\infty)$ such that for $0<\tau\leq \tau^*$ and $0<h\leq h^*$,
$f_{\tau,h}$ satisfies the following conditions: for some $\theta>0$ and $\rho>0$, $\alpha\in(0,1]$ satisfying  $\max\{\alpha\theta, \frac12\alpha\rho\}<1+\frac d{q(2q-1)}-\frac d4$, there exist constants $\widetilde{c}_0, \widetilde{c}_1, \widetilde{c}_2, \widetilde{c}_3, \widetilde{c}_4, \widetilde{c}_5$, independent of $\tau$ and $h$, such that for any $u,v \in \mathbb{R}$
\begin{align}
2(u+v)f_{\tau,h}(u)+\tau|f_{\tau,h}(u)|^2
\leq
&
\widetilde{c}_1(1+|v|^{2q})-\widetilde{c_0}|u|^2,
\label{asum:condition-ftauh-I}
\\
|f_{\tau,h}(u)|
\leq
&\widetilde{c}_2|f(u)|,
\label{asum:condition-ftauh-Ii}
\\
|f_{\tau,h}(u)|
\leq
&
\widetilde{c}_3(1+|u|+(\tau^\theta+h^\rho)^{-\alpha}|u|),
\label{asum:condition-ftauh-Iii}
\\
|f_{\tau,h}(u)-f_{\tau,h}(v)|
\leq
&
\widetilde{c}_4(1+|u|^{2q-2}+|v|^{2q-2})|u-v|,
\label{asum:condition-ftauh-IiiI}
\\
|f(v)-f_{\tau,h}(v)|
\leq
&
\widetilde{c}_5(\tau^\theta+h^\rho)|v|^{\frac{2q-2}\alpha}|f(v)|.
\label{lem:f-ftau-I}
\end{align}
\end{lemma}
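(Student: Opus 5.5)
Write $\varepsilon:=\beta_1\tau^\theta+\beta_2h^\rho$ and $\kappa:=\frac{2q-2}{\alpha}$, so that $f_{\tau,h}(u)=f(u)\,g_\varepsilon(u)$ with $g_\varepsilon(u):=(1+\varepsilon|u|^{\kappa})^{-\alpha}\in(0,1]$. Every estimate will be derived pointwise in $u,v\in\mathbb{R}$ from the scalar properties \eqref{eq:definiton-I-f-I}--\eqref{eq:assumption-f} of $f$ and the elementary behaviour of $g_\varepsilon$.

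\textbf{The easy estimates.} Since $0<g_\varepsilon\le1$, \eqref{asum:condition-ftauh-Ii} holds with $\widetilde c_2=1$.

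For \eqref{lem:f-ftau-I} we have $|f-f_{\tau,h}|=|f|\,(1-g_\varepsilon)$. Because $x\mapsto(1+x)^{-\alpha}$ is convex with derivative $-\alpha$ at $0$, we get $1-(1+x)^{-\alpha}\le\alpha x\le x$. Taking $x=\varepsilon|u|^\kappa$ gives the bound with $\widetilde c_5=\max\{\beta_1,\beta_2\}$.

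For \eqref{asum:condition-ftauh-Iii}, use \eqref{eq:assumption-f}, i.e. $|f(u)|\le c_4|u|^{2q-1}+c_5$. For $|u|\le1$ the bound is immediate. For $|u|\ge1$,
\[
|u|^{2q-1}g_\varepsilon(u)\le |u|\,\frac{|u|^{2q-2}}{(\varepsilon|u|^\kappa)^{\alpha}}=\varepsilon^{-\alpha}|u|,
\]
since $\kappa\alpha=2q-2$. Together with $\varepsilon^{-\alpha}\le C(\tau^\theta+h^\rho)^{-\alpha}$, this gives \eqref{asum:condition-ftauh-Iii}.

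For the local Lipschitz bound \eqref{asum:condition-ftauh-IiiI}, apply the mean value theorem. We have $f_{\tau,h}'=f'g_\varepsilon+fg_\varepsilon'$ with
\[
g_\varepsilon'(u)=-\alpha\kappa\,\varepsilon|u|^{\kappa-1}\operatorname{sgn}(u)\,(1+\varepsilon|u|^\kappa)^{-\alpha-1}.
\]
Hence
\[
|u\,g_\varepsilon'(u)|\le\alpha\kappa\,\frac{\varepsilon|u|^\kappa}{1+\varepsilon|u|^\kappa}\,g_\varepsilon\le(2q-2)\,g_\varepsilon.
\]
Combined with $|f(u)|\le C(1+|u|^{2q-1})$, this yields $|f g_\varepsilon'|\le C(1+|u|^{2q-2})$ for $|u|\ge1$. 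For $|u|\le1$ one checks boundedness directly; note that $\kappa-1\ge 2q-3\ge 1>0$, so $g_\varepsilon'$ is bounded there. With \eqref{eq:definiton-II-f} this gives $|f_{\tau,h}'(w)|\le C(1+|w|^{2q-2})$, and evaluating at an intermediate point $w$ between $u$ and $v$ gives the claim.

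\textbf{The main step: the dissipativity inequality \eqref{asum:condition-ftauh-I}.} Write
\[
2(u+v)f_{\tau,h}=g_\varepsilon\,2(u+v)f(u).
\]
By \eqref{eq:definiton-III-f} (applied with $y=v$, after adjusting constants),
\[
2(u+v)f_{\tau,h}(u)\le g_\varepsilon\bigl(-2c_0|u|^{2q}+2c_1|v|^{2q}+2c_2\bigr)\le -2c_0g_\varepsilon|u|^{2q}+C(1+|v|^{2q}).
\]
The term $\tau|f_{\tau,h}|^2\le\tau g_\varepsilon^2(c_4|u|^{2q-1}+c_5)^2$ must be absorbed by $-c_0g_\varepsilon|u|^{2q}$. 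Using
\[
g_\varepsilon(u)|u|^{2q-2}\le\varepsilon^{-\alpha}\le(\beta_1\tau^\theta)^{-\alpha},
\]
we obtain
\[
\tau g_\varepsilon^2|u|^{4q-2}\le \tau\,\beta_1^{-\alpha}\tau^{-\alpha\theta}\,g_\varepsilon|u|^{2q}.
\]
Here $\alpha\theta<1$ is required; this holds by the parameter restriction, whose right-hand side $1+\frac{d}{q(2q-1)}-\frac d4$ is $\le1$ exactly when $d(q(2q-1)-4)\ge0$, which is true since $q\ge2$. (If the right-hand side exceeds $1$ in some case, one simply additionally imposes $\alpha\theta<1$, which the later choices $\theta=1$, $\alpha<1$ satisfy.) Thus $\tau^{1-\alpha\theta}\beta_1^{-\alpha}c_4^2\le c_0/2$ for $\tau\le\tau^*$, giving
\[
2(u+v)f_{\tau,h}+\tau|f_{\tau,h}|^2\le -c_0g_\varepsilon|u|^{2q}+C(1+|v|^{2q}).
\]

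It remains to show $g_\varepsilon|u|^{2q}\ge c|u|^2-C$ uniformly in small $\varepsilon$. This is the delicate point and where I expect the real obstacle to lie. For $\varepsilon|u|^\kappa\le1$ we have $g_\varepsilon\ge2^{-\alpha}$, so $g_\varepsilon|u|^{2q}\ge2^{-\alpha}|u|^{2q}\ge 2^{-\alpha}|u|^2-C$. For $\varepsilon|u|^\kappa\ge1$,
\[
g_\varepsilon|u|^{2q}\ge(2\varepsilon)^{-\alpha}|u|^{2q-\alpha\kappa}=(2\varepsilon)^{-\alpha}|u|^2\ge c|u|^2,
\]
using $\varepsilon\le1$ for $\tau\le\tau^*$ and $h\le h^*$. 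Hence $g_\varepsilon|u|^{2q}\ge c|u|^2-C$ in both regimes, and absorbing constants yields \eqref{asum:condition-ftauh-I}. This mirrors \cite[(3.8)--(3.11)]{jiang2025uniform}, with the only change being $\tau^\theta$ replaced by $\beta_1\tau^\theta+\beta_2h^\rho$. The $h$-part is harmless because only lower bounds $\varepsilon\ge\beta_1\tau^\theta$ and upper bounds $\varepsilon\le1$ are used.
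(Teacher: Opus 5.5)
Your proof is correct and takes the route the paper indicates. The paper gives no argument of its own; it only defers to the pointwise estimates (3.8)--(3.11) of \cite{jiang2025uniform} with $\tau^\theta$ replaced by $\beta_1\tau^\theta+\beta_2h^\rho$, which is exactly what you carry out. You also make explicit two points the paper leaves unstated: only the bounds $\beta_1\tau^\theta\le\beta_1\tau^\theta+\beta_2h^\rho\le 1$ are needed, and the stated parameter restriction already forces $\alpha\theta<1$, which is what lets the term $\tau|f_{\tau,h}(u)|^2$ be absorbed.
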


%
\section{Error estimates in Banach spaces for the finite element fully discretizations of the deterministic linear parabolic equation}
In order to derive uniform-in-time moment bounds of the numerical solution $X_{\tau,h}^m$, we need to establish new error estimates in a Banach space $L^p(D), p \geq 2$ for the fully discrete finite element approximation of the linear parabolic equation:
\begin{align}\label{eq:linear-parabolic-eq}
u(t)+Au(t)=0,\;u(0)=v,
\end{align}
 whose solution can be written as $u(t)=E(t)v$. Define the fully discrete approximation operators
\[
\Phi_{\tau,h}^n
: =
E(t_n)-E_{\tau,h}^nP_h.
\]

\begin{theorem}
\label{thm:infty-determing-error}
Let Assumption \ref{assum:linear-operator-A} be fulfilled. Then, there exists a constant
 $C$ independent of $h,\tau$ and $t_n$ such that for any $p\geq 2$
\begin{align}\label{lem:error-determinstic-error-estimate}
\|\Phi_{\tau,h}^n v\|_{L^p(D)}
\leq
 C(h^{2+\frac dp-\frac d 2}+\tau^{1+\frac d{2p}-\frac d4}) \min\{t_n^{-1},t_n^{-2}\}  \|v\|, \quad d \in \{1,2,3\}.
\end{align}
\end{theorem}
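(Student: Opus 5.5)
We reduce the $L^p$ estimate to an $L^2$-based estimate in a discrete fractional norm, and obtain the exponential decay in $t_n$ from the spectral gaps of $A$ and $A_h$. The exponents $2+\frac dp-\frac d2 = 2-\frac{d(p-2)}{2p}$ and $1+\frac d{2p}-\frac d4=\frac12\big(2-\frac{d(p-2)}{2p}\big)$ indicate that we lose exactly the amount $\kappa:=\frac{d(p-2)}{2p}$ of spatial regularity (half of it in time) relative to the optimal $L^2$ order $h^2+\tau$ for nonsmooth data. This loss is precisely what the discrete Sobolev inequality \eqref{eq:relation-Lp-Ah-relatoin} costs.

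\textbf{Step 1 (splitting).} Write
\[
\Phi_{\tau,h}^n v=(I-P_h)E(t_n)v+\big(P_hE(t_n)-E_{\tau,h}^nP_h\big)v=:e_1+e_2 .
\]
For $e_1$, we use the $L^p$-stability \eqref{eq:bound-P_h-p-norm} together with a standard $L^p$ interpolation estimate $\|(I-P_h)w\|_{L^p}\le Ch^{2-\kappa}\|w\|_{2}$, obtained from $\|(I-P_h)w\|_{L^p}\le \|(I-I_h)w\|_{L^p}+C\|P_h(I_h-w)\|_{L^p}$ and the embedding $H^{2}\subset W^{2-\kappa,p}$ (or, alternatively, an inverse inequality $\|\chi_h\|_{L^p}\le Ch^{-\kappa}\|\chi_h\|$ on $P_h w-I_h w$ combined with the $L^2$ rate $h^2$). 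Together with $\|E(t_n)v\|_2\le Ct_n^{-1}e^{-\lambda_1t_n/2}\|v\|$, this gives the $h$-part of the bound. For $e_2\in V_h$, we apply \eqref{eq:relation-Lp-Ah-relatoin}, which yields $\|e_2\|_{L^p}\le C\|A_h^{\kappa/2}e_2\|$.

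\textbf{Step 2 (discrete negative/positive norm error).} It remains to show
\[
\|A_h^{\kappa/2}(P_hE(t_n)-E_{\tau,h}^nP_h)v\|\le C(h^{2-\kappa}+\tau^{1-\kappa/2})\min\{t_n^{-1},t_n^{-2}\}\|v\|.
\]
We split this further as $P_hE(t_n)-E_h(t_n)P_h$ (spatial semidiscrete error, with $E_h(t)=e^{-tA_h}$) plus $E_h(t_n)P_h-(I+\tau A_h)^{-n}P_h$ (time error). The time part follows by spectral calculus on $A_h$: with $r(z)=(1+z)^{-1}$, we have $|e^{-n\tau\lambda}-r(\tau\lambda)^n|\lambda^{\kappa/2}\le C\tau^{1-\kappa/2}t_n^{-1}$ uniformly in $\lambda\ge\lambda_{1,h}$, and for large $t_n$ both terms decay like $e^{-c\lambda_{1,h}t_n}$ with $\lambda_{1,h}\ge c\lambda_1>0$. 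Stealing part of this decay converts $t_n^{-1}$ into $\min\{t_n^{-1},t_n^{-2}\}$. For the space part, we interpolate between the classical nonsmooth-data $L^2$ estimate $\|(P_hE(t)-E_h(t)P_h)v\|\le Ch^2t^{-1}\|v\|$ (Thomée, Ch.\ 3) and the $H^1$ estimate $\|A_h^{1/2}(\cdot)\|\le Cht^{-1}\|v\|$, with $\kappa\in[0,1]$ for $d\le 3$ since $\kappa<d/2\le 3/2$. If $\kappa>1$ (possible when $d=3$), we instead use the inverse inequality $\|A_h^{\kappa/2}\chi_h\|\le Ch^{-\kappa}\|\chi_h\|$ directly on the $L^2$ estimate, which already gives $h^{2-\kappa}$. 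This is simpler and covers all cases, so we would use it throughout for the space part.

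\textbf{Step 3 (long-time decay; main obstacle).} The textbook constants for the nonsmooth-data error are not uniform in $t$. We therefore rederive the bound with exponential weights: write $F_h(t)=P_hE(t)-E_h(t)P_h$ and use $F_h(t)=F_h(t/2)E(t/2)+E_h(t/2)P_hF_h(t/2)$-type semigroup identities, together with $\|E(t/2)\|\le e^{-\lambda_1t/2}$ and $\|E_h(t/2)\|\le e^{-\lambda_{1,h}t/2}$, to transfer the decay of the exact and discrete semigroups. For $t_n\ge1$, this yields $Ch^2e^{-ct_n}\le Ch^2t_n^{-2}$. We expect the main difficulty to lie in making the spatial nonsmooth-data estimate (via the duality/energy argument with $t$-weights) uniform for large times and combining it cleanly with the time-discrete error. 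Where needed, we would also invoke Lemma \ref{lem:eq-smooth-ENm}, which already supplies the $\min\{t_m^{-\mu/2},t_m^{-2}\}$ smoothing for $E_{\tau,h}^m$.
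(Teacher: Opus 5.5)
Your proposal is correct. It reaches the spatial estimate and the long-time decay by a different route from the paper.

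\textbf{The paper's route.} It works with $\rho=(I-R_h)E(t)v$ and $\xi_h=P_hE(t)v-E_h(t)P_hv$. It re-derives Thom\'ee's $t^2$-weighted representation of $\xi_h$ (the terms $I_j,II_j$) with exponential factors inserted into every integral, and applies the inverse inequality $\|\chi_h\|_{L^p(D)}\le Ch^{\frac dp-\frac d2}\|\chi_h\|$ term by term. It then bounds $\rho$ in $L^p(D)$ by an elementwise Gagliardo--Nirenberg inequality. Its time-error step uses an $L^2$ bound $C\tau\min\{t_n^{-1},t_n^{-2}\}$ and an $A_h$-bound $C\min\{t_n^{-1},t_n^{-2}\}$, combined by discrete Gagliardo--Nirenberg. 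This is equivalent to your direct $\lambda^{\kappa/2}$-weighted spectral estimate.

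\textbf{Your route.} You split off $(I-P_h)E(t_n)v$, which nodal interpolation plus the $L^p$-stability \eqref{eq:bound-P_h-p-norm} handles in a standard way, avoiding the elementwise Gagliardo--Nirenberg step. You get decay from the identity $F_h(t)=F_h(t/2)E(t/2)+E_h(t/2)P_hF_h(t/2)$, which is exact. Combined with $\|E(t/2)\|_{\mathcal{L}(H)}\le e^{-\lambda_1t/2}$ and $\|E_h(t/2)P_h\|_{\mathcal{L}(H)}\le e^{-\lambda_{1,h}t/2}\le e^{-\lambda_1t/2}$ (min--max gives $\lambda_{1,h}\ge\lambda_1$), it yields $\|F_h(t)v\|\le Ch^2t^{-1}e^{-\lambda_1t/2}\|v\|$ in one line. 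The inverse inequality then gives $h^{2-\kappa}\min\{t^{-1},t^{-2}\}$.

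\textbf{Step 3 is simpler than you fear.} Thom\'ee's nonsmooth-data bound $\|(E_h(t)P_h-E(t))v\|\le Ch^2t^{-1}\|v\|$ already holds for all $t>0$ with a $t$-independent constant. That is exactly what the identity needs at time $t/2$, so no weighted energy argument has to be redone. Only the extra decay is missing from the textbook estimate, and the identity supplies it.

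\textbf{What each buys.} Your argument is shorter and modular: any uniform nonsmooth-data estimate is upgraded to exponential decay at no cost. The paper's argument is self-contained and builds the decay into each term.

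\textbf{Shared caveat.} Both your ``stealing decay'' step (which uses $2^{-n}\le Ce^{-ct_n}$) and the paper's bound \eqref{eq:r(A)-taulambda-equality=1} tacitly require $\tau\le\tau^*$. For $p>2$, $n=1$ and $\tau\to\infty$, the stated time-error bound fails.
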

{\it Proof of Theorem \ref{thm:infty-determing-error}.}
We begin with deriving error estimates of the semi-discrete finite element approximation of the linear equation. The semidiscrete problem is to find $u_h(t)\in V_h$ such   that
\begin{align}\label{eq:semidiscrete-probleme}
\frac{\dd u_h(t)}{\dd t}
+
A_hu_h(t)=0, u_h(0)=P_hv,
\end{align}
whose solution  can be written as $u_h(t)=E_h(t)P_hv$,
where $E_h(t):=e^{-tA_h}$ is the semi-group generated by $-A_h$. To streamline the error analysis, we denote
\[
e_h(t):=u(t)-u_h(t),
\quad
\xi_h(t) := P_hu(t)-u_h(t),
\quad
\rho(t) := ( I - R_h ) u(t).
\]
Owing to \eqref{eq:linear-parabolic-eq}, \eqref{eq:semidiscrete-probleme} and the fact $P_hA=A_hR_h$, one knows
\begin{align}
\frac{\dd \xi_h(t)}{\dd t}+A_h\xi_h(t)=A_h(R_h-P_h)u(t)=-A_hP_h\rho(t).
\end{align}
By integration and noting $\xi_h(0)=0$, one gets
\begin{align}
\xi_h(t)=
-\int_0^tE_h(t-s)A_hP_h\rho(s)\,\dd s
=
\int_0^tE_h'(t-s)P_h\rho(s)\,\dd s
.
\end{align}
Noting $t^2=s^2+2s(t-s)+(t-s)^2$, one can write
\begin{align}\label{eq:decompose-Xih}
\begin{split}
t^2\xi_h
=&
\left(\int_0^{\frac t2}+\int_{\frac t2}^t\right)
\big(s^2+2s(t-s)+(t-s)^2\big)E_h'(t-s)P_h\rho(s)\,\dd s
\\
=:
&
\sum_{j=1}^3(I_j+II_j).
\end{split}
\end{align}
Using the inverse inequality $\|v_h\|_{L^p(D)}\leq Ch^{\frac dp-\frac d2}\|v_h\|, \forall v_h\in V_h$, $p
\geq 2$ and
the fact $\|E_h'(t)P_hv\| \leq Ct^{-1}\|v\|$, we have, for any $t>0$
\begin{align}
\|E_h'(t)P_hv\|_{L^p(D)}
\leq
C t^{-1} h^{\frac dp-\frac d2}\|v\|.
\end{align}
By \eqref{eq:property-E} and \eqref{eq:property-R_h}, we deduce
\begin{align}\label{eq:error-rho}
\begin{split}
\|\rho(t)\|+t\|\rho_t(t)\|
\leq
&
Ch^2(\|Au(t)\|+t\|Au_t(t)\|)
\leq
Ch^2(\|AE(t)v\|+t\|AE'(t)v\|)
\\
\leq
&
Ch^2t^{-1}e^{-\frac{\lambda_1t}2}\|v\|.
\end{split}
\end{align}
Thus
\begin{align}\label{eq:estimate-I1}
\begin{split}
\begin{split}
\|I_1\|_{L^p(D)}
\leq
&
\int_0^{\frac t2}\|s^2E_h'(t-s)P_h\rho(s)\|_{L^p(D)}\,\dd s
\leq
C h^{\frac dp-\frac d2}\int_0^{\frac t2}s^2(t-s)^{-1}\|\rho(s)\|\,\dd s
\\
\leq
&
Ch^{2+\frac dp-\frac d2}t^{-1}\int_0^{\frac t2}se^{-\frac{\lambda_1 s}2}\|v\|\,\dd s
\leq
Ch^{2+\frac dp-\frac d2} t^2 \min\{t^{-1}, t^{-2}\}\|v\|,
\end{split}
\end{split}
\end{align}
where in the third inequality we used the fact
\[
t^{-1}\int_0^{\frac t2}se^{-\frac {\lambda_1 s}2}\,\dd s
\leq
Ct^{-1}\int_0^{\frac t2}s\,\dd s
\leq Ct,
\]
 for $0<t\leq 1$ and
\[
  t^{-1}\int_0^{\frac t2}se^{-\frac {\lambda_1 s}2}\,\dd s
  \leq
  C \int_0^{\frac t2}e^{-\frac {\lambda_1 s}2}\,\dd s
  \leq
   C,
\]
for $t\geq 1$.
For $II_1$,  after integration by parts, we infer
\begin{align}
\begin{split}
II_1
=
&
-\int_{\frac t2}^ts^2E_h'(t-s)P_h\rho(s)\,\dd s
\\
=
&
\left[E_h(t-s)s^2P_h\rho(s)\right]\Big|_{\frac t2}^t-\int_{\frac t2}^tE_h(t-s)\left(2sP_h\rho(s)+s^2P_h\rho_s(s)\right)\,\dd s
\\
=
&
t^2P_h\rho(t)-\frac {t^2}4E_h( t/2)P_h\rho( t/2)-\int_{\frac t2}^tE_h(t-s)\left(2sP_h\rho(s)+s^2P_h\rho_s(s)\right)\,\dd s.
\end{split}
\end{align}
Similarly, one can acquire, by \eqref{eq:error-rho} and the inverse inequality $\|v_h\|_{L^p(D)}\leq Ch^{\frac d p-\frac d2}\|v_h\|,\;$ for $p\geq 2$
\begin{align}\label{eq:estimate-II2}
\begin{split}
\|II_1\|_{L^p(D)}
\leq
&
Ct^2 h^{\frac dp-\frac d2} (\|\rho(t)\|
+
\|\rho(t/2)\|)
+
Ch^{\frac dp-\frac d2}\int_{\frac t 2}^t(s\|\rho(s)\|+s^2\|\rho_s(s)\|)\,\dd s
\\
\leq
&
Ct h^{2+\frac dp-\frac d2} (e^{-\frac{\lambda_1 t}2}+e^{-\frac{\lambda_1 t}4})\|v\|
+
Ch^{2+\frac dp-\frac d2}\int_{\frac t 2}^t e^{-\frac {\lambda_1 s}2}\,\dd s\|v\|
\\
\leq
&
Ch^{2+\frac dp-\frac d2} t^2 \min\{t^{-1},t^{-2}\}\|v\|,
\end{split}
\end{align}
and
\begin{align}\label{eq:estimate-I2-II2-II3}
\begin{split}
&\|I_2+II_2+II_3\|_{L^p(D)}
\\
\leq
&
Ch^{\frac dp-\frac d2}\left(\int_0^ts(t-s)\|E_h'(t-s)P_h\rho(s)\|\,\dd s
+
\int_{\frac t2}^t(t-s)^2\|E_h'(t-s)P_h\rho(s)\|\,\dd s\right)
\\
\leq
&
Ch^{2+\frac dp-\frac d2}\left(\int_0^t e^{-\frac{\lambda_1(t-s)}2}e^{-\frac {\lambda_1 s}2}\|v\|\,\dd s+\int_{\frac t2}^t(t-s)s^{-1}e^{-\frac {\lambda_1 s}2}\|v\|\,\dd s\right)
\\
\leq
&
Ch^{2+\frac dp-\frac d2} t^2 \min\{t^{-1},t^{-2}\}\|v\|.
\end{split}
\end{align}
For $I_3$,  we integrate by parts to obtain
\begin{align}
\begin{split}
I_3
=&
-\int_0^{\frac t 2}(t-s)^2E_h'(t-s)P_h\rho(s)\,\dd s
\\
=
&
-(t-s)^2E_h'(t-s)P_h\widehat{\rho}(s)|_{s=0}^{\frac t2}
+
\int_0^{\frac t 2}\big((t-s)^2E_h''(t-s)-2(t-s)E_h'(t-s)\big)P_h\widehat{\rho}(s)\,\dd s
\\
=
&
\frac{t^2}4E_h'(t/2)P_h\widehat{\rho}(t/2)
+
\int_0^{\frac t 2}\big((t-s)^2E_h''(t-s)-2(t-s)E_h'(t-s)\big)P_h\widehat{\rho}(s)\,\dd s,
\end{split}
\end{align}
where we denote
\[
\widehat{\rho}(t) := \int_0^t\rho(s)\,\dd s.
\]
Since $E_h''(t)=A_h^2E_h(t)$ and $\|A_h^\mu E_h(t)\|_{\mathcal{L}(H)}
\leq
C t^{-\mu} e^{-\frac{\lambda_1 t}2}$ for any $\mu\geq 0$, $t>0$, one can show
\begin{align}
t^2\|E_h''(t)P_hv\|+t\|E_h'(t)P_hv\|
\leq
 C e^{\frac{-\lambda_1 t}2}\|v\|.
\end{align}
By \eqref{eq:property-R_h} and \eqref{IV-spatio-temporal-S(t)}, we show
\begin{align}
\left\|\widehat{\rho} (t) \right\|
\leq
Ch^2\left\|\int_0^tA u(s)\,\dd s\right\|
\leq
Ch^2\|v\|.
\end{align}
Thus, using the inverse inequality $\|v_h\|_{L^p(D)}\leq Ch^{\frac dp-\frac d2}\|v_h\|, \forall v_h\in V_h$ shows, for any $p\geq 2$
\begin{align}\label{eq:estimate-I3}
\begin{split}
\|I_3\|_{L^p(D)}
\leq
&
Ch^{\frac dp-\frac d2}\Big(\|t^2E_h'(t/2)P_h\widehat{\rho}(t/2)\|
+
\int_0^{\frac t 2}\big((t-s)^2\|E_h''(t-s)P_h\widehat{\rho}(s)\|
\\
&
+
2(t-s)\|E_h'(t-s)P_h\widehat{\rho}(s)\|\big)\,\dd s\Big)
\\
\leq&
Ch^{2+\frac dp-\frac d2}\left( te^{\frac{-\lambda_1 t}4}
+
\int_0^{\frac t 2} e^{\frac{-\lambda_1 (t-s)}2}\,\dd s\right)\|v\|
\\
\leq
&
Ch^{2+\frac dp-\frac d2}t e^{-\frac{\lambda_1 t}4}\|v\|
\leq
Ch^{2+\frac dp-\frac d2}t^2\min\{t^{-1},t^{-2}\}\|v\|.
\end{split}
\end{align}
Finally, plugging estimates \eqref{eq:estimate-I1}, \eqref{eq:estimate-II2}, \eqref{eq:estimate-I2-II2-II3} and \eqref{eq:estimate-I3} into \eqref{eq:decompose-Xih} leads to, for any $p\geq2$
\begin{align}\label{eq:xih-estimate}
\|\xi_h(t)\|_{L^p(D)}
\leq
C\min\{t^{-1},t^{-2}\} h^{2+\frac dp-\frac d2}\|v\|.
\end{align}
To bound $\rho(t)$ in  $L^p$-norm, we employ the Gagliardo-Nirenberg inequality
\begin{align}
\|v\|_{L^p(D)}
\leq
C\sum_{K\in \mathcal{T}_h}\|v\|^{\frac {(p-2)d}{4p}}_{H^2(K)}\sum_{K\in \mathcal{T}_h}\|v\|_{L^2(K)}^{1-\frac {(p-2)d}{4p}}
\mathrm{}\end{align}
to conclude, for any $p\geq 2$
\begin{align}\label{eq:rho-estimate}
\begin{split}
\|\rho(t)\|_{L^p(D)}
\leq
&
C\sum_{K\in \mathcal{T}_h}\|\rho(t)\|^{\frac {(p-2)d}{4p}}_{H^2(K)}\sum_{K\in \mathcal{T}_h}\|\rho(t)\|_{L^2(K)}^{1-{\frac {(p-2)d}{4p}}}
\\
\leq
&
C(\|\rho(t)\|_{H^1(D)}+|u(t)|_{H^2(D)})^{\frac {(p-2)d}{4p}}\|\rho(t)\|^{1-{\frac {(p-2)d}{4p}}}
\\
\leq
&
Ch^{2+\frac dp- \frac{d}2}\big((1+h+h^2)\|u(t)\|_2\big)^{\frac {(p-2)d}{4p}}\|u(t)\|_2^{1-{\frac {(p-2)d}{4p}}}
\\
\leq
&
Ch^{2+\frac dp- \frac{d}2} t^{-1}e^{-\frac{\lambda_1 t}2}\|v\|
\leq C h^{2+\frac dp- \frac{d}2}\min\{t^{-1},t^{-2}\}\|v\|,
\end{split}
\end{align}
where in the second inequality we used the fact $R_hu$ is a piecewise continuous linear function over the quasiuniform triangulation $\mathcal{T}_h$ of $D$. Therefore, by \eqref{eq:xih-estimate} and \eqref{eq:rho-estimate}, we obtain, for any $p\geq 2$
\begin{align}
\|(E(t)-E_h(t)P_h)v\|_{L^p(D)}
\leq
C h^{2+\frac dp-\frac d2}\min\{t^{-1}, t^{-2}\}\|v\|.
\end{align}
It remains to bound $\|(E_h(t_n)-E_{\tau,h}^n)P_hv\|_{L^p(D)}$. For this, we need to bound
$\|(E_h(t_n)-E_{\tau,h}^n)P_hv\|$ and $\|A_h(E_h(t_n)-E_{\tau,h}^n)P_hv\|$.
By using the expansion of $P_hv$ in terms of
$\{(\lambda_{j,h},e_{j,h}^j)\}_{j=1}^{\mathcal{N}_h}$, one can get
\begin{align}\label{eq:e-E-error}
\left\|(E_h(t_n)-E_{\tau,h}^n)P_hv\right\|^2
=
\sum_{j=1}^{\mathcal{N}_h}\left(e^{-\lambda_{j,h}t_n}-(1+\tau \lambda_{j,h})^{-n}\right)^2\left<P_hv, e_{j,h}\right>^2.
\end{align}
To proceed further, consider two cases: $\tau \lambda_{j,h}\leq 1$ and $\tau \lambda_{j,h}>1$, and denote $r(z) := (1+z)^{-1}$, $z>0$.
As shown in the proof of \cite[Theorem 7.1]{thomee2007galerkin}, there exist two positive constants $C$ and $c$ such that
\begin{align}
|r(z)-e^{-z}|
& \leq
Cz^2, \quad \forall z\in[0,1],
\\
\label{eq:r(z)-ez}
r(z)
& \leq
e^{-cz}, \quad \forall z\in[0,1].
\end{align}
These two inequalities suffice to ensure that, for $n= 1, 2, 3, \cdots,$
\begin{align}
\left|r(z)^n-e^{-nz})\right|
=
\Big|(r(z)-e^{-z})\sum_{j=0}^{n-1}r(z)^{n-1-j}e^{-zj}\Big|
\leq
Cz^2n e^{-c(n-1)z}.
\end{align}
For the  case $\tau \lambda_{j,h}\leq 1$, the above inequality and the bound $\sup_{s\in[0,\infty)}se^{-cs}<\infty$ imply,
 \begin{align}\label{eq:bound-expansion-full-error-not-big-1}
 \begin{split}
 |e^{-\lambda_{j,h}t_n}-r(\tau \lambda_{j,h})^n|
 \leq
 &
 Cn\tau^2\lambda_{j,h}^2e^{-c\lambda_{j,h}t_{n-1}}
 \leq
 C\tau
  \lambda_{j,h}e^{-\frac{c \lambda_{j,h} t_n}2} t_n\lambda_{j,h} e^{-\frac{c \lambda_{j,h} t_n}2}
 \\
 \leq
 &
 C\tau t_n^{-1} e^{-\frac{c \lambda_{1,h} t_n}4}
 \leq
 C\tau\min\{t_n^{-1}, t_n^{-2}\}.
 \end{split}
 \end{align}
For the case $\tau\lambda_{j,h}>1$, using \eqref{eq:r(A)-taulambda-big-1} with $\mu=0$, \eqref{eq:r(A)-taulambda-equality=1} with $\mu=0$ and employing  the fact $\sup_{\lambda\geq 1} e^{-\frac{n\lambda}2}\leq C n^{-1}$  lead to
 \begin{align}
 \begin{split}
|e^{-\lambda_{j,h}t_n}-r(\tau \lambda_{j,h})^n|
\leq
&
|e^{-\lambda_{j,h}t_n}|+|r(\tau \lambda_{j,h})^n|
\\
\leq
&
C e^{-\frac{\lambda_{j,h}t_n}2} \sup_{\lambda\geq 1}e^{-\frac{n\lambda}2}
+
C\tau^2 t_n^{-2}
\\
\leq
&
Ce^{-\frac{\lambda_1t_n}4} n^{-1}
+
C\tau^2 t_n^{-2}
\\
\leq
&
C\tau t_n^{-1}e^{-ct_n}
+
C\tau^2 t_n^{-2}
\leq
C\tau\min\{t_n^{-1}, t_n^{-2}\},
\end{split}
 \end{align}
 which combined with \eqref{eq:bound-expansion-full-error-not-big-1} shows
 \begin{align}
 |e^{-\lambda_{j,h}t_n}-r(\tau \lambda_{j,h})^n|
 \leq
 C\tau\min\{t_n^{-1},t_n^{-2}\}.
 \end{align}
This together with \eqref{eq:e-E-error} enables us to obtain
\begin{align}
\|(E_h(t_n)-E_{\tau,h}^n)P_hv\|
\leq
C\tau\min\{t_n^{-1}, t_n^{-2}\} \|v\|.
\end{align}
Now let us bound $\|A_h(E_h(t_n))-E_{\tau,h}^n)P_hv\|$. Similarly as in \eqref{eq:property-E}, we derive
\begin{align}
\|A_h^{\frac{\mu}2}E_h(t_n)P_hv\|
\leq
C t_n^{-\frac\mu2}e^{-\frac{\lambda_1 t_n}2}\|v\|
\leq
C\min\{t_n^{-\frac\mu2},t_n^{-2}\}.
\end{align}
Then the above estimate in combination with \eqref{lem:eq-spatial-regu-ENm} implies
\begin{align}
\|A_h(E_h(t_n)-E_{\tau,h}^n)P_hv\|
\leq
C\min\{t_n^{-1}, t_n^{-2}\}\|v\|.
\end{align}
By the discrete analogue of the Gagliardo-Nirenberg inequality
\[
\|v_h\|_{L^p(D)}
\leq
C\|A_hv_h\|^{\frac {(p-2)d}{4p}}\|v_h\|^{1-{\frac {(p-2)d}{4p}}},
\quad
p\geq2,
\]
we have
\begin{align}
\begin{split}
\|(E_h(t_n)-E_{\tau,h}^n)P_hv\|_{L^p(D)}
\leq&
\|A_h(E_h(t_n)-E_{\tau,h}^n)P_hv\|^{\frac {(p-2)d}{4p}}\|(E_h(t_n)-E_{\tau,h}^n)P_hv\|^{1-\frac {(p-2)d}{4p}}
\\
\leq
&
C \tau^{1+\frac d{2p}-\frac d4} \min\{t_n^{-1},t_n^{-2}\}\|v\|.
\end{split}
\end{align}
This completes the proof of the lemma. $\Box$

In the subsequent weak error analysis, we also need the following error estimate:
\begin{equation}
\label{lem:deterministic-error-in-H-1}
\|\Phi_{\tau,h}^n v\|_{-1}
\leq
C(h^{\mu}+\tau^{\frac\mu2})\|v\|_{-1+\mu},
\quad
\mu\in[0,2].
\end{equation}
%
In the case $\mu\in[1,2]$, we refer to \cite[Lemma 2.8]{Yan2005galerkin}. By interpolation theory, it suffices to validate \eqref{lem:deterministic-error-in-H-1} in the case $\mu=0$, which follows from \eqref{eq:relation-A-Ah} and the stability of $E_{\tau,h}^m$ and $E(t)$ in $H$.
%
\section{Uniform-in-time moment bounds of the fully discrete  schemes}

The objective of this section is to derive uniform-in-time moment bounds of the considered fully discrete scheme \eqref{eq:full-discretization}, which essentially relies on uniform moment bounds for the discretized version of the stochastic  convolution
$W_A(t)$, defined by
\begin{align}
W_{\tau,h}^m:=\sum_{j=1}^{m}E_{\tau,h}^{m+1-j}P_h\Delta W_j.
\end{align}

\begin{lemma}
\label{lem:bound-discrete-stochastic-convulution}
Let Assumptions \ref{assum:linear-operator-A}, \ref{assum:noise-term} be fulfilled with $\gamma\in \left[\frac{(2q^2-q-1)d}{2q(2q-1)},2\right]$.
For any $p>1$, there exist a positive constant $C(Q,p)$ such that
\begin{align}\label{lem:bound-discrete-stochastic-convolution}
\sup_{m\in \mathbb{N}}\|W_{\tau,h}^m\|_{L^p(\Omega;L^{2q(2q-1)}(D))}
+
\sup_{m\in \mathbb{N}}\|A_h^{\frac\gamma2}W_{\tau,h}^m\|_{L^p(\Omega; H)}
\leq
C(Q,p)<\infty.
\end{align}
Additionally, if $\gamma\in(\frac d2,2]$ for $d \in \{1, 2, 3\}$ or $\gamma\in (0,\frac12)$ with $Q=I$ for $d =1$,
then there exists a constant $C=C(Q,p)$ such that
\begin{align}\label{lemeq:V-bound-discrete-stoch-conv}
\sup_{m\in \mathbb{N}}\|W_{\tau,h}^m\|_{L^p(\Omega; V)}
\leq
C(Q,p)<\infty.
\end{align}
\end{lemma}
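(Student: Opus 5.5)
The plan is to reduce both bounds to the $H$-norm of a discrete fractional power of $W_{\tau,h}^m$, and to control that by the Itô isometry. Since $W_{\tau,h}^m$ is a Gaussian $V_h$-valued random variable, all $L^p(\Omega)$ norms are equivalent to the $L^2(\Omega)$ norm, with constants depending only on $p$. So it suffices to bound second moments uniformly in $m$. This holds for the $H$-norm of $A_h^{\frac\gamma2}W_{\tau,h}^m$, and also for the $L^r(D)$-norm with $r=2q(2q-1)$ (via Gaussian hypercontractivity of the real-valued $W_{\tau,h}^m(x)$ pointwise and Fubini).

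\textbf{Step 1: the $\dot H^\gamma$ bound.} By independence of increments,
\begin{align*}
\mathbb{E}\|A_h^{\frac\gamma2}W_{\tau,h}^m\|^2
=\tau\sum_{j=1}^m\|A_h^{\frac\gamma2}E_{\tau,h}^{m+1-j}P_hQ^{\frac12}\|_{\mathcal{L}_2(H)}^2
=\tau\sum_{i=1}^{m}\|A_h^{\frac12}E_{\tau,h}^{i}P_hA^{\frac{1-\gamma}{2}}\,A^{\frac{\gamma-1}2}Q^{\frac12}\|_{\mathcal{L}_2(H)}^2 .
\end{align*}
I will write $A_h^{\frac12}E^i_{\tau,h}P_hA^{\frac{1-\gamma}2}=A_h^{\frac12}E^i_{\tau,h}A_h^{\frac{1-\gamma}{2}}\cdot\big(A_h^{-\frac{1-\gamma}2}P_hA^{\frac{1-\gamma}2}\big)$. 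The last factor is bounded by \eqref{eq:Ah-A-bound}, since $-(1-\gamma)\in[-1,1]$; for $\gamma>1$ the adjoint form there is used. The remaining factor is handled spectrally. Summing the squared eigenvalue contributions $\tau\sum_{i\ge1}\lambda_{j,h}(1+\tau\lambda_{j,h})^{-2i}\le \tfrac12$ (geometric series), uniformly in $m$, $h$, $\tau$ and $j$, gives
\[
\mathbb{E}\|A_h^{\frac\gamma2}W_{\tau,h}^m\|^2\le C\|A^{\frac{\gamma-1}2}Q^{\frac12}\|_{\mathcal{L}_2(H)}^2 .
\]
This is the discrete analogue of \eqref{III-spatio-temporal-S(t)} with infinite horizon, which avoids the $t_{m-n}^{(1-\varrho)/2}$ growth in \eqref{lem:eq-spatial-regu-ENm-sum-II}. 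Moving the operator across the Hilbert--Schmidt norm uses the ideal property of $\mathcal{L}_2(H)$.

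\textbf{Step 2: the $L^{2q(2q-1)}$ and $V$ bounds.} With $r=2q(2q-1)$, \eqref{eq:relation-Lp-Ah-relatoin} gives $\|v_h\|_{L^r(D)}\le C\|A_h^{\frac{d(r-2)}{4r}}v_h\|$. Now $\frac{d(r-2)}{2r}=\frac{d(2q^2-q-1)}{2q(2q-1)}\le\gamma$ is exactly the assumed lower bound on $\gamma$. Since $\lambda_{1,h}\ge c>0$ uniformly in $h$, the lower power is dominated by $A_h^{\gamma/2}$, and Step 1 concludes. For \eqref{lemeq:V-bound-discrete-stoch-conv} with $\gamma>\frac d2$, apply \eqref{eq:V-norm-control-by-Ah-norm} with $\kappa\in(\frac d2,\gamma]$ and again Step 1.

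\textbf{Main obstacle.} The hard case is $d=1$, $Q=I$, $\gamma<\frac12$: the $V$-norm is then not controlled by any admissible $A_h^{\kappa/2}$. Here I would use a Kolmogorov/Garsia-type argument on the Gaussian field $x\mapsto W^m_{\tau,h}(x)$. Writing $W^m_{\tau,h}(x)-W^m_{\tau,h}(y)=\langle W^m_{\tau,h},\delta_x-\delta_y\rangle$, the variance is
\[
\tau\sum_i\|E_{\tau,h}^iP_h(\delta_x-\delta_y)\|^2\le C\|A_h^{-\frac12}P_h(\delta_x-\delta_y)\|^2\le C|x-y|^{\beta}
\]
for some $\beta>0$, uniformly in $h$; in 1D, $A^{-1/2}\delta_x$ is H\"older in $x$, and the operator comparison is \eqref{eq:relation-A-Ah}. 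A pointwise variance bound $\sup_x\mathbb{E}|W^m_{\tau,h}(x)|^2\le C$ follows similarly, using $\|A^{-1/2}\delta_x\|<\infty$ in $d=1$. Gaussian moment equivalence and the Kolmogorov continuity criterion then bound $\mathbb{E}\|W^m_{\tau,h}\|_V^p$ uniformly in $m$. Making the uniform-in-$h$ H\"older estimate rigorous for the discrete projection of the Dirac differences, e.g.\ via \eqref{eq:L_2-L_1} with $\kappa=1$ for $d=1$, is the step I expect to require the most care.
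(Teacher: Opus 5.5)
Your proposal is correct. For \eqref{lem:bound-discrete-stochastic-convolution} and for the $V$-bound with $\gamma>\frac d2$ it follows the paper: It\^o isometry/Burkholder, a square-function bound for $E^i_{\tau,h}$, the comparison \eqref{eq:Ah-A-bound}, then \eqref{eq:relation-Lp-Ah-relatoin} resp.\ \eqref{eq:V-norm-control-by-Ah-norm}. Your explicit sum $\tau\sum_{i\ge1}\lambda(1+\tau\lambda)^{-2i}=(2+\tau\lambda)^{-1}\le\frac12$ is what actually gives uniformity in $m$. The paper's citation of \eqref{lem:eq-spatial-regu-ENm-sum-II} with $\varrho=\gamma$, taken literally, carries a factor $t_m^{(1-\gamma)/2}$; one needs $\varrho=1$ after moving $A_h^{\frac{\gamma-1}2}$ onto the noise, which is what you do.

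Do fix the exponents in your display. The correct splitting is $A_h^{\frac\gamma2}E^i_{\tau,h}P_hQ^{\frac12}=A_h^{\frac12}E^i_{\tau,h}\big(A_h^{\frac{\gamma-1}2}P_hA^{\frac{1-\gamma}2}\big)A^{\frac{\gamma-1}2}Q^{\frac12}$. With the leftover factor $A_h^{\frac12}E^i_{\tau,h}A_h^{\frac{1-\gamma}2}$ that you wrote, the sum would be $\tau\sum_{i\ge1}\lambda^{2-\gamma}(1+\tau\lambda)^{-2i}=\lambda^{1-\gamma}/(2+\tau\lambda)$. That is not uniformly bounded when $\gamma<1$.

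The genuinely different part is the case $d=1$, $Q=I$. The paper's route is:
\begin{itemize}
\item it takes $D=[0,L]$ with a uniform mesh;
\item it uses the explicit discrete eigenpair bounds of \cite{waslsh2005galerkin}: $\lambda_{l,h}\sim l^2$, $\|e_{l,h}\|_{L^\infty(D)}\le C$, Lipschitz constant $\le Cl$;
\item it interpolates $|e_{l,h}(x)-e_{l,h}(y)|^2\le C(l|x-y|)^{4/5}$ to get $\mathbb{E}|W^m_{\tau,h}(x)-W^m_{\tau,h}(y)|^2\le C|x-y|^{4/5}$;
\item it concludes with $W^{1/5,p}\subset V$.
\end{itemize}

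Your duality route needs no spectral information on $A_h$, and the step you flag as delicate is immediate. For $v\in H^{-1}$,
\[
\|A_h^{-\frac12}P_hv\|=\sup_{0\neq w_h\in V_h}\frac{\langle v,w_h\rangle}{\|\nabla w_h\|}\le\|A^{-\frac12}v\|,
\]
which is \eqref{eq:relation-A-Ah} with $\mu=-1$ and constant one. Also $|w(x)-w(y)|\le|x-y|^{\frac12}\|w'\|$ for $w\in H_0^1(D)$. Hence the increment variance is at most $\frac12|x-y|$, and $\sup_x\mathbb{E}|W^m_{\tau,h}(x)|^2\le\frac12\sup_x\|A^{-\frac12}\delta_x\|^2<\infty$. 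Both bounds hold uniformly in $m,\tau,h$ and on any mesh.

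This gives a better H\"older exponent and removes the uniform-mesh hypothesis. Both arguments are tied to $d=1$: yours through $\delta_x\in H^{-1}$, the paper's through the explicit 1D discrete eigenfunctions.

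Drop your fallback via \eqref{eq:L_2-L_1} with $\kappa=1$. It only bounds $\|A_h^{-\frac12}P_h(\delta_x-\delta_y)\|$ by a constant and yields no positive power of $|x-y|$, which the Kolmogorov criterion requires.
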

The proof is provided in the appendix.
%
%
%
A slight modification of the proof of \cite[Lemma 4.4]{jiang2025uniform} gives the following lemma on the uniform-in-time moment  bound of the fully discretization \eqref{eq:full-discretization} in $L^2(D)$.
\begin{lemma}
\label{lem:uniform-moment-L2}
(Uniform-in-time moment bounds in $L^2(D)$) Let Assumptions \ref{assum:linear-operator-A}-\ref{assum:intial-value-data} be fulfilled with $\gamma\in\big[\tfrac{(2q^2-q-1)d}{2q(2q-1)},2\big]$ and $\max\{\alpha\theta,\frac12\alpha\rho\}<1+\tfrac d{4q(2q-1)}-\tfrac d4$.
For any $p\geq 1$, there exists a constant $C=C(X_0,Q,p,q)$ such that
\begin{align}
\sup_{m\in \mathbb{N}_0}\|X_{\tau,h}^m\|_{L^p(\Omega; H)}
\leq C(X_0,Q,p,q)<\infty.
\end{align}
\end{lemma}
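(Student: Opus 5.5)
We will prove the $L^2(D)$ bound by splitting off the discrete stochastic convolution and running an energy argument on the remainder. Set $Y^m:=X_{\tau,h}^m-W_{\tau,h}^m$, so that $Y^0=P_hX_0$. Since $W_{\tau,h}^m=\sum_{j=1}^m E_{\tau,h}^{m+1-j}P_h\Delta W_j$ satisfies the recursion $(I+\tau A_h)W_{\tau,h}^m=W_{\tau,h}^{m-1}+(I+\tau A_h)^{-1}P_h\Delta W_m$, we work with the slightly modified convolution $\widetilde W^m$ solving $\widetilde W^m-\widetilde W^{m-1}+\tau A_h\widetilde W^m=P_h\Delta W_m$. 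Its uniform moment bound in $L^{2q(2q-1)}(D)$ follows exactly as in Lemma~\ref{lem:bound-discrete-stochastic-convulution}, since it differs only by one extra factor $(I+\tau A_h)$, which is $L^r$-stable by the same argument. Then $Y^m:=X^m_{\tau,h}-\widetilde W^m$ solves the random, noise-free scheme
\begin{align*}
Y^m-Y^{m-1}+\tau A_hY^m=\tau P_hF_{\tau,h}(Y^{m-1}+\widetilde W^{m-1}).
\end{align*}

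Next we test this equation with $2Y^m$. Using $2\langle a-b,a\rangle=\|a\|^2-\|b\|^2+\|a-b\|^2$ and $\langle A_hY^m,Y^m\rangle\ge\lambda_{1,h}\|Y^m\|^2\ge c\|Y^m\|^2$ (with $\lambda_{1,h}\ge\lambda_1$ by min–max), we write $Y^m=Y^{m-1}+(Y^m-Y^{m-1})$ in the nonlinear term. Young's inequality, $2\tau\langle P_hF_{\tau,h},Y^m-Y^{m-1}\rangle\le\|Y^m-Y^{m-1}\|^2+\tau^2\|F_{\tau,h}\|^2$, then gives
\begin{align*}
(1+2c\tau)\|Y^m\|^2\le\|Y^{m-1}\|^2+\tau\int_D\Big(2Y^{m-1}f_{\tau,h}(U)+\tau|f_{\tau,h}(U)|^2\Big)\,\dd x,
\end{align*}
where $U=Y^{m-1}+\widetilde W^{m-1}$. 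We split $2Y^{m-1}f_{\tau,h}(U)=2(U-\widetilde W^{m-1})f_{\tau,h}(U)$ and apply \eqref{asum:condition-ftauh-I} with $u=U$, $v=-\widetilde W^{m-1}$. This is the key step where the taming enters: the $\tau|f_{\tau,h}|^2$ term is absorbed, leaving $\widetilde c_1(1+\|\widetilde W^{m-1}\|_{L^{2q}}^{2q})-\widetilde c_0\|U\|^2$. Dropping the negative term gives
\begin{align*}
\|Y^m\|^2\le(1+2c\tau)^{-1}\|Y^{m-1}\|^2+C\tau\big(1+\|\widetilde W^{m-1}\|_{L^{2q}(D)}^{2q}\big).
\end{align*}

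Iterating, $\|Y^m\|^2\le(1+2c\tau)^{-m}\|P_hX_0\|^2+C\tau\sum_{j<m}(1+2c\tau)^{-(m-j)}(1+\|\widetilde W^j\|^{2q}_{L^{2q}})$. The geometric sum $\tau\sum_k(1+2c\tau)^{-k}\le C$ is bounded uniformly in $\tau\le\tau^*$. Taking $L^{p/2}(\Omega)$ norms and applying Minkowski's inequality to the sum, together with $\|\widetilde W^j\|_{L^{2q}}\le C\|\widetilde W^j\|_{L^{2q(2q-1)}}$ and the uniform moment bound of Lemma~\ref{lem:bound-discrete-stochastic-convulution}, gives $\sup_m\|Y^m\|_{L^p(\Omega;H)}<\infty$. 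Adding back $\widetilde W^m$, whose $H$-norm is controlled by its $L^{2q(2q-1)}$-norm, concludes the proof.

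The main obstacle is the validity of \eqref{asum:condition-ftauh-I}. It requires $\tau^{1-\alpha\theta}$-type control of $\tau|f_{\tau,h}|^2$ by the $|u|^{2q}$ dissipation, and this is exactly where the parameter restriction on $\alpha\theta,\alpha\rho$ enters; it is taken from Lemma~\ref{assum:mofification-f}. A secondary technical point is justifying the convolution substitution, that is, the moment bound for $\widetilde W^m$ in place of $W^m_{\tau,h}$.
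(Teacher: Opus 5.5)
Your argument is correct and follows essentially the route the paper takes (the paper defers to a slight modification of \cite[Lemma 4.4]{jiang2025uniform}): subtract the discrete stochastic convolution, arrange the energy identity so that exactly the combination $2\langle Y^{m-1},F_{\tau,h}(U)\rangle+\tau\|F_{\tau,h}(U)\|^2$ appears, apply \eqref{asum:condition-ftauh-I}, and sum the geometric series. The one slip is your claim that $W_{\tau,h}^m$ obeys a different recursion: its last summand is $E_{\tau,h}^{1}P_h\Delta W_m$, so it already satisfies $W_{\tau,h}^m-W_{\tau,h}^{m-1}+\tau A_hW_{\tau,h}^m=P_h\Delta W_m$, your $\widetilde W^m$ is exactly $W_{\tau,h}^m$, and Lemma~\ref{lem:bound-discrete-stochastic-convulution} applies directly; this matters, because the justification you gave would fail, since $I+\tau A_h$ is not uniformly $L^r$-stable (its norm grows like $1+\tau h^{-2}$).
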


\begin{theorem}\label{th:uniform-moment-bound}
Let all conditions in Lemma \ref{lem:uniform-moment-L2} hold true for $\gamma\in\left[\max\left\{\frac{d(2q-2+\alpha(2q-3))}{2(2q-2+\alpha(2q-1))},\tfrac {(2q^2-q-1)d}{2q(2q-1)}\right\},2\right]$ and let $X_{\tau,h}^m$ be produced by the fully discretization scheme \eqref{eq:full-discretization}.
Then there exists a positive constant $C=C(X_0,Q,p,q)$ such that, for any $p\geq 1$,
\begin{align}\label{th:spatal-regu-full-solu}
\sup_{m\in \mathbb{N}_0}\big(\|X_{\tau,h}^m\|_{L^p(\Omega; L^{2q(2q-1)}(D))}
+
\|A_h^{\frac \gamma 2}X_{\tau,h}^m\|_{L^p(\Omega; H)}\big)
\leq C<\infty,
\end{align}
and, for $\beta\in [0,\gamma]$
\begin{align}
\label{th:temporal-regu-full-solu}
\big \|A_h^{\frac\beta2}(X_{\tau,h}^m-X_{\tau,h}^{n}) \big\|_{L^p(\Omega; H)}
\leq
C|t_m-t_n|^{\min\{\frac12,\frac{\gamma-\beta}2\}}.
\end{align}
In addition, if $\gamma\in(\frac d2,2]$ or $\gamma\in\left(0,\frac12\right]$ with $Q=I$ in dimension one, then
\begin{align}\label{them:bound-numerical-V}
\sup_{m\in \mathbb{N}_0}\|X_{\tau,h}^m\|_{L^p(\Omega; V)}
\leq C<\infty.
\end{align}
\end{theorem}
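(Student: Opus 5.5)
The plan is to split the numerical solution as $X_{\tau,h}^m=Y_{\tau,h}^m+W_{\tau,h}^m$, with $Y_{\tau,h}^m:=E_{\tau,h}^mP_hX_0+\tau\sum_{j=0}^{m-1}E_{\tau,h}^{m-j}P_hF_{\tau,h}(X_{\tau,h}^j)$. By Lemma \ref{lem:bound-discrete-stochastic-convulution} the stochastic part is already uniformly bounded in $L^{2q(2q-1)}(D)$ and in $\dot H^\gamma_h$. So everything reduces to bounding $Y_{\tau,h}^m$ uniformly in $m$ in $L^{r}(D)$ with $r=2q(2q-1)$.

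\textbf{Step 1: the $L^r$ bound on $Y$.} I cannot use a discrete $L^r$-contraction of $E_{\tau,h}^m$ directly, since that is not available for finite elements. Instead I write $E_{\tau,h}^{m-j}P_h=E(t_{m-j})-\Phi^{m-j}_{\tau,h}$ and use two facts:
\begin{itemize}
\item the contractivity of $E(t)$ in $L^{r}(D)$;
\item Theorem \ref{thm:infty-determing-error} with $p=r$.
\end{itemize}
The exact-semigroup part is handled as in the proof of Theorem \ref{them:property-solution}. I bound $\|E(t_{m-j})F_{\tau,h}(X^j)\|_{L^r}$ by the smoothing $L^{1}\to L^r$ (or $L^{s}\to L^r$), which gives a factor $t_{m-j}^{-\kappa}e^{-ct_{m-j}}$ with $\kappa<1$. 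This is summable in $\tau$ uniformly in $m$. The cost is $\|F_{\tau,h}(X^j)\|_{L^{1}}$-type norms, and these are controlled by Lemma \ref{lem:uniform-moment-L2} only up to power $2$. I therefore combine \eqref{asum:condition-ftauh-Ii} with the interpolation between $L^2$ and $L^r$ described in Step 2.

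The error part $\tau\sum_j\Phi^{m-j}_{\tau,h}P_hF_{\tau,h}(X^j)$ is bounded by
\[
C(h^{2+\frac dr-\frac d2}+\tau^{1+\frac d{2r}-\frac d4})\,\tau\sum_j\min\{t_{m-j}^{-1},t_{m-j}^{-2}\}\,\|F_{\tau,h}(X^j)\|.
\]
Here the taming is used through \eqref{asum:condition-ftauh-Iii}, or an interpolated version $|f_{\tau,h}(u)|\le C(1+(\tau^\theta+h^\rho)^{-\alpha}|u|)$. This turns the prefactor into a quantity bounded by $1$, precisely because $\max\{\alpha\theta,\tfrac12\alpha\rho\}<1+\frac d{2r}-\frac d4$. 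The singularity $t^{-1}$ at $j=m-1$ gives only $\tau\cdot\tau^{-1}$, so the $j=m-1$ term must be handled separately. One option is to use the $\dot H_h$-regularity bound \eqref{lem:eq-spatial-regu-ENm} together with \eqref{eq:relation-Lp-Ah-relatoin}, writing $\|E_{\tau,h}P_h g\|_{L^r}\le C\|A_h^{d(r-2)/(4r)}E_{\tau,h}P_hg\|\le C\tau^{-d(r-2)/(4r)}\|g\|$; the taming factor then absorbs this power.

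\textbf{Step 2: closing the bound.} The resulting estimate for $\sup_m\mathbb E\|Y^m\|_{L^r}^p$ will be of the form (uniform constant) $+\,\epsilon\sup_j\mathbb E\|X^j\|_{L^r}^p$. Alternatively, I obtain a sublinear power via the interpolation $\|u\|_{L^{s}}\le\|u\|^{1-\vartheta}\|u\|_{L^r}^{\vartheta}$ and use the $L^2$ bound of Lemma \ref{lem:uniform-moment-L2}. The restriction $\gamma\ge \frac{d(2q-2+\alpha(2q-3))}{2(2q-2+\alpha(2q-1))}$ should be exactly what makes the exponent of $\|X\|_{L^r}$ strictly less than one after combining the taming (degree $\alpha$) with the smoothing index. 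Young's inequality then closes the estimate with constants independent of $m$. To avoid circularity I work up to a finite horizon $M$, taking suprema over $m\le M$; these are finite for fixed $\tau,h$. I then let $M\to\infty$.

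\textbf{Step 3: the $\dot H^\gamma_h$ bound, temporal regularity, and the $V$ bound.} I bound $\|A_h^{\gamma/2}Y^m\|$ using \eqref{lem:eq-spatial-regu-ENm}, with decay $\min\{t^{-\gamma/2},t^{-2}\}$ for $\gamma<2$. For this I need $\|F_{\tau,h}(X^j)\|\le C(1+\|X^j\|_{L^{2(2q-1)}}^{2q-1})$, which is now uniformly bounded by Step 2 since $2(2q-1)\le r$. The initial term uses \eqref{eq:Ah-A-bound} with $X_0\in H^\gamma$. For $\gamma=2$ I use \eqref{lem:eq-spatial-regu-ENm-sum} together with the temporal increments, as is standard. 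The temporal estimate \eqref{th:temporal-regu-full-solu} follows from the difference of the mild forms, using \eqref{lem:eq-temporal-regu-ENm} on $(I-E^{m-n}_{\tau,h})$ applied to $A_h^{\beta/2}X^n$, and \eqref{lem:eq-spatial-regu-ENm-sum-II}–\eqref{lem:eq-spatial-regu-ENm-sum} on the new drift and noise pieces; the noise piece gives the capped exponent $\tfrac12$. Finally, \eqref{them:bound-numerical-V} follows from \eqref{eq:V-norm-control-by-Ah-norm} with $\kappa\in(\frac d2,\gamma)$ applied to $Y$, together with \eqref{lemeq:V-bound-discrete-stoch-conv} for the noise. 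In the case $Q=I$, $d=1$, $\gamma<\frac12$, the $Y$ part is smoother than $\gamma$: using the Step 2 bound, the drift part lies in $\dot H_h^{\kappa}$ for $\kappa<2$.

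\textbf{Main obstacle.} The hardest part is Step 1/2: balancing the blow-up of $\Phi_{\tau,h}$ near $t=0$ and the inverse-type factors against the taming factor $(\tau^\theta+h^\rho)^{-\alpha}$, while keeping the nonlinear exponent sublinear in $\|X\|_{L^r}$. I would first try to get the contraction-plus-small-perturbation structure term by term, and only then fall back on the interpolation/Young argument.
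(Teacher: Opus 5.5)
Your decomposition is essentially the paper's. Splitting $E^{m-j}_{\tau,h}P_h=E(t_{m-j})-\Phi^{m-j}_{\tau,h}$ inside your $Y$ and adding $W^m_{\tau,h}$ gives, up to where you place the initial datum, the paper's two pieces. One is the exact-semigroup part $\widetilde Y^m_{\tau,h}$. The other is $\mathcal R^m_{\tau,h}$, which collects the $\Phi$-part, the discrete stochastic convolution and $(E^m_{\tau,h}P_h-E(t_m))X_0$.

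\textbf{Main gap: the exact-semigroup part.} You propose to bound $\tau\sum_{j<m}E(t_{m-j})F_{\tau,h}(X^j_{\tau,h})$ term by term via $L^s\to L^r$ smoothing, then close with Lemma \ref{lem:uniform-moment-L2} and interpolation. This cannot work, because the resulting power of $\|X\|_{L^r(D)}$ is superlinear.
\begin{itemize}
\item Already for $q=2$ and the most favourable choice $s=1$, $\|F(X)\|_{L^1(D)}\le C(1+\|X\|^3_{L^3(D)})$ and $\|X\|_{L^3(D)}\le\|X\|^{3/5}\|X\|^{2/5}_{L^{12}(D)}$. So the bound contains $\|X\|^{6/5}_{L^{12}(D)}$.
\item In general the power is $\frac{q(2q-1)(2q-3)}{(2q+1)(q-1)}>1$, and larger $s$ only makes it worse.
\item For $d=3$, the $L^1\to L^{12}$ smoothing singularity $t^{-11/8}$ is not even integrable.
\item Neither Young's inequality nor an ``$\epsilon\sup_j$'' absorption closes such an estimate.
\item Taming does not rescue it. \eqref{asum:condition-ftauh-Iii} gives linear growth only with the factor $(\tau^\theta+h^\rho)^{-\alpha}$, and in the exact part there is no small error factor to cancel it.
\end{itemize}
Contractivity of $E(t)$ in $L^r(D)$, applied to the convolution sum, does not give a uniform bound by itself either.

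\textbf{Missing ingredient: dissipativity.} Your plan never uses \eqref{asum:condition-ftauh-I}. The paper observes that $\widetilde Y^m_{\tau,h}:=X^m_{\tau,h}-\mathcal R^m_{\tau,h}$ satisfies the one-step recursion
\[
\widetilde Y^m_{\tau,h}=E(\tau)\big(\widetilde Y^{m-1}_{\tau,h}+\tau F_{\tau,h}(\widetilde Y^{m-1}_{\tau,h}+\mathcal R^{m-1}_{\tau,h})\big).
\]
Apply \eqref{asum:condition-ftauh-I} pointwise in $x$ with $u=X^{m-1}_{\tau,h}(x)$ and $v=-\mathcal R^{m-1}_{\tau,h}(x)$. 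This gives
\[
|\widetilde Y^{m-1}_{\tau,h}+\tau f_{\tau,h}(X^{m-1}_{\tau,h})|^2\le\big(1-\tfrac{\widetilde c_0}{2}\tau\big)|\widetilde Y^{m-1}_{\tau,h}|^2+C\tau\big(1+|\mathcal R^{m-1}_{\tau,h}|^{2q}\big).
\]
Combined with the $L^{4q-2}(D)$-contractivity of $E(\tau)$, this yields a uniform $L^{4q-2}(D)$ bound on $\widetilde Y^m_{\tau,h}$ once $\sup_m\|\mathcal R^m_{\tau,h}\|_{L^{2q(2q-1)}(D)}<\infty$. This is where $r=2q(2q-1)=q(4q-2)$ comes from.

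So the exact part is controlled only in $L^{4q-2}(D)$. The $L^{2q(2q-1)}(D)$ bound on $X^m_{\tau,h}$ is then recovered from the bound on $\|A_h^{\gamma/2}X^m_{\tau,h}\|$ via \eqref{eq:relation-Lp-Ah-relatoin}, since $\gamma\ge\frac{(2q^2-q-1)d}{2q(2q-1)}$.

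\textbf{Smaller flaw in the $\Phi$-part.} Your treatment of the $\Phi$-part is the right idea: this is where taming, the $L^2$ bound and Theorem \ref{thm:infty-determing-error} enter. But $\tau\sum_j\min\{t_{m-j}^{-1},t_{m-j}^{-2}\}\sim\log(1/\tau)$, so separating only the $j=m-1$ term is not enough. Since no stepsize ratio is imposed, the product
\[
\big(h^{2+\frac dr-\frac d2}+\tau^{1+\frac d{2r}-\frac d4}\big)\log(1/\tau)\,(\tau^\theta+h^\rho)^{-\alpha}
\]
is unbounded (fix $h$ and let $\tau\to0$). The paper avoids this by interpolating between \eqref{lem:error-determinstic-error-estimate} and the plain smoothing bound $C\min\{t^{-\frac{(2q^2-q-1)d}{4q(2q-1)}},t^{-2}\}$. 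It uses an exponent $\overline\delta\in(0,1)$ with $\overline\delta(1+\frac d{2r}-\frac d4)\ge\max\{\alpha\theta,\frac12\alpha\rho\}$, which makes the time singularity integrable.
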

{\it Proof of Theorem \ref{th:uniform-moment-bound}.}
To show this theorem, we introduce two auxiliary processes,
\begin{align}
\mathcal{R}_{\tau,h}^m
:=\tau\sum_{l=0}^{m-1}\big(E_{\tau,h}^{m-l}P_h-E(t_m-t_l)\big)F_{\tau,h}(X_{\tau,h}^l)
+
W_{\tau,h}^m
+
E_{
\tau,h}^mP_hX_0-E(t_m)X_0,
\end{align}
and
\begin{align}
\widetilde{Y}_{\tau,h}^m:=X_{\tau,h}^m-\mathcal{R}_{\tau,h}^m.
\end{align}
Recalling \eqref{eq:fully-discrete-problem}, we arrive at, for all $m\in \mathbb{N}_0$
\begin{align}
\widetilde{Y}_{\tau,h}^m
=
E(t_m)\widetilde{Y}_{\tau,h}^0+\tau\sum_{l=0}^{m-1}E(t_m-t_l)F_{\tau,h}(X_{\tau,h}^l)
=
E(\tau)\widetilde{Y}_{\tau,h}^{m-1}
+
\tau E(\tau)F_{\tau,h}(X_{\tau,h}^{m-1}).
\end{align}
By the proof of \cite[Theorem 4.3]{jiang2025uniform}, it suffices to show
$\sup_{m\in \mathbb{N}_0}\|\mathcal{R}_{\tau,h}^m\|_{L^p(\Omega;L^{2q(2q-1)}(D))}<\infty$.
Note first that we have, by \eqref{eq:property-E},  \eqref{lem:eq-spatial-regu-ENm}, \eqref{eq:relation-Lp-Ah-relatoin}, \eqref{lem:interpolation-property}
with $s=2q(2q-1)$, $r=2$ and $\beta=\frac{(2q^2-q-1)d}{2q(2q-1)}$
\begin{align}
\begin{split}
\|E_{\tau,h}^{m+1}P_hv-E(t_{m+1})v\|_{L^{2q(2q-1)}(D)}
\leq
&
C  \Big( \Big \|A_h^{\frac{(2q^2-q-1)d}{4q(2q-1)}}E_{\tau,h}^{m+1}P_hv \Big \|
+
\Big \|A^{\frac{(2q^2-q-1)d}{4q(2q-1)}}E(t_{m+1})v \Big\| \Big )
\\
\leq
&
C\min\Big\{t_{m+1}^{-2},
t_{m+1}^{-\frac{(2q^2-q-1)d}{4q(2q-1)}}\Big\}\left\|v\right\|,
\end{split}
\end{align}
or
\begin{align}
\begin{split}
\|E_{\tau,h}^{m+1}P_hv-E(t_{m+1})v\|_{L^{2q(2q-1)}(D)}
\leq
C
\Big \| A^{\frac{(2q^2-q-1)d}{4q(2q-1)}}v \Big \|.
\end{split}
\end{align}
Since $\max\{\alpha\theta,\frac12\alpha\rho\}<1+\frac d{4q(2q-1)}-\frac d4$, there exists a constant $\overline{\delta}\in (0,1)$ such that  $(1+\frac d{4q(2q-1)}-\frac d4)\overline{\delta}\geq \max\{\alpha\theta,\frac12\alpha\rho\}$.
Therefore, the above two estimates together with \eqref{asum:condition-ftauh-Iii} and \eqref{lem:error-determinstic-error-estimate} for
$p=2q(2q-1)$ imply
\begin{align}\label{eq:bound-R}
\begin{split}
&
\sup_{m\in \mathbb{N}_0}\|\mathcal{R}_{\tau,h}^{m+1}\|_{L^p(\Omega;L^{2q(2q-1)}(D))}
\\
\leq &
\sup_{m\in \mathbb{N}_0}\tau\sum_{l=0}^m\left\|\big(E_{\tau,h}^{m+1-l}P_h
-
E(t_{m+1}-t_l)\big)F_{\tau,h}(X_{\tau,h}^l)\right\|_{L^p(\Omega;L^{2q(2q-1)}(D))}
\\
&
+
\sup_{m\in \mathbb{N}_0}\left\|W_{\tau,h}^{m+1}\right\|_{L^p(\Omega;L^{2q(2q-1)}(D))}
+
\sup_{m\in \mathbb{N}_0}\left\|(E(t_{m+1})
-
E_{\tau,h}^{m+1}P_h)X_0\right\|_{L^p(\Omega;L^{2q(2q-1)}(D))}
\\
\leq &
\sup_{m\in \mathbb{N}_0}\tau\sum_{l=0}^m\left\|\big(E_{\tau,h}^{m+1-l}P_h
-
E(t_{m+1}-t_l)\big)F_{\tau,h}(X_{\tau,h}^l)\right\|^{\overline{\delta}}_{L^p(\Omega;L^{2q(2q-1)}(D))}
\\
&
\cdot
 \left\|\big(E_{\tau,h}^{m+1-l}P_h
-
E(t_{m+1}-t_l)\big)F_{\tau,h}(X_{\tau,h}^l)\right\|^{1-\overline{\delta}}_{L^p(\Omega;L^{2q(2q-1)}(D))}
+
C(Q,p)
\\
\leq
&
C\sup_{m\in \mathbb{N}_0}\tau\sum_{l=0}^m
\left(h^{2+\frac {d}{2q(2q-1)}-\frac d2}+\tau^{1+\frac d{4q(2q-1)}-\frac d4}\right)^{\overline{\delta}} \min\{t_{m+1-l}^{-\overline{\delta}},t_{m+1-l}^{-2\overline{\delta}}\}
\min\left\{t_{m+1-l}^{-\frac{(1-\overline{\delta})(2q^2-q-1)d}{4q(2q-1)}},t_{m+1-l}^{-2(1-\overline{\delta})}\right\}
\\
&
\cdot
\big(1+(1+(h^\rho+\tau^\theta)^{-\alpha})\|X_{\tau,h}^l\|_{L^p(\Omega;H)}\big)
+
C(Q,p)
\\
\leq
&
C\sup_{m\in \mathbb{N}_0}\tau\sum_{l=0}^m
 \min\left\{t_{m+1-l}^{-\overline{\delta}-\frac{(1-\overline{\delta})(2q^2-q-1)d}{4q(2q-1)}},t_{m+1-l}^{-2}\right\}
\sup_{m\in \mathbb{N}^{+}}\|X_{\tau,h}^m\|_{L^p(\Omega;H)}
+
C(Q,p)
<\infty,
\end{split}
\end{align}
where in the last inequality, we used the fact
\[
\sup_{m\in \mathbb{N}_0 }\tau\sum_{l=0}^m
\min
\Big\{
t_{m+1-l}^{-\overline{\delta}-\frac{(1-\overline{\delta})(2q^2-q-1)d}{4q(2q-1)}},t_{m+1-l}^{-2}
\Big\}
<
\infty,
\]
as $\overline{\delta}+\frac{(1-\overline{\delta})(2q^2-q-1)d}{4q(2q-1)}<1$.
Following similar arguments as in the proof of \cite[(4.25)]{jiang2025uniform},  we deduce
\begin{align}
\sup_{m\in \mathbb{N}_0}\|\widetilde{Y}_{\tau,h}^m\|_{L^p(\Omega;L^{4q-2}(D))}
<\infty,
\end{align}
which together with \eqref{eq:bound-R} shows
\begin{align}
\sup_{m\in \mathbb{N}_0}\|X_{\tau,h}^m\|_{L^p(\Omega;L^{4q-2}(D))}
\leq
\sup_{m\in \mathbb{N}_0}\|\widetilde{Y}_{\tau,h}^m\|_{L^p(\Omega;L^{4q-2}(D))}
+
\sup_{m\in \mathbb{N}_0}\|\mathcal{R}_{\tau,h}^m\|_{L^p(\Omega;L^{4q-2}(D))}
<\infty.
\end{align}
This guarantees
\begin{align}\label{eq:bound-Fth-H}
\sup_{m\in \mathbb{N}_0}\|F_{\tau,h}(X_{\tau,h}^m)\|_{L^p(\Omega;H)}
\leq
\sup_{m\in \mathbb{N}_0}\|F(X_{\tau,h}^m)\|_{L^p(\Omega;H)}<\infty.
\end{align}
To show \eqref{th:spatal-regu-full-solu} and \eqref{th:temporal-regu-full-solu}, we consider two cases: $\gamma\in\big[\frac{(2q^2-q-2)d}{2q(2q-1)},\kappa\big]$ and $\gamma \in  (\kappa, 2]$,
where $\kappa\in (\frac d2,2)$.
For the former case $\gamma\in\big[\frac{(2q^2-q-2)d}{2q(2q-1)},\kappa\big]$, we have, by \eqref{eq:Ah-A-bound},
\eqref{lem:eq-spatial-regu-ENm}, \eqref{lem:bound-discrete-stochastic-convolution}
and \eqref{eq:bound-Fth-H}
\begin{align}\label{eq:spyia-bound-Xh}
\begin{split}
\sup_{m\in \mathbb{N}_0}\|A_h^{\frac\gamma2}X_{\tau,h}^m\|_{L^p(\Omega;H)}
\leq
&
\sup_{m\in \mathbb{N}_0}\left\|A_h^{\frac\gamma2}\Big(E_{\tau,h}^mP_hX_0
+
\tau\sum_{l=0}^{m-1}E_{\tau,h}^{m-l}P_hF_{\tau,h}(X_{\tau,h}^l)+W_{\tau,h}^m\Big)\right\|_{L^p(\Omega;H)}
\\
\leq
&
C(\|X_0\|_{L^p(\Omega;H^\gamma)}+C \sup_{m\in \mathbb{N}_0}\|F_{\tau,h}(X_{\tau,h}^m)\|_{L^p(\Omega;H)}\tau \sup_{m\in \mathbb{N}_0}\sum_{l=0}^{m-1}\min\{t_{m-l}^{-\frac\gamma2}, t_{m-l}^{-2}\}
\\
&
+
 \sup_{m\in \mathbb{N}_0}\|A_h^{\frac\gamma2}W_{\tau,h}^m\|_{L^p(\Omega;H)}
<\infty,
\end{split}
\end{align}
and by \eqref{eq:Ah-A-bound}, \eqref{lem:eq-spatial-regu-ENm}, \eqref{lem:eq-spatial-regu-ENm-sum-II}, \eqref{lem:eq-temporal-regu-ENm}, the Burkholder-Davis-Gundy type inequality
\begin{align}\label{eq:temporal-Xh}
\begin{split}
&\|A_h^{\frac\beta2}(X_{\tau,h}^m-X_{\tau,h}^n)\|_{L^p(\Omega;H)}
\\
\leq
&
\|A_h^{\frac\beta2}(I-E_{\tau,h}^{m-n})X_{\tau,h}^n\|_{L^p(\Omega;H)}
+
\sum_{i=n}^{m-1}\tau\|A_h^{\frac\beta2}E_{\tau,h}^{m-i}P_hF_{\tau,h}(X_{\tau,h}^{i-1})\|_{L^p(\Omega;H)}
\\
&+
\Big(\sum_{i=n}^{m-1}\tau\|A_h^{\frac\beta2}E_{\tau,h}^{m-i}P_hQ^{\frac12}\|_{\mathcal{L}_2(H)}^2\Big)^{\frac12}
\\
\leq
&
Ct_{m-n}^{\frac{\gamma-\beta}2}\|A_h^{\frac\gamma2}X_{\tau,h}^n\|_{L^2(\Omega;H)}
+
C\tau \sum_{i=n}^{m-1}\min\{t_{m-i}^{-\frac\beta2}, t^{-2}_{m-i}\}\sup_{m\in \mathbb{N}_0}\|F(X_{\tau,h}^m)\|_{L^p(\Omega;H)}
\\
&
+
Ct_{m-n}^{\frac{\min\{\gamma-\beta,1\}}2} \|A^{\frac{\gamma-1}2}Q^{\frac12}\|_{\mathcal{L}_2(H)}
\\
\leq
&
Ct_{m-n}^{\frac{\min\{\gamma-\beta,1\}}2},
\end{split}
\end{align}
for $\beta\in[0,\gamma]$, where in the last inequality we used the following estimate, for $\gamma\in \big[\frac{(2q^2-q-2)d}{2q(2q-1)},\kappa\big]$
\begin{align}
\tau \sum_{i=n}^{m-1}\min\{t_{m-i}^{-\frac\beta2}, t^{-2}_{m-i}\}
\leq
Ct_{m-n}^{\frac{\gamma-\beta}2}\tau \sum_{i=n}^{m-1}\min\left\{t_{m-i}^{-\frac\gamma2}, t^{-2-\frac{\gamma-\beta}2}_{m-i}\right\}
\leq
Ct_{m-n}^{\frac{\gamma-\beta}2}.
\end{align}
%
Next, let us focus on the other case $\gamma\in (\kappa,2]$. In this case, one can observe
\[\sup_{m\in \mathbb{N}_0}\|X^m_{\tau,h}\|_{L^{p}(\Omega;L^\infty(D))}
\leq
C\sup_{m\in \mathbb{N}_0}\|A_h^{\frac {\kappa}2}X^m_{\tau,h}\|_{L^{p}(\Omega;H)}<\infty,\]
 as already verified in the former case. Thus, employing \eqref{eq:V-norm-control-by-Ah-norm} and \eqref{eq:temporal-Xh} yields
 \begin{align}
 \begin{split}
 &\|P_h\big(F_{\tau,h}(X_{\tau,h}^{m-1})-F_{\tau,h}(X_{\tau,h}^{l-1})\big)\|_{L^p(\Omega;H)}
 \\
 \leq
 &
 \left\|
 \left(1+\|X_{\tau,h}^{m-1}\|_{L^\infty(D)}^{2q-2}+\|X_{\tau,h}^{l-1}\|_{L^\infty(D)}^{2q-2})\right)
 \|X_{\tau,h}^{m-1}-X_{\tau,h}^{l-1}\|\right\|_{L^p(\Omega;\mathbb{R})}
 \\
 \leq
 &
 C(t_m-t_l)^{\frac{\min\{\kappa,1\}}2}.
 \end{split}
 \end{align}
 This in combination with \eqref{lem:eq-spatial-regu-ENm}, \eqref{lem:eq-spatial-regu-ENm-sum} and \eqref{eq:bound-Fth-H}   implies, for $\beta\in (\kappa,2]$
\begin{align}
\begin{split}
&\left\|A_h^{\frac\beta2}\tau\sum_{l=n}^{m-1}E_{\tau,h}^{m-l}P_hF_{\tau,h}(X_{\tau,h}^{l-1})\right\|_{L^p(\Omega;H)}
\\
\leq
&
\left\|A_h^{\frac\beta2}\tau\sum_{l=n}^{m-1}E_{\tau,h}^{m-l}P_hF_{\tau,h}(X_{\tau,h}^{m-1})\right\|_{L^p(\Omega;H)}
\\
&
+
\tau\sum_{l=n}^{m-1}\|A_h^{\frac\beta2}E_{\tau,h}^{m-l}\|_{\mathcal{L}(H)}
\|F_{\tau,h}(X_{\tau,h}^{m-1})-F_{\tau,h}(X_{\tau,h}^{l-1})\|_{L^p(\Omega;H)}
\\
\leq
&
C(t_m-t_n)^{\frac{2-\beta}2} \sup_{m\in \mathbb{N}_0}\|F_{\tau,h}(X_{\tau,h}^l)\|_{L^p(\Omega;H)}
+
C\tau\sum_{l=n}^{m-1}\min\{t_{m-l}^{-\frac\beta2},t^{-2}_{m-l}\} (t_m-t_l)^{\frac{\min\{\kappa,1\}}2}
\\
\leq
&
C(t_m-t_n)^{\frac{2-\beta}2}
+
C(t_m-t_n)^{\frac{2-\beta}2}
\tau\sum_{l=n}^{m-1}\min\left\{t_{m-l}^{-1+\frac{\min\{\kappa,1\}}2},t^{-2-\frac{2-\beta}2+\frac{\min\{\kappa,1\}}2}_{m-l}\right\}
\\
\leq
&
C(t_m-t_n)^{\frac{2-\beta}2}.
\end{split}
\end{align}
Along the lines of the proof \eqref{eq:spyia-bound-Xh} and \eqref{eq:temporal-Xh}, we can show \eqref{th:spatal-regu-full-solu} and \eqref{th:temporal-regu-full-solu}.

Based on the estimates \eqref{th:spatal-regu-full-solu},  \eqref{th:temporal-regu-full-solu}
and \eqref{lemeq:V-bound-discrete-stoch-conv}£¬ we can follow the similar arguments of the proof of \eqref{eq:spyia-bound-Xh} to prove \eqref{them:bound-numerical-V}.
Hence, the proof of this theorem is complete. $\square $

As a direct consequence of Theorem \ref{th:uniform-moment-bound}, the following results hold.
\begin{proposition}\label{pro:pro:bound-Xh-f}
Let all conditions in Theorem \ref{th:uniform-moment-bound} hold true.
For any $p\geq 1$, there exists a constant $C=C(X_0,Q,p,q)$ such that
\begin{align}\label{pro:bound-Xh-f}
\begin{split}
&\sup_{m\in \mathbb{N}_0}(
\|F(X_{\tau,h}^m)\|_{L^p(\Omega; H)}
+
\sup_{m\in \mathbb{N}_0}\|f'(X_{\tau,h}^m)\|_{L^p(\Omega; L^{2q}(D))}
\\
&
+
\|f''(X_{\tau,h}^m)\|_{L^p(\Omega; L^{2q}(D))
}
+
\|(X_{\tau,h}^m)^{\frac{2q-2}\alpha}F(X_{\tau,h}^m)\|_{L^p(\Omega; L^1(D))}
\leq C<\infty.
\end{split}
\end{align}
\end{proposition}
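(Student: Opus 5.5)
The plan is to reduce every term in \eqref{pro:bound-Xh-f} to a uniform-in-time moment bound of $X_{\tau,h}^m$ in $L^{2q(2q-1)}(D)$, which Theorem \ref{th:uniform-moment-bound} provides through \eqref{th:spatal-regu-full-solu}. The tools are the pointwise polynomial growth bounds \eqref{eq:assumption-f} and \eqref{eq:definiton-II-f}, followed by H\"older's inequality in space and in $\Omega$. All constants will depend only on $p$, $q$, $\alpha$, $|D|$ and the constant $C$ of \eqref{th:spatal-regu-full-solu}. Since that bound holds for every $p\geq1$, moments of any order are available. This lets us replace $p$ by $p\cdot k$ whenever a power $k$ of the solution norm appears.

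\textbf{Step 1 (the terms $F$, $f'$, $f''$).} By \eqref{eq:assumption-f},
\[
\|F(X_{\tau,h}^m)\|\leq C\big(1+\|X_{\tau,h}^m\|_{L^{4q-2}(D)}^{2q-1}\big).
\]
Since $4q-2\leq 2q(2q-1)$ and $D$ is bounded, the embedding $L^{2q(2q-1)}(D)\subset L^{4q-2}(D)$ gives
\[
\|F(X_{\tau,h}^m)\|_{L^p(\Omega;H)}\leq C\Big(1+\|X_{\tau,h}^m\|^{2q-1}_{L^{p(2q-1)}(\Omega;L^{2q(2q-1)}(D))}\Big),
\]
which is uniformly bounded in $m$. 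For the derivatives, \eqref{eq:definiton-II-f} yields
\[
\|f'(X_{\tau,h}^m)\|_{L^{2q}(D)}+\|f''(X_{\tau,h}^m)\|_{L^{2q}(D)}\leq C\big(1+\|X_{\tau,h}^m\|^{2q-2}_{L^{2q(2q-2)}(D)}\big).
\]
Again $2q(2q-2)\leq 2q(2q-1)$, so the same argument closes these terms.

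\textbf{Step 2 (the weighted term).} Pointwise,
\[
|v|^{\frac{2q-2}\alpha}|f(v)|\leq C\big(1+|v|^{\frac{2q-2}\alpha+2q-1}\big),
\]
so we need $X_{\tau,h}^m$ bounded in $L^{s}(D)$ with $s:=\frac{2q-2}{\alpha}+2q-1$. This exponent grows as $\alpha\to0$ and is not automatically at most $2q(2q-1)$, so this is the main obstacle.

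I would first exploit the spatial regularity part of \eqref{th:spatal-regu-full-solu}, namely $\sup_m\|A_h^{\gamma/2}X_{\tau,h}^m\|_{L^p(\Omega;H)}<\infty$, together with \eqref{eq:relation-Lp-Ah-relatoin}. That inequality gives $\|v_h\|_{L^{s}(D)}\leq C\|A_h^{\frac{d(s-2)}{4s}}v_h\|$. This suffices provided $\frac{d(s-2)}{2s}\leq\gamma$, i.e.\ $\gamma\geq \frac d2\big(1-\frac2s\big)$.

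Substituting $s$, one computes
\[
\frac d2\Big(1-\frac{2\alpha}{2q-2+\alpha(2q-1)}\Big)=\frac{d(2q-2+\alpha(2q-3))}{2(2q-2+\alpha(2q-1))}.
\]
This is exactly the first entry of the maximum in the lower bound on $\gamma$ assumed in Theorem \ref{th:uniform-moment-bound}. This confirms that the hypothesis was designed for this estimate. Hence $\|X_{\tau,h}^m\|_{L^s(D)}\leq C\|A_h^{\gamma/2}X_{\tau,h}^m\|$. The case $\frac{d(s-2)}{4s}$ near the endpoint is covered by the monotonicity of discrete norms, since $\lambda_{1,h}$ is bounded below uniformly in $h$.

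Taking $L^p(\Omega)$ norms then gives
\[
\|(X_{\tau,h}^m)^{\frac{2q-2}\alpha}F(X_{\tau,h}^m)\|_{L^p(\Omega;L^1(D))}\leq C\Big(1+\sup_m\|A_h^{\gamma/2}X_{\tau,h}^m\|^{s}_{L^{ps}(\Omega;H)}\Big)<\infty.
\]
Summing Steps 1 and 2 gives \eqref{pro:bound-Xh-f}, with $C$ independent of $m$, $\tau$ and $h$.
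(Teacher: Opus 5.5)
Your proposal is correct and matches the argument the paper intends. The paper gives no separate proof: it calls the proposition a direct consequence of Theorem \ref{th:uniform-moment-bound}, meaning the polynomial growth of $f,f',f''$ combined with the uniform bounds in $L^{2q(2q-1)}(D)$ and in the $A_h^{\gamma/2}$-norm. Your Step 2 supplies the one non-obvious point: the weighted term needs $\|v_h\|_{L^s(D)}\le C\|A_h^{\gamma/2}v_h\|$ with $s=\frac{2q-2}{\alpha}+2q-1$, via \eqref{eq:relation-Lp-Ah-relatoin}, and this is exactly what the first entry of the lower bound on $\gamma$ in Theorem \ref{th:uniform-moment-bound} guarantees.
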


\section{Strong convergence analysis of the full discretization scheme}
This section is devoted to the strong convergence of the fully-discrete finite element method \eqref{eq:full-discretization}.
%

%

To do this,  we need to introduce two auxiliary processes. The first one is
\begin{align}
\widetilde{X}^m_{\tau,h}=E_{\tau,h}^mP_hX_0+\tau\sum_{l=0}^{m-1}E_{\tau,h}^{m-l}P_hF(X(t_{l+1}))+W_{\tau,h}^m.
\end{align}
In view of \eqref{lem:eq-spatial-regu-ENm}, \eqref{lem:bound-discrete-stochastic-convolution}, \eqref{th:spatal-regu-full-solu} and  \eqref{th:spatial-regu-exac-solution}, one can acquire
\begin{align}\label{eq:property-first-auxi}
\sup_{m\in \mathbb{N}}\|A_h^{\frac\gamma2}\widetilde{X}_{\tau,h}^m\|_{L^p(\Omega;H)}
<\infty.
\end{align}
Another one
 is to find $\overline{X}_{\tau,h}^m\in V_h$ such that
\begin{align}
\overline{X}_{\tau,h}^m-\overline{X}_{\tau,h}^{m-1}+\tau A_h\overline{X}_{\tau,h}^m=\tau P_hF(X_{\tau,h}^m)+P_h\Delta W^m,
\end{align}
whose solution can be reformulated as
\begin{align}\label{eq:second-auxi-problem}
\overline{X}_{\tau,h}^m
=
E_{\tau,h}^mP_hX_0
+
\sum_{l=1}^mE_{\tau,h}^{m+1-l}P_hF(X_{\tau,h}^l)
+
W_{\tau,h}^m.
\end{align}
Under Assumptions \ref{assum:linear-operator-A}-\ref{assum:intial-value-data}, one can deduce
\begin{align}\label{eq:property-second}
\sup_{m\in \mathbb{N}_0}
\|A_h^{\frac\gamma2}\overline{X}_{\tau,h}^m\|_{L^p(\Omega,H)}
\leq
C(p,Q)<\infty.
\end{align}
With the above two auxiliary processes,
we  separate the considered error $\|X(t_m)-X_h^m\|_{L^p(\Omega;H)}$ as
\begin{align}
\label{eq:strong-error-decomposition}
\|X(t_m)-X_{\tau,h}^m\|_{L^p(\Omega;H)}
\leq
\|X(t_m)-\widetilde{X}_{\tau,h}^m\|_{L^p(\Omega;H)}
+
\|\widetilde{X}_{\tau,h}^m-\overline{X}_{\tau,h}^m\|_{L^p(\Omega;H)}
+
\|\overline{X}_{\tau,h}^m
-X_{\tau,h}^m\|_{L^p(\Omega;H)}.
\end{align}
In what follows, we will bound the above three errors, separately.
Following the same argument as in the proof of \cite[(4.37)]{QQQi2018Optimal}, one can bound the error
$\|X(t_m)-\widetilde{X}_{\tau,h}^m\|_{L^p(\Omega;H)}$ as follows.
\begin{lemma}\label{lem:ERROR-AUXI-problem}
Let all conditions in Theorem \ref{th:uniform-moment-bound} be fulfilled.
 Then, there
 exists a positive constant $C=C(p,Q)$ independent of $h$ and $\tau$  such that for  $p\geq1$
 \begin{align}\label{lemma:error-bound-auxi}
 \sup_{m\in \mathbb{N}_0}\|X(t_m)-\widetilde{X}_{\tau,h}^m\|_{L^p(\Omega;H)}
 \leq
 C(h^\gamma+\tau^{\frac\gamma2}).
 \end{align}
\end{lemma}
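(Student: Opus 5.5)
We compare the mild solution at $t_m$ with $\widetilde{X}^m_{\tau,h}$ term by term. Subtracting the definition of $\widetilde{X}^m_{\tau,h}$ from \eqref{eq:definition-mild-solution} gives
\begin{align*}
X(t_m)-\widetilde{X}_{\tau,h}^m
=
\Phi_{\tau,h}^m X_0
+
\sum_{l=0}^{m-1}\int_{t_l}^{t_{l+1}}\big(E(t_m-s)F(X(s))-E_{\tau,h}^{m-l}P_hF(X(t_{l+1}))\big)\,\dd s
+
\big(W_A(t_m)-W_{\tau,h}^m\big)
=:J_1+J_2+J_3 .
\end{align*}
Each term must be bounded by $C(h^\gamma+\tau^{\gamma/2})$ with a constant that does not depend on $m$. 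In every case the decay $\min\{t^{-\mu},t^{-2}\}$ in \eqref{eq:property-E} and \eqref{lem:eq-spatial-regu-ENm}, which is available for $E$ and for $E_{\tau,h}$, is what makes the constants uniform in $m$.

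\textbf{Term $J_1$.} The standard nonsmooth data estimate $\|\Phi_{\tau,h}^m v\|\le C(h^\gamma+\tau^{\gamma/2})\|v\|_\gamma$ holds for $\gamma\in[0,2]$. We obtain it by interpolating the $\mu=0$ stability bound and the $\mu=2$ bound, arguing as in the proof of Theorem~\ref{thm:infty-determing-error}. Since both $E$ and $E_{\tau,h}$ decay exponentially, this estimate holds uniformly in $m$. Together with \eqref{eq:assum-intial-value-Hgamma} it gives the bound for $J_1$.

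\textbf{Term $J_3$.} We use the Burkholder--Davis--Gundy inequality:
\[
\|J_3\|_{L^p(\Omega;H)}^2\le C\sum_{l}\int_{t_{l}}^{t_{l+1}}\big\|\big(E(t_m-s)-E_{\tau,h}^{m-l}P_h\big)Q^{\frac12}\big\|_{\mathcal L_2}^2\,\dd s .
\]
We split the integrand as $(E(t_m-s)-E(t_m-t_l))+\Phi_{\tau,h}^{m-l}$. For the second piece we use the smoothing-type error bound $\|\Phi^n_{\tau,h}A^{-\frac{\gamma-1}2}\|\le C(h^\gamma+\tau^{\gamma/2})t_n^{-1/2}e^{-ct_n}$ together with the corresponding square-summable version, as in the proof of Lemma~\ref{lem:bound-discrete-stochastic-convulution}. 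For the first piece we use \eqref{eq:property-E} and \eqref{III-spatio-temporal-S(t)}. Because of the exponential decay, the sums over $l$ are bounded independently of $m$.

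\textbf{Term $J_2$.} This is the main obstacle. We split it as
\[
J_2=\sum_l\int_{t_l}^{t_{l+1}}E(t_m-s)\big(F(X(s))-F(X(t_{l+1}))\big)\,\dd s+\sum_l\int_{t_l}^{t_{l+1}}\big(E(t_m-s)-E_{\tau,h}^{m-l}P_h\big)F(X(t_{l+1}))\,\dd s .
\]
\emph{The second piece.} We use $\sup_t\|F(X(t))\|_{L^p(\Omega;H)}<\infty$, which follows from \eqref{them:bound-L4q-2-mild-solution}. Combined with the deterministic error bound $\|\Phi^n_{\tau,h}v\|\le C(h^2+\tau)\min\{t_n^{-1},t_n^{-2}\}\|v\|$, and interpolated with stability to get $(h^\gamma+\tau^{\gamma/2})\min\{t_n^{-\gamma/2},t_n^{-2}\}$, this gives a summable bound. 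The time-increment part $E(t_m-s)-E(t_m-t_l)$ is handled the same way.

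\emph{The first piece.} Here plain Hölder continuity $\tau^{\min\{\gamma,1\}/2}$ from \eqref{th:temporap-regu-mild-solution} is insufficient when $\gamma>1$. Following \cite[(4.37)]{QQQi2018Optimal}, we write $F(X(s))-F(X(t_{l+1}))$ via a Taylor expansion:
\[
F(X(s))-F(X(t_{l+1}))=F'(X(t_{l+1}))\big(X(s)-X(t_{l+1})\big)+R .
\]
The remainder $R$ is quadratic and has order $\tau^{\min\{\gamma,1\}}$, controlled in a negative Sobolev norm using $L^{4q-2}$ bounds on $f''$. In the linear term we insert the mild formulation of $X(s)-X(t_{l+1})$. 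Its stochastic-integral part has zero mean conditionally, and its contribution is bounded through Itô isometry, which gains $\tau^{1/2}$ per step while keeping square-summability in $l$. The remaining parts are of order $\tau$ times a smoothing factor $A^{1-\epsilon}E(t_m-s)$, integrable against $e^{-c(t_m-s)}$. Negative-norm estimates of $F'(X)v$ via Sobolev embedding and the uniform bounds \eqref{th:spatial-regu-exac-solution} then close the argument with rate $\tau^{\gamma/2}$, uniformly in $m$. Collecting $J_1$--$J_3$ yields \eqref{lemma:error-bound-auxi}.
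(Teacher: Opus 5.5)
Your splitting into the initial-data error $\Phi^m_{\tau,h}X_0$, the drift error and the stochastic-convolution error matches what the paper has in mind. So does your use of the exponential decay of $E(\cdot)$ and $E_{\tau,h}^n$ for uniformity in $m$, and of a Taylor expansion of the drift increment when $\gamma>1$; the paper gives no details beyond citing \cite[(4.37)]{QQQi2018Optimal}.

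However, the step you single out as the main obstacle fails as written. You expand around the right endpoint, $F(X(s))-F(X(t_{l+1}))=F'(X(t_{l+1}))(X(s)-X(t_{l+1}))+R$, and then claim that the noise part $F'(X(t_{l+1}))\int_s^{t_{l+1}}E(t_{l+1}-r)\,\dd W(r)$ has zero conditional mean. It does not. $F'(X(t_{l+1}))$ depends on the increments of $W$ on $[s,t_{l+1}]$, so the product is anticipating. The contributions of different $l$ are then not orthogonal, and neither the It\^o isometry nor the Burkholder--Davis--Gundy inequality applies. Without orthogonality you only get $\tau\sum_l e^{-ct_{m-l}}\,\mathcal{O}(\tau^{1/2})=\mathcal{O}(\tau^{1/2})$, exactly the rate you said is insufficient for $\gamma>1$.

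The fix is to expand around an adapted point: write $F(X(t_{l+1}))-F(X(s))=F'(X(s))(X(t_{l+1})-X(s))+R$ and insert
\[
X(t_{l+1})-X(s)=(E(t_{l+1}-s)-I)X(s)+\int_s^{t_{l+1}}E(t_{l+1}-r)F(X(r))\,\dd r+\int_s^{t_{l+1}}E(t_{l+1}-r)\,\dd W(r).
\]
The stochastic Fubini theorem then turns the noise contribution into
\[
\sum_{l=0}^{m-1}\int_{t_l}^{t_{l+1}}\Big(\int_{t_l}^{r}E(t_m-s)F'(X(s))\,\dd s\Big)E(t_{l+1}-r)\,\dd W(r),
\]
whose integrand is adapted. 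The Burkholder--Davis--Gundy inequality, $\|E(t_{l+1}-r)Q^{\frac12}\|_{\mathcal{L}_2(H)}\le C$ for $\gamma\ge1$, and the weight $e^{-\lambda_1(t_m-s)}$ give $\mathcal{O}(\tau)$ uniformly in $m$. Here you use $\|f'(X(s))\|_{L^\infty(D)}$ when $\gamma>\frac d2$, and a negative-norm estimate of $F'(X(s))v$ otherwise. The deterministic piece $(E(t_{l+1}-s)-I)X(s)$ is already $\mathcal{O}(\tau^{\gamma/2})$ in $H$ by $\|A^{-\frac\gamma2}(I-E(t))\|\le Ct^{\frac\gamma2}$ and the uniform $H^\gamma$ bound.

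There are two smaller points.

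First, at the endpoint $\gamma=2$, which the lemma includes, your treatment of the second piece of $J_2$ picks up a logarithm. With only $\sup_t\|F(X(t))\|_{L^p(\Omega;H)}<\infty$ you get $(h^2+\tau)\,\tau\sum_{n=1}^{m}\min\{t_n^{-1},t_n^{-2}\}\sim(h^2+\tau)|\log\tau|$. The time-increment part $E(t_m-s)-E(t_m-t_l)$ likewise produces $\tau\int_0^{t_m-\tau}(t_m-s)^{-1}e^{-c(t_m-s)}\,\dd s\sim\tau|\log\tau|$. To remove it you need a little spatial regularity of the nonlinearity, e.g. $\sup_t\|A^{\frac\epsilon2}F(X(t))\|_{L^p(\Omega;H)}<\infty$ for some $\epsilon\in(0,\frac12)$. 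This holds for $\gamma=2$: then $X(t)\in H^2\cap V$, hence $F(X(t))\in H^1(D)$, and $D(A^{\frac\epsilon2})=H^\epsilon(D)$ for $\epsilon<\frac12$.

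Second, in $J_3$ the pointwise bound with $t_n^{-1/2}$ is of no use for the sum, since $\tau\sum_n t_n^{-1}$ diverges logarithmically. The estimate rests entirely on the discrete $L^2$-in-time bound $\tau\sum_{n\ge1}\|\Phi^n_{\tau,h}v\|^2\le C(h^{2\gamma}+\tau^{\gamma})\|v\|_{\gamma-1}^2$, which you should state explicitly. It is standard on bounded intervals and holds for all $n$ thanks to the exponential decay of both solution operators.
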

Next lemma will present the convergence result of the error $\|\overline{X}_{\tau,h}^m
-X_{\tau,h}^m\|_{L^p(\Omega;H)}$.
\begin{lemma}\label{lem:ERROR-AUXI-problem-ii}
Let all conditions in Theorem \ref{th:uniform-moment-bound} be fulfilled.
 Then, there
 exists a positive constant $C=C(p,Q)$ independent of $h$ and $\tau$  such that for  $p\geq1$
 \begin{align}\label{lemma:error-bound-auxi}
 \sup_{m\in \mathbb{N}_0}\|\overline{X}_{\tau,h}^m
-X_{\tau,h}^m\|_{L^p(\Omega;H)}
 \leq
 C (h^{\min\{\gamma,\rho\}}+\tau^{\min\{\theta,\frac\gamma2\}}).
 \end{align}
\end{lemma}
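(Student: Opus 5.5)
Subtracting the scheme \eqref{eq:full-discretization} from the equation defining $\overline{X}_{\tau,h}^m$ makes the noise increments and the initial data cancel. The error $e^m:=\overline{X}_{\tau,h}^m-X_{\tau,h}^m$ therefore satisfies $e^0=0$ and
\begin{align*}
e^m-e^{m-1}+\tau A_h e^m=\tau P_h\big(F(X_{\tau,h}^m)-F_{\tau,h}(X_{\tau,h}^{m-1})\big).
\end{align*}
Hence $e^m=\tau\sum_{l=1}^m E_{\tau,h}^{m+1-l}P_h\big(F(X_{\tau,h}^l)-F_{\tau,h}(X_{\tau,h}^{l-1})\big)$. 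I split the summand as
\begin{align*}
\big(F(X_{\tau,h}^{l-1})-F_{\tau,h}(X_{\tau,h}^{l-1})\big)+\big(F(X_{\tau,h}^{l})-F(X_{\tau,h}^{l-1})\big)=:J_1^l+J_2^l .
\end{align*}
The plan is to bound the two resulting sums uniformly in $m$, using the exponential decay built into $\min\{t^{-\mu/2},t^{-2}\}$ in \eqref{lem:eq-spatial-regu-ENm}.

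\textbf{The taming error $J_1^l$.} By \eqref{lem:f-ftau-I} we have $|J_1^l|\le \widetilde c_5(\tau^\theta+h^\rho)|X_{\tau,h}^{l-1}|^{\frac{2q-2}\alpha}|F(X_{\tau,h}^{l-1})|$ pointwise. This quantity is only controlled in $L^1(D)$, by Proposition \ref{pro:pro:bound-Xh-f}. Fix $\kappa\in(\frac d2,2)$ and write $E_{\tau,h}^{m+1-l}P_h=A_h^{\kappa/2}E_{\tau,h}^{m+1-l}\,A_h^{-\kappa/2}P_h$. Then \eqref{eq:L_2-L_1} and \eqref{lem:eq-spatial-regu-ENm} give
\begin{align*}
\Big\|\tau\sum_{l}E_{\tau,h}^{m+1-l}P_hJ_1^l\Big\|_{L^p(\Omega;H)}\le C(\tau^\theta+h^\rho)\,\tau\sum_{l=1}^m\min\{t_{m+1-l}^{-\kappa/2},t_{m+1-l}^{-2}\}\le C(\tau^\theta+h^\rho),
\end{align*}
where the sum is bounded uniformly in $m$ because $\kappa<2$.

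\textbf{The time-increment term $J_2^l$.} This is where the rate $\tau^{\min\{\theta,\gamma/2\}}$ for $\gamma>1$ must come from. A direct estimate via \eqref{asum:condition-ftauh-IiiI}-type local Lipschitz bounds, Hölder's inequality with the $L^{2q(2q-1)}$ moments from Theorem \ref{th:uniform-moment-bound}, and \eqref{th:temporal-regu-full-solu} with $\beta=0$ only yields $\tau^{\min\{1/2,\gamma/2\}}$. That suffices for $\gamma\le1$. For $\gamma\in(1,2]$ I will Taylor expand:
\begin{align*}
J_2^l=f'(X_{\tau,h}^{l-1})\big(X_{\tau,h}^l-X_{\tau,h}^{l-1}\big)+\mathcal{O}\big(|X_{\tau,h}^l-X_{\tau,h}^{l-1}|^2\big).
\end{align*}
The remainder is $O(\tau^{\min\{1,\gamma\}})$ in $L^1(D)$ by \eqref{th:temporal-regu-full-solu} and the bound on $f''$ in Proposition \ref{pro:pro:bound-Xh-f}; it is then handled exactly as $J_1^l$ via \eqref{eq:L_2-L_1}. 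For the linear part I insert the scheme,
\begin{align*}
X_{\tau,h}^l-X_{\tau,h}^{l-1}=-\tau A_hX_{\tau,h}^l+\tau P_hF_{\tau,h}(X_{\tau,h}^{l-1})+P_h\Delta W_l .
\end{align*}
\begin{itemize}
\item The drift piece $\tau P_hF_{\tau,h}$ is $O(\tau)$ in $H$ by \eqref{eq:bound-Fth-H}.
\item The piece $\tau A_hX_{\tau,h}^l$ is $O(\tau)$ in a negative norm, using $\|A_h^{\gamma/2}X_{\tau,h}^l\|$ bounded. Transferring this negative regularity across the multiplication by $f'(X_{\tau,h}^{l-1})$ is done by duality with the smoothing $A_h^{\cdot}E_{\tau,h}^{m+1-l}$, plus the $L^\infty$ or $L^{2q}$ bounds on $f'(X_{\tau,h}^{l-1})$; this costs $t^{-\mu/2}$ with $\mu<2$.
\item The noise piece $\tau\sum_l E_{\tau,h}^{m+1-l}P_h\big(f'(X_{\tau,h}^{l-1})P_h\Delta W_l\big)$ is a discrete martingale, since $f'(X_{\tau,h}^{l-1})$ is $\mathcal F_{t_{l-1}}$-measurable. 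By the BDG inequality its $L^p(\Omega;H)$ norm is at most $C\tau\big(\sum_l\tau\,\|\cdots\|_{\mathcal L_2}^2\big)^{1/2}\le C\tau^{\gamma/2}$, again using \eqref{eq:ass-AQ-condition} and the decay of $E_{\tau,h}$ for a uniform-in-$m$ sum.
\end{itemize}
Altogether the $J_2^l$ contribution is $O(\tau^{\min\{1,\gamma/2\}})$.

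Collecting the two contributions gives $C(h^\rho+\tau^{\min\{\theta,\gamma/2\}})\le C(h^{\min\{\gamma,\rho\}}+\tau^{\min\{\theta,\gamma/2\}})$ for $h\le1$, uniformly in $m$. The main obstacle is the treatment of $f'(X_{\tau,h}^{l-1})A_hX_{\tau,h}^l$ in the $\gamma>1$ regime: multiplication by $f'$ does not commute with $A_h$, and $P_h$ destroys the $\dot H^1$ structure. I would first try to bound $\|A_h^{-\epsilon}P_h(f'(u)w)\|$ by $\|f'(u)\|_{W^{1,\cdot}}\|A_h^{-\epsilon}w\|$ via a discrete duality argument with \eqref{eq:Ah-A-bound} and \eqref{eq:relation-A-Ah}. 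Failing that, I would accept the cruder rate $\tau^{1/2}$ there and argue that $\theta$ or $\gamma/2$ dominates only in the regime where the finer expansion is needed.
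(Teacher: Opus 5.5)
There is a genuine gap, at exactly the step you flag as the main obstacle. Several parts of your argument are fine:
\begin{itemize}
\item the error recursion;
\item the taming term, which is the paper's bound for its $J_2$ via \eqref{lem:f-ftau-I}, \eqref{eq:L_2-L_1} and the uniformly summable kernel $\min\{t^{-\kappa/2},t^{-2}\}$;
\item the direct local-Lipschitz/H\"older-in-time argument for $\gamma\le1$, a valid shortcut the paper does not take;
\item the drift, noise and quadratic-remainder pieces of the Taylor expansion. For $\gamma\le\frac d2$ the remainder only gives $\tau^{\min\{1,\gamma-\frac d{4q}\}}$, as in the paper's $J_{14}$, but that still suffices.
\end{itemize}

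The problem is the linear Taylor term for $\gamma\in(1,2]$. Because you insert the scheme in implicit form, $X_{\tau,h}^l-X_{\tau,h}^{l-1}=-\tau A_hX_{\tau,h}^l+\cdots$, that term is not small in $H$ uniformly in $h$. The natural bound is $\tau\,\lambda_{\mathcal{N}_h,h}^{1-\gamma/2}\|A_h^{\gamma/2}X_{\tau,h}^l\|\sim\tau h^{\gamma-2}$. So you are forced to move $A_h^{-(1-\gamma/2)}$ across multiplication by $f'(X_{\tau,h}^{l-1})$, and that multiplier estimate is neither proved nor plausible in this generality:
\begin{itemize}
\item A bound on $\|A_h^{\gamma/2}X_{\tau,h}^{l-1}\|$ with $\gamma>1$ gives no continuous $H^\gamma$ regularity of the piecewise linear $X_{\tau,h}^{l-1}$; \eqref{eq:relation-A-Ah} only transfers up to $H^1$.
\item For $d=3$, $\gamma\le\frac32$ (not excluded by the hypotheses), there is no uniform $L^\infty$ bound on $f'(X_{\tau,h}^{l-1})$ at all.
\end{itemize}
Your fallback rate $\tau^{1/2}$ does not prove the lemma whenever $\min\{\theta,\frac\gamma2\}>\frac12$.

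The missing observation is the identity
\begin{align*}
\tau A_hX_{\tau,h}^l=(I-E_{\tau,h})\big(X_{\tau,h}^{l-1}+\tau P_hF_{\tau,h}(X_{\tau,h}^{l-1})+P_h\Delta W_l\big).
\end{align*}
Substituting it gives the one-step form of the increment, which is what the paper uses:
\begin{align*}
X_{\tau,h}^l-X_{\tau,h}^{l-1}=-(I-E_{\tau,h})X_{\tau,h}^{l-1}+\tau E_{\tau,h}P_hF_{\tau,h}(X_{\tau,h}^{l-1})+E_{\tau,h}P_h\Delta W_l .
\end{align*}
Now the linear term is small in $H$ itself. By \eqref{lem:eq-temporal-regu-ENm} with $\nu=\gamma$,
\begin{align*}
\|(I-E_{\tau,h})X_{\tau,h}^{l-1}\|\le C\tau^{\gamma/2}\|A_h^{\gamma/2}X_{\tau,h}^{l-1}\|.
\end{align*}
Hence $f'(X_{\tau,h}^{l-1})(I-E_{\tau,h})X_{\tau,h}^{l-1}$ is $O(\tau^{\gamma/2})$ in $L^p(\Omega;L^1(D))$ by Cauchy--Schwarz and \eqref{pro:bound-Xh-f}. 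It is then summed exactly like your taming term, through \eqref{eq:L_2-L_1} with $\kappa\in(\frac d2,2)$. No duality or multiplier estimate is needed, and the drift and noise pieces simply pick up a harmless factor $E_{\tau,h}$.
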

{\it Proof of Lemma \ref{lem:ERROR-AUXI-problem-ii}.}
Subtracting \eqref{eq:second-auxi-problem} from \eqref{eq:fully-discrete-problem}, the
error $\|\overline{X}_{\tau,h}^m-X_{\tau,h}^m\|_{L^p(\Omega,H)}$ can be split into the following two terms
\begin{align}\label{eq:SECOND-PRO-Decom}
\begin{split}
\|\overline{X}_{\tau,h}^m-X_{\tau,h}^m\|_{L^p(\Omega;H)}
\leq
&
\left\|\sum_{l=1}^{m}\tau E_{\tau,h}^{m+1-l}P_h (F(X_{\tau,h}^l)-F(X_{\tau,h}^{l-1}))\right\|_{L^p(\Omega;H)}
\\
&
+
\sum_{l=1}^{m}\tau \|E_{\tau,h}^{m+1-l}P_h (F(X_{\tau,h}^{l-1})-F_{\tau,h}(X_{\tau,h}^{l-1}))\|_{L^p(\Omega;H)}
\\
=:
&J_1+J_2.
\end{split}
\end{align}
By \eqref{lem:eq-spatial-regu-ENm} and for $\kappa\in (\frac d2, 2)$, \eqref{lem:f-ftau-I} and \eqref{pro:bound-Xh-f}, the second term $J_2$ can be estimated as follows:
\begin{align}\label{eq:J2-bound}
\begin{split}
J_2
\leq
&
\sum_{l=1}^{m}\tau \min\{t_{m+1-l}^{-\frac \kappa2}, t_{m+1-l}^{-2}\}\|A_h^{-\frac \kappa2}( F(X_{\tau,h}^{l-1})-F_{\tau,h}(X_{\tau,h}^{l-1}))\|_{L^p(\Omega;H)}
\\
\leq
&
\sum_{l=1}^{m}\tau \min\{t_{m+1-l}^{-\frac \kappa2}, t_{m+1-l}^{-2}\}\|F(X_{\tau,h}^{l-1})-F_{\tau,h}(X_{\tau,h}^{l-1})\|_{L^p(\Omega;L^1(D))}
\\
\leq
&
C(h^\rho+\tau^\theta)
\sum_{l=1}^{m}\tau \min\{t_{m+1-l}^{-\frac \kappa2}, t_{m+1-l}^{-2}\}
\sup_{m\in \mathbb{N}_0}\|(X_{\tau,h}^m)^{\frac{2q-2}\alpha}F(X_{\tau,h}^m)\|_{L^{p}(\Omega;L^1(D))}
\\
\leq
&
C(h^\rho+\tau^\theta).
\end{split}
\end{align}
To bound the first term $J_1$, we apply Taylor's formula to decompose it further into four additional terms as follows
\begin{align}\label{eq:bound-J11-term}
\begin{split}
J_1
\leq
&
\left\|\sum_{l=1}^{m}\tau E_{\tau,h}^{m+1-l}P_h F'(X_{\tau,h}^{l-1})(I-E_{\tau,h})X_{\tau,h}^{l-1}\right\|_{L^p(\Omega;H)}
\\
&
+
\left\|\sum_{l=1}^{m}\tau^2 E_{\tau,h}^{m+1-l}P_h F'(X_{\tau,h}^{l-1})E_{\tau,h}P_hF_{\tau,h}(X_{\tau,h}^{l-1})\right\|_{L^p(\Omega;H)}
\\
&
+
\left\|\sum_{l=1}^{m}\tau E_{\tau,h}^{m+1-l}P_h F'(X_{\tau,h}^{l-1})E_{\tau,h}P_h\Delta W^l\right\|_{L^p(\Omega;H)}
\\
&
+
\left\|\sum_{l=1}^{m}\tau E_{\tau,h}^{m+1-l}P_h R_F(X_{\tau,h}^l,X_{\tau,h}^{l-1})\right\|_{L^p(\Omega;H)}
\\
=:
&
J_{11}+
J_{12}+J_{13}+J_{14},
\end{split}
\end{align}
where the remainder term $R_F$ reads,
\[
R_F(X_{\tau,h}^l,X_{\tau,h}^{l-1})
:
=
\int_0^1F''\big(X_{\tau,h}^{l-1}+\lambda(X_{\tau,h}^l-X_{\tau,h}^{l-1})\big)
\big(X_{\tau,h}^l-X_{\tau,h}^{l-1},X_{\tau,h}^l-X_{\tau,h}^{l-1}\big)(1-\lambda)\,\dd \lambda.
\]
In the sequel we treat the above four terms one by one. Thanks to \eqref{eq:L_2-L_1}, \eqref{lem:eq-spatial-regu-ENm}, \eqref{lem:eq-temporal-regu-ENm}, \eqref{th:spatal-regu-full-solu} and \eqref{pro:bound-Xh-f}, we derive, for any fixed $\kappa\in (\frac d2,2)$,
\begin{align}\label{eq:bound-j11}
\begin{split}
J_{11}
\leq
&
C\sum_{l=1}^{m}\tau \min\{t_{m+1-l}^{-\frac\kappa2},t_{m+1-l}^{-2}\}
\|A_h^{-\frac \kappa2}P_h F'(X_{\tau,h}^{l-1})(I-E_{\tau,h})X_{\tau,h}^{l-1}\|_{L^p(\Omega;H)}
\\
\leq
&
C\sum_{l=1}^{m}\tau \min\{t_{m+1-l}^{-\frac\kappa2},t_{m+1-l}^{-2}\}
\| F'(X_{\tau,h}^{l-1})(I-E_{\tau,h})X_{\tau,h}^{l-1}\|_{L^p(\Omega;L^1(D))}
\\
\leq
&
C\sum_{l=1}^{m}\tau \min\{t_{m+1-l}^{-\frac\kappa2},t_{m+1-l}^{-2}\}
\| f'(X_{\tau,h}^{l-1})\|_{L^{2p}(\Omega;H)}
\|(I-E_{\tau,h})X_{\tau,h}^{l-1}\|_{L^{2p}(\Omega;H)}
\\
\leq
&
C\sum_{l=1}^{m}\tau^{1+\frac\gamma2} \min\{t_{m+1-l}^{-\frac\kappa2},t_{m+1-l}^{-2}\}
\sup_{m\in \mathbb{N}_0}\| f'(X_{\tau,h}^m)\|_{L^{2p}(\Omega;H)}
\sup_{m\in \mathbb{N}_0}\|A_h^{\frac\gamma2}X_{\tau,h}^m\|_{L^{2p}(\Omega;H)}
\\
\leq
&
C\tau^{\frac\gamma2},
\end{split}
\end{align}
where in the last inequality we used the fact,
\begin{align}\label{eq:sum}
\sup_{m\in \mathbb{N}_0}\sum_{l=1}^{m}\tau \min\{t_{m+1-l}^{-\frac\kappa2},t_{m+1}^{-2}\}<\infty,
\quad
\kappa\in(\tfrac d2,2).
\end{align}
Similarly,
 using \eqref{eq:L_2-L_1}, \eqref{lem:eq-spatial-regu-ENm},  \eqref{eq:sum} and \eqref{pro:bound-Xh-f}  implies that, for any fixed $\kappa\in(\frac d2,2)$
\begin{align}\label{eq:bound-j12}
\begin{split}
J_{12}
\leq&
C\sum_{l=1}^{m}\tau^2 \min\{t_{m+1-l}^{-\frac \kappa2},t_{m+1-l}^{-2}\}\|P_h F'(X_{\tau,h}^{l-1})E_{\tau,h}P_hF_{\tau,h}(X_{\tau,h}^{l-1})\|_{L^p(\Omega;L^1(D))}
\\
\leq
&
C\sum_{l=1}^{m}\tau^2 \min\{t_{m+1-l}^{-\frac \kappa2},t_{m+1-l}^{-2}\}\sup_{m\in \mathbb{N}_0}\| f'(X_{\tau,h}^{m})\|_{L^{2p}(\Omega;H)}
\sup_{m\in \mathbb{N}_0}\|F(X_{\tau,h}^{m})\|_{L^{2p}(\Omega;H)}
\\
\leq
&
C\tau.
\end{split}
\end{align}
 Owing to  the stochastic Fubini theorem (e.g., see \cite[Theorem 4.18]{da2014stochastic}) and the
Burkholder-Davis-Gundy-type inequality, we  obtain, for the term $J_{13}$,
\begin{align}
\begin{split}
J_{13}
=&
\left\|\sum_{l=1}^{m}\int_{t_{l-1}}^{t_l}\int_{t_{l-1}}^{t_l} E_{\tau,h}^{m+1-l}P_h F'( X_{\tau,h}^{l-1})E_{\tau,h}P_h\,\dd W(\sigma)\,\dd s\right\|_{L^p(\Omega;H)}
\\
=
&
\left\|\sum_{l=1}^{m}\int_{t_{l-1}}^{t_l}\int_{t_{l-1}}^{t_l} E_{\tau,h}^{m+1-l}P_h F'(X_{\tau,h}^{l-1})E_{\tau,h}P_h\,\dd s\,\dd W(\sigma) \right\|_{L^p(\Omega;H)}
\\
\leq
&
C\left(\sum_{l=1}^{m}\int_{t_{l-1}}^{t_l}\left\|\int_{t_{l-1}}^{t_l} E_{\tau,h}^{m+1-l}P_h F'(X_{\tau,h}^{l-1})E_{\tau,h}P_h\,\dd s\right\|^2_{L^p(\Omega;\mathcal{L}^0_2)}\,\dd \sigma\right)^{\frac12}
\\
\leq
&
C\left(\sum_{l=1}^{m}\tau^3\min\{1,t_{m+1-l}^{-4}\}\left\|  F'(X_{\tau,h}^{l-1})E_{\tau,h}P_h\right\|^2_{L^p(\Omega;\mathcal{L}^0_2)}\, \right)^{\frac12}.
\end{split}
\end{align}
For $\gamma\in (0,\frac d2]$, from  \eqref{pro:bound-Xh-f}, \eqref{eq:relation-Lp-Ah-relatoin}, \eqref{lem:eq-spatial-regu-ENm}
and  the H\"{o}lder inequality, one can deduce
\begin{align}
\begin{split}
J_{13}
\leq
&
\left(\sum_{l=1}^{m}\tau^3\min\left\{1,t_{m+1-l}^{-4}\right\}\|f'(X_{\tau,h}^{l-1})\|^2_{L^p(\Omega;L^4)}\| A_h^ {\frac d8}E_{\tau,h}P_h\|^2_{\mathcal{L}^0_2}\right)^{\frac12}
\\
\leq&
\left(\sum_{l=1}^{m}\tau^{3-\frac{4-4\gamma+d}4}\min\left\{1,t_{m+1-l}^{-4}\right\}
\|f'(X_{\tau,h}^{l-1})\|^2_{L^p(\Omega;L^4(D))}
\|A_h^{\frac{\gamma-1}2}P_h\|^2_{\mathcal{L}_2^0}\right)^{\frac12}
\\
\leq
&
C\tau^{\frac{4-d+4\gamma}8} \sup_{m \in \mathbb{N}_0}\|f'(X_{\tau,h}^{m})\|_{L^p(\Omega;L^4(D))}
\leq
 C\tau^{\frac\gamma2}.
\end{split}
\end{align}
For $\gamma\in (\frac d2,2]$, applying \eqref{eq:V-norm-control-by-Ah-norm}
 and \eqref{th:spatal-regu-full-solu} implies
$
\sup_{m\in \mathbb{N}_0}\|f'(X_{\tau,h}^m)\|_{L^p(\Omega;L^\infty(D))}<\infty$.
Further, we have, by  \eqref{lem:eq-spatial-regu-ENm}, \eqref{eq:relation-A-Ah} and Assumption \ref{assum:noise-term}
\begin{align}
\begin{split}
J_{13}
\leq
&
\left(\sum_{l=1}^{m}\tau^3\min\{1,t_{m+1-l}^{-4}\}\|f'(X_{\tau,h}^{l-1})\|^2_{L^p(\Omega;L^\infty(D))}\| E_{\tau,h}P_h\|^2_{\mathcal{L}_2^0}\right)^{\frac12}
\\
\leq&
\left(\sum_{l=1}^{m}\tau^{3-\max\{0,1-\gamma\}}\min\{1,t_{m+1-l}^{-4}\}
\sup_{m\in \mathbb{N}}\|f'(X_{\tau,h}^{m})\|^2_{L^p(\Omega;L^\infty(D))}
\|A_h^{\frac{\gamma-1}2}P_h\|^2_{\mathcal{L}_2^0}\right)^{\frac12}
\\
\leq
&
C\tau^{\frac\gamma2}\|A^{\frac{\gamma-1}2}Q^{\frac12}\|_{\mathcal{L}_2(H)}.
\end{split}
\end{align}
Thus,  the above three estimates ensure
\begin{align}\label{eq:bound-j13}
J_{13}\leq C\tau^{\frac\gamma2}.
\end{align}
Now we turn our attention to the term $J_{14}$.
Thanks to \eqref{eq:sum}, \eqref{eq:L_2-L_1} and \eqref{lem:eq-spatial-regu-ENm}, we derive
\begin{align}\label{eq:-BOUND-J14-ii}
\begin{split}
J_{14}
\leq
&
C\sum_{l=1}^{m}\tau \min\{t^{-\frac\kappa2},t^{-2}_{m+1-l}\}\| A_h^{-\frac\kappa 2}P_h R_F(X_{\tau,h}^l,X_{\tau,h}^{l-1})\|_{L^p(\Omega;H)}
\\
\leq
&
C\sum_{l=1}^{m}\tau \min\{t^{-\frac\kappa2},t^{-2}_{m+1-l}\}\|  R_F(X_{\tau,h}^l,X_{\tau,h}^{l-1})\|_{L^p(\Omega;L^1(D))}.
\end{split}
\end{align}
For $\gamma\in \big(\tfrac{(2q^2-q-2)d}{2q(2q-1)},\tfrac d 2\big]$, applying   \eqref{pro:bound-Xh-f}, \eqref{eq:sum}, \eqref{th:temporal-regu-full-solu}, \eqref{eq:relation-Lp-Ah-relatoin}
and  the H\"{o}lder inequality implies
\begin{align}\label{eq:bound-J14-I}
\begin{split}
J_{14}
\leq
&
C
\sum_{l=1}^{m}\tau \min\{t_{m+1-l}^{-\frac\kappa2},t^{-2}_{m+1-l}\}\int_0^1\| f''(X_{\tau,h}^{l-1}+\lambda(X_{\tau,h}^l-X_{\tau,h}^{l-1}))\|_{L^{2p}(\Omega;L^{2q})}\,\dd \lambda \|X_{\tau,h}^{l}-X_{\tau,h}^{l-1}\|^2_{L^{4p}(\Omega;L^{\frac{4q}{2q-1}})}
\\
\leq
&
C
(1+\sup_{m\in \mathbb{N}_0}\| X_{\tau,h}^{m}\|_{L^{2p(2q-2)}(\Omega;L^{2q(2q-2)}(D))})^{2q-2}
\sum_{l=1}^{m}\tau \min\{t_{m+1-l}^{-\frac\kappa2},t^{-2}_{m+1-l}\} \|A_h^{\frac{d}{8q}}(X_{\tau,h}^{l}-X_{\tau,h}^{l-1})\|^2_{L^{4p}(\Omega;H)}
\\
\leq
&
C\tau^{\min\{\gamma-\frac{d}{4q},1\}}
\leq
C\tau^{\min\{\frac\gamma2+\frac\gamma2-\frac{d}{4q},1\}}
\leq
C\tau^{\min\{\frac\gamma 2,1\}}=C\tau^{\frac\gamma2},
\end{split}
\end{align}
where in the last inequality we used the fact $\frac{\gamma}2-\frac d{4q}>0$.
For $\gamma\in(\frac d 2,  2]$,  by applying \eqref{eq:V-norm-control-by-Ah-norm}, \eqref{th:spatal-regu-full-solu},
\eqref{eq:sum} and \eqref{th:temporal-regu-full-solu}, one can observe
\begin{align}
\begin{split}
J_{14}
\leq
&
C
\sum_{l=1}^{m}\tau \min\{t_{m+1-l}^{-\frac\kappa2},t^{-2}_{m+1-l}\}\int_0^1\| f''(X_{\tau,h}^{l-1}+\lambda(X_{\tau,h}^l-X_{\tau,h}^{l-1}))\|_{L^{2p}(\Omega;L^\infty(D))}\,\dd \lambda \|X_{\tau,h}^{l}-X_{\tau,h}^{l-1}\|^2_{L^{4p}(\Omega;H)}
\\
\leq
&
C\tau^{\min\{\gamma,1\}}\sup_{m\in \mathbb{N}_0}\sum_{l=1}^{m}\tau \min\{t_{m+1-l}^{-\frac\kappa2},t^{-2}_{m+1-l}\}
(1+\sup_{m\in \mathbb{N}_0}\| X_{\tau,h}^{m}\|_{L^{2p(2q-2)}(\Omega;L^\infty(D))})^{2q-2}
\\
\leq
&
C\tau^{\min\{\gamma,1\}},
\end{split}
\end{align}
which in combination with \eqref{eq:bound-J14-I} and \eqref{eq:-BOUND-J14-ii} arrives at
\begin{align}\label{eq:bound-j14}
J_{14}
\leq
C\tau^{\frac\gamma2}.
\end{align}
Putting the estimates \eqref{eq:bound-j11}, \eqref{eq:bound-j12}, \eqref{eq:bound-j13} and \eqref{eq:bound-j14} back into \eqref{eq:bound-J11-term}  results in
\begin{align}
J_1
\leq
C\tau^{\frac\gamma2},
\end{align}
which together with \eqref{eq:J2-bound} and \eqref{eq:SECOND-PRO-Decom} shows  \eqref{lemma:error-bound-auxi}
and ends the proof of this lemma. $\square$

Finally,  we turn our attention to  the error
$\|\widetilde{X}_{\tau,h}^m-\overline{X}_{\tau,h}^m\|_{L^p(\Omega;H)}$.
\begin{lemma}\label{them:bound-two-auxili}
Let all conditions in Theorem \ref{th:uniform-moment-bound} be satisfied.
 For any fixed time-point $T\in (0,\infty)$, let $\tau=T/M$, $M\in \mathbb{N}$ be a time-step size. Then
  there
 exist  positive constants $\tau^*, h^*$ and $C=C(T,p,Q)$ such that for all
 $\tau \leq \tau^*$, $h\leq h^*$
 and $\forall\, p\geq 1$,
\begin{align}\label{lem:eq-bound-solution-full-stochatic-I}
\sup_{m\in \{1,2,\dots,  M\}}
\|\widetilde{X}_{\tau,h}^m-\overline{X}_{\tau,h}^m\|_{L^p(\Omega;H)}
\leq
C(h^{\min\{\gamma, \rho\}}+\tau^{\min\{\frac\gamma2,\theta\}}).
\end{align}
If additionally $L_f<\lambda_1$, then one can show the uniform-in-time strong error estimates: for any $\tau\in (0, \tau^*)$,
\begin{align}\label{lem:eq-uniform-in-time-bound-solution-full-stochatic-ii}
\sup_{m\in \mathbb{N}}
\|\widetilde{X}_{\tau,h}^m-\overline{X}_{\tau,h}^m\|_{L^p(\Omega;H)}
\leq
C(h^{\min\{\gamma, \rho\}}+\tau^{\min\{\frac\gamma2,\theta\}}).
\end{align}
\end{lemma}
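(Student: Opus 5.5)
We compare the two auxiliary processes directly. Set $e^m:=\widetilde{X}_{\tau,h}^m-\overline{X}_{\tau,h}^m$. Subtracting \eqref{eq:second-auxi-problem} from the definition of $\widetilde{X}_{\tau,h}^m$ removes both the initial term and the noise $W_{\tau,h}^m$. What is left is a deterministic-looking recursion driven by the nonlinearity:
\begin{align*}
e^m=\tau\sum_{l=1}^{m}E_{\tau,h}^{m+1-l}P_h\big(F(X(t_l))-F(X_{\tau,h}^l)\big).
\end{align*}
(Here we read \eqref{eq:second-auxi-problem} with the factor $\tau$, as it arises from the implicit recursion.) Equivalently,
\begin{align*}
e^m-e^{m-1}+\tau A_he^m=\tau P_h\big(F(X(t_m))-F(X_{\tau,h}^m)\big).
\end{align*}
The plan is to split the difference as
\begin{align*}
F(X(t_m))-F(X_{\tau,h}^m)=\big[F(\widetilde{X}^m_{\tau,h})-F(\overline{X}^m_{\tau,h})\big]+\big[F(X(t_m))-F(\widetilde{X}^m_{\tau,h})\big]+\big[F(\overline{X}^m_{\tau,h})-F(X^m_{\tau,h})\big].
\end{align*}
The first bracket is handled by monotonicity: \eqref{eq:definiton-I-f-I} gives $\langle F(u)-F(v),u-v\rangle\le L_f\|u-v\|^2$. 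The other two brackets are perturbations, which we estimate using Lemmas \ref{lem:ERROR-AUXI-problem} and \ref{lem:ERROR-AUXI-problem-ii}.

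\textbf{Energy step.} Test the error recursion with $e^m$ and use $\langle a-b,a\rangle\ge\frac12(\|a\|^2-\|b\|^2)$. Together with the one-sided Lipschitz bound this gives
\begin{align*}
\tfrac12\big(\|e^m\|^2-\|e^{m-1}\|^2\big)+\tau\|A_h^{\frac12}e^m\|^2\le \tau L_f\|e^m\|^2+\tau\langle \mathcal{P}^m,e^m\rangle ,
\end{align*}
where $\mathcal{P}^m$ collects the two perturbation brackets. For the perturbations we use \eqref{eq:definiton-IIII-f}:
\begin{align*}
|F(u)-F(v)|\le c_3\big(1+|u|^{2q-2}+|v|^{2q-2}\big)|u-v|.
\end{align*}
We then apply H\"older with the weight in $L^{2q}$ and the difference in $L^2$. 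Because the weights are not bounded, we first pair $\mathcal{P}^m$ against $e^m$ in a dual norm. Concretely, bound $\langle \mathcal{P}^m,e^m\rangle\le \|A_h^{-\frac{\kappa}2}P_h\mathcal{P}^m\|\,\|A_h^{\frac\kappa2}e^m\|$ via \eqref{eq:L_2-L_1} with $\kappa\in(\frac d2,2)$. Alternatively, use the $L^1$--$L^\infty$ pairing in the form $\|\mathcal{P}^m\|_{L^1}\|e^m\|_{V}$, with $\|e^m\|_V$ controlled by \eqref{eq:V-norm-control-by-Ah-norm}. The factor $\|A_h^{\kappa/2}e^m\|$ has uniform moments by \eqref{eq:property-first-auxi} and \eqref{eq:property-second}, at least when $\gamma\ge\kappa$. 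When $\gamma<\kappa$, we instead interpolate with the $L^{2q(2q-1)}$ bounds of Theorem \ref{th:uniform-moment-bound} and \eqref{them:bound-L4q-2-mild-solution}, using \eqref{eq:relation-Lp-Ah-relatoin}.

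\textbf{Size of the perturbation.} The $L^1$ norm of $\mathcal{P}^m$ is bounded by the weights, which have bounded moments in $L^2$ by Proposition \ref{pro:bound-Xh-f} and the analogous statements for $X$, $\widetilde{X}$ and $\overline{X}$. These multiply $\|X(t_m)-\widetilde{X}^m_{\tau,h}\|+\|\overline{X}^m_{\tau,h}-X^m_{\tau,h}\|$, so the perturbation is of size $h^{\min\{\gamma,\rho\}}+\tau^{\min\{\gamma/2,\theta\}}$ in $L^p(\Omega)$.

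\textbf{Gronwall step and the main obstacle.} On a finite horizon $[0,T]$ we apply a discrete Gronwall inequality after taking moments. To pass to $p$-th moments, we raise the energy inequality to the power $p/2$ before summing. The resulting constant is $C(T)$, which gives \eqref{lem:eq-bound-solution-full-stochatic-I}. For the uniform-in-time bound \eqref{lem:eq-uniform-in-time-bound-solution-full-stochatic-ii} we use $\|A_h^{1/2}e^m\|^2\ge\lambda_{1,h}\|e^m\|^2$ together with $\lambda_{1,h}\ge\lambda_1$ (min--max, since $V_h\subset H_0^1$). With $L_f<\lambda_1$ this leaves the net contraction
\begin{align*}
(1+2\tau(\lambda_1-L_f)-\varepsilon\tau)\|e^m\|^2\le\|e^{m-1}\|^2+C\tau\,\varepsilon^{-1}\|\mathcal{P}^m\|_{*}^2 .
\end{align*}
Iterating, the geometric factor sums to a constant independent of $m$, which yields the uniform bound.

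The main obstacle is the pairing of the non-Lipschitz perturbation with $e^m$. We must use a dual norm absorbed partly by the dissipation $\tau\|A_h^{1/2}e^m\|^2$, and partly by the uniform a priori regularity of both auxiliary processes, so that no random, unbounded Lipschitz constant enters the Gronwall argument. I would try first the choice $\|\cdot\|_{-1}$: use $\langle \mathcal{P},e\rangle\le\|A_h^{-1/2}P_h\mathcal{P}\|\,\|A_h^{1/2}e\|$, Young's inequality into the dissipation, and the estimate $\|A_h^{-1/2}P_h w\|\le C\|w\|_{L^{6/5}}$ (from \eqref{eq:l2-lp-relation}), which is valid for $d\le3$. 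Only if that fails would I resort to the $\kappa$-pairing above.
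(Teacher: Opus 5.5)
Your primary route is exactly the paper's proof and is correct: you test the error recursion with $e^m$, use monotonicity for $F(\widetilde{X}_{\tau,h}^m)-F(\overline{X}_{\tau,h}^m)$, absorb the pairing $\langle\mathcal{P}^m,e^m\rangle\le\|A_h^{-1/2}P_h\mathcal{P}^m\|\,\|A_h^{1/2}e^m\|$ into the dissipation by Young's inequality with $\|A_h^{-1/2}P_hw\|\le C\|w\|_{L^{6/5}}$, apply Gronwall on $[0,T]$, and use Poincar\'e plus $L_f<\lambda_1$ for a geometric contraction in the uniform-in-time case. Do not keep the $\kappa$- or $L^1$--$L^\infty$ pairings as a fallback, however: for $d\ge 2$ one needs $\kappa>1$, so $\|A_h^{\kappa/2}e^m\|$ cannot be absorbed into $\tau\|A_h^{1/2}e^m\|^2$, the perturbation then enters only linearly against an $O(1)$ a priori bound, and you would recover merely the square root of the claimed rate.
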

{\it Proof of Lemma \ref{them:bound-two-auxili}.}
Note first that
 $\widetilde{X}_{\tau,h}^m-\overline{X}_{\tau,h}^m$  obeys
\begin{align}
\widetilde{X}_{\tau,h}^m-\overline{X}_{\tau,h}^m-(\widetilde{X}_{\tau,h}^{m-1}-\overline{X}_{\tau,h}^{m-1})
+
\tau A_h(\widetilde{X}_{\tau,h}^m-\overline{X}_{\tau,h}^m)
=
\tau P_h \big(F(X(t_m))-F(X_{\tau,h}^{m})\big).
\end{align}
Multiplying this equation by $\widetilde{X}_{\tau,h}^m-\overline{X}_{\tau,h}^m$ and using the fact
$(A_h v_h,v_h )=\|\nabla v_h\|^2$, $\forall v_h\in V_h$ yield
\begin{align}\label{eq:inner-product-e}
\begin{split}
&\frac12(\|\widetilde{X}_{\tau,h}^m-\overline{X}_{\tau,h}^m\|^2
-
\|\widetilde{X}_{\tau,h}^{m-1}-\overline{X}_{\tau,h}^{m-1}\|^2)
+
\tau \|\widetilde{X}_{\tau,h}^m-\overline{X}_{\tau,h}^m\|_1^2
\\
\leq
&
\left<\widetilde{X}_{\tau,h}^m-\overline{X}_{\tau,h}^m
-
(\widetilde{X}_{\tau,h}^{m-1}-\overline{X}_{\tau,h}^{m-1}),\widetilde{X}_{\tau,h}^m-\overline{X}_{\tau,h}^m\right>
+
\tau\left<\nabla (\widetilde{X}_{\tau,h}^m-\overline{X}_{\tau,h}^m),
\nabla (\widetilde{X}_{\tau,h}^m-\overline{X}_{\tau,h}^m)\right>
\\
=
&
\tau\left<F\big(X(t_m)\big)-F\big(\widetilde{X}_{\tau,h}^m\big),\widetilde{X}_{\tau,h}^m-\overline{X}_{\tau,h}^m\right>
+
\tau\left<F(\widetilde{X}_{\tau,h}^m)-F(\overline{X}_{\tau,h}^m),\widetilde{X}_{\tau,h}^m-\overline{X}_{\tau,h}^m\right>
\\
&
+
\tau \left<F(\overline{X}_{\tau,h}^m)-F(X_{\tau,h}^{m}),\widetilde{X}_{\tau,h}^m-\overline{X}_{\tau,h}^m\right>.
\end{split}
\end{align}
Due to Assumption \ref{assum:nonlinearity} and the inequality $ab\leq \frac1{2\epsilon}a^2+\frac\epsilon2b^2$, for any $\epsilon>0$
\begin{align}\label{eq:error-equation-iner}
\begin{split}
&
\frac12(\|\widetilde{X}_{\tau,h}^m-\overline{X}_{\tau,h}^m\|^2-\|\widetilde{X}_{\tau,h}^{m-1}-\overline{X}_{\tau,h}^{m-1}\|^2)
+
\tau \|\widetilde{X}_{\tau,h}^m-\overline{X}_{\tau,h}^m\|_1^2
\\
\leq
&
L_f\tau \|\widetilde{X}_{\tau,h}^m-\overline{X}_{\tau,h}^m\|^2
+
\frac{\tau}{2\epsilon}\|A_h^{-\frac12}P_h\big(F(X(t_m))-F(\widetilde{X}_{\tau,h}^m)
+
F(\overline{X}_{\tau,h}^m)-F(X_{\tau,h}^m)\big)\|^2
\\
&
+
\frac{\tau\epsilon}{2} \| \widetilde{X}_{\tau,h}^m-\overline{X}_{\tau,h}^m\|_1^2.
\end{split}
\end{align}
Summation on $m$, taking $\epsilon=1$ and employing the fact $\widetilde{X}_{\tau,h}^0-\overline{X}_{\tau,h}^0=0$ show
\begin{align}
\begin{split}
\|\widetilde{X}_{\tau,h}^m-\overline{X}_{\tau,h}^m\|^2
\leq
&
2L_f\tau \sum_{l=1}^m \|\widetilde{X}_{\tau,h}^l-\overline{X}_{\tau,h}^l\|^2
\\
&
+
\tau\sum_{l=1}^m \|A_h^{-\frac12}P_h\big(F(X(t_l))-F(\widetilde{X}_{\tau,h}^l)
+
F(\overline{X}_{\tau,h}^l)-F(X_{\tau,h}^l)\big)\|^2.
\end{split}
\end{align}
Further, by Gronwall's inequality and letting $\tau< \frac1{2 L_f}$ we arrive at
\begin{align}
\|\widetilde{X}_{\tau,h}^m-\overline{X}_{\tau,h}^m\|^2
\leq
C(T)\tau\sum_{l=1}^m \|A_h^{-\frac12}P_h\big(F(X(t_l))-F(\widetilde{X}_{\tau,h}^l)
+
F(\overline{X}_{\tau,h}^l)-F(X_{\tau,h}^l)\big)\|^2.
\end{align}
Taking expectation and using the H\"{o}lder inequality lead to
\begin{align}
\mathbb{E}[\|\widetilde{X}_{\tau,h}^m-\overline{X}_{\tau,h}^m\|^{2p}]
\leq
C(T)\tau\sum_{l=1}^m \mathbb{E}[\|A_h^{-\frac12}P_h\big(F(X(t_l))-F(\widetilde{X}_{\tau,h}^l)
+
F(\overline{X}_{\tau,h}^l)-F(X_{\tau,h}^l)\big)\|^{2p}].
\end{align}
In view of \eqref{th:spatal-regu-full-solu}, \eqref{th:spatial-regu-exac-solution}, \eqref{eq:property-first-auxi} and \eqref{lemma:error-bound-auxi}, we infer
\begin{align}\label{eq:bound-FX-FwidetildeX}
\begin{split}
&\|A_h^{-\frac12}P_h\big(F(X(t_m))-F(\widetilde{X}_{\tau,h}^m)
\big)\|_{L^{2p}(\Omega;H)}
\\
\leq
&
C\|F(X(t_m))-F(\widetilde{X}_{\tau,h}^m)
\|_{L^{2p}(\Omega;L^{\frac65}(D))}
\\
\leq
&
C
\left\|\|X(t_m)-\widetilde{X}_{\tau,h}^m\| (1+\|X(t_m)\|^{2q-2}_{L^{3(2q-2)}(D)}+\|\widetilde{X}_{\tau,h}^m\|^{2q-2}_{L^{3(2q-2)}(D)}
\big)\right\|_{L^{2p}(\Omega;\mathbb{R})}
\\
\leq
&
C(h^{\gamma}+\tau^{\frac\gamma2}).
\end{split}
\end{align}
Similarly as above,
\begin{align}\label{eq:bound-FX-FwidetildeX-ii}
\begin{split}
\|A_h^{-\frac12}P_h\big(F(\overline{X}_{\tau,h}^m))-F(X_{\tau,h}^m)
\big)\|_{L^{2p}(\Omega;H)}
\leq
C(h^{\min\{\gamma,\rho\}}+\tau^{\min\{\frac\gamma2,\theta\}}).
\end{split}
\end{align}
Gathering the above three estimates together results in
\begin{align}\label{eq:e-finite-time}
\|\widetilde{X}_{\tau,h}^m-\overline{X}_{\tau,h}^m\|_{L^{2p}(\Omega;H)}
\leq
C(T)(h^{\min\{\gamma,\rho\}}+\tau^{\min\{\frac\gamma2,\theta\}}),
\end{align}
which shows \eqref{lem:eq-bound-solution-full-stochatic}.

In the case $L_f<\lambda_1$, we take $\epsilon=\lambda_1-L_f$ in \eqref{eq:error-equation-iner}
and utilize the fact $\|\nabla v_h\|^2\geq \lambda_1\|v_h\|^2$ for any $v_h\in V_h$, to obtain
\begin{align}\label{eq:error-equation-iner-I}
\begin{split}
&
\tfrac12(\|\widetilde{X}_{\tau,h}^m-\overline{X}_{\tau,h}^m\|^2-\|\widetilde{X}_{\tau,h}^{m-1}-\overline{X}_{\tau,h}^{m-1}\|^2)
+
\tfrac{\lambda_1-L_f}2\tau \|\widetilde{X}_{\tau,h}^m-\overline{X}_{\tau,h}^m\|^2
\\
\leq
&
\tfrac{\tau}{2(\lambda_1-L_f)}\|A_h^{-\frac12}P_h\big(F(X(t_m))-F(\widetilde{X}_{\tau,h}^m)
+
F(\overline{X}_{\tau,h}^m)-F(X_{\tau,h}^m)\big)\|^2
.
\end{split}
\end{align}
Employing the estimate $(1+x)^{-1}\leq e^{-cx}, x\in[0,1]$  for some $c>0$ implies
\begin{align}
\begin{split}
&\|\widetilde{X}_{\tau,h}^m-\overline{X}_{\tau,h}^m\|^2
\leq
\tfrac 1{1+\tau(\lambda_1-L_f)}\|\widetilde{X}_{\tau,h}^{m-1}-\overline{X}_{\tau,h}^{m-1}\|^2
\\
&
+
\tfrac \tau{(1+\tau(\lambda_1-L_f))(\lambda_1-L_f)}\|A_h^{-\frac12}P_h\big(F(X(t_m))
-
F(\widetilde{X}_{\tau,h}^m)+F(\overline{X}_{\tau,h}^m)-F(X_{\tau,h}^m)\big)\|^2
\\
\leq
&
\tfrac{\tau}{\lambda_1-L_f} e^{-c \tau(\lambda_1-L_f)}\|A_h^{-\frac12}P_h\big(F(X(t_m))
-
F(\widetilde{X}_{\tau,h}^m)+F(\overline{X}_{\tau,h}^m)-F(X_{\tau,h}^m)\big)\|^2
\\
&
+
e^{-c \tau(\lambda_1-L_f)}\|\widetilde{X}_{\tau,h}^{m-1}-\overline{X}_{\tau,h}^{m-1}\|^2.
\end{split}
\end{align}
By summation over $m$, noting that $\widetilde{X}_{\tau,h}^0 - \overline{X}_{\tau,h}^0 = 0$ and denoting $\widetilde{c}:=c(\lambda_1-L_f)$, we arrive at
\begin{align}
\begin{split}
&\|\widetilde{X}_{\tau,h}^m-\overline{X}_{\tau,h}^m\|^2
\leq
e^{-\widetilde{c}\tau}\|\widetilde{X}_{\tau,h}^{m-1}-\overline{X}_{\tau,h}^{m-1}\|^2
\\
&
+
\tfrac {\tau e^{-\widetilde{c}\tau}}{\lambda_1-L_f}\|A_h^{-\frac12}P_h\big(F(X(t_m))
-
F(\widetilde{X}_{\tau,h}^m)+F(\overline{X}_{\tau,h}^m)-F(X_{\tau,h}^m)\big)\|^2
\\
\leq
&
\tfrac{\tau}{\lambda_1-L_f} \sum_{j=1}^m e^{-\widetilde{c} t_{m+1-j} }\|A_h^{-\frac12}P_h\big(F(X(t_j))-F(\widetilde{X}_{\tau,h}^j)+F(\overline{X}_{\tau,h}^j)-F(X_{\tau,h}^j)\big)\|^2.
\end{split}
\end{align}
Similarly as in \eqref{eq:e-finite-time}, we use \eqref{eq:bound-FX-FwidetildeX} and \eqref{eq:bound-FX-FwidetildeX-ii} to show
\begin{align}
\begin{split}
\mathbb{E}[\|\widetilde{X}_{\tau,h}^m-\overline{X}_{\tau,h}^m\|^{2p}]
\leq
&
C\tau\sum_{j=1}^m e^{-\widetilde{c} t_{m+1-j}}\mathbb{E}[\|A_h^{-\frac12}P_h\big(F(X(t_m))-F(\widetilde{X}_{\tau,h}^m)
+
F(\overline{X}_{\tau,h}^m)-F(X_{\tau,h}^m)\big)\|^{2p}]
\\
\leq
&
C\tau\sum_{l=1}^me^{-\widetilde{c} t_{m+1-j}}(h^{\min\{\gamma,\rho\}}+\tau^{\min\{\frac\gamma2,\theta\}})^{2p}
\\
\leq
&
C(h^{\min\{\gamma,\rho\}}+\tau^{\min\{\frac\gamma2,\theta\}})^{2p},
\end{split}
\end{align}
where in the first and last inequalities, we used the fact
\begin{align}
 \tau\sup_{m\in \mathbb{N}_0}\sum_{j=1}^m e^{-\widetilde{c} t_{m+1-j}}
 <\infty.
 \end{align}
This shows \eqref{lem:eq-uniform-in-time-bound-solution-full-stochatic-ii} and ends the proof of this lemma. $\square$

Armed with Lemmas \ref{lem:ERROR-AUXI-problem}-\ref{them:bound-two-auxili}, one can easily deduce strong convergence  of the fully-discrete finite element method \eqref{eq:full-discretization} as follows.
\begin{theorem}\label{them:bound-numerical-solution}
Let all conditions in Theorem \ref{th:uniform-moment-bound} be satisfied.
 For any fixed time-point $T\in (0,\infty)$, let $\tau=T/M$, $M\in \mathbb{N}$ be a time-step size. Then
  there
 exist  positive constants $\tau^*, h^*$ and $C=C(T,p,Q)$ such that for all
 $\tau \leq \tau^*$, $h\leq h^*$
 and $\forall\, p\geq 1$,
\begin{align}\label{lem:eq-bound-solution-full-stochatic}
\sup_{m\in \{1,2,\dots,  M\}}
\|X(t_m)-X_{\tau,h}^m\|_{L^p(\Omega;H)}
\leq
C(h^{\min\{\gamma, \rho\}}+\tau^{\min\{\frac\gamma2,\theta\}}).
\end{align}
If additionally $L_f<\lambda_1$, then one can show the uniform-in-time strong error estimates: for any $\tau\in (0, \tau^*)$,
\begin{align}\label{lem:eq-uniform-in-time-bound-solution-full-stochatic}
\sup_{m\in \mathbb{N}_0}
\|X(t_m)-X_{\tau,h}^m\|_{L^p(\Omega;H)}
\leq
C(h^{\min\{\gamma, \rho\}}+\tau^{\min\{\frac\gamma2,\theta\}}).
\end{align}
\end{theorem}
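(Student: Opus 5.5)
The plan is to combine the three lemmas through the triangle inequality \eqref{eq:strong-error-decomposition}, which splits the error as
\[
\|X(t_m)-X_{\tau,h}^m\|_{L^p(\Omega;H)}
\leq
\|X(t_m)-\widetilde{X}_{\tau,h}^m\|_{L^p(\Omega;H)}
+
\|\widetilde{X}_{\tau,h}^m-\overline{X}_{\tau,h}^m\|_{L^p(\Omega;H)}
+
\|\overline{X}_{\tau,h}^m-X_{\tau,h}^m\|_{L^p(\Omega;H)} .
\]
Each of the three terms is then bounded by one of the lemmas just proved.

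For the first term, Lemma \ref{lem:ERROR-AUXI-problem} gives a bound uniformly in $m\in\mathbb{N}_0$ of size $C(h^\gamma+\tau^{\gamma/2})$. Since $h^\gamma\le h^{\min\{\gamma,\rho\}}$ and $\tau^{\gamma/2}\le \tau^{\min\{\gamma/2,\theta\}}$ whenever $h,\tau\le 1$, this is dominated by the right-hand side of \eqref{lem:eq-bound-solution-full-stochatic}. For the third term, Lemma \ref{lem:ERROR-AUXI-problem-ii} gives the bound $C(h^{\min\{\gamma,\rho\}}+\tau^{\min\{\theta,\gamma/2\}})$, again uniformly in $m$.

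The middle term is handled by Lemma \ref{them:bound-two-auxili}. For the finite-horizon statement, I take \eqref{lem:eq-bound-solution-full-stochatic-I} with the constant $C(T)$ coming from the discrete Gronwall argument; this requires $\tau\le\tau^*$ with $\tau^*<1/(2L_f)$ when $L_f>0$. For the uniform-in-time statement under $L_f<\lambda_1$, I invoke \eqref{lem:eq-uniform-in-time-bound-solution-full-stochatic-ii} instead. Its constant does not depend on $T$, because of the exponential weights $e^{-\widetilde c\, t_{m+1-j}}$ and the uniform summability $\tau\sup_m\sum_j e^{-\widetilde c t_{m+1-j}}<\infty$.

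Summing the three bounds and taking the supremum over $m\le M$, respectively over $m\in\mathbb{N}_0$, yields both claims. The case $m=0$ is not covered by the supremum over $m\in\mathbb{N}$ in Lemma \ref{them:bound-two-auxili}, but it is trivial: $X(0)-X^0_{\tau,h}=(I-P_h)X_0$, which is $O(h^\gamma)$ by \eqref{eq:property-Ih} and Assumption \ref{assum:intial-value-data}. The only real point of care is that all constants in the two uniform lemmas are genuinely time-independent. This rests on the uniform moment bounds of Theorem \ref{th:uniform-moment-bound}, \eqref{th:spatial-regu-exac-solution} and \eqref{eq:property-first-auxi}, which feed into \eqref{eq:bound-FX-FwidetildeX} and \eqref{eq:bound-FX-FwidetildeX-ii}. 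I would simply check that no $T$-dependence enters those estimates. With that verified, the theorem follows directly, and there is no substantive obstacle beyond what the lemmas already handle.
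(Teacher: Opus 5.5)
Your proposal is correct and follows the paper's argument. The theorem is obtained by inserting Lemmas \ref{lem:ERROR-AUXI-problem}, \ref{lem:ERROR-AUXI-problem-ii} and \ref{them:bound-two-auxili} into the triangle-inequality splitting \eqref{eq:strong-error-decomposition}, using the finite-horizon or the uniform-in-time ($L_f<\lambda_1$) version of Lemma \ref{them:bound-two-auxili} as appropriate, and your treatment of $m=0$ is fine (in fact the middle term vanishes there, since $\widetilde{X}^0_{\tau,h}=\overline{X}^0_{\tau,h}=P_hX_0$).
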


\section{Uniform-in-time weak convergence analysis}
This section aims to establish uniform-in-time weak error bounds of the proposed fully discrete schemes, which forces us to first derive Malliavin regularity estimates of the numerical approximations.
\subsection{Malliavin regularity estimates of fully discrete finite element approximations}
In this part, we  are devoted to the Malliavin regularity of the numerical solution, which plays an important role in controlling the stochastic integral error term in the weak convergence analysis.
%
Let us start with the introduction of Malliavin derivative. Let $U_0 := Q^{\frac12}(H)$. For any deterministic mapping $\Psi\in L^2\big([0,T]; \mathcal{L}_2(U_0,\mathbb{R})\big)$, let
 $M: L^2\big([0,T]; \mathcal{L}_2(U_0,\mathbb{R})\big)\rightarrow L^2(\Omega;\mathbb{R})$ be an isonormal process, given by
\begin{align}\label{eq:stochastic-integrand-Psi}
M(\Psi):=\int_0^T\Psi(t)\,\dd W(t).
\end{align}
 With this, we  define the family  of  smooth cylindrical random variables by,
\begin{align}
\mathcal{S}(H)
=
\Big\{G=\sum_{i=1}^ng_i\big(M(\Psi_1),\cdots,M(\Psi_m)\big)h_i, g_i\in C_p^\infty(\mathbb{R}^m,\mathbb{R}), h_i\in H, i=1,\cdots, n, n\in \mathbb{N}\Big\},
\end{align}
for $\Psi_j\in L^2\big([0,T]; \mathcal{L}_2(U_0,\mathbb{R})\big)$, $j=1, 2, 3\cdots, m$, $m\in \mathbb{N}$,
where $C_p^\infty(\mathbb{R}^m,\mathbb{R})$  represents  the space of all real-valued $C^\infty$-functions on $\mathbb{R}^m$ with polynomial growth.
For $G\in S(H)$, the  Malliavin derivative of $G$, at time $s\in[0,T]$,  is defined as
\begin{align}
D_sG
:=
\sum_{i=1}^n\sum_{j=1}^m \partial_jg_i\big(M(\Psi_1),\cdots,M(\Psi_m)\big)h_i\otimes \Psi_j(s).
\end{align}
Let $ \mathbb{D}^{1,2}(H)$
be the closure of the set of smooth random variables $\mathcal{S}(H)$ in the space $L^2(\Omega;H)$
with  respect to  the norm
\begin{align}
\|G\|_{\mathbb{D}^{1,2}(H)}
=
\Big(\mathbb{E}\big[\|G\|^2\big]
+
\mathbb{E}\int_0^T\|D_tG\|_{\mathcal{L}_2^0}^2\,\dd t\Big)^{\frac12}.
\end{align}
Then,  the Malliavin integration by parts formula is valid (see, e.g.,\cite[Lemma 2.1]{Daisuke2011Weak}), namely, for any $G\in \mathbb{D}^{1,2}(H)$ and adapted process $\Psi(t)\in L^2\big(\Omega; L^2\big([0,T],\mathcal{L}_2^0\big)$, it holds
\begin{align}\label{eq:integration-by-part-formula}
\mathbb{E}
\big<DG, \Psi\big>_{L^2([0,T], \mathcal{L}_2^0)}
=
\mathbb{E}\Big<G,\int_0^T\Psi(t)\,\dd W(t)\Big>_H.
\end{align}
Additionally, we define the process  $D_s^uG$ by $\big<D_sG,u\big>=D_s^uG$, which represents Malliavin derivative in the direction $u\in U_0$.
Moreover, the chain rule of the Malliavin derivative is valid. For another separable Hilbert space $\mathcal{H}$, if $\rho\in C_b^1(H,\mathcal{H})$, then $\rho(G)\in \mathbb{D}^{1,2}(\mathcal{H})$
and $D_t^\mu(\rho(G))=\rho'(G)\cdot D_t^uG$.

Now, we intend to establish the Malliavin regularity of the numerical approximations, which requires additional properties on $f$ and $f_{\tau,h}$. Under Assumption \ref{assum:nonlinearity}, one can easily deduce the following properties of $f$.
\begin{lemma}
Let Assumption \ref{assum:nonlinearity} be fulfilled. Then, there exist constants $c_6$ and $c_7, c_8, c_9>0$ such that, for all $x,y\in \mathbb{R}$
\begin{align}
f'(x)|x|-sign(x)(2q-2)f(x)
\leq
&
c_6-c_7|x|x^{2q-2},
\label{eq:assumption-f-derivx-2q-2f}
\\
|f'(x)|x|-sign(x)(2q-2)f(x)|
\leq
&
c_8+c_9|x|^{2q-1}.
\label{eq:assumption-f-derivx-2q-2f-II}
\end{align}
\end{lemma}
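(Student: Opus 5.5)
The plan is to compute the function
\[
g(x):=f'(x)|x|-\mathrm{sign}(x)(2q-2)f(x)
\]
explicitly from the polynomial form of $f$ in Assumption~\ref{assum:nonlinearity}. The point is that it has leading behaviour $a_{2q-1}|x|^{2q-1}$, and everything else is of lower degree. Write $a:=a_{2q-1}<0$ and treat the cases $x>0$, $x<0$ and $x=0$ separately.

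\textbf{Case $x>0$.} Here $|x|=x$ and $\mathrm{sign}(x)=1$, so
\[
g(x)=\sum_{j=0}^{2q-1}a_j\,(j-2q+2)\,x^j .
\]
The top coefficient is $a(2q-1-2q+2)=a$, so $g(x)=a\,x^{2q-1}+\sum_{j\le 2q-2}a_j(j-2q+2)x^j$.

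\textbf{Case $x<0$.} Here $|x|=-x$ and $\mathrm{sign}(x)=-1$, so
\[
g(x)=-\sum_{j=0}^{2q-1}a_j\,(j-2q+2)\,x^j .
\]
Since $x^{2q-1}=-|x|^{2q-1}$, the top term is $-a\,x^{2q-1}=a|x|^{2q-1}$.

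\textbf{Case $x=0$.} With the convention $\mathrm{sign}(0)=0$, both sides are trivially handled and $g(0)=0$.

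In all cases, using $|x|x^{2q-2}=|x|^{2q-1}$, we obtain the unified representation
\[
g(x)=a\,|x|\,x^{2q-2}+r(x),\qquad |r(x)|\le \sum_{j=0}^{2q-2}|a_j|\,|2q-2-j|\,|x|^j .
\]

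The only mildly nontrivial step is absorbing the lower-order remainder. For each $j\le 2q-2$, Young's inequality (or simply comparing $|x|\le R$ with $|x|>R$ for a large $R$) gives
\[
|a_j|\,|2q-2-j|\,|x|^j\le \tfrac{|a|}{2(2q-1)}|x|^{2q-1}+C_j .
\]
Summing over $j$ yields $|r(x)|\le \tfrac{|a|}{2}|x|^{2q-1}+C$.

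For \eqref{eq:assumption-f-derivx-2q-2f}, combine this with $a=-|a|$:
\[
g(x)\le -|a|\,|x|x^{2q-2}+\tfrac{|a|}{2}|x|^{2q-1}+C .
\]
This gives the one-sided bound with $c_7=\tfrac{|a|}{2}$ and $c_6=C$.

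For \eqref{eq:assumption-f-derivx-2q-2f-II}, apply the triangle inequality to the same representation:
\[
|g(x)|\le |a||x|^{2q-1}+|r(x)|\le \tfrac{3|a|}{2}|x|^{2q-1}+C .
\]
This gives $c_9=\tfrac{3|a|}{2}$ and $c_8=C$.

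No real obstacle is expected. The only care needed is the sign bookkeeping for $x<0$, namely checking that the leading coefficient of $g$ is $a$, and hence negative, on both half-lines.
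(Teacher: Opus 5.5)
Your proof is correct, and it is the argument the paper has in mind: the paper gives no proof of this lemma and simply asserts it as an easy consequence of Assumption~\ref{assum:nonlinearity}. You write $f'(x)|x|-\mathrm{sign}(x)(2q-2)f(x)=\mathrm{sign}(x)\sum_{j}a_j(j-2q+2)x^j$, show it has leading term $a_{2q-1}|x|^{2q-1}$ on both half-lines (using that $2q-2$ is even because $q$ is an integer), absorb the lower-order terms by Young's inequality, and obtain both bounds with $c_7=|a_{2q-1}|/2$ and $c_9=3|a_{2q-1}|/2$.
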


\begin{lemma}\label{assum:derivate-ftauh-bound}
Let Assumption \ref{assum:nonlinearity} be fulfilled  and $L_f<\lambda_1$.
Then, there exist two constants $\tau^*\in (0,\infty)$ and $h^*\in(0,\infty)$ such that $0<\tau\leq \tau^*$ and $0<h\leq h^*$,
the transformation $f_{\tau,h}$ defined in \eqref{eq:definiton-I-ftauh} satisfies the following condition:
\begin{align}\label{asum:condition-ftauh-derivate}
\begin{split}
2f_{\tau,h}'(x)y^2+\tau |f_{\tau,h}'(x)y|^2
\leq
(\lambda_1+L_f)y^2.
\end{split}
\end{align}
\end{lemma}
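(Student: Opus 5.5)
Dividing \eqref{asum:condition-ftauh-derivate} by $y^2$ (the case $y=0$ is trivial), it suffices to show, with $g:=f_{\tau,h}'(x)$, $\varepsilon:=\beta_1\tau^\theta+\beta_2h^\rho$ and $k:=\frac{2q-2}{\alpha}$, that
\[
2g+\tau g^2=g(2+\tau g)\le\lambda_1+L_f
\qquad\text{for all } x\in\mathbb{R}.
\]
The plan is to get a crude two-sided bound on $g$ and then a sharp upper bound on $g$, and to combine them according to the sign of $g$.

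\textbf{Step 1 (formula and crude bound).} Differentiating \eqref{eq:definiton-I-ftauh} gives
\[
f_{\tau,h}'(x)=\frac{f'(x)}{(1+\varepsilon|x|^{k})^{\alpha}}
-\frac{\alpha k\,\varepsilon\,|x|^{k-1}\,\mathrm{sign}(x)f(x)}{(1+\varepsilon|x|^{k})^{\alpha+1}}.
\]
By \eqref{eq:definiton-II-f}, \eqref{eq:assumption-f} and $\varepsilon^{\alpha}|x|^{2q-2}\le(1+\varepsilon|x|^{k})^{\alpha}$, both terms are bounded by $C(1+\varepsilon^{-\alpha})$. Since $\varepsilon\ge\beta_1\tau^\theta$, this yields
\[
|g|\le C\bigl(1+\tau^{-\alpha\theta}\bigr).
\]
The standing restriction $\alpha\theta<1+\frac d{q(2q-1)}-\frac d4$ forces $\alpha\theta<1$ for the admissible $(d,q)$; this needs to be checked case by case. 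Hence $\tau|g|\le C\tau^{1-\alpha\theta}\le 1$ once $\tau\le\tau^*$.

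\textbf{Step 2 (case $g\le 0$).} For $g\le0$, Step 1 gives $2+\tau g\ge1$, so $g(2+\tau g)\le g\le 0$. It remains to check that $0\le\lambda_1+L_f$ is available. If $L_f\ge-\lambda_1$ this is immediate. Otherwise I would refine the argument using the sharp upper bound of Step 3, which is at most $\max\{L_f,0\}$ up to an $O(\varepsilon)$ error.

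\textbf{Step 3 (case $g>0$, sharp upper bound).} In the derivative formula, the first term is at most $\max\{L_f,0\}$ by \eqref{eq:definiton-I-f-I}, since the denominator is $\ge1$. For the second term I use the dissipativity $\mathrm{sign}(x)f(x)\ge -C$, which follows from $a_{2q-1}<0$ as in \eqref{eq:definiton-III-f}. Combined with $\varepsilon|x|^{k-1}\le C\varepsilon^{1/k}(1+\varepsilon|x|^k)$, this shows the subtracted term is bounded below by $-C\varepsilon^{\min\{1,1/k\}}$. Hence
\[
g\le \max\{L_f,0\}+C\varepsilon^{\min\{1,1/k\}}=:L_f^++\delta(\tau,h),
\]
with $\delta\to0$. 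Then, using Step 1 for $g>0$,
\[
g(2+\tau g)\le (L_f^++\delta)\bigl(2+C\tau^{1-\alpha\theta}\bigr)=2L_f^++o(1).
\]
Since $L_f<\lambda_1$, we have $2L_f<\lambda_1+L_f$ with a strict margin, so choosing $\tau^*,h^*$ small closes this case.

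The main obstacle I expect is the sign bookkeeping when $L_f<0$ (and in particular when $\lambda_1+L_f<0$). There I would replace the bound $\max\{L_f,0\}$ by the exact bound $L_f(1+\varepsilon|x|^k)^{-\alpha}$. I would then argue separately on the region $\varepsilon|x|^k\le1$, where $g\le 2^{-\alpha}L_f+\delta$, and on the complementary region, where the taming term is strongly negative of order $-\varepsilon^{-\alpha}$. Together with the requirement $\tau|g|\le1$, this should still give $g(2+\tau g)\le\lambda_1+L_f$.
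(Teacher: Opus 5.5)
Your framework is sound. The crude bound $|f_{\tau,h}'|\le C(1+\varepsilon^{-\alpha})$, the check that $\alpha\theta<1$, and the split of $g(2+\tau g)$ by the sign of $g$ all work. The gap is in Step 3, where the inequality $\mathrm{sign}(x)f(x)\ge -C$ is backwards. Since $a_{2q-1}<0$, we have $\mathrm{sign}(x)f(x)=a_{2q-1}|x|^{2q-1}+O(|x|^{2q-2})\to-\infty$, and \eqref{eq:definiton-III-f} with $y=0$ gives only an \emph{upper} bound on $\mathrm{sign}(x)f(x)$. Consequently the second term of your derivative formula, $-\alpha k\varepsilon|x|^{k-1}\mathrm{sign}(x)f(x)(1+\varepsilon|x|^k)^{-\alpha-1}$, is positive for large $|x|$ and of order $\varepsilon^{-\alpha}$ where $\varepsilon|x|^k\approx 1$. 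For instance, with $f(x)=-x^3$ and $\alpha=1$,
\[
f_{\tau,h}'(x)=-\frac{3x^2}{1+\varepsilon x^2}+\frac{2\varepsilon x^4}{(1+\varepsilon x^2)^2},
\]
and at $\varepsilon x^2=1$ the second term equals $\frac{1}{2\varepsilon}$. What keeps $f_{\tau,h}'$ bounded above there is the first term, which equals $-\frac{3}{2\varepsilon}$. Bounding it by $\max\{L_f,0\}$ discards exactly the part that compensates. Treating the two terms separately therefore yields only $g\le L_f^++C\varepsilon^{-\alpha}$. The bound $g\le L_f^++\delta(\tau,h)$ that Step 3 relies on is true, but your argument does not establish it.

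The missing idea is this cancellation, which is what the paper uses. With $D:=1+\varepsilon|x|^k$, it rewrites
\[
f_{\tau,h}'(x)=\frac{f'(x)}{D^{\alpha+1}}+\frac{\bigl(f'(x)|x|-(2q-2)\mathrm{sign}(x)f(x)\bigr)\varepsilon|x|^{k-1}}{D^{\alpha+1}}.
\]
It then applies \eqref{eq:assumption-f-derivx-2q-2f}, namely $f'(x)|x|-(2q-2)\mathrm{sign}(x)f(x)\le c_6-c_7|x|^{2q-1}$. This holds because the leading coefficients combine to $(2q-1)a_{2q-1}-(2q-2)a_{2q-1}=a_{2q-1}<0$. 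Combined with your inequality $\varepsilon|x|^{k-1}\le\varepsilon^{1/k}(1+\varepsilon|x|^k)$, this gives $g\le L_f^++|c_6|\varepsilon^{1/k}$.

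If you also keep the term $-c_7 s^{1+\alpha}\varepsilon^{-\alpha}(1+s)^{-\alpha-1}$, where $s=\varepsilon|x|^k$, you get $g\le L_f+o(1)$ uniformly in $x$ when $L_f<0$. That resolves your ``main obstacle'': there $g<0$, so $g(2+\tau g)\le g\le L_f+o(1)<\lambda_1+L_f$, using $2+\tau g\ge 1$ from Step 1.

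With this bound inserted, your Steps 1--2 close the proof. They do so more economically than the paper, which expands $\tau|f_{\tau,h}'|^2$ term by term and absorbs its growth into the $-c_7$ term via a Young inequality. Note also that the paper's step $f'(x)D^{-\alpha-1}\le L_f$ tacitly assumes $L_f\ge 0$.
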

{\it Proof of Lemma \ref{assum:derivate-ftauh-bound}.}
It is easy to see
\begin{align}
\begin{split}
f_{\tau,h}'(x)
=
&
\tfrac{f'(x)}{\big(1+(\beta_1\tau^\theta+\beta_2h^\rho)|x|^{\frac{2q-2}\alpha})^\alpha}
-
sign(x)
\tfrac{(2q-2)f(x)(\beta_1\tau^\theta+\beta_2h^\rho)|x|^{\frac{2q-2}\alpha-1}}{\big(1+(\beta_1\tau^\theta+\beta_2h^\rho)
|x|^{\frac{2q-2}\alpha})^{\alpha+1}}
\\
=
&
\tfrac{f'(x)}{\big(1+(\beta_1\tau^\theta+\beta_2h^\rho)|x|^{\frac{2q-2}\alpha})^{\alpha+1}}
\\
&
+
\tfrac{\big(f'(x)|x|-(2q-2)sign(x)f(x)\big)(\beta_1\tau^\theta+\beta_2h^\rho)|x|^{\frac{2q-2}\alpha-1}}
{\big(1+(\beta_1\tau^\theta+\beta_2h^\rho)
|x|^{\frac{2q-2}\alpha})^{\alpha+1}}.
\end{split}
\end{align}
This together with  Assumption \ref{assum:nonlinearity}  implies
\begin{align}\label{eq:derive-fhtau}
\begin{split}
2f_{\tau,h}'(x)
\leq
&
2L_f
+
\tfrac{(2c_6-2c_7x^{2q-2}|x|)(\beta_1\tau^\theta+\beta_2h^\rho)|x|^{\frac{2q-2}\alpha-1}}{\big(1+(\beta_1\tau^\theta+\beta_2h^\rho)
|x|^{\frac{2q-2}\alpha})^{\alpha+1}}
\\
\leq
&
2L_f
+
\tfrac{2c_6(\beta_1\tau^\theta+\beta_2h^\rho)^{\frac{\alpha}{2q-2}}
((\beta_1\tau^\theta+\beta_2h^\rho)|x|^{\frac{2q-2}\alpha})^{\frac{2q-2-\alpha}{2q-2}}}
{\big(1+(\beta_1\tau^\theta+\beta_2h^\rho)
|x|^{\frac{2q-2}\alpha})^{\alpha+1}}
\\
&
-
\tfrac{2c_7x^{2q-2}(\beta_1\tau^\theta+\beta_2h^\rho)|x|^{\frac{2q-2}\alpha}}{\big(1+(\beta_1\tau^\theta+\beta_2h^\rho)
|x|^{\frac{2q-2}\alpha})^{\alpha+1}}
\\
\leq
&
2L_f
+
2c_6(\beta_1\tau^\theta+\beta_2h^\rho)^{\frac{\alpha}{2q-2}}
-
\tfrac{2c_7(\beta_1\tau^\theta+\beta_2h^\rho)|x|^{\frac{(2q-2)(\alpha+1)}\alpha}}{\big(1+(\beta_1\tau^\theta+\beta_2h^\rho)
|x|^{\frac{2q-2}\alpha})^{\alpha+1}}
.
\end{split}
\end{align}
Similarly as above,
\begin{align}
\begin{split}
\Big|\tfrac{\tau^{\frac12}f'(x)}{\big(1+(\beta_1\tau^\theta+\beta_2h^\rho)|x|^{\frac{2q-2}\alpha}\big)^{\alpha+1}}\Big|
\leq
&
\tfrac{R_f \tau^{\frac12}(1+|x|^{2q-2})}{\big(1+(\beta_1\tau^\theta+\beta_2h^\rho)|x|^{\frac{2q-2}\alpha}\big)^{\alpha+1}}
 \\
 \leq
 &
 R_f\tau^{\frac12}
 +
 \tfrac{R_f \tau^{\frac12}|x|^{2q-2}}
 {\big(1+(\beta_1\tau^\theta+\beta_2h^\rho)|x|^{\frac{2q-2}\alpha}\big)^{\alpha+1}},
\end{split}
\end{align}
and
\begin{align}
\begin{split}
&\left|\frac{\tau^{\frac12}(f'(x)|x|-(2q-2)sign(x)f(x))(\beta_1\tau^\theta+\beta_2h^\rho)|x|^{\frac{2q-2}\alpha-1}}
{\big(1+(\beta_1\tau^\theta+\beta_2h^\rho)
|x|^{\frac{2q-2}\alpha}\big)^{\alpha+1}}\right|
\\
\leq
&
\frac{\tau^{\frac12}(\beta_1\tau^\theta+\beta_2h^\rho)(c_8+c_9|x|^{2q-1})|x|^{\frac{2q-2}\alpha-1}}{\big(1+(\beta_1\tau^\theta+\beta_2h^\rho)
|x|^{\frac{2q-2}\alpha}\big)^{\alpha+1}}
\\
\leq
&
\frac{c_8\tau^{\frac12}(\beta_1\tau^\theta+\beta_2h^\rho)^{\frac{\alpha}{2q-2}}
((\beta_1\tau^\theta+\beta_2h^\rho)|x|^{\frac{2q-2}\alpha})^{\frac{2q-2-\alpha}{2q-2}}}
{\big(1+(\beta_1\tau^\theta+\beta_2h^\rho)
|x|^{\frac{2q-2}\alpha}\big)^{\alpha+1}}
\\
&
+
\frac{c_9\tau^{\frac12}(\beta_1\tau^\theta+\beta_2h^\rho)|x|^{\frac{(2q-2)(\alpha+1)}\alpha}}
{\big(1+(\beta_1\tau^\theta+\beta_2h^\rho)
|x|^{\frac{2q-2}\alpha}\big)^{\alpha+1}}
\\
\leq
&
c_8\tau^{\frac12}(\beta_1\tau^\theta+\beta_2h^\rho)^{\frac{\alpha}{2q-2}}
+
\frac{c_9\tau^{\frac12}(\beta_1\tau^\theta+\beta_2h^\rho)|x|^{\frac{(2q-2)(\alpha+1)}\alpha}}
{\big(1+(\beta_1\tau^\theta+\beta_2h^\rho)
|x|^{\frac{2q-2}\alpha})^{\alpha+1}}
\end{split}
\end{align}
Therefore,
\begin{align}
\begin{split}
\tau|f_{\tau,h}'(x)|^2
\leq
&
\bigg(R_f\tau^{\frac12}
 +
 c_8\tau^{\frac12}(\beta_1\tau^\theta+\beta_2h^\rho)^{\frac{\alpha}{2q-2}}
 \\
 &
+
\frac{R_f \tau^{\frac12}|x|^{2q-2}+c_9\tau^{\frac12}(\beta_1\tau^\theta+\beta_2h^\rho)|x|^{\frac{(2q-2)(\alpha+1)}\alpha}}
{\big(1+(\beta_1\tau^\theta+\beta_2h^\rho)
|x|^{\frac{2q-2}\alpha})^{\alpha+1}}
\bigg)^2
\\
\leq
&
2\tau\Big(R_f
 +
 c_8(\beta_1\tau^\theta+\beta_2h^\rho)^{\frac{\alpha}{2q-2}}\Big)^2
 \\
 &
 +
4\bigg( \frac{R^2_f \tau|x|^{4q-4}+c_9^2\tau(\beta_1\tau^\theta+\beta_2h^\rho)^2|x|^{\frac{(4q-4)(\alpha+1)}\alpha}}
{\big(1+(\beta_1\tau^\theta+\beta_2h^\rho)
|x|^{\frac{2q-2}\alpha})^{2\alpha+2}}\bigg).
\end{split}
\end{align}
This in combination with \eqref{eq:derive-fhtau} shows
\begin{align}
\begin{split}
&2f_{\tau,h}'(x)
+
\tau|f_{\tau,h}'(x)|^2
\\
\leq
&
2L_f
+
2c_6(\beta_1\tau^\theta+\beta_2h^\rho)^{\frac{\alpha}{2q-2}}
+2\tau\Big(R_f
 +
 c_8(\beta_1\tau^\theta+\beta_2h^\rho)^{\frac{\alpha}{2q-2}}\big)^2
\\
&
-
\frac{2c_7(\beta_1\tau^\theta+\beta_2h^\rho)|x|^{\frac{(2q-2)(\alpha+1)}\alpha}}{\big(1+(\beta_1\tau^\theta+\beta_2h^\rho)
|x|^{\frac{2q-2}\alpha}\big)^{\alpha+1}}
 +
 \frac{4(R^2_f \tau|x|^{4q-4}+c^2_9\tau(\beta_1\tau^\theta+\beta_2h^\rho)^2|x|^{\frac{(4q-4)(\alpha+1)}\alpha})}
{\big(1+(\beta_1\tau^\theta+\beta_2h^\rho)
|x|^{\frac{2q-2}\alpha}\big)^{2\alpha+2}}
\\
\leq
&
2L_f
+
2c_6(\beta_1\tau^\theta+\beta_2h^\rho)^{\frac{\alpha}{2q-2}}
+
2\tau\Big(R_f
 +
 c_8(\beta_1\tau^\theta+\beta_2h^\rho)^{\frac{\alpha}{2q-2}}\Big)^2
 -
\frac{2c_7(\beta_1\tau^\theta+\beta_2h^\rho)|x|^{\frac{(2q-2)(\alpha+1)}\alpha}}{\big(1+(\beta_1\tau^\theta+\beta_2h^\rho)
|x|^{\frac{2q-2}\alpha}\big)^{\alpha+1}}
\\
&
 +
 \frac{\frac{4R^2_f(1-\alpha) \tau^{1-\frac{2\alpha\theta}{1+\alpha}}}{\alpha+1}+\frac{8R^2_f \alpha\tau^{1-\frac{2\alpha\theta}{1+\alpha}+\theta} }{\alpha+1}|x|^{\frac{(2q-2)(\alpha+1)}\alpha}+4c^2_9\tau(\beta_1\tau^\theta+\beta_2h^\rho)^2|x|^{\frac{(4q-4)(\alpha+1)}\alpha}}
{\big(1+(\beta_1\tau^\theta+\beta_2h^\rho)
|x|^{\frac{2q-2}\alpha}\big)^{2\alpha+2}}
\\
\leq
&
2L_f
+
2c_6(\beta_1\tau^\theta+\beta_2h^\rho)^{\frac{\alpha}{2q-2}}
+
2\tau\Big(R_f
 +
c_8(\beta_1\tau^\theta+\beta_2h^\rho)^{\frac{\alpha}{2q-2}}\Big)^2
 +
 \frac{4R^2_f(1-\alpha)}{\alpha+1} \tau^{1-\frac{2\alpha\theta}{1+\alpha}}
 \\
 &
 +
  \frac{\left(-2c_7+\frac{8R^2_f \beta_1^{-1} \alpha }{\alpha+1}\tau^{1-\frac{2\alpha\theta}{1+\alpha}}+4c_9^2\beta_1^{-\alpha}\tau^{1-\alpha\theta}\right)(\beta_1\tau^\theta+\beta_2h^\rho)|x|^{\frac{(2q-2)(\alpha+1)}\alpha}
  (1+(\beta_1\tau^\theta+\beta_2h^\rho)
|x|^{\frac{2q-2}\alpha})^{\alpha+1}
  }
{\big(1+(\beta_1\tau^\theta+\beta_2h^\rho)
|x|^{\frac{2q-2}\alpha}\big)^{2\alpha+2}},
\end{split}
\end{align}
where in the second inequality we used the Young inequality
$\tau^{\frac{2\alpha\theta}{1+\alpha}}|x|^{4q-4}
\leq \frac{1-\alpha}{1+\alpha}+\frac{2\alpha}{1+\alpha}\tau^\theta|x|^{\frac{(2q-2)(1+\alpha)}\alpha}$.
Under the condition $L_F<\lambda_1$, there exist constants $\tau^*$ and $h^*$ such that
for $\tau\in (0,\tau^*)$ and $h\in(0,h^*)$,
\begin{align}
-2c_7
+
\tfrac{8R^2_f \beta_1^{-1} \alpha }{\alpha+1}\tau^{1-\frac{2\alpha\theta}{1+\alpha}}+4c_9^2\beta_1^{-\alpha}\tau^{1-\alpha\theta}
\leq
0,
\end{align}
and
\begin{align}
2c_6(\beta_1\tau^\theta+\beta_2h^\rho)^{\frac{\alpha}{2q-2}}
+
2\tau\Big(R_f
 +
c_8(\beta_1\tau^\theta+\beta_2h^\rho)^{\frac{\alpha}{2q-2}}\Big)^2
 +
 \tfrac{4R^2_f(1-\alpha) }{\alpha+1}\tau^{1-\frac{2\alpha\theta}{1+\alpha}}
 \leq
 \lambda_1-L_f.
\end{align}
This shows \eqref{asum:condition-ftauh-derivate} and ends the proof of this lemma. $\square$

Next, we consider the Malliavin derivative of the continuous version of the discrete  solution  process and prove some estimates
needed later. Below, we introduce a continuous version of the fully finite element approximation \eqref{eq:full-discretization}, defined by $X_{\tau,h}(t)=X_{\tau,h}^m$ for $t=t_m$ and, for $t\in [t_m, t_{m+1})$

\begin{align}\label{eq:continouse-verstion-Xtauh}
\begin{split}
X_{\tau,h}(t)
=
X_{\tau,h}^m
+
\int_{t_m}^t(-E_{\tau,h}A_hX_{\tau,h}^m+E_{\tau,h}F_{\tau,h}(X_{\tau,h}^m))\,\dd t
+
\int_{t_m}^tE_{\tau,h}P_h\,\dd W(t).
\end{split}
\end{align}

\begin{lemma}\label{lem:eq-mallivin-derivative-numercial-solu}
Suppose Assumptions
 \ref{assum:linear-operator-A}-\ref{assum:intial-value-data} are valid for $\gamma\in(\frac d2,2]$ or $\gamma\in\left(0,\frac12\right)$ with $Q=I$
 in dimension one. Let $X_{\tau,h}^m$ be the solution of the fully discretization \eqref{eq:full-discretization}. Then the Malliavin derivative of $X_{\tau,h}(t)$ satisfies, for $s<t_m$ and $t\in[t_{m}, t_{m+1})$
 \begin{align}\label{eq:bound-malivin-derive-bound}
 \|A^{\frac{\beta-1}2}D_s^zX_{\tau,h}(t)\|_{L^p(\Omega;H)}
 \leq
 C\|A_h^{\frac{\beta-1}2}D_s^zX_{\tau,h}(t)\|_{L^p(\Omega;H)}
 \leq
 C(Q,X_0,p,\gamma)\|A_h^{\frac{\beta-1}2}P_hz\|,
 \end{align}
where $\beta=\min\{\gamma,1\}$  and the constant $C(Q,X_0,\gamma)$ is independent of $h$, $\tau$, $s$ and $t$.
\end{lemma}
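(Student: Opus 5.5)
The plan is to derive a linear recursion for the Malliavin derivative of the scheme and run a discrete energy estimate in the negative-order norm $|\cdot|_{\beta-1,h}$, with the one-sided derivative bound of Lemma \ref{assum:derivate-ftauh-bound} supplying the contraction. The first inequality in \eqref{eq:bound-malivin-derive-bound} is immediate. Since $\beta-1\in[-1,0]$ and $D_s^zX_{\tau,h}(t)\in V_h$, it follows from \eqref{eq:relation-A-Ah}, or equivalently from \eqref{eq:Ah-A-bound} with $\mu=1-\beta$ applied to elements of $V_h$. The remaining work is the second inequality.

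\emph{Step 1: the recursion.} For $s\in[t_{k-1},t_k)$, the chain rule applied to \eqref{eq:full-discretization} gives $D_s^zX_{\tau,h}^j=0$ for $j<k$ and $D_s^zX_{\tau,h}^k=E_{\tau,h}P_hz$. Writing $\eta^j:=D_s^zX_{\tau,h}^j$, the recursion for $j>k$ is
\begin{align*}
\eta^j+\tau A_h\eta^j=\eta^{j-1}+\tau P_h\big(f_{\tau,h}'(X_{\tau,h}^{j-1})\eta^{j-1}\big).
\end{align*}
For $t\in[t_m,t_{m+1})$, \eqref{eq:continouse-verstion-Xtauh} gives $D_s^zX_{\tau,h}(t)=\eta^m+(t-t_m)E_{\tau,h}\big(-A_h\eta^m+P_hf_{\tau,h}'(X_{\tau,h}^m)\eta^m\big)$. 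It therefore suffices to bound $\eta^m$ uniformly, and then treat the intra-step term with the smoothing of $E_{\tau,h}$.

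\emph{Step 2: energy estimate at $\beta=1$.} Set $\beta=1$ first, which is the $\gamma\ge1$ case. Test the recursion against $\eta^j$. The left side gives $\|\eta^j\|^2+\tau\|\nabla\eta^j\|^2+\cdots$. Since $P_h$ is self-adjoint, the right side squared equals $\|\eta^{j-1}\|^2+\langle(2f_{\tau,h}'+\tau|f_{\tau,h}'|^2)\eta^{j-1},\eta^{j-1}\rangle$ minus a nonnegative term, and by Lemma \ref{assum:derivate-ftauh-bound} this is at most $(1+\tau(\lambda_1+L_f))\|\eta^{j-1}\|^2$. Precisely, use $\|(I+\tau A_h)\eta^j\|^2\ge(1+\tau\lambda_{1,h})^2\|\eta^j\|^2$ together with $\|P_hw\|\le\|w\|$. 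This yields
\[
\|\eta^j\|^2\le\frac{1+\tau(\lambda_1+L_f)}{(1+\tau\lambda_{1,h})^2}\|\eta^{j-1}\|^2 .
\]
Because $\lambda_{1,h}\ge\lambda_1$ and $L_f<\lambda_1$, the factor is at most $1$ for small $\tau$, so $\|\eta^m\|\le\|P_hz\|$ pathwise. No moment bounds on $X_{\tau,h}$ are needed at this level.

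\emph{Step 3: the case $\gamma<1$, i.e. $\beta=\gamma<1$.} Here one must work in the negative norm $|\cdot|_{\beta-1,h}$, and this is the main obstacle: the multiplication operator by $f_{\tau,h}'(X)$ does not commute with $A_h^{(\beta-1)/2}$. I would split $\eta^j=\zeta^j+r^j$, where $\zeta^j:=E_{\tau,h}^{j-k+1}P_hz$ solves the linear part. For $\zeta^j$, Lemma \ref{lem:eq-smooth-ENm} gives $|\zeta^j|_{\beta-1,h}\le|P_hz|_{\beta-1,h}$. The remainder is
\[
r^m=\tau\sum_{j}E_{\tau,h}^{m-j}P_hf_{\tau,h}'(X_{\tau,h}^j)\eta^j .
\]
I would bound it in $H$ through a Gronwall-type contractive argument, using the smoothing estimate $\|A_h^{\frac{1-\beta}2}E^{n}_{\tau,h}\|\le C\min\{t_n^{-\frac{1-\beta}2},t_n^{-2}\}$ from \eqref{lem:eq-spatial-regu-ENm}. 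This is combined with a duality estimate of the form
\[
\big\|A_h^{-\frac{\kappa}2}P_h\big(f'(X)\zeta\big)\big\|\le C\|f'(X)\|_{L^\infty}\|\zeta\|_{L^1}\lesssim\|f'(X)\|_{V}\,|\zeta|_{\beta-1,h},
\]
which holds via \eqref{eq:L_2-L_1} when $\gamma>d/2$, using the $V$-bound \eqref{them:bound-numerical-V} of Theorem \ref{th:uniform-moment-bound}. For $d=1$ with $Q=I$ and $\gamma<\tfrac12$, the same $V$-bound is again available. The first time step's singularity $t^{-(1-\beta)/2}$ is integrable. For the long-time sum, the contraction from Step 2 (exponential decay $e^{-c(\lambda_1-L_f)t}$ of $\|\eta\|$) supplies the uniform-in-time factor. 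Finally, take $L^p(\Omega)$ norms with Hölder, using the uniform moments of $\|X_{\tau,h}^j\|_V$.
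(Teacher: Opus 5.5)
\textbf{There is a genuine gap, and it is in Step 3.} Your Steps 1--2 are correct and agree with the paper: the same recursion and the same one-step contraction $\|(I+\tau A_h)^{-1}(I+\tau P_hf_{\tau,h}'(X_{\tau,h}^{j-1}))\|_{\mathcal{L}(H)}\le 1$ from Lemma \ref{assum:derivate-ftauh-bound}. Your pathwise bound $\|\eta^m\|\le\|P_hz\|$ settles $\beta=1$ even more directly than the paper does. Step 3 is the only place where $\beta<1$ matters, which here happens only in $d=1$.

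The second inequality of your duality chain, $\|\zeta\|_{L^1}\lesssim|\zeta|_{\beta-1,h}$, is false for $\beta<1$, because a negative-order norm cannot control $L^1$. For example, on a uniform mesh in $d=1$, \eqref{eq:bound-discrete-value-I} and $1=\|e_{j,h}\|^2\le\|e_{j,h}\|_{L^1}\|e_{j,h}\|_{L^\infty}$ give $\|e_{j,h}\|_{L^1}\ge\sqrt{L/6}$. Meanwhile $|e_{j,h}|_{\beta-1,h}=\lambda_{j,h}^{\frac{\beta-1}2}$ is arbitrarily small for large $j$. The same obstruction rules out the other reading, $|P_h(f_{\tau,h}'(X)\eta)|_{\beta-1,h}\lesssim\|f_{\tau,h}'(X)\|_V|\eta|_{\beta-1,h}$: a merely bounded continuous multiplier is not bounded on $H^{\beta-1}$ (take $g=\sin(Nx)$ and $\eta\approx\sin(Nx)$). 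So the feedback term cannot be estimated through the negative norm of $\eta$.

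Two further routes also fail. A plain Gronwall argument for $r$ with the random coefficients $\|f_{\tau,h}'(X^j_{\tau,h})\|_{L^\infty}$ produces factors like $\exp(C\tau\sum_j\|X^j_{\tau,h}\|_V^{2q-2})$, which have neither time-uniform nor moment bounds. Using only the decay of $\|\eta^j\|$ from Step 2 does not work either, because it starts from $\|\eta^k\|=\|E_{\tau,h}P_hz\|$. Without a step-size ratio restriction, that quantity can be as large as $\tau^{-\frac{1-\beta}2}|P_hz|_{\beta-1,h}$.

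\textbf{The fix, which is what the paper does with $V_z(m,s)=r^m$.} Keep the negative norm on the explicit linear part only, and let the Step 2 contraction absorb the feedback. Since $\zeta^j=(I+\tau A_h)^{-1}\zeta^{j-1}$, the remainder satisfies $r^k=0$ and
\[
r^j=(I+\tau A_h)^{-1}\big(I+\tau P_hf_{\tau,h}'(X_{\tau,h}^{j-1})\big)r^{j-1}+\tau(I+\tau A_h)^{-1}P_h\big(f_{\tau,h}'(X_{\tau,h}^{j-1})\zeta^{j-1}\big).
\]
The contraction then gives, pathwise,
\[
\|r^m\|\le\tau\sum_{j=k}^{m-1}\|f_{\tau,h}'(X_{\tau,h}^{j})\|_{L^\infty}\|\zeta^j\|,\qquad
\|\zeta^j\|\le C\min\big\{t_{j-k+1}^{-\frac{1-\beta}2},t_{j-k+1}^{-2}\big\}|P_hz|_{\beta-1,h},
\]
where the second bound is \eqref{lem:eq-spatial-regu-ENm} with $\mu=1-\beta$.

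To finish, use three facts: $\zeta^j$ is deterministic, $|f_{\tau,h}'(x)|\le C(1+|x|^{2q-2})$, and the uniform moments \eqref{them:bound-numerical-V} hold. Minkowski's inequality and the summability of the kernel then give $\|r^m\|_{L^p(\Omega;H)}\le C|P_hz|_{\beta-1,h}$, uniformly in $m,\tau,h$. Finally $|\eta^m|_{\beta-1,h}\le|\zeta^m|_{\beta-1,h}+\lambda_{1,h}^{\frac{\beta-1}2}\|r^m\|$, and the intra-step term is handled as you indicate. No duality estimate and no Gronwall argument is needed.
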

{\it Proof of Lemma \ref{lem:eq-mallivin-derivative-numercial-solu}.}
For $s\geq t_{m-1}$, one has $D_s^zX_{\tau,h}^n=0$, for $n=1,2,\cdots, m-1.$ For $t_{m-1}\leq s<t_m$, one sees
\begin{align}
D_s^zX_{\tau,h}^m=E_{\tau,h}P_hz+E_{\tau,h}P_hDF_{\tau,h}(X_{\tau,h}^{m-1})D_sX_{\tau,h}^{m-1}=E_{\tau,h}P_hz.
\end{align}
Hence, it is easy to show, for $s\in[t_{m-1}, t_m)$
\begin{align}\label{eq:milivin-derive-bound-II}
\|D_s^zX_{\tau,h}^m\|_{L^p(\Omega;H)}
\leq
\|E_{\tau,h}P_hz\|
\leq
C\tau^{-\frac{1-\beta}2}\|A^{\frac{\beta-1}2}z\|,
\end{align}
and
\begin{align}\label{eq:milivin-derive-bound-I}
\|A_h^{\frac{\beta-1}2}D_s^zX_{\tau,h}^m\|_{L^p(\Omega;H)}
\leq
\|A_h^{\frac{\beta-1}2}E_{\tau,h}P_hz\|_{L^p(\Omega;H)}
\leq
C\|A^{\frac{\beta-1}2}z\|.
\end{align}
By \eqref{eq:full-discretization} and the chain rule, the Malliavin derivative of
$X_{\tau,h}^m$ can be derived as follows, for $s< t_{m-1}$, $l_s\tau \leq s<(l_s+1)\tau$
\begin{align}\label{eq:mallivin-derivate-X}
D_s^zX_{\tau,h}^{m}
-
D_s^zX_{\tau,h}^{m-1}
=
-
\tau A_hD_s^zX_{\tau,h}^{m}
+
\tau P_hDF_{\tau,h}(X_{\tau,h}^{m-1})D_s^zX_{\tau,h}^{m-1},
\end{align}
where $l_s=\left[\frac s \tau\right]$.
By iteration, the solution of the above problem can be rewritten as
\begin{align}
D_s^zX_{\tau,h}^m
=E_{\tau,h}^{m-l_s}P_hz
+
\tau\sum_{j=l_s+1}^{m-1}E_{\tau,h}^{m-j}P_hDF_{\tau,h}(X_{\tau,h}^j)D_s^zX_{\tau,h}^j.
\end{align}
 Multiplying \eqref{eq:mallivin-derivate-X} by $D_s^zX_{\tau,h}^{m}$
and using the  equality $(x-y)x=\frac12(x^2-y^2+(x-y)^2)$ yield
\begin{align}
\begin{split}
&\frac12(\|D_s^zX_{\tau,h}^{m}\|^2-\|D_s^zX_{\tau,h}^{m-1}\|^2+\|D_s^zX_{\tau,h}^{m}-D_s^zX_{\tau,h}^{m-1}\|^2)
\\
=
&
-\tau\|\nabla D_s^zX_{\tau,h}^{m}\|^2
+
\tau(DF_{\tau,h}(X_{\tau,h}^{m-1})D_s^zX_{\tau,h}^{m-1},D_s^zX_{\tau,h}^{m}-D_s^zX_{\tau,h}^{m-1})
\\
&
+
\tau(DF_{\tau,h}(X_{\tau,h}^{m-1})D_s^zX_{\tau,h}^{m-1},D_s^zX_{\tau,h}^{m-1})
\\
\leq
&
-\tau\|\nabla D_s^zX_{\tau,h}^{m}\|^2
+
\frac12\tau^2\|DF_{\tau,h}(X_{\tau,h}^{m-1})D_s^zX_{\tau,h}^{m-1}\|^2
\\
&
+
\frac12\|D_s^zX_{\tau,h}^{m}-D_s^zX_{\tau,h}^{m-1}\|^2
+
\tau(DF_{\tau,h}(X_{\tau,h}^{m-1})D_s^zX_{\tau,h}^{m-1},D_s^zX_{\tau,h}^{m-1}).
\end{split}
\end{align}
Therefore, we rely on Lemma \ref{assum:derivate-ftauh-bound} to get
\begin{align}
\begin{split}
\tfrac12(\|D_s^zX_{\tau,h}^{m}\|^2-\|D_s^zX_{\tau,h}^{m-1}\|^2)
\leq
&
-\tau \lambda_1\|D_s^zX_{\tau,h}^{m}\|^2
+
\tau(L_f+\frac{\lambda_1-L_f}2)\|D_s^zX_{\tau,h}^{m-1}\|^2,
\end{split}
\end{align}
implying
\begin{align}
\begin{split}
\|D_s^zX_{\tau,h}^{m}\|^2
\leq
&
\frac{1+\tau(\lambda_1+L_f)}{1+2\tau \lambda_1}
\|D_s^zX_{\tau,h}^{m-1}\|^2
\leq
\frac1{1+\frac{\tau (\lambda_1-L_f)}{1+(\lambda_1+L_f)\tau}}\|D_s^zX_{\tau,h}^{m-1}\|^2
\\
\leq
&
\frac1{1+\frac{\tau (\lambda_1-L_f)}{1+(\lambda_1+L_f)\tau^*}}\|D_s^zX_{\tau,h}^{m-1}\|^2.
\end{split}
\end{align}
This means that, for any $\tau\in (0, \tau^*)$
\begin{align}\label{eq:bound-1+tauAh-DF}
\|(1+\tau A_h)^{-1}(1+\tau P_hDF_{\tau,h}( X_{\tau,h}^{m-1})\|_{\mathcal{L}(H)}^2
\leq
\frac1{1+\frac{\tau (\lambda_1-L_f)}{1+(\lambda_1+L_f)\tau^*}}.
\end{align}
By $V_z(m,s):=D^z_sX_{\tau,h}^{m}-E_{\tau,h}^{m-l_s}P_hz$, it follows
\begin{align}
\begin{split}
V_z(m,s)
=
&
V_z(m-1,s)
-
\tau A_hV_z(m,s)
+
\tau P_h DF_{\tau,h}(X_{\tau,h}^{m-1})\cdot V_z(m-1,s)
\\
&
+
\tau P_h DF_{\tau,h}(X_{\tau,h}^{m-1})\cdot E_{\tau,h}^{m-1-l_s}P_hz.
\end{split}
\end{align}
Moreover, by \eqref{lem:eq-spatial-regu-ENm}, \eqref{eq:bound-1+tauAh-DF} and the fact $V(l_s+1,z)=D_s^zX_{\tau,h}^{l_s+1}-E_{\tau,h}P_hz=0$
\begin{align}
\begin{split}
\|V_z(m,s)\|
\leq
&
\|(1+\tau A_h)^{-1}(1+\tau P_hDF_{\tau,h}(X_{\tau,h}^{m-1}) )\cdot V_z(m-1,s)\|
\\
&
+
\|(1+\tau A_h)^{-1}\tau P_h DF_{\tau,h}(X_{\tau,h}^{m-1})\cdot E_{\tau,h}^{m-1-l_s}P_hz\|
\\
\leq
&
\|V_z(m-1,s)\|
+
(1+\lambda_1\tau)^{-1}\tau\| DF_{\tau,h}(X_{\tau,h}^{m-1})\cdot E_{\tau,h}^{m-1-l_s}P_hz\|
\\
\leq
&
C\tau\sum_{j=l_s+1}^{m-1}
\| DF_{\tau,h}(X_{\tau,h}^j)\cdot E_{\tau,h}^{j-l_s}P_hz\|
\\
\leq
&
C\tau\sum_{j=l_s+1}^{m-1} \min\left\{t_{j-l_s}^{-\frac\mu2},t_{j-l_s}^{-2}\right\}
\left(1+\|X_{\tau,h}^j\|^{2q-2}_V\right)\|A_h^{-\frac\mu 2}P_hz\|,
\end{split}
\end{align}
for $\mu\in[0, 1]$,
which together with  \eqref{eq:sum} leads to
\begin{align}
\begin{split}
\|V_z(m,s)\|_{L^p(\Omega;H)}
\leq
&
C\tau\sum_{j=l_s+1}^{m-1} \min\left\{t_{j-l_s}^{-\frac\mu2}, t_{j-l_s}^{-2}\right\}\|A_h^{-\frac\mu2}P_hz\| (1+\sup_{m\in \mathbb{N}}\|X_{\tau,h}^j\|^{2q-2}_{L^{(2q-2)p}(\Omega,V)})
\\
\leq
&
C\|A_h^{-\frac\mu2}P_hz\|.
\end{split}
\end{align}
Then, from  \eqref{eq:relation-A-Ah} and \eqref{lem:eq-spatial-regu-ENm}, it follows that, for $s<t_{m-1}$
\begin{align}
\begin{split}
\|D^z_sX_{\tau,h}^{m}\|_{L^p(\Omega;H)}
\leq
&
\|V_z(m,s)\|_{L^p(\Omega;H)}
+
\|E_{\tau,h}^{m-l_s}P_hz\|
\\
\leq
&
C\left(1+\min\left\{t_{m-l_s}^{-\frac{1-\beta}2}, t_{m-l_s}^{-2}\right\}\right)\|A^{\frac{\beta-1}2}z\|.
\end{split}
\end{align}
Hence by the above estimate and \eqref{eq:milivin-derive-bound-II}, we have for $s<t_m$
 \begin{align}\label{eq:milivin-derive-bound-IIi}
\|D_s^zX_{\tau,h}^m\|_{L^p(\Omega;H)}
\leq
C\left(1+\min\left\{t_{m-l_s}^{-\frac{1-\beta}2}, t_{m-l_s}^{-2}\right\}\right)\|A^{\frac{\beta-1}2}z\|.
\end{align}
Furthermore, the a priori estimate of $X_{\tau,h}^j$, the smooth regularity of $E_{\tau,h}^j$ in Lemma \ref{lem:eq-smooth-ENm} and the above estimate imply, for $s<t_{m}$
\begin{align}\label{eq:bund-malvin-derive-xhn}
\begin{split}
&\|A_h^{\frac{\beta-1}2}D^z_sX_{\tau,h}^{m}\|_{L^p(\Omega;H)}
\\
\leq
&
\|A_h^{\frac{\beta-1}2}E_{\tau,h}^{m-l_s}P_hz\|
+
\sum_{j=l_s+1}^{m-1}\|A_h^{\frac{\beta-1}2}E_{\tau,h}^{m-j}DF_{\tau,h}(X_{\tau,h}^j)D_s^zX_{\tau,h}^j\|_{L^p(\Omega;H)}
\\
\leq
&
\|A_h^{\frac{\beta-1}2}P_hz\|
+
C\sum_{j=l_s+1}^{m-1}\left\|(1+\|X_{\tau,h}^j\|_V^{2q-2})\|D_s^zX_{\tau,h}^j\|\right\|_{L^p(\Omega;\mathbb{R})}
\\
\leq
&
\|A^{\frac{\beta-1}2}z\|
+
C\tau\sum_{j=l_s+1}^{m-1} \left(1+\min\{t_{j-l_s}^{-\frac{1-\beta}2}, t_{j-l_s}^{-2}\}\right) \|A^{\frac{\beta-1}2}z\|
\\
\leq
&
C\|A^{\frac{\beta-1}2}z\|.
\end{split}
\end{align}

Next, we show \eqref{eq:bound-malivin-derive-bound} based on the above estimates.
By \eqref{eq:continouse-verstion-Xtauh} and the chain rule, the Malliavin derivative of $X_{\tau,h}(t)$, for $t\in [t_m, t_{m+1})$
can be derived as follows, for $s<t_m$,
\begin{align}\label{eq:malavin-derive-continouse-verstion-Xtauh}
\begin{split}
D_s^zX_{\tau,h}(t)
=
D_s^zX_{\tau,h}^m
-
(t-t_m)E_{\tau,h}A_hD_s^zX_{\tau,h}^m+(t-t_m)E_{\tau,h}DF_{\tau,h}(X_{\tau,h}^m)D_sX_{\tau,h}^m.
\end{split}
\end{align}
Similarly as in \eqref{eq:bund-malvin-derive-xhn}, we obtain, for $s<t_m$,
\begin{align}
\begin{split}
&\|A_h^{\frac{\beta-1}2}D_s^zX_{\tau,h}(t)\|_{L^p(\Omega;  H)}
\\
\leq
&
\|A_h^{\frac{\beta-1}2}D_s^zX_{\tau,h}^m\|_{L^p(\Omega;H)}
+
(t-t_m)\|A_hE_{\tau,h}A_h^{\frac{\beta-1}2}D_s^zX_{\tau,h}^m\|_{L^p(\Omega;H)}
\\
&
+
(t-t_m)\|E_{\tau,h}A_h^{\frac{\beta-1}2}P_hDF_{\tau,h}(X_{\tau,h}^m)D_s^zX_{\tau,h}^m\|_{L^p(\Omega;H)}
\\
\leq
&
C(1+
+
\tau\tau^{-1})\|A_h^{\frac{\beta-1}2}D_s^zX_{\tau,h}^m\|_{L^p(\Omega;H)}
+
\tau\|P_hDF_{\tau,h}(X_{\tau,h}^m)D_s^zX_{\tau,h}^m\|_{L^p(\Omega;H)}
\\
\leq
&
C \left(1+\tau\min\{t_{m-l_s}^{-\frac{1-\beta}2}, t_{m-l_s}^{-2}\}\right)\|A^{\frac{\beta-1}2}z\|
\leq C(Q,X_0,p,\gamma)\|A^{\frac{\beta-1}2}z\|.
\end{split}
\end{align}
This shows, for $s<t_m$ and $t\in [t_m, t_{m+1})$
\begin{align}
\begin{split}
\|A_h^{\frac{\beta-1}2}D_sX_{\tau,h}(t)\|_{L^p(\Omega;  \mathcal{L}_2^0)}
=
&
\left(\mathbb{E}\left[\Big(\sum_{l\in \mathbb{N}^+}\|A_h^{\frac{\beta-1}2}D_s^{Q^{\frac12}e_l}X_{\tau,h}(t)\|^2\Big)^{\frac p2}\right]\right)^{\frac1p}
\\
\leq
&
C\left(\left[\Big(\sum_{l\in \mathbb{N}^+}\|A^{\frac{\beta-1}2}Q^{\frac12}e_l\|^2\Big)^{\frac p2}\right]\right)^{\frac1p}
\\
\leq
&
C\|A^{\frac{\beta-1}2}Q^{\frac12}\|_{\mathcal{L}_2(H)}.
\end{split}
\end{align}
The proof of this lemma is thus complete. $\Box$

\subsection{Uniform-in-time weak convergence rate}
The goal of this subsection is to carry out the uniform-in-time weak convergence analysis of the fully discrete finite element approximation
\eqref{eq:full-discretization}.

Under the condition $L_f<\lambda_1$, we can acquire  an exponential convergence to equilibrium for the SPDE \eqref{eq:parabolic-SPDE}
(see, e.g. \cite[Proposition 3.3]{brehier2022approximation}, \cite[Theorem 12.5]{goldys2006stochasti}).
\begin{proposition}\label{pro:exponential-convergence-equli}
Let Assumptions \ref{assum:linear-operator-A}-\ref{assum:noise-term} be fulfilled and let $L_f<\lambda_1$.
Let $X(t,x)$ be the mild solution of \eqref{eq:parabolic-SPDE} with initial value $X(0):=x\in H$. Then there exist constants $c>0$ and
$C>0$ such that for any $\varphi\in C_b^2(H)$, $t\geq 0$ and $u,v\in H$
\begin{align}
\left|\mathbb{E}[\varphi(X(t,u))]-\mathbb{E}[\varphi(X(t,v))]\right|
\leq
C\|\varphi\|_{C_b^1(H)}
e^{-ct}\|u-v\|.
\end{align}
\end{proposition}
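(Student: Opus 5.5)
The plan is a synchronous coupling argument. Let $X(t,u)$ and $X(t,v)$ be driven by the same Wiener process $W$. Then the difference $e(t):=X(t,u)-X(t,v)$ contains no noise, since the stochastic convolution $W_A(t)$ cancels. It solves, pathwise, the random PDE
\[
\frac{\dd e(t)}{\dd t}=-Ae(t)+F(X(t,u))-F(X(t,v)),\qquad e(0)=u-v.
\]
The goal is the pathwise contraction estimate $\|e(t)\|\leq e^{-(\lambda_1-L_f)t}\|u-v\|$. Once we have it, the mean value theorem gives
\[
\left|\mathbb{E}[\varphi(X(t,u))]-\mathbb{E}[\varphi(X(t,v))]\right|\leq \|\varphi\|_{C_b^1(H)}\,\mathbb{E}\|e(t)\|\leq \|\varphi\|_{C_b^1(H)}e^{-(\lambda_1-L_f)t}\|u-v\|,
\]
which is the claim with $c=\lambda_1-L_f>0$ and $C=1$.

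To get the contraction, take the $H$-inner product of the equation with $e(t)$. The linear part gives $\langle -Ae,e\rangle=-\|\nabla e\|^2\leq-\lambda_1\|e\|^2$ by the Poincaré inequality, which is the spectral bound behind \eqref{eq:property-E}. For the nonlinear part, apply the one-sided Lipschitz bound \eqref{eq:definiton-I-f-I} pointwise through the mean value theorem, $(f(a)-f(b))(a-b)\leq L_f|a-b|^2$. This yields
\[
\frac12\frac{\dd}{\dd t}\|e(t)\|^2\leq-(\lambda_1-L_f)\|e(t)\|^2,
\]
and Gronwall's inequality then gives the exponential decay. Note that if $L_f<0$ one may simply replace $L_f$ by $0$.

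The main obstacle is justifying the energy identity. The mild solution is not a strong solution, and for initial data $u,v\in H$ it need not be in $L^{4q-2}(D)$, while Theorem \ref{them:property-solution} assumes $L^{4q-2}$ initial data. I would handle this in two stages.

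First, take $u,v$ smooth, say in $H^2\cap L^{4q-2}(D)$. Then $e$ is a classical solution on $(0,\infty)$ of a deterministic parabolic equation with locally bounded coefficients. Its regularity comes from the splitting $X=Y+W_A$ used in the proof of Theorem \ref{them:property-solution}, because $Y(t,u)-Y(t,v)=e(t)$. I would justify the identity either via a Galerkin or spectral truncation $P_N$, passing to the limit $N\to\infty$, or via a Yosida-type regularization.

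Second, extend to general $u,v\in H$. The contraction inequality itself shows that the solution map is Lipschitz in $H$ uniformly in $t$ on a dense set, so it extends by density. The weak estimate then passes to the limit because $\varphi\in C_b^1$ is bounded and Lipschitz, using dominated convergence.

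If one prefers to avoid the density step, the proposition can simply be stated for initial data satisfying Assumption \ref{assum:intial-value-data}, which is all that the subsequent weak error analysis needs.
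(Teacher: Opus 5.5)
Your proposal is correct: coupling both solutions through the same noise removes the additive term, and the bounds $f'\le L_f$ and $\|\nabla e\|^2\ge\lambda_1\|e\|^2$ give the pathwise contraction $\|X(t,u)-X(t,v)\|\le e^{-(\lambda_1-L_f)t}\|u-v\|$, which yields the claim with $c=\lambda_1-L_f$ and $C=1$. The paper gives no proof of its own and refers to \cite[Proposition 3.3]{brehier2022approximation}; your argument, including the regularization and density justification of the energy identity, is essentially the standard synchronous-coupling contraction proof that the cited result relies on.
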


Due to the Doob theorem, Proposition \ref{pro:exponential-convergence-equli} ensures the existence of the unique invariant measure of SPDE \eqref{eq:parabolic-SPDE}.
\begin{theorem}
Let Assumptions \ref{assum:linear-operator-A}-\ref{assum:noise-term} be fulfilled and let $L_f<\lambda_1$. Then the SPDE \eqref{eq:parabolic-SPDE} admits a unique invariant measure $\mu$.
\end{theorem}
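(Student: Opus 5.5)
The plan is to obtain existence of an invariant measure from tightness (Krylov--Bogoliubov) and uniqueness from the exponential contraction of Proposition \ref{pro:exponential-convergence-equli}, in the form of a Doob-type argument. Throughout we work with the Markov transition semigroup $P_t\varphi(x):=\mathbb{E}[\varphi(X(t,x))]$, $\varphi\in C_b(H)$. This semigroup is well defined and Feller thanks to Theorem \ref{them:property-solution}, using the well-posedness of the mild solution and its continuous dependence on the initial datum. Continuous dependence in $H$ follows from the one-sided Lipschitz bound \eqref{eq:definiton-I-f-I}: the difference of two solutions satisfies the energy estimate $\frac{\dd}{\dd t}\|X(t,u)-X(t,v)\|^2\le 2(L_f-\lambda_1)\|X(t,u)-X(t,v)\|^2$.

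\emph{Existence.} Fix a deterministic initial value $x$ that is smooth enough for Assumption \ref{assum:intial-value-data}, for instance $x=0$. By \eqref{th:spatial-regu-exac-solution} we have $\sup_{t\ge0}\mathbb{E}\|X(t,x)\|_{\gamma}^2\le C$ with $\gamma>0$. Since the embedding $H^\gamma\subset H$ is compact, Chebyshev's inequality shows that the family of laws $\{\mathcal{L}(X(t,x))\}_{t\ge0}$ is tight in $H$. The time averages $\mu_T:=\frac1T\int_0^T\mathcal{L}(X(t,x))\,\dd t$ are therefore tight as well, and Prokhorov's theorem gives a weakly convergent subsequence $\mu_{T_k}\to\mu$. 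The Feller property then allows the standard Krylov--Bogoliubov argument to show that $\mu$ is invariant: $\int P_t\varphi\,\dd\mu=\lim_k\frac1{T_k}\int_0^{T_k}\mathbb{E}\varphi(X(s+t,x))\,\dd s=\int\varphi\,\dd\mu$, because the boundary terms are $O(t/T_k)$. In addition, $\mu$ inherits finite moments, $\int\|y\|^2\mu(\dd y)<\infty$, by lower semicontinuity of the norm.

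\emph{Uniqueness.} Suppose $\mu,\nu$ are two invariant measures. We first need finite first moments, and for this I would use the dissipativity \eqref{eq:dissipativity-property-Phf-in-L2-norm}/\eqref{eq:definiton-III-f}, which yields $\mathbb{E}\|X(t,u)\|\le C(1+e^{-ct}\|u\|)$. Applying invariance with truncated test functions $\|\cdot\|\wedge R$ then shows that any invariant measure has finite first moment, bounded independently of $R$. Next, for $\varphi\in C_b^2(H)$, invariance and Proposition \ref{pro:exponential-convergence-equli} give
\[
\Big|\int\varphi\,\dd\mu-\int\varphi\,\dd\nu\Big|
=\Big|\iint\big(P_t\varphi(u)-P_t\varphi(v)\big)\mu(\dd u)\nu(\dd v)\Big|
\le C\|\varphi\|_{C_b^1}e^{-ct}\iint\|u-v\|\,\mu(\dd u)\nu(\dd v).
\]
Letting $t\to\infty$ gives $\int\varphi\,\dd\mu=\int\varphi\,\dd\nu$ for all $\varphi\in C_b^2(H)$. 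This class is measure-determining on a separable Hilbert space, since it contains smooth cylindrical functions, and hence $\mu=\nu$.

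The main obstacle I expect is the moment bound for an \emph{arbitrary} invariant measure in the uniqueness step. If this is delicate, I would sidestep it by applying Proposition \ref{pro:exponential-convergence-equli} to the bounded function $\min\{1,\|u-v\|\}$ directly. For $\|u-v\|\ge1$ one instead uses the trivial bound $2\|\varphi\|_\infty$, combined with the contraction on bounded sets and the fact that by time $t$ both solutions enter a fixed ball with high probability, uniformly in the starting point, thanks to the $L^{2q}$ dissipativity. This gives convergence to $0$ by dominated convergence.
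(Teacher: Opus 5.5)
Your proof is correct but takes a different, more explicit route than the paper, which settles this theorem in one sentence by invoking ``the Doob theorem'' together with Proposition~\ref{pro:exponential-convergence-equli}, with no separate existence argument. You get existence by Krylov--Bogoliubov, with tightness from \eqref{th:spatial-regu-exac-solution} at $x=0$ and the compact embedding $H^\gamma\hookrightarrow H$ for $\gamma>0$. You get uniqueness by integrating the contraction of Proposition~\ref{pro:exponential-convergence-equli} against $\mu\otimes\nu$. This averaging is really the mechanism behind the paper's claim, since the classical Doob theorem needs strong Feller and irreducibility, and Proposition~\ref{pro:exponential-convergence-equli} supplies neither. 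A contraction-only route is also available: uniform moments plus the contraction make $t\mapsto\mathcal{L}(X(t,x))$ Cauchy in the bounded-Lipschitz metric. That would give existence, uniqueness and exponential convergence to $\mu$ at once, without compactness. Your route instead uses compactness for existence and the contraction only for uniqueness.

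Two simplifications are worth making. First, the moment bound for an arbitrary invariant measure, which you flag as the main obstacle, is not needed. The quantity $|P_t\varphi(u)-P_t\varphi(v)|\le\min\{2\|\varphi\|_\infty,\,C\|\varphi\|_{C_b^1}e^{-ct}\|u-v\|\}$ is bounded and tends to $0$ for each fixed $(u,v)$. So dominated convergence under $\mu\otimes\nu$ finishes the proof directly, and the ``enter a fixed ball'' detour can be dropped. Second, because the noise is additive, your own energy estimate already gives the pathwise bound $\|X(t,u)-X(t,v)\|\le e^{-(\lambda_1-L_f)t}\|u-v\|$. This implies Proposition~\ref{pro:exponential-convergence-equli} by itself, which makes your argument fully self-contained.
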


For the weak convergence analysis below, we need to consider the spectral Galerkin method for \eqref{eq:parabolic-SPDE}. For $ N\in \mathbb{N}$, we define a finite-dimensional subspace $H^N\subset H$ by $H^N:=span\{e_1,e_2,\cdots, e_N\}$, where $\{e_i\}_{i=1}^N$ is the $N$-first eigenvectors of the dominant linear operator $A$. Let $P_N$ be the projection operator from $H$ to $H^N$, defined by
$P_N v¡êo=\sum_{n=1}^N\left<v,e_n\right>e_n$, $v\in H^\vartheta$, $\vartheta\in \mathbb{R}^+$. It is easy to show
\begin{align}\label{eq:projection-property-Pn}
\|(I-P_N)v\|
\leq
\lambda_N^{-\frac{\vartheta}2}\|v\|_\vartheta,\;\forall v\in  H^\vartheta, \vartheta\geq 0.
\end{align}
Then the spectral Galerkin approximation of \eqref{eq:parabolic-SPDE} is given by
\begin{align}\label{eq:spectral-galerkin-parabolic-SPDE}
\,\dd X^N(t)
=
-A_NX^N(t)+P_NF(X^N(t))\,\dd t
=
\,P_N\dd W(t),\; X^N(0)=P_NX_0,
\end{align}
where $A_N=P_NA$. Let $\varphi\in C_b^2(H)$  and we define
\begin{align}
\mu^N(t,x):=\mathbb{E}[\varphi(X^N(t,x))], \;t\geq 0, \;x\in H,
\end{align}
which is the unique solution of the Kolmogorov equation associated with $X^N(t,x)$, (see \cite[Theorem 9.16]{da2014stochastic})
\begin{align}
\partial_t \mu^N(t,x)
=D\mu^N(t,x)\cdot(-A_Nx+P_NF(x))
+
\frac12 \mathrm{Tr}[D^2\mu^N(t,x)P_NQ^{\frac12}(P_NQ^{\frac12})^*],
\end{align}
with $\mu^N(0,\cdot)=\varphi(P_N\cdot)$.

The next lemma presents the uniform-in-time moments bound of the numerical solution $X^N(t,x)$, (see, e.g.,  \cite[Lemma 5.5]{jiang2025uniform}, \cite[Lemma 2]{Cui2021Weak}).
\begin{lemma}
Suppose Assumptions
 \ref{assum:linear-operator-A}-\ref{assum:intial-value-data} are valid for $\gamma\in (\frac  d2,2]$ or $\gamma\in (0,\frac12)$ with $Q=I$ in dimension one. Let $X^N(t), t\geq 0$ be the mild solution of \eqref{eq:spectral-galerkin-parabolic-SPDE} with the initial value $X^N(0):=X_0^N$. Then, there exists a constant $C:=C(Q,p,q,\gamma)$ such that
\begin{align}
\sup_{t\geq 0}\|X^N(t)\|_{L^p(\Omega;V)}
\leq
C
\left(1
+
\|X^N_0\|_{L^{\frac{(2q-1)[(2q-2)((8q-8)-(2q-3)d)+4]p}{4-(2q-3)d}}(\Omega;V)}^{\frac{(2q-1)[(2q-2)((8q-8)-(2q-3)d)+4]}{4-(2q-3)d}}\right),
\end{align}
where  $C$  is independent of $m$.
\end{lemma}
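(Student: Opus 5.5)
We prove the uniform-in-time $V$-bound for the spectral Galerkin solution $X^N$ by mimicking the proof of Theorem \ref{them:property-solution}, with a bootstrap from $L^{2q}(D)$ up to $V=C(D,\mathbb{R})$. We split $X^N(t)=Y^N(t)+W_A^N(t)$, where $W_A^N(t):=\int_0^tE(t-s)P_N\,\dd W(s)$ and $Y^N$ solves the random PDE
\[
\dd Y^N=-A_NY^N\,\dd t+P_NF(Y^N+W_A^N)\,\dd t,\qquad Y^N(0)=P_NX_0.
\]

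\textbf{Step 1 (stochastic convolution).} Since $P_N$ commutes with $E(t)$ and $\|P_N\|_{\mathcal{L}(H)}\le1$, the condition \eqref{eq:ass-AQ-condition} with $\gamma>\frac d2$ gives $\sup_{t\ge0}\|A^{\kappa/2}W_A^N(t)\|_{L^p(\Omega;H)}\le C$ for some $\kappa\in(\frac d2,\gamma)$. This uses the exponential decay $e^{-\lambda_1 t}$ from \eqref{eq:property-E} together with a factorization argument. By \eqref{eq:embedding-Hd-C} we obtain $\sup_t\|W_A^N(t)\|_{L^p(\Omega;V)}\le C$ uniformly in $N$. In the case $d=1$, $Q=I$, $\gamma<\frac12$ we take the $V$-bound of the stochastic convolution from \cite[Lemma 5.5]{jiang2025uniform}. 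That bound rests on the explicit kernel and Kolmogorov/Garsia-type estimates and is again uniform in $N$.

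\textbf{Step 2 (pathwise energy bounds).} We test the $Y^N$ equation with $Y^N$ and use \eqref{eq:definiton-III-f}, the dissipativity surviving the projection because $P_N$ is $H$-orthogonal. Gronwall then gives an exponentially weighted bound on $\|Y^N(t)\|^2$ and on $\int e^{-c(t-s)}\|Y^N(s)\|_{L^{2q}}^{2q}\,\dd s$ in terms of $\|P_NX_0\|$ and $\|W_A^N\|_{L^{2q}}$. Testing with $A_NY^N$ gives an $H^1$ bound: here, unlike the finite element setting, the $\dot H^1$-dissipativity $\langle -\nabla F(v),\nabla v\rangle\ge -C\|\nabla v\|^2$ is usable, since $P_N$ commutes with $A$. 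The cross term involving $W_A^N$ is absorbed through $f'(x)\le L_f$ and Young's inequality, using the $V$-bound of $W_A^N$ from Step 1.

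\textbf{Step 3 (bootstrap to $V$ and tracking exponents).} We use the mild formula $Y^N(t)=E(t)P_NX_0+\int_0^tE(t-s)P_NF(X^N(s))\,\dd s$. We bound $\|Y^N(t)\|_V\le C\|A^{\kappa/2}Y^N(t)\|$, estimate $\|A^{\kappa/2}E(t-s)P_N A^{\delta/2}\|\le C(t-s)^{-(\kappa+\delta)/2}e^{-\lambda_1(t-s)/2}$, and bound $\|A^{-\delta/2}F(X^N)\|$ by $\|F(X^N)\|_{L^r}$ via \eqref{lem:interpolation-property}. We iterate through intermediate $L^r$ levels exactly as in Case II of Theorem \ref{them:property-solution}. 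Each level trades a Sobolev gain against the growth of degree $2q-1$ of $f$, so every moment used is a power of $\|X_0^N\|_V$ and $\|W_A^N\|$. The main obstacle is keeping the constants time-independent while the polynomial powers multiply; the stated exponent $\frac{(2q-1)[(2q-2)((8q-8)-(2q-3)d)+4]}{4-(2q-3)d}$ records exactly this accumulated power. To control it we interpolate Gagliardo–Nirenberg between the $H^1$ bound and the $L^\infty$ norm. This requires $4-(2q-3)d>0$, which holds under Assumption \ref{assum:nonlinearity}, i.e.\ for $q=2$ when $d=3$. We then close the estimate with a Young inequality, and take $p$-th moments at the end using Hölder.
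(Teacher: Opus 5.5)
\textbf{There is a genuine gap: your $H^1$ step is not closed, and your bound depends on more than $\|X_0^N\|_V$.} The paper gives no proof of this lemma; it only cites \cite{jiang2025uniform,Cui2021Weak}. So your sketch has to stand on its own. Its skeleton is sensible: remove the Ornstein--Uhlenbeck part, do an $L^2$ energy estimate, do an $H^1$ energy estimate using that $P_N$ commutes with $A$, then a mild-formula step to $V$.

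\textbf{The $H^1$ step.} If you expand $\langle F(Y^N+W_A^N),AY^N\rangle$ by the chain rule, you are left with $\langle f'(Y^N+W_A^N)\nabla W_A^N,\nabla Y^N\rangle$. This needs $\|\nabla W_A^N\|$, which for $\gamma<1$ is not bounded uniformly in $N$ (for $Q=I$, $d=1$ it grows like $N^{1/2}$). Neither $f'\le L_f$ nor the $V$-bound of $W_A^N$ controls it. The usable splitting is $F(Y^N)+[F(Y^N+W_A^N)-F(Y^N)]$. Apply \eqref{eq:definiton-I-f-I} only to $\langle F(Y^N),AY^N\rangle$, and use \eqref{eq:definiton-IIII-f} plus Young for the rest:
\[
\big|\langle F(Y^N+W_A^N)-F(Y^N),AY^N\rangle\big|
\le \tfrac14\|AY^N\|^2+C\|W_A^N\|_V^2\big(1+\|W_A^N\|_V^{4q-4}+\|Y^N\|_{L^{4q-4}(D)}^{4q-4}\big).
\]
At this stage you only have $\|Y^N\|$ pointwise in time and $\|Y^N\|_{L^{2q}(D)}^{2q}$ integrated in time. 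Neither controls $\|Y^N\|_{L^{4q-4}(D)}^{4q-4}$, so "using the $V$-bound of $W_A^N$" does not close the estimate.

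The missing idea is Gagliardo--Nirenberg between $L^2$ and $\mathrm{dom}(A)$:
$\|Y\|_{L^{4q-4}(D)}^{4q-4}\le C\|AY\|^{\frac{(2q-3)d}{2}}\|Y\|^{\frac{(8q-8)-(2q-3)d}{2}}$. Follow it with Young's inequality with exponents $\frac{4}{(2q-3)d}$ and $\frac{4}{4-(2q-3)d}$ to absorb into $\|AY^N\|^2$. This is exactly where the condition $4-(2q-3)d>0$ and the factor $(8q-8)-(2q-3)d$ in the stated exponent come from. The outer factor $2q-1$ then comes from $\|F(X^N)\|\le C(1+\|X^N\|_{L^{4q-2}(D)}^{2q-1})$ and $H^1\subset L^{4q-2}(D)$ in the final mild step. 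Your Gagliardo--Nirenberg "between $H^1$ and $L^\infty$" in Step 3 is not where this happens.

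\textbf{The initial datum.} The right-hand side of the lemma involves only $\|X_0^N\|_V$. Your $H^1$ energy estimate for $Y^N$ with $Y^N(0)=P_NX_0$ brings in $\|\nabla P_NX_0\|$. In Step 3, the term $\|A^{\kappa/2}E(t)P_NX_0\|$ is only $O(t^{-\kappa/2})\|X_0\|$. Neither is controlled by the $V$-norm. This matters: in the weak error analysis the lemma is applied with initial data such as $X_{\tau,h}(t)$. For $\gamma<1$ these are controlled in $V$ and $H^\gamma$ but not in $H^1$ uniformly in $h,\tau$.

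Put the initial datum into the perturbation instead. Set $\widetilde W^N(t):=E(t)X_0^N+W_A^N(t)$. It stays in $H^N$ and, by $L^\infty$-contractivity of $E(t)$, satisfies $\|\widetilde W^N(t)\|_V\le\|X_0^N\|_V+\|W_A^N(t)\|_V$. Then $Z^N:=X^N-\widetilde W^N$ starts at $0$. This is also why $\|X_0^N\|_V$ enters with the same accumulated power as the noise.

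Keep all bounds as exponentially weighted time integrals and take moments inside them. Taking moments "at the end" does not work, since $\sup_{t\ge0}\|W_A^N(t)\|_V$ has no finite moments.

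\textbf{The sign condition.} Your claim that $4-(2q-3)d>0$ follows from Assumption \ref{assum:nonlinearity} is false. It fails for $d=2$, $q\ge3$ and for $d=1$, $q\ge4$, where the stated exponent is negative. The lemma only makes sense under the additional restriction $(2q-3)d<4$, which you must impose as a hypothesis.
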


Also, we need the regularity estimates for $\mu^N(\cdot, \cdot)$, $N\in \mathbb{N}$, (see \cite[Lemma 5.5]{jiang2025uniform}).
\begin{lemma}\label{lem:bound-Komogrolv-equation}
For any $\varphi\in C_b^2(H)$ and $0<\vartheta_0,\vartheta_1,\vartheta_2$, $\vartheta_1+\vartheta_2<1$,
there exist constants $c>0$, $C(Q,\vartheta_0,\varphi)>0$, and $C(Q,\vartheta_1,\vartheta_2,\varphi)>0$ such that
$x,y,z\in H^N$ and $t>0$
\begin{align}
\begin{split}
|D\mu^N(t,x)y|
\leq
&
C(Q,\vartheta_0,\varphi)
\left(1+\sup_{s\in[0,t]}\mathbb{E}\left[\|X^N(s,x)\|_V^{2q}\right]\right)\big(1+t^{-\vartheta_0}\big)e^{-ct}\|y\|_{-2\vartheta_0},
\\
\left|D^2\mu^N(t,x)\cdot(y,z)\right|
\leq
&
C(Q,\vartheta_1,\vartheta_2,\varphi)
\left(1+\sup_{s\in[0,t]}\mathbb{E}\left[\|X^N(s,x)\|_V^{8q-2}\right]\right)(1+t^{-\vartheta_1-\vartheta_2})e^{-ct}\|y\|_{-2\vartheta_1}
\|z\|_{-2\vartheta_2}.
\end{split}
\end{align}
\end{lemma}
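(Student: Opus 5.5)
The plan follows the standard Kolmogorov-equation argument for the Galerkin system. I treat the two estimates separately: first the derivative $\eta^y(t):=DX^N(t,x)\,y$, then the second derivative $\zeta^{y,z}(t):=D^2X^N(t,x)(y,z)$. Each is then transferred to $\mu^N$ by differentiating under the expectation.

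\textbf{First variation.} Differentiating \eqref{eq:spectral-galerkin-parabolic-SPDE} in $x$ gives the random linear equation
\begin{align*}
\tfrac{\dd}{\dd t}\eta^y(t)=-A_N\eta^y(t)+P_NF'(X^N(t,x))\eta^y(t),\qquad \eta^y(0)=y .
\end{align*}
Testing with $\eta^y$ and using $f'\le L_f<\lambda_1$ from \eqref{eq:definiton-I-f-I} yields the contraction $\|\eta^y(t)\|\le e^{-(\lambda_1-L_f)t}\|y\|$. This bound is pathwise and needs no moments.

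To allow the negative-norm datum $y\in H^{-2\vartheta_0}$, I split $\eta^y(t)=E(t)y+r(t)$, where
\begin{align*}
r(t)=\int_0^tE(t-s)P_NF'(X^N(s))\eta^y(s)\,\dd s .
\end{align*}
Then $\|E(t)y\|\le Ct^{-\vartheta_0}e^{-\lambda_1t}\|y\|_{-2\vartheta_0}$. For $r$ I need a bound on $\eta^y$ itself, so I close a Gronwall-type argument in the weighted quantity $\sup_s s^{\vartheta_0}\|\eta^y(s)\|$ on $[0,1]$. The coefficient is controlled by $\|F'(X^N)\|_{\mathcal L(H)}\le C(1+\|X^N\|_V^{2q-2})$, which produces the factor in $\sup_s\mathbb{E}\|X^N\|_V^{2q}$ after Hölder in $\omega$. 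For $t\ge1$, the $L^2$ contraction from time $1$ gives the factor $e^{-ct}$.

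Finally,
\begin{align*}
|D\mu^N(t,x)y|=|\mathbb{E}[D\varphi(X^N)\eta^y]|\le\|\varphi\|_{C^1_b}\,\mathbb{E}\|\eta^y(t)\|,
\end{align*}
which gives the first inequality.

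\textbf{Second variation.} This is the main obstacle. The equation is
\begin{align*}
\tfrac{\dd}{\dd t}\zeta=-A_N\zeta+P_NF'(X^N)\zeta+P_NF''(X^N)(\eta^y,\eta^z),\qquad \zeta(0)=0 .
\end{align*}
By Duhamel together with the same contraction, now applied to the evolution operator $U(t,s)$ of the linearised equation, which is contractive with rate $e^{-(\lambda_1-L_f)(t-s)}$,
\begin{align*}
\|\zeta(t)\|\le\int_0^te^{-c(t-s)}\,\|F''(X^N(s))(\eta^y(s),\eta^z(s))\|\,\dd s .
\end{align*}
Smoothing is still needed, because $\eta^y(s)\eta^z(s)$ behaves like $s^{-\vartheta_1-\vartheta_2}$. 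I would bound
\begin{align*}
\|F''(X^N)\eta^y\eta^z\|\le C(1+\|X^N\|_V^{2q-3})\,\|\eta^y\|_{L^4}\|\eta^z\|_{L^4} .
\end{align*}
I would first try to avoid $L^4$ by placing one factor in $L^\infty$ via smoothing. Each $L^4$ norm is controlled by $\|A^{d/8}\eta\|$, and by the mild formulation this is bounded by $s^{-\vartheta_i-d/8}$ times the polynomial moments. The $s$-singularities remain integrable because $\vartheta_1+\vartheta_2<1$; if $d/4$ is too costly, I would move regularity onto $\zeta$ by estimating $\zeta$ in $H^{-\epsilon}$ instead.

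The exponential factor comes from splitting $\int_0^t=\int_0^{t/2}+\int_{t/2}^t$ and using the decay of both $U(t,s)$ and $\eta$. Collecting the powers of $\|X^N\|_V$ and applying Hölder yields the moment $\sup_s\mathbb{E}\|X^N\|_V^{8q-2}$, which is admissible by the preceding lemma on uniform $V$-bounds. Then
\begin{align*}
D^2\mu^N(t,x)(y,z)=\mathbb{E}\bigl[D^2\varphi(\eta^y,\eta^z)+D\varphi\,\zeta\bigr],
\end{align*}
and the second inequality follows.

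The delicate point is tracking the exponents so that the polynomial moment is exactly of order $8q-2$ while the time singularity stays $t^{-\vartheta_1-\vartheta_2}$.
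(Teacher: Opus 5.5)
Your overall architecture (first and second variation, pathwise $H$-contraction from $f'\le L_f<\lambda_1$, differentiation under $\mathbb{E}$) is the standard one; the paper itself only cites this lemma from its reference. However, two of your steps do not work as written.

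\textbf{First variation.} You expand around the deterministic semigroup, $\eta^y(t)=E(t)y+\int_0^tE(t-s)P_NF'(X^N(s))\eta^y(s)\,\dd s$. This leaves the unknown inside the integral, against the random, unbounded coefficient $K(s)=C(1+\|X^N(s)\|_V^{2q-2})$. Any Gronwall closure then produces $\exp(C\int_0^1K(s)\,\dd s)$. A cleverer closure on a random short interval, using that $\|\eta^y(\cdot)\|$ is nonincreasing, still only gives powers of the pathwise $\sup_{s\le 1}\|X^N(s)\|_V$. No H\"older inequality in $\omega$ turns either of these into the linear factor $1+\sup_{s\le t}\mathbb{E}\|X^N(s,x)\|_V^{2q}$ in the statement.

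The missing idea is to reverse the roles. Put the random evolution operator $U(t,s)$ of the linearised equation outside; you already use its contractivity $\|U(t,s)\|_{\mathcal{L}(H)}\le e^{-(\lambda_1-L_f)(t-s)}$ for $\zeta$. Put the deterministic semigroup inside:
\[
\eta^y(t)=E(t)y+\int_0^tU(t,s)P_NF'(X^N(s))E(s)y\,\dd s .
\]
This holds because $g=\eta^y-E(\cdot)y$ solves $g'=-A_Ng+P_NF'(X^N)g+P_NF'(X^N)E(\cdot)y$ with $g(0)=0$. It gives
\[
\|\eta^y(t)\|\le C\Big(t^{-\vartheta_0}e^{-\frac{\lambda_1t}2}+\int_0^te^{-(\lambda_1-L_f)(t-s)}\big(1+\|X^N(s)\|_V^{2q-2}\big)s^{-\vartheta_0}e^{-\frac{\lambda_1s}2}\,\dd s\Big)\|y\|_{-2\vartheta_0},
\]
which is linear in the random coefficient. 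Taking expectations yields the first estimate (for $\vartheta_0<1$) with no Gronwall argument at all.

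\textbf{Second variation.} The $L^4$ route for $\zeta$ leads to $\int_0^te^{-c(t-s)}s^{-\vartheta_1-\vartheta_2-d/4}\,\dd s$. This diverges as soon as $\vartheta_1+\vartheta_2\ge 1-\frac d4$. But the full range $\vartheta_1+\vartheta_2<1$ is needed: Section 7 uses $\vartheta_1+\vartheta_2=1-\varepsilon$, e.g.\ in $a_n^1$ and $b_n^{3,2}$. Your fallback of estimating $\zeta$ in $H^{-\epsilon}$ goes the wrong way. With only $\varphi\in C_b^2(H)$, the term $D\varphi(X^N(t))\zeta(t)$ requires $\|\zeta(t)\|$ in $H$.

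Instead, measure the forcing in a negative norm and recover regularity from the smoothing of $U(t,s)$:
\begin{itemize}
\item For $\kappa\in(\frac d2,2)$ (this is where $d\le 3$ enters), $\|P_NF''(X^N)(\eta^y,\eta^z)\|_{-\kappa}\le C\|F''(X^N)(\eta^y,\eta^z)\|_{L^1(D)}\le C(1+\|X^N\|_V^{2q-3})\|\eta^y\|\,\|\eta^z\|$.
\item The same identity started at time $s$, $U(t,s)v=E(t-s)v+\int_s^tU(t,r)P_NF'(X^N(r))E(r-s)v\,\dd r$, bounds $\|U(t,s)v\|$ by $\|v\|_{-\kappa}$ times $(t-s)^{-\kappa/2}$ plus an integral linear in $1+\|X^N(r)\|_V^{2q-2}$, up to exponential factors.
\end{itemize}
In $\zeta(t)=\int_0^tU(t,s)P_NF''(X^N(s))(\eta^y(s),\eta^z(s))\,\dd s$ the only singularities are then $(t-s)^{-\kappa/2}$ and $s^{-\vartheta_1-\vartheta_2}$, and both are integrable. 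H\"older over at most four random factors gives a moment of order $8q-9\le 8q-2$.
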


\begin{theorem}\label{them:weak-vonvergence}
Let Assumptions
 \ref{assum:linear-operator-A}-\ref{assum:intial-value-data} and Assumption \ref{assum:derivate-ftauh-bound} be valid  for  $\gamma\in(\frac d2,2]$ or $\gamma\in(0,\frac12)$ with $Q=I$
in dimension one. Let $X(t)$ and $X_{\tau,h}^m$, $m\in \mathbb{N}$, be defined by
 \eqref{eq:definition-mild-solution} and \eqref{eq:fully-discrete-problem}, respectively.
Let the condition $\max\{\frac12 \alpha\rho, \alpha\theta\}<1+\frac d{2q(2q-1)}-\frac d4$ hold. Then  there exists a constant $C(Q,X_0,\gamma)>0$, such that
 for any $\iota\in(0, \beta)$, $\beta:=\min\{\gamma,1\}$,
\begin{align}
\left |\mathbb{E}\big[\varphi(X(t_m))\big]-\mathbb{E}\big[\varphi(X^m_{\tau,h})\big]\right|
\leq
C(1+t_m^{-\iota}+(t_m-\tau)^{-\frac12})(h^{\min\{2\iota,\rho\}}+\tau^{\min\{\iota,\theta\}}),
 \end{align}
 where
 $C$ is independent of $t_m$.
\end{theorem}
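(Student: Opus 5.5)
We follow the standard Kolmogorov-equation route, with the spectral Galerkin process $X^N$ as an intermediary. First we reduce to the Galerkin problem. By the exponential convergence to equilibrium of Proposition \ref{pro:exponential-convergence-equli} and the standard convergence $X^N(t)\to X(t)$ (for each fixed $t_m$), it suffices to bound $|\mathbb{E}[\mu^N(t_m,P_NX_0)]-\mathbb{E}[\varphi(P_NX^m_{\tau,h})]|$ uniformly in $N$ and then let $N\to\infty$. We embed the scheme into continuous time through the interpolation $X_{\tau,h}(t)$ of \eqref{eq:continouse-verstion-Xtauh} (projected by $P_N$) and use the telescoping identity
\begin{align*}
\mathbb{E}[\varphi(P_NX^m_{\tau,h})]-\mathbb{E}[\mu^N(t_m,P_NX_0)]
=\mathbb{E}[\mu^N(0,X^m_{\tau,h})]-\mathbb{E}[\mu^N(t_m,P_NX^0_{\tau,h})]
+\mathbb{E}[\mu^N(t_m,P_NP_hX_0)-\mu^N(t_m,P_NX_0)].
\end{align*}
The last term is $O(e^{-ct_m}h^{2\iota}t_m^{-\iota})$ by Lemma \ref{lem:bound-Komogrolv-equation} with $\vartheta_0=\iota$ and \eqref{eq:property-I_h}. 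For the first difference we apply It\^o's formula to $\mu^N(t_m-t,P_NX_{\tau,h}(t))$ on each interval $[t_l,t_{l+1})$ and subtract the Kolmogorov equation.

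The resulting error splits into a drift part and a diffusion part. The drift part has the form $\int D\mu^N(t_m-t,\cdot)\,[\,-E_{\tau,h}A_hX^l_{\tau,h}+E_{\tau,h}F_{\tau,h}(X^l_{\tau,h})+A_NP_NX_{\tau,h}(t)-P_NF(P_NX_{\tau,h}(t))\,]$. We decompose the bracket into four pieces:
\begin{itemize}
\item[(a)] the linear discrepancy between $E_{\tau,h}A_h$ and $A$, which is handled by writing the scheme's linear part through $\Phi_{\tau,h}$ and using \eqref{lem:deterministic-error-in-H-1}; this yields $h^{2\iota}+\tau^{\iota}$ in the $H^{-2\iota}$ norm paired with $\vartheta_0=\iota$;
\item[(b)] $F_{\tau,h}-F$, which by \eqref{lem:f-ftau-I} and Proposition \ref{pro:bound-Xh-f} is $O(h^\rho+\tau^\theta)$ in $L^1\subset H^{-\kappa}$;
\item[(c)] $F(X^l_{\tau,h})-F(X_{\tau,h}(t))$, which we Taylor expand;
\item[(d)] $(I-E_{\tau,h})$ and $(I-P_h)$ factors, which are controlled by \eqref{lem:eq-temporal-regu-ENm}.
\end{itemize}
In each piece we use $\vartheta_0<1$, and integrability of $(1+(t_m-t)^{-\vartheta_0})e^{-c(t_m-t)}$ over $[0,t_m]$ makes all constants $t_m$-independent. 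The diffusion part, $\tfrac12\mathrm{Tr}[D^2\mu^N(E_{\tau,h}P_hQ^{1/2}(\cdot)^*-P_NQ^{1/2}(\cdot)^*)]$, is handled with the second-derivative bound of Lemma \ref{lem:bound-Komogrolv-equation}, taking $\vartheta_1+\vartheta_2<1$ and using \eqref{eq:ass-AQ-condition}. The final interval $[t_{m-1},t_m]$, where $t_m-t<\tau$ and the singularity of $D^2\mu^N$ is not integrable in the same way, is estimated crudely and produces the factor $(t_m-\tau)^{-1/2}$.

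The main obstacle is piece (c), where the Taylor expansion contains the noise increment $F'(X^l_{\tau,h})E_{\tau,h}P_h(W(t)-W(t_l))$. Its pairing with $D\mu^N$ is only $O(\tau^{1/2})$ in a naive bound. We would first try the Malliavin integration by parts formula \eqref{eq:integration-by-part-formula} to rewrite it as $\mathbb{E}\int_{t_l}^t\langle D_s[D\mu^N(\cdot)F'(X^l)\cdots],E_{\tau,h}P_hQ^{1/2}\rangle\,\dd s$. This brings in $D^2\mu^N$ applied to $D_sX_{\tau,h}$, controlled by Lemma \ref{lem:eq-mallivin-derivative-numercial-solu} in the $H^{\beta-1}$ norm, together with $f''$ terms bounded via Proposition \ref{pro:bound-Xh-f} and the $V$-bound \eqref{them:bound-numerical-V}. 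Choosing $\vartheta_1=\iota$ and $\vartheta_2=(1-\beta)/2+\epsilon$ then yields $\tau^{\iota}$. Collecting all terms gives the stated rate $h^{\min\{2\iota,\rho\}}+\tau^{\min\{\iota,\theta\}}$ with prefactor $1+t_m^{-\iota}+(t_m-\tau)^{-1/2}$.
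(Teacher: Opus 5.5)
Your overall framework matches the paper: the spectral Galerkin intermediary, the telescoping sum, It\^o's formula combined with the Kolmogorov equation, and Malliavin integration by parts for the noise increment in the Taylor expansion of $F$. However, piece (a) does not work as stated, and this is a genuine gap.

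\textbf{Piece (a).} Estimate \eqref{lem:deterministic-error-in-H-1} compares propagators, $E(t_n)-E^n_{\tau,h}P_h$. What appears in the It\^o--Kolmogorov identity is instead the generator-level discrepancy $A_NP_NX_{\tau,h}(t)-E_{\tau,h}A_hX^l_{\tau,h}$, acting on a process with only $H^\gamma$ regularity. No bound of size $h^{2\iota}+\tau^{\iota}$ in $H^{-2\iota}$ follows for this quantity.

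The obstruction is stochastic: $X^l_{\tau,h}$ contains $W^l_{\tau,h}$, and $X_{\tau,h}(t)-X^l_{\tau,h}$ contains $\int_{t_l}^tE_{\tau,h}P_h\,\mathrm{d}W(s)$. Take the term $\langle A_h\int_{t_l}^tE_{\tau,h}P_h\,\mathrm{d}W(s),D\mu^N(t_m-t,X_{\tau,h}(t))\rangle$. A direct moment bound with any $\vartheta_0<1$ gives at best $\tau^{\beta/2}$, which is only the strong order.

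The paper handles this as follows:
\begin{itemize}
\item it splits the linear part into $b_n^1,\dots,b_n^4$ and expands $X^n_{\tau,h}$ into its initial, drift and noise parts;
\item it bounds the deterministic parts with the discrete smoothing $\|A_h^{\mu}E^n_{\tau,h}P_h\|_{\mathcal{L}(H)}\le C\min\{t_n^{-\mu},t_n^{-2}\}$, $\mu\in[0,1]$;
\item it rewrites $\langle(A-A_h)X_{\tau,h}(t),D\mu^N\rangle$ as $\langle X_{\tau,h}(t),A_h(R_h-P_h)D\mu^N\rangle$;
\item it applies Malliavin integration by parts with Lemma \ref{lem:eq-mallivin-derivative-numercial-solu} to every noise contribution ($b_n^{1,3}$, $b_n^{2,1,3}$, $b_n^{2,3}$, $b_n^{3,2}$, $b_n^{3,5}$), not only to the nonlinear term $c_n^{3,3}$.
\end{itemize}
This is what recovers the doubled weak rates $\tau^{\beta-\varepsilon}$ and $h^{2\beta-2\varepsilon}$.

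\textbf{The factor $(t_m-\tau)^{-1/2}$.} Your account of where this factor comes from is mistaken. On the last interval $t_m-t\le\tau$, so the factor cannot originate there. That interval also needs no special care: $(1+(t_m-t)^{-\vartheta_1-\vartheta_2})$ is integrable for $\vartheta_1+\vartheta_2<1$, and the paper simply includes $[t_{m-1},t_m]$ in $\sum_{n=1}^{m-1}$.

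The step that must be isolated is the first one. The smoothing bounds above degenerate at $t_0=0$, while $P_hX_0$ is only in $H^\gamma$. Your plan treats $[0,\tau)$ like every other interval, where those bounds are unavailable.

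The paper handles the first step in three moves:
\begin{itemize}
\item it uses the Markov property to write $\mathbb{E}[\mu^N(t_m,P_hX_0)]=\mathbb{E}[\mu^N(t_m-\tau,X(\tau,P_hX_0))]$;
\item it proves the one-step bound $\mathbb{E}\|X(\tau,P_hX_0)-X^1_{\tau,h}\|_{-1}\le C(h^{2\beta}+\tau^{\beta})$, which is where \eqref{lem:deterministic-error-in-H-1} is actually used;
\item it pairs this with $D\mu^N(t_m-\tau,\cdot)$ at $\vartheta_0=\frac12$.
\end{itemize}
This pairing is the source of the factor $(t_m-\tau)^{-1/2}$.
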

{\it Proof of Theorem \ref{them:weak-vonvergence}.} By recalling the fact $\mu^N(t,x)=\mathbb{E}[\varphi(X^N(t,x))]$, $t\geq 0$, we decompose the weak error into the following three terms
\begin{align}
\begin{split}
\mathbb{E}\big[\varphi(X(t_m))\big]-\mathbb{E}\big[\varphi(X^m_{\tau,h})\big]
=
&
\mathbb{E}\big[\varphi(X(t_m))\big]-\mathbb{E}\big[\varphi(X^N(t_m))\big]
+
\mathbb{E}\big[\varphi(X^N(t_m))\big]-\mathbb{E}\big[\varphi(P_NX^m_{\tau,h})\big]
\\
&
+
\mathbb{E}\big[\varphi(P_NX^m_{\tau,h})\big]-\mathbb{E}\big[\varphi(X^m_{\tau,h})\big].
\end{split}
\end{align}
For the third term, we have, by \eqref{th:spatal-regu-full-solu} and \eqref{eq:projection-property-Pn}, for  $\iota\in(0, \min\{\gamma,1\})$
\begin{align}\label{eq:PnXm-XM}
\mathbb{E}\big[\varphi(P_NX^m_{\tau,h})\big]-\mathbb{E}\big[\varphi(X^m_{\tau,h})\big]
& \leq
\|\varphi\|_{C_b^1(H)}
\|(P_N-I)X^m_{\tau,h}\|_{L^p(\Omega;H)}
\\
\nonumber
& \leq
C\lambda_N^{-\frac\iota2}\|A^{\frac\iota2}X^m_{\tau,h}\|_{L^p(\Omega;H)}
\rightarrow 0, \quad N \to \infty.
\end{align}
For the first term, we use the well-established strong convergence rate within finite time horizon (see \cite[Theorem 4.1]{QQQi2018Optimal} ),
and take the limit $N\rightarrow \infty$ to obtain
\begin{align}\label{eq:xtm-xntm}
|\mathbb{E}\big[\varphi(X(t_m))\big]-\mathbb{E}\big[\varphi(X^N(t_m))\big]|
\leq
\|\varphi\|_{C_b^1(H)}\|X(t_m)-X^N(t_m)\|_{L^2(\Omega;H)}
\leq
 C(t_m,\varphi)\lambda_N^{-\frac{\gamma}2}\rightarrow 0.
\end{align}
For the second term, we note that $\mu^N(t,x)=\mathbb{E}\left[\varphi(X^N(t,x))\right], t\geq 0, x\in H$  and divide it into two parts:
\begin{align}\label{eq:decompose-XN-PNXtau}
\begin{split}
\left|\mathbb{E}\big[\varphi(X^N(t_m))\big]-\mathbb{E}\big[\varphi(P_NX^m_{\tau,h})\big]\right|
=
&
\mathbb{E}\big[\mu^N(t_m,X_0)\big]-\mathbb{E}\big[\mu^N(0,X_{\tau,h}^m)\big]
\\
=
&
\mathbb{E}\big[\mu^N(t_m,X_0)\big]
-
\mathbb{E}\big[\mu^N(t_m,P_hX_0)\big]
\\
&
+
\mathbb{E}\big[\mu^N(t_m,P_hX_0)\big]
-
\mathbb{E}\big[\mu^N(0,X_{\tau,h}^m)\big].
\end{split}
\end{align}
Owing to \eqref{eq:property-Ih}  and Lemma \ref{lem:bound-Komogrolv-equation}, one can see
\begin{align}
\begin{split}
&\left|\mathbb{E}\big[\mu^N(t_m,X_0)\big]
-
\mathbb{E}\big[\mu^N(t_m,P_hX_0)\big]\right|
\\
\leq
&
\int_0^1\left|\mathbb{E}\big[D\mu^N(t_m,P_hX_0+\xi(I-P_h)X_0)\cdot (I-P_h)X_0\big]\right|\,\dd \xi
\\
\leq
&
C\int_0^1(1+\sup_{s\in[0,t_m]}\mathbb{E}
\left[\|X(s,P_hX_0+\xi(I-P_h)X_0)\|_V^{2q}\right])\,\dd \xi(1+t_m^{-\iota})e^{-ct_m}\|(I-P_h)X_0\|_{-2\iota}
\\
\leq
&
C(Q,X_0,q,d)(1+t_m^{-\iota})h^{2\iota}.
\end{split}
\end{align}
To proceed with the second term in \eqref{eq:decompose-XN-PNXtau}, we decompose it further as follows:
\begin{align}
\begin{split}
&\mathbb{E}\big[\mu^N(t_m,P_hX_0)\big]
-
\mathbb{E}\big[\mu^N(0,X_{\tau,h}^m)\big]
\\
=
&
\sum_{n=0}^{m-1}\left(\mathbb{E}\left[\mu^N(m\tau-n\tau,X_{\tau,h}^n)\right]
-
\mathbb{E}\left[\mu^N(m\tau-(n+1)\tau,X_{\tau,h}^{n+1})\right]\right)
\\
=
&
\left(\mathbb{E}\left[\mu^N(m\tau,X_{\tau,h}^0)\right]
-
\mathbb{E}\left[\mu^N(m\tau-\tau,X_{\tau,h}^1)\right]\right)
\\
&
+
\sum_{n=1}^{m-1}\left(\mathbb{E}\left[\mu^N(m\tau-n\tau,X_{\tau,h}^n)\right]
-
\mathbb{E}\left[\mu^N(m\tau-(n+1)\tau,X_{\tau,h}^{n+1})\right]\right).
\end{split}
\end{align}
Applying the It\^o formula to $X_{\tau,h}(t)$ defined by \eqref{eq:continouse-verstion-Xtauh} gives
\begin{align}
\begin{split}
\mu^N(t,X_{\tau,h}(t))
=
&
\mu^N(t_k,X_{\tau,h}(t_k))
+
\int_{t_k}^t\frac{\dd \mu^N}{\dd s}(s,X_{\tau,h}(s))+L_{k,\tau}\mu^N(s,X_{\tau,h}(s))\,\dd s
\\
&
+
\int_{t_k}^t\big<D\mu^N(s,X_{\tau,h}(s)),\,\dd W(s)\big>,
\end{split}
\end{align}
where
\begin{align}
\begin{split}
L_{k,\tau}\mu^N(t,X_{\tau,h}(t))
=
&
\left<-E_{\tau,h}A_hX_{\tau,h}^m+E_{\tau,h}F_{\tau,h}(X_{\tau,h}^m), D\mu^N(t,X_{\tau,h}(t))\right>
\\
&
+
\frac12 \mathrm{Tr}\left\{E_{\tau,h}P_hQ (E_{\tau,h}P_h)^*
D^2\mu^N\big(t,X_{\tau,h}(t)\big)\right\}.
\end{split}
\end{align}
Then utilizing the Kolmogorov equation shows
\begin{align}
\begin{split}
&\mathbb{E}\big[\mu^N(t_m,P_hX_0)\big]
-
\mathbb{E}\big[\mu^N(0,X_{\tau,h}^m)\big]
\\
=&
\left(\mathbb{E}\left[\mu^N(m\tau,X_{\tau,h}^0)\right]
-
\mathbb{E}\left[\mu^N(m\tau,X_{\tau,h}^1)\right]\right)
\\
&
+
\sum_{n=1}^{m-1}\int_{t_n}^{t_{n+1}}\frac12\mathbb{E}\left[\mathrm{Tr}\left\{\left(E_{\tau,h}P_hQ (E_{\tau,h}P_h)^*
-P_NQ^{\frac12}(P_NQ^{\frac12})^*\right)
D^2\mu^N\big(t_m-t,X_{\tau,h}(t)\big)
\right\}\right]
\,\dd t
\\
&
+
\sum_{n=1}^{m-1}\int_{t_n}^{t_{n+1}}\mathbb{E}\left[\left<A_NX_{\tau,h}(t)
-
E_{\tau,h}A_hX_{\tau,h}^n
,
D\mu^N(t_m-t,X_{\tau,h}(t))\right>
\right]\,\dd t
\\
&
+
\int_{t_n}^{t_{n+1}}\mathbb{E}\left[\left<E_{\tau,h}P_hF_{\tau,h}(X_{\tau,h}^n)-P_NF(X_{\tau,h}(t)),D\mu^N(t_m-t, X_{\tau,h}(t))\right>\right]\,\dd t
\\
=
&
\left(\mathbb{E}\left[\mu^N(m\tau,X_{\tau,h}^0)\right]
-
\mathbb{E}\left[\mu^N(m\tau-\tau,X_{\tau,h}^1)\right]\right)
+
\sum_{n=1}^{m-1}
(a_n+b_n+c_n).
\end{split}
\end{align}
From \eqref{th:spatal-regu-full-solu}, the Markov property, a priori estimates of $X_{\tau,h}^m$ and regularity estimates in Lemma \ref{lem:bound-Komogrolv-equation}, it follows that,
\begin{align}
\begin{split}
&\left|\mathbb{E}\left[\mu^N(t_m,X_{\tau,h}^0)\right]
-
\mathbb{E}\left[\mu^N(t_m-\tau,X_{\tau,h}^1)\right]
\right|
\\
=
&
\left|\mathbb{E}\left[\mu^N(t_m-\tau,X(\tau,P_hX_0))\right]
-
\mathbb{E}\left[\mu^N(t_m-\tau,X_{\tau,h}^1)\right]
\right|
\\
=
&
\int_0^1\left|\mathbb{E}\left[D\mu^N(t_m-\tau,\xi X(\tau,P_hX_0)+(1-\xi)X_{\tau,h}^1)\cdot(X(\tau,P_hX_0)-X_{\tau,h}^1)\right]
\right|\, \dd \xi
\\
\leq
&
C(Q,\varphi)\int_0^1\left(1+\sup_{s\in[0,t_m-\tau]}\mathbb{E}\big[\|X^N(s,\xi X(\tau,P_hX_0)+(1-\xi)X_{\tau,h}^1)\|_V^{2q} \big]\,\dd\xi\right)
\\
&\cdot
(1+(t_m-\tau)^{-\frac12})e^{-c(t_m-\tau)}\mathbb{E}\left[\|X(\tau,P_hX_0)-X_{\tau,h}^1\|_{-1}\right]
\\
\leq
&
C(Q,\varphi) \left(1+(t_m-\tau)^{-\frac12}\right)e^{-c(t_m-\tau)}(h^{2\min\{\gamma,1\}}+\tau^{\min\{\gamma,1\}}),
\end{split}
\end{align}
where
\begin{align}
\begin{split}
&\mathbb{E}\left[\|X(\tau,P_hX_0)-X_{\tau,h}^1\|_{-1}\right]
=
\mathbb{E}\left[\left\|\int_0^\tau E(\tau-s)F(X(s))\,\dd s
-
\tau F_{\tau,h}(X_{\tau,h}^0)\right\|_{-1}\right]
\\
&
+
\mathbb{E}\left[\|(E(\tau)-E_{\tau,h}P_h)P_hX_0\|_{-1}\right]
+
\mathbb{E}\left[\left\|\int_0^\tau(E(\tau-s)-E_{\tau,h}P_h)\,\dd W(s)\right\|_{-1}\right]
\\
\leq
&
C(h^{1+\min\{\gamma,1\}}+\tau^{\frac{1+\min\{\gamma,1\}}2})\|X_0\|_{H^{\min\{\gamma,1\}}}
+
C\left(\int_0^\tau\|A^{-\frac12}(E(\tau-s)-E_{\tau,h}P_h)\|_{\mathcal{L}_2^0}^2\,\dd s\right)^{\frac12}
\\
&
C\tau (\sup_{t\in[0,\infty)}\|F(X(t))\|_{L^1(\Omega;H)}+\sup_{m\in \mathbb{N}_0}\|F(X_{\tau,h}^m)\|_{L^1(\Omega;H)})
\\
\leq
&
C(h^{1+\min\{\gamma,1\}}+\tau^{\frac{1+\min\{\gamma,1\}}2}+\tau)
+
C\tau^{\frac12} (h^{\min\{\gamma,1\}}+\tau^{\frac{\min\{\gamma,1\}}2})\|A^{\frac{\min\{\gamma,1\}-1}2}\|_{\mathcal{L}_2^0}
\\
\leq
&
C(h^{2\min\{\gamma,1\}}+\tau^{\min\{\gamma,1\}}).
\end{split}
\end{align}
Here we used \eqref{lem:deterministic-error-in-H-1}, \eqref{th:spatial-regu-exac-solution}, \eqref{th:spatal-regu-full-solu} and the isometry property in the first inequality and \eqref{lem:deterministic-error-in-H-1} in the second inequality.
In what follows, we focus on  estimating  the terms $a_n,b_n,c_n$. First we split the term $a_n$ into six parts:
\begin{align}\label{eq:decompose-an}
\begin{split}
a_n=
&
\frac12\int_{t_n}^{t_{n+1}}\mathbb{E}\left[ \mathrm{Tr}\left((E_{\tau,h}P_h-I)P_hQ (E_{\tau,h}P_h)^*D^2\mu^N(t_m-t,X_{\tau,h}(t))\right)
\right]\,\dd t
\\
&
+
\frac12\int_{t_n}^{t_{n+1}}\mathbb{E}\left[ \mathrm{Tr}\left((P_h-I)Q (E_{\tau,h}P_h)^*D^2\mu^N(t_m-t,X_{\tau,h}(t))\right)
\right]\,\dd t
\\
&
+
\frac12\int_{t_n}^{t_{n+1}}\mathbb{E}\left[ \mathrm{Tr}\left(Q (E_{\tau,h}P_h-P_h)^*D^2\mu^N(t_m-t,X_{\tau,h}(t))\right)
\right]\,\dd t
\\
&
+
\frac12\int_{t_n}^{t_{n+1}}\mathbb{E}\left[ \mathrm{Tr}\left(Q (P_h-I)^*D^2\mu^N(t_m-t,X_{\tau,h}(t))\right)
\right]\,\dd t
\\
&
+
\frac12\int_{t_n}^{t_{n+1}}\mathbb{E}\left[ \mathrm{Tr}\left((I-P_N)Q D^2\mu^N(t_m-t,X_{\tau,h}(t))\right)
\right]\,\dd t
\\
&
+
\frac12\int_{t_n}^{t_{n+1}}\mathbb{E}\left[ \mathrm{Tr}\left(P_NQ (I-P_N)^*D^2\mu^N(t_m-t,X_{\tau,h}(t))\right)
\right]\,\dd t
\\
=:
&
a_n^1
+
a_n^2
+
a_n^3
+
a_n^4
+
a_n^5
+
a_n^6.
\end{split}
\end{align}
Using the property of Hilbert-Schmidt and trace operator, we get, for any $\varepsilon>0$
\begin{align}
\begin{split}
&|a_n^1|
\leq
\frac12\int_{t_n}^{t_{n+1}}\Big|\mathbb{E}\Big[\mathrm{Tr}\Big(A_h^{-\beta+\varepsilon}(E_{\tau,h}-I)
A_h^{\frac{\beta-1}2}P_hA^{\frac{1-\beta}2}A^{\frac{\beta-1}2}QA^{\frac{\beta-1}2}A^{\frac{1-\beta}2}A_h^{\frac{\beta-1}2}P_h
E_{\tau,h}
\\
&
\qquad A_h^{\frac{1-\beta}2}P_hA^{\frac{\beta-1}2}
A^{\frac{1-\beta}2}D^2\mu^N(t_m-t,X_{\tau,h}(t))
A^{\frac{\beta+1}2-\varepsilon}A^{-\frac{\beta+1}2+\varepsilon}A_h^{\frac{\beta+1}2-\varepsilon}P_h\Big)
\Big]\Big|\,\dd t
\\
\leq
&
\frac12\int_{t_n}^{t_{n+1}}\mathbb{E}\Big[\|A_h^{-\beta+\varepsilon}(E_{\tau,h}-I)\|_{\mathcal{L}(H)}
\|A^{\frac{\beta-1}2}\|_{\mathcal{L}_2^0}^2
\|E_{\tau,h}P_h\|_{\mathcal{L}(H)}\|A_h^{\frac{1-\beta}2}P_hA^{\frac{\beta-1}2}\|^2_{\mathcal{L}(H)}
\\
&
\cdot
\|A^{\frac{1-\beta}2}A_h^{\frac{\beta-1}2}P_h\|_{\mathcal{L}(H)}\|A^{\frac{1-\beta}2}D^2\mu^N(t_m-t,X_{\tau,h}(t))
A^{\frac{\beta+1-2\varepsilon}2}\|_{\mathcal{L}(H)}
\|A^{-\frac{\beta+1-2\varepsilon}2}A_h^{\frac{\beta+1-2\varepsilon}2}P_h\|_{\mathcal{L}(H)}
\Big]\,\dd t,
\end{split}
\end{align}
and then obtain by   \eqref{eq:relation-A-Ah}, \eqref{eq:Ah-A-bound}, \eqref{lem:eq-temporal-regu-ENm}, Lemmas \ref{lem:eq-smooth-ENm}, \ref{lem:bound-Komogrolv-equation}
\begin{align}\label{eq:estimate-an1}
\begin{split}
|a_n^1|
\leq
&
C\tau^{\beta-\varepsilon}\int_{t_n}^{t_{n+1}}\left(1+(t_m-t)^{-1+\varepsilon}\right)e^{-c(t_m-t)}
\left(1+\sup_{s\in[0,t_m-t]}\mathbb{E}\big[\|X(s,X_{\tau,h}(t))\|_V^{2q}\big]\right)\dd t
\\
\leq
&
C\tau^{\beta-\varepsilon}\int_{t_n}^{t_{n+1}}\left(1+(t_m-t)^{-1+\varepsilon}\right)e^{-c(t_m-t)}
\,\dd t.
\end{split}
\end{align}
For $a_n^2$, we follow the similar approach used in the proof of \eqref{eq:estimate-an1}, with \eqref{eq:property-Ih} used instead to obtain
\begin{align}\label{eq:estimate-an2}
\begin{split}
|a_n^2|
=
&
\frac12\int_{t_n}^{t_{n+1}}\Big|\mathbb{E}\Big[\mathrm{Tr}\Big(A^{-\frac{1+\beta}2+\varepsilon}
(P_h-I)A^{\frac{1-\beta}2}A^{\frac{\beta-1}2}QA^{\frac{\beta-1}2}
A^{\frac{1-\beta}2}
\\
&
\cdot
A_h^{\frac{\beta-1}2}P_hE_{\tau,h}A_h^{\frac{1-\beta}2}P_hA^{\frac{\beta-1}2}
A^{\frac{1-\beta}2}D^2\mu^N(t_m-t,X_{\tau,h}(t))A^{\frac{1+\beta}2-\varepsilon}\Big)\Big]\Big|\,\dd t
\\
\leq
&
\frac12\int_{t_n}^{t_{n+1}}\mathbb{E}\Big[\|A^{-\frac{1+\beta}2+\varepsilon}(P_h-I)A^{\frac{1-\beta}2}\|_{\mathcal{L}(H)}
\|A^{\frac{\beta-1}2}\|_{\mathcal{L}_2^0}^2
\|A^{\frac{1-\beta}2}A_h^{\frac{\beta-1}2}P_h\|_{\mathcal{L}(H)}
\\
&
\cdot
\|E_{\tau,h}P_h\|_{\mathcal{L}(H)}\|A_h^{\frac{1-\beta}2}P_hA^{\frac{\beta-1}2}\|_{\mathcal{L}(H)}
\|A^{\frac{1-\beta}2}D^2\mu^N(t_m-t,X_{\tau,h}(t))A^{\frac{1+\beta}2-\varepsilon}
\|_{\mathcal{L}(H)}\Big]\,\dd t
\\
\leq
&
Ch^{2-2\varepsilon}\int_{t_n}^{t_{n+1}}\left(1+(t_m-t)^{-1+\varepsilon}\right)e^{-c(t_m-t)}
\left(1+\sup_{s\in[0,t_m-t]}\mathbb{E}\big[\|X(s,X_{\tau,h}(t))\|_V^{2q}\big]\right)\,\dd t
\\
\leq
&
Ch^{2-2\varepsilon}\int_{t_n}^{t_{n+1}}\left(1+(t_m-t)^{-1+\varepsilon}\right)e^{-c(t_m-t)}\,\dd t.
\end{split}
\end{align}
By the same arguments of the proof of \eqref{eq:estimate-an1} and \eqref{eq:estimate-an2}, one can find that
\begin{align}\label{eq:estmate-an3-an4}
\begin{split}
|a_n^3+a_n^4+a_n^5+a_n^6|
\leq
&
C(h^{2\beta-2\varepsilon}+\tau^{\beta-\varepsilon}+\lambda_N^{-\beta+\varepsilon})
\int_{t_n}^{t_{n+1}}\left(1+(t_m-t)^{-1+\varepsilon}\right)e^{-c(t_m-t)}\,\dd t.
\end{split}
\end{align}
Putting the estimates \eqref{eq:estimate-an1}, \eqref{eq:estimate-an2} and \eqref{eq:estmate-an3-an4} back into \eqref{eq:decompose-an}
gives
\begin{align}
\begin{split}
\sum_{n=1}^{m-1} |a_n|
\leq
&
C(h^{2\beta-2\varepsilon}+\tau^{\beta-\varepsilon}+\lambda_N^{-\beta+\varepsilon})
\int_0^{t_m}\left(1+(t_m-t)^{-1+\varepsilon}\right)e^{-c(t_m-t)}\,\dd t
\\
\leq
&
C(Q,X_0,\gamma)(h^{2\beta-2\varepsilon}+\tau^{\beta-\varepsilon}+\lambda_N^{-\beta+\varepsilon}).
\end{split}
\end{align}

Next, we turn our attention to $c_n$. For this purpose, we first have
\begin{align}
\begin{split}
c_n
=
&
\mathbb{E}\int_{t_n}^{t_{n+1}}\left<(E_{\tau,h}P_h-I)F_{\tau,h}(X_{\tau,h}^n),D\mu^N(t_m-t, X_{\tau,h}(t))\right>\,\dd t
\\
&+
\mathbb{E}\int_{t_n}^{t_{n+1}}\left<F_{\tau,h}(X_{\tau,h}^n)-F(X_{\tau,h}^n),D\mu^N(t_m-t, X_{\tau,h}(t))\right>\,\dd t
\\
&
+
\mathbb{E}\int_{t_n}^{t_{n+1}}\left<F(X_{\tau,h}^n)-F(X_{\tau,h}(t)),D\mu^N(t_m-t, X_{\tau,h}(t))\right>\,\dd t
\\
&
+
\mathbb{E}\int_{t_n}^{t_{n+1}}\left<(I-P_N)F(X_{\tau,h}(t)),D\mu^N(t_m-t ,X_{\tau,h}(t))\right>\,\dd t
=:
c_n^1+c_n^2+c_n^3+c_n^4.
\end{split}
\end{align}
Let us begin with estimating $c_n^1$. Using Lemma \ref{lem:bound-Komogrolv-equation} and Theorems \ref{them:property-solution} and \ref{th:uniform-moment-bound}
yields, for any $\varepsilon \in(0,\beta)$
\begin{align}\label{eq:bound-cn1}
\begin{split}
\sum_{n=1}^{m-1} |c_n^1|
=
&
\sum_{n=1}^{m-1}\mathbb{E}\int_{t_n}^{t_{n+1}}\left|\left<(E_{\tau,h}P_h-I)F_{\tau,h}(X_{\tau,h}^n),D\mu^N(t_m-t, X_{\tau,h}(t))\right>\right|\,\dd t
\\
\leq
&
C
\sum_{n=1}^{m-1}\int_{t_n}^{t_{n+1}}\left(1+(t_m-t)^{-\beta+\varepsilon}\right)
e^{-c(t_m-t)}\|A^{-\beta+\varepsilon}(E_{\tau,h}P_h-I)\|_{\mathcal{L}(H)}
\\
&\cdot
\mathbb{E}\Big[\Big(1+\sup_{s\in[0,t_m-t]}\mathbb{E}\big[\|X(s,X_{\tau,h}(s))\|_V^{2q}\big]\Big)
\|F_{\tau,h}(X_{\tau,h}^n)\|\Big]\,\dd t
\\
\leq
&
C(h^{2\beta-2\varepsilon}+\tau^{\beta-\varepsilon})
\sum_{n=1}^{m-1}\int_{t_n}^{t_{n+1}}\left(1+(t_m-t)^{-\beta+\varepsilon}\right)e^{-c(t_m-t)}\,\dd t,
\\
\leq
&
C(h^{2\beta-2\varepsilon}+\tau^{\beta-\varepsilon}),
\end{split}
\end{align}
where in the second inequality we used the fact that
\begin{align}
\begin{split}
\|A^{-\mu}(E_{\tau,h}P_h-I)\|_{\mathcal{L}(H)}
\leq
&
\|A^{-\mu}(P_h-I)\|_{\mathcal{L}(H)}
+
\|A^{-\mu}A_h^{\mu}A_h^{-\mu}(E_{\tau,h}-I)P_h\|_{\mathcal{L}(H)}
\\
\leq
&
C(h^{2\mu}+ \tau^\mu)
\end{split}
\end{align}
for any $\mu\in[0,1]$ due to \eqref{eq:Ah-A-bound}, \eqref{eq:property-Ih} and \eqref{lem:eq-temporal-regu-ENm}.
In the same way, one can acquire for the term $c_n^2$
\begin{align}\label{eq:estimate-sum-cn2}
\begin{split}
&\sum_{n=1}^{m-1}|c_n^2|
\leq
\sum_{n=1}^{m-1}\int_{t_n}^{t_{n+1}}\mathbb{E}|\left<F_{\tau,h}(X_{\tau,h}^n)-F(X_{\tau,h}^n),D\mu^N(t_m-t, X_{\tau,h}(t))\right>|\,\dd t
\\
\leq
&
C\sum_{n=1}^{m-1}\int_{t_n}^{t_{n+1}}\mathbb{E}\Big[\big(1
+
\sup_{s\in[0,t_m-t]}\mathbb{E}\big[\|X(s,X_{\tau,h}(t))\|_V^{2q}\big]\big)
\|F(X_{\tau,h}^n)-F_{\tau,h}(X_{\tau,h}^n)\|\Big]e^{-c(t_m-t)}
\,\dd t
\\
\leq
&
C(\tau^\theta+h^\rho)\sum_{n=1}^{m-1}\int_{t_n}^{t_{n+1}}\sup_{m\in \mathbb{N}_0}
\mathbb{E}\Big[\|(X_{\tau,h}^m)^{\frac{2q-2}\alpha}F(X_{\tau,h}^m)\|\Big]e^{-c(t_m-t)}
\,\dd t
\\
\leq
&
C(\tau^\theta+h^\rho)\sum_{n=1}^{m-1}\int_{t_n}^{t_{n+1}}e^{-c(t_m-t)}\,\dd t
\\
\leq
&
C(\tau^\theta+h^\rho),
\end{split}
\end{align}
where we used  \eqref{lem:f-ftau-I} in the second inequality and Proposition \ref{pro:pro:bound-Xh-f} in the third inequality.
For the term $c_n^3$, we use \eqref{eq:continouse-verstion-Xtauh} and the Taylor expansion of $F$ to make the following decomposition:
\begin{align}
\begin{split}
c_n^3
&
=
\mathbb{E}\int_{t_n}^{t_{n+1}}(t-t_n)
\left<D\mu^N(t_m-t, X_{\tau,h}(t)),DF(X_{\tau,h}^n) A_hE_{\tau,h}X_{\tau,h}^n\right>\,\dd t
\\
&
-
\mathbb{E}\int_{t_n}^{t_{n+1}}(t-t_n)
\left<D\mu^N(t_m-t, X_{\tau,h}(t)),DF(X^n_{\tau,h}) E_{\tau,h}P_hF_{\tau,h}(X_{\tau,h}^n)\right>\,\dd t
\\
&
-
\mathbb{E}\int_{t_n}^{t_{n+1}}
\left<D\mu^N(t_m-t, X_{\tau,h}(t))
,DF(X^n_{\tau,h})\int_{t_n}^t E_{\tau,h}P_h\,\dd W(s)\right>\,\dd t
\\
&
+
\mathbb{E}\int_{t_n}^{t_{n+1}}
\left<D\mu^N(t_m-t,X_{\tau,h}(t))
, DF(X_{\tau,h}^n) R_F(X_{\tau,h}^{n},X_{\tau,h}(t))\right>\,\dd t
\\
=&
c_n^{3,1}
+
c_n^{3,2}
+
c_n^{3,3}
+
c_n^{3,4}
,
\end{split}
\end{align}
where $R_F(X_{\tau,h}^{n}, X_{\tau,h}(t))$ is a remainder term, given by
\[
R_F(X_{\tau,h}^{n},X_{\tau,h}(t))
:
=
-\int_0^1D^2F\big(X_{\tau,h}^{n}+\lambda(X_{\tau,h}(t)-X_{\tau,h}^{n})\big)
\big(X_{\tau,h}(t)-X_{\tau,h}^{n}, X_{\tau,h}(t)-X_{\tau,h}^{n}\big)(1-\lambda)\,\dd \lambda.
\]
Concerning  $c_n^{3,1}$, we first use \cite[(5.12)]{jiang2025uniform} to get, for any $\vartheta\in(0,1)$, some
$\eta\in\left(\max\{\frac d2,1\},2\right)$
\begin{align}
\|A^{-\frac\eta2}DF(u)v\|
\leq
C(1+\max\left\{\|u\|_V,\|u\|_\vartheta\right\}^{2q-2})\|v\|_{-\vartheta},
\end{align}
and then utilizing   \eqref{lem:eq-spatial-regu-ENm}, Theorem \ref{th:uniform-moment-bound} and Lemma \ref{lem:bound-Komogrolv-equation}
gives, for any $\varepsilon\in (0,\beta)$
\begin{align}\label{eq:estimate-cn31}
\begin{split}
|c_n^{3,1}|
\leq
&
C\tau \int_{t_n}^{t_{n+1}}\left(1+(t_m-t)^{-\frac{\eta}2}\right)e^{-c(t_m-t)}
\\
&
\mathbb{E} \Big[\Big(1+\sup_{s\in[0,t_m-t]}
\mathbb{E}[\|X(s, X_{\tau,h}(t))\|_V^{2q}]\Big)
\|A^{-\frac{\eta}2}DF(X_{\tau,h}^n) A_hE_{\tau,h}X_{\tau,h}^n\|\Big]\,\dd t
\\
\leq
&
C\tau \int_{t_n}^{t_{n+1}}\left(1+(t_m-t)^{-\frac{\eta}2}\right)e^{-c(t_m-t)}
\\
&
\qquad\mathbb{E}\left[\Big(1+\max\{\|X_{\tau,h}^n\|_V,\|A_h^{\frac\beta2-\varepsilon}X^n_{\tau,h}\|\}^{2q-2}\Big)
\|A_h^{-\frac{\beta}2+\varepsilon} A_hE_{\tau,h}X_{\tau,h}^n\|\right]\,\dd t
\\
\leq
&
C\tau^{\beta-\varepsilon}  \int_{t_n}^{t_{n+1}}\left(1+(t_m-t)^{-\frac{\eta}2}\right)e^{-c(t_m-t)}  \,\dd t
\sup_{m\in \mathbb{N}}\|A_h^{\frac\beta2}X_{\tau,h}^m\|_{L^2(\Omega;H)}
\\
\leq
&
C\tau^{\beta-\varepsilon} \int_{t_n}^{t_{n+1}}\left(1+(t_m-t)^{-\frac{\eta}2}\right)e^{-c(t_m-t)}  \,\dd t.
\end{split}
\end{align}
Similarly as the above estimate, the term $c_n^{3,2}$ can be estimated as follows
\begin{align}\label{eq:estimate-cn32}
\begin{split}
|c_n^{3,2}|
\leq
&
C\tau \int_{t_n}^{t_{n+1}}e^{-c(t_m-t)}
\mathbb{E} \Big[\big(1+\sup_{s\in[0, t_m-t]}
\mathbb{E}\big[\|X(s, X_{\tau,h}(t))\|_V^{2q}\big]\big)\|f'(X_{\tau,h}^n)\|_V
\\
&
\qquad
\|E_{\tau,h}P_h\|_{\mathcal{L}(H)}\|F_{\tau,h}(X_{\tau,h}^n)\|\Big]\,\dd t
\\
\leq
&
C\tau \int_{t_n}^{t_{n+1}}e^{-c(t_m-t)}\,\dd t.
\end{split}
\end{align}
In light of  \eqref{lem:eq-spatial-regu-ENm}, Lemmas
\ref{lem:eq-mallivin-derivative-numercial-solu}, \ref{lem:bound-Komogrolv-equation},  Theorem \ref{th:uniform-moment-bound}    and the Malliavin integration by parts,
it follows that
\begin{align}\label{eq:estimate-cn33}
\begin{split}
|c_n^{3,3}|
=
&
\left|\mathbb{E}\int_{t_n}^{t_{n+1}}
\int_{t_n}^t
\left<D^2\mu^N(t_m-t ,X_{\tau,h}(t))D_sX_{\tau,h}(t)
, DF(X_{\tau,h}^n) E_{\tau,h}P_h\,\right>_{\mathcal{L}_2^0}\,\dd s \dd t\right|
\\
\leq
&
\int_{t_n}^{t_{n+1}}\int_{t_n}^t\left(1+(t_m-t)^{-\frac{1-\beta}2}\right)
e^{-c(t_m-t)}
\mathbb{E}\Big[\big(1+\sup_{s\in[0,t_m-t]}
\mathbb{E}[\|X(s,X_{\tau,h}(t))\|^{8q-2}_V]\big)
\\
&
\qquad\qquad
\|A_h^{\frac{\beta-1}2}D_sX_{\tau,h}(t)\|_{\mathcal{L}_2^0}
\|DF(X_{\tau,h}^n)\cdot E_{\tau,h}P_h\|_{\mathcal{L}_2^0}\,\dd s
\,\dd t\Big]
\\
\leq
&
C\int_{t_n}^{t_{n+1}}\int_{t_n}^t(1+(t_m-t)^{\frac{\beta-1}2})
e^{-c(t_m-t)}\mathbb{E}[\|A_h^{\frac{\beta-1}2}D_sX_{\tau,h}(t)\|_{\mathcal{L}_2^0}
\\
&
\qquad\qquad
(1+\|X_{\tau,h}^n\|_V^{2q-2})]
\|A_h^{\frac{1-\beta}2}E_{\tau,h}\|_{\mathcal{L}(H)}\|A_h^{\frac{\beta-1}2}P_h\|_{\mathcal{L}_2^0}\,\dd t
\\
\leq
&
C\tau^{\frac{1+\beta}2}\int_{t_n}^{t_{n+1}}\left(1+(t_m-t)^{\frac{\beta-1}2}\right)e^{-c(t_m-t)}\,\dd t.
\end{split}
\end{align}
For the term $c_n^{3,4}$, we  first note  that
 $\|A^{-\frac\eta2}v\|\leq C\|v\|_{L^1(D)}$, for $ v\in L^1(D)$ by the embedding theorem  and some
$\eta\in\left(\max\{\frac d2,1\},2\right)$. Therefore,  by Theorem \ref{th:uniform-moment-bound} and Lemma \ref{lem:bound-Komogrolv-equation},  we arrive at
\begin{align}\label{eq:estimate-cn34}
\begin{split}
&|c_n^{3,4}|
\leq
\int_{t_n}^{t_{n+1}}\left(1+(t_m-t)^{-\frac\eta2}\right)e^{-c(t_m-t)}
\mathbb{E}\Big[\Big(1+\sup_{s\in[0,t_m-t]}\mathbb{E}[\|X(s,X_{\tau,h}(t))\|_V^{2q}]\Big)
\\
&
\qquad \cdot
\|A^{-\frac\eta2}DF(X_{\tau,h}^n) R_F(X_{\tau,h}^{n},X_{\tau,h}(t))\|\Big]\,\dd t
\\
\leq
&
C\int_{t_n}^{t_{n+1}}\left(1+(t_m-t)^{-\frac\eta2}\right)e^{-c(t_m-t)}
\mathbb{E}\Big[
\|DF(X_{\tau,h}^n) R_F(X_{\tau,h}^{n},X_{\tau,h}(t))\|_{L^1(D)}\Big]\,\dd t
\\
\leq
&
C\int_{t_n}^{t_{n+1}}\left(1+(t_m-t)^{-\frac\eta2}\right)e^{-c(t_m-t)}
\mathbb{E}\Big[(1+\|X_{\tau,h}^n\|_V^{2q-2}+\|X_{\tau,h}(t)\|_V^{2q-2})
\|X_{\tau,h}^{n}-X_{\tau,h}(t)\|^2\Big]\,\dd t
\\
\leq
&C\tau^\beta \int_{t_n}^{t_{n+1}}\left(1+(t_m-t)^{-\frac\eta2}\right)e^{-c(t_m-t)}\,\dd t.
\end{split}
\end{align}
Gathering \eqref{eq:estimate-cn31}-\eqref{eq:estimate-cn34}, we conclude that
for any $\varepsilon \in(0,\beta)$ and some $\eta\in \left(\max\{\frac d 2,1\},2\right)$,
\begin{align}\label{eq:bound-sum-cn3}
\begin{split}
\sum_{n=1}^{m-1}| c_n^3|
\leq
C\tau^{\beta-\varepsilon}\sum_{n=1}^{m-1}\int_{t_n}^{t_{n+1}}\left(1+(t_m-t)^{\frac{\beta-1}2}+(t_m-t)^{-\frac{\eta}2}\right)e^{-c(t_m-t)}\,\dd t
\leq
C\tau^{\beta-\varepsilon}.
\end{split}
\end{align}
At the moment we follow similar arguments used in the proof of \eqref{eq:bound-cn1}
to derive, for any $\varepsilon \in(0,\beta)$
\begin{align}\label{eq:bound-sum-cn4}
\begin{split}
\sum_{n=1}^{m-1}|c_n^4|
\leq
C\lambda_N^{-\beta+\varepsilon}
\sum_{n=1}^{m-1}\int_{t_n}^{t_{n+1}}\left(1+(t_m-t)^{-\beta+\varepsilon}\right)e^{-c(t_m-t)}\,\dd t
\leq
C\lambda_N^{-\beta+\varepsilon}.
\end{split}
\end{align}
From the estimates \eqref{eq:bound-cn1}, \eqref{eq:estimate-sum-cn2}, \eqref{eq:bound-sum-cn3} and \eqref{eq:bound-sum-cn4},
it follows that for any $\varepsilon\in (0, \beta)$,
\begin{align}
\sum_{n=1}^{m-1}|c_n|
\leq
C(\tau^{\min\{\theta, \beta-\varepsilon\}}+ h^{\min\{\rho, 2\beta-2\varepsilon\}}+\lambda_N^{-\beta+\varepsilon}).
\end{align}
Now we split $b_n$ as $b_n=b_n^1+b_n^2+b_n^3+b_n^4$, where
\begin{align}
b_n^1
& :=
\mathbb{E}\int_{t_n}^{t_{n+1}}\left<(I-E_{\tau,h})A_hX_{\tau,h}^n
,
D\mu^N(t_m-t,X_{\tau,h}(t)\right>\,\dd t,
\\
b_n^2
& :=\mathbb{E}\int_{t_n}^{t_{n+1}}\left<A_h(X_{\tau,h}(t)-X_{\tau,h}^n)
,
D\mu^N(t_m-t,X_{\tau,h}(t)\right>\,\dd t,
\\
b_n^3
& :=
\mathbb{E}\int_{t_n}^{t_{n+1}}\left<AX_{\tau,h}(t)-A_hX_{\tau,h}(t)
,
D\mu^N(t_m-t, X_{\tau,h}(t)\right>\,\dd t,
\\
b_n^4
& :=
\mathbb{E}\int_{t_n}^{t_{n+1}}\left<(P_N-I)AX_{\tau,h}(t)
,
D\mu^N(t_m-t,X_{\tau,h}(t)\right>\,\dd t.
\end{align}
To bound the term $b_n^1$, we make a further decomposition as follows:
\begin{align}
\begin{split}
b_n^1
=
&
\mathbb{E}\int_{t_n}^{t_{n+1}}\left<(I-E_{\tau,h})A_h E_{\tau,h}^nP_hX_0,D\mu^N(t_m-t, X_{\tau,h}(t))\right>\,\dd t
\\
&
+
\mathbb{E}\int_{t_n}^{t_{n+1}}\tau \sum_{j=0}^{n-1}\left<(I-E_{\tau,h})A_h E_{\tau,h}^{n-j}P_hF_{\tau,h}(X_{\tau,h}^j),D\mu^N(t_m-t, X_{\tau,h}(t))\right>\,\dd t
\\
&
+
\mathbb{E}\int_{t_n}^{t_{n+1}}\left<(I-E_{\tau,h})A_h \sum_{j=1}^{n }\int_{t_
{j-1}}^{t_{j}}E_{\tau,h}^{n-j+1}P_h\,\dd W(s),D\mu^N(t_m-t,X_{\tau,h}(t))\right>\,\dd t
\\
=:
&
b_{n}^{1,1}
+
b_n^{1,2}
+
b_n^{1,3}.
\end{split}
\end{align}
The first term $b_n^{1,1}$ is similar to $c_n^1$ above and is treated in the same way:
\begin{align}
\begin{split}
|b_{n}^{1,1}|
\leq
&
\int_{t_n}^{t_{n+1}}|\mathbb{E}\left<(I-E_{\tau,h})A_h E_{\tau,h}^nP_hX_0,D\mu^N(t_m-t,X_{\tau,h}(t))\right>|\,\dd t
\\
\leq
&
C\int_{t_n}^{t_{n+1}}\left(1+(t_m-t)^{-1+\frac\varepsilon2}\right)e^{-c(t_m-t)}
\mathbb{E}\Big[\big(1+\sup_{s\in[0,t_m-t]}\mathbb{E}[\|X(s,X_{\tau,h}(t))\|_V^{2q}]\big)
\\
&
\quad\|A^{-1+\frac\varepsilon2}A_h^{1-\frac\varepsilon2}P_h\|_{\mathcal{L}(H)}
\|A_h^{\frac\varepsilon2}(I-E_{\tau,h}) E_{\tau,h}^nP_hX_0\|\Big]\,\dd t
\\
\leq
&
C\int_{t_n}^{t_{n+1}}\left(1+(t_m-t)^{-1+\frac\varepsilon2}\right)e^{-c(t_m-t)}
\|A_h^{-\beta+\varepsilon}(I-E_{\tau,h})A_h^{\beta-\frac\varepsilon2} E_{\tau,h}^nP_hX_0\|\,\dd t
\\
\leq
&
 C\tau^{\beta-\varepsilon}\int_{t_n}^{t_{n+1}}\left(1+(t_m-t)^{-1+\frac\varepsilon2}\right)e^{-c(t_m-t)}
\min\left\{t_n^{-\beta+\frac\varepsilon2},t_n^{-2}\right\}\,\dd t
 \\
 \leq
 &
  C\tau^{\beta-\varepsilon}\int_{t_n}^{t_{n+1}}\left(1+(t_m-t)^{-1+\frac\varepsilon2}\right)e^{-c(t_m-t)}
  \min\left\{t^{-\beta+\frac\varepsilon2},t^{-2}\right\}\,\dd t,
\end{split}
\end{align}
where we utilized the fact $\frac 1{t_n}\leq \frac 2 t$, for $t\in(t_n,t_{n+1}), n=1,2,\cdots$ in the last inequality.
Similarly as above,  we have by \eqref{eq:sum}
\begin{align}
\begin{split}
|b_{n}^{1,2}|
\leq
&
\int_{t_n}^{t_{n+1}}\tau \sum_{j=0}^{n-1}\left|\mathbb{E}\left<(I-E_{\tau,h})A_h E_{\tau,h}^{n-j}P_hF_{\tau,h}(X_{\tau,h}^j),D\mu^N(t_m-t, X_{\tau,h}(t))\right>\right|\,\dd t
\\
\leq
&
C\int_{t_n}^{t_{n+1}}\left(1+(t_m-t)^{-1+\frac\varepsilon2}\right)e^{-c(t_m-t)}
\tau \sum_{j=0}^{n-1}\mathbb{E}\big[\|A_h^{-1+\varepsilon}(I-E_{\tau,h})A_h^{1-\frac \varepsilon2} E_{\tau,h}^{n-j}P_hF_{\tau,h}(X_{\tau,h}^j)\|\big]\,\dd t
\\
\leq
&
C\tau^{1-\varepsilon}\int_{t_n}^{t_{n+1}}\left(1+(t_m-t)^{-1+\frac\varepsilon2}\right)e^{-c(t_m-t)}\tau \sum_{j=0}^{n-1}
\min\{t_{n-j}^{-1+\frac\varepsilon2},t_{n-j}^{-2}\}\,\dd t
\\
\leq
&
C\tau^{1-\varepsilon}\int_{t_n}^{t_{n+1}}\left(1+(t_m-t)^{-1+\frac\varepsilon2}\right)e^{-c(t_m-t)}\,\dd t.
\end{split}
\end{align}
For the term $b_n^{1,3}$,  we rely on Malliavin integration by parts, Lemmas \ref{lem:eq-smooth-ENm}, \ref{lem:eq-mallivin-derivative-numercial-solu}, \ref{lem:bound-Komogrolv-equation}
and Theorem \ref{th:uniform-moment-bound} to obtain
\begin{align}\label{eq:bound-b_n13}
\begin{split}
|b_n^{1,3}|
\leq
&
\int_{t_n}^{t_{n+1}}\sum_{j=1}^{n}\int_{t_{j-1}}^{t_{j}}\left|\mathbb{E}\left<(I-E_{\tau,h})A_h E_{\tau,h}^{n-j+1}P_h ,D^2\mu^N(t_m-t,X_{\tau,h}(t))D_sX_{\tau,h}(t)\right>_{\mathcal{L}_2^0}\right|\,\dd s\,\dd t
\\
\leq
&
\int_{t_n}^{t_{n+1}}\sum_{j=1}^{n }\int_{t_{j-1}}^{t_{j}} \left(1+(t_m-t)^{-1+\frac\varepsilon2}\right)e^{-c(t_m-t)}\|A_h^{-\frac{1+\beta-\varepsilon}2}
(I-E_{\tau,h})A_h E_{\tau,h}^{n-j+1}P_h\|_{\mathcal{L}_2^0}
\\
&
\mathbb{E}\Big[(1+\sup_{s\in[0,t_m-t]}\mathbb{E}[\|X(s, X_{\tau,h}(t))\|_V^{8q-2}])
\|A^{\frac{\beta-1}2}D_sX_{\tau,h}(t)\|_{\mathcal{L}_2^0}\Big]\,\dd s\,\dd t
\\
\leq
&
\int_{t_n}^{t_{n+1}}\sum_{j=1}^{n }\int_{t_{j-1}}^{t_{j}} (1+(t_m-t)^{-1+\frac\varepsilon2})e^{-c(t_m-t)}
\\
&
\cdot
\|A_h^{-\beta+\varepsilon}(E_{\tau,h}-I)P_h\|_{\mathcal{L}(H)}\| A_h^{1-\frac\varepsilon2}E_{\tau,h}^{n-j+1}\|_{\mathcal{L}(H)}\|A_h^{\frac{\beta-1}2}P_h\|_{\mathcal{L}_2^0}
\,\dd s\,\dd t
\\
\leq
&
C\tau^{\beta-\varepsilon}\int_{t_n}^{t_{n+1}} \left(1+(t_m-t)^{-1+\frac\varepsilon2}\right)e^{-c(t_m-t)}\dd t
\tau\sum_{j=1}^{n }\min\{t_{n-j+1}^{-1+\frac\varepsilon2},t_{n-j+1}^{-2}\}
\\
\leq
&
C\tau^{\beta-\varepsilon}\int_{t_n}^{t_{n+1}}\left(1+(t_m-t)^{-1+\frac\varepsilon2}\right)e^{-c(t_m-t)}\,\dd t.
\end{split}
\end{align}
We conclude this step by gathering the previous estimates. This enables us to write
\begin{align}\label{eq:bound-bn1}
\begin{split}
\sum_{n=1}^{m-1}|b_n^{1}|
\leq
C\tau^{\beta-\varepsilon}\sum_{n=1}^{m-1}\int_{t_n}^{t_{n+1}}
\left(1+(t_m-t)^{-1+\frac\varepsilon2}\right)(1+\min\{t^{-1+\frac\varepsilon2}, t^{-2}\})e^{-c(t_m-t)}\,\dd t
\leq
C\tau^{\beta-\varepsilon}.
\end{split}
\end{align}

Recalling  the definition $X_{\tau,h}(t)$ in \eqref{eq:continouse-verstion-Xtauh},
the term $b_n^2$ is divided into the following three  terms
\begin{align}
\begin{split}
b_n^2
=&
\mathbb{E}\int_{t_n}^{t_{n+1}}(t-t_n)\left<-A^2_hE_{\tau,h}X_{\tau,h}^n,D\mu^N(t_m-t,X_{\tau,h}(t)\right>\,\dd t
\\
&
+
\mathbb{E}\int_{t_n}^{t_{n+1}}(t-t_n)\left<A_hE_{\tau,h}F_{\tau,h}(X_{\tau,h}^n),D\mu^N(t_m-t,X_{\tau,h}(t)\right>\,\dd t
\\
&
+
\mathbb{E}\int_{t_n}^{t_{n+1}}\left<A_h\int_{t_n}^tE_{\tau,h}P_h\dd W(s),D\mu^N(t_m-t,X_{\tau,h}(t)\right>\,\dd t
\\
=:
&
b_n^{2,1}
+
b_n^{2,2}
+
b_n^{2,3}.
\end{split}
\end{align}
To deal with the term $b_n^{2,1}$, we make a further decomposition:
\begin{align}\label{eq:decomposition-bn21}
\begin{split}
b_n^{2,1}
=
&
\mathbb{E}\int_{t_n}^{t_{n+1}}(t-t_n)\left<-A^2_hE_{\tau,h}E_{\tau,h}^nP_hX_0,
\,D\mu^N(t_m-t, X_{\tau,h}(t)\right>\,\dd t
\\
&
+
\mathbb{E}\int_{t_n}^{t_{n+1}}(t-t_n)\left<-A^2_hE_{\tau,h}
\tau\sum_{j=0}^{n-1}E_{\tau,h}^{n-j}F_{\tau,h}(X_{\tau,h}^j)
\,D\mu^N(t_m-t,X_{\tau,h}(t)\right>\,\dd t
\\
&
+
\mathbb{E}\int_{t_n}^{t_{n+1}}(t-t_n)\left<-A^2_h\sum_{j=1}^{n}E_{\tau,h}^{n-j+1}P_h \Delta W_j,
\,D\mu^N(t_m-t, X_{\tau,h}(t)\right>\,\dd t
\\
=
&
b_n^{2,1,1}+b_n^{2,1,2}+b_n^{2,1,3}.
\end{split}
\end{align}
Owing to  the smoothing effect of $E_{\tau,h}$ in Lemma \ref{lem:eq-smooth-ENm}, the Malliavin regularity in Lemma \ref{lem:bound-Komogrolv-equation}, and the a priori estimate of $X_{\tau,h}(t)$,
we have
\begin{align}\label{eq:bound-bn211}
\begin{split}
|b_n^{2,1,1}|
\leq
&
\int_{t_n}^{t_{n+1}}(t-t_n)
\left|\mathbb{E}\left<A^{-1+\frac\varepsilon2}A_h^2E_{\tau,h}E_{\tau,h}^nP_hX_0,
A^{1-\frac\varepsilon2}D\mu^N(t_m-t, X_{\tau,h}(t))\right>\right|\,\dd t
\\
\leq
&
C\tau\int_{t_n}^{t_{n+1}}
\mathbb{E}\Big[\left(1+(t_m-t)^{-1+\frac\varepsilon2}\right)e^{-c(t_m-t)}
\Big(1+\sup_{s\in[0,t_m-t]}\mathbb{E}\big[\|X(s,X_{\tau,h}(t))\|_V^{2q}\big]\Big)
\\
&
\qquad
\|A^{-1+\frac\varepsilon2}A_h^{1-\frac\varepsilon2}P_h\|_{\mathcal{L}(H)}
\Big\|A_h^{\varepsilon}E_{\tau,h}\|_{\mathcal{L}(H)}\|A_h^{1-\frac\varepsilon2}E_{\tau,h}^nP_hX_0\Big\|\Big]\,\dd t
\\
\leq
&
C\tau^{1-\varepsilon}\int_{t_n}^{t_{n+1}}\left(1+(t_m-t)^{-1+\frac\varepsilon2}\right)e^{-c(t_m-t)}
\min\left\{t_n^{-1+\frac\varepsilon2},t_n^{-2}\right\}
\,\dd t
\\
\leq
&
C\tau^{1-\varepsilon}\int_{t_n}^{t_{n+1}}
\left(1+(t_m-t)^{-1+\frac\varepsilon2}\right)e^{-c(t_m-t)}\min\left\{t^{-1+\frac\varepsilon2},t^{-2}\right\}
\,\dd t
,
\end{split}
\end{align}
for $n=1,2,\cdots, m-1$.
Similarly as above
\begin{align}\label{eq:bound-bn212}
\begin{split}
|b_n^{2,1,2}|
\leq
&
\tau\int_{t_n}^{t_{n+1}}\mathbb{E}\Big[\Big\|A^{-1+\frac\varepsilon2}A_h^2E_{\tau,h}
\tau\sum_{j=0}^{n-1}E_{\tau,h}^{n-j}F_{\tau,h}(X_{\tau,h}^j)\Big\|
\|A^{1-\frac\varepsilon2}D\mu^N(t_m-t, X_{\tau,h}(t))\|\Big]\,\dd t
\\
\leq
&
C\tau\int_{t_n}^{t_{n+1}}\left(1+(t_m-t)^{-1+\frac\varepsilon2}\right)e^{-c(t_m-t)}
\mathbb{E}\Big[\Big(1
+
\sup_{s\in[0,t_m-t]}\mathbb{E}\big[\|X(s,X_{\tau,h}(t))\|_V^{2q}\big]\Big)
\\
&
\|A^{-1+\frac\varepsilon2}A_h^{1-\frac\varepsilon2}P_h\|_{\mathcal{L}(H)}
\|A_h^{\varepsilon}E_{\tau,h}\|_{\mathcal{L}(H)}\tau\sum_{j=0}^{n-1}
\|A_h^{1-\frac\varepsilon2}E_{\tau,h}^{n-j}P_h\|_{\mathcal{L}(H)}\|F_{\tau,h}(X_{\tau,h}^j)\|\Big]\,\dd t
\\
\leq
&
C\tau^{1-\varepsilon}\int_{t_n}^{t_{n+1}}\left(1+(t_m-t)^{-1+\frac\varepsilon2}\right)e^{-c(t_m-t)}
\tau\sum_{j=0}^{n-1}\min\left\{t_{n-j}^{-1+\frac\varepsilon2},t_{n-j}^{-2}\right\}
\,\dd t
\\
\leq
&
C\tau^{1-\varepsilon}\int_{t_n}^{t_{n+1}}
\left(1+(t_m-t)^{-1+\frac\varepsilon2}\right)e^{-c(t_m-t)}
\,\dd t
.
\end{split}
\end{align}
Following similar arguments as used in the proof of \eqref{eq:bound-b_n13}, we obtain
\begin{align}\label{eq:bund-bn213}
\begin{split}
&|b_n^{2,1,3}|
\\
&
\leq
\tau\int_{t_n}^{t_{n+1}} \Big|\mathbb{E}
\Big[\sum_{j=1}^{n}\int_{t_{j-1}}^{t_{j}}\left<A^2_hE_{\tau,h}^{n-j+1}P_h,
D^2\mu^N(t_m-t,X_{\tau,h}(t))D_sX_{\tau,h}(t)\right>_{\mathcal{L}_2^0}\Big]\,\dd s\Big|
\,\dd t
\\
&\leq
C\tau\int_{t_n}^{t_{n+1}} \sum_{j=1}^{n}\int_{t_{j-1}}^{t_{j}}
\left(1+(t_m-t)^{-1+\frac\varepsilon2}\right)e^{-c(t_m-t)}\mathbb{E}\Big[
\Big(1+\sup_{s\in[0,t_m-t]}\mathbb{E}\big[\|X(s,X_{\tau,h}(t))\|_V^{8q-2}\big]\Big)
\\
&\qquad
\|A^{-\frac{1+\beta-\varepsilon}2}A_h^{\frac{1+\beta-\varepsilon}2}P_h\|_{\mathcal{L}(H)}\| A_h^{1-\frac\varepsilon2}E_{\tau,h}^{n-j+1}P_h\|_{\mathcal{L}(H)}\|A_h^{1-\beta+\varepsilon}E_{\tau,h}\|_{\mathcal{L}(H)}
\|A_h^{\frac{\beta-1}2}P_h\|_{\mathcal{L}_2^0}
\|A^{\frac{\beta-1}2}D_sX_{\tau,h}(t)\|_{\mathcal{L}_2^0}
\Big]\,\dd s\,\dd t
\\
&\leq
C\tau \tau^{-1+\beta-\varepsilon}\int_{t_n}^{t_{n+1}}\left(1+(t_m-t)^{-1+\frac\varepsilon2}\right)e^{-c(t_m-t)}
\sum_{j=1}^{n}\tau\min\left\{t_{n-j+1}^{-1+\frac\varepsilon2}, t_{n-j+1}^{-2}\right\}\,\dd t
\\
&\leq
C\tau^{\beta-\varepsilon}\int_{t_n}^{t_{n+1}}\left(1+(t_m-t)^{-1+\frac\varepsilon2}\right)e^{-c(t_m-t)}\,\dd t.
\end{split}
\end{align}
Thus, inserting \eqref{eq:bound-bn211}, \eqref{eq:bound-bn212}  and \eqref{eq:bund-bn213} into \eqref{eq:decomposition-bn21} shows
\begin{align}
|b_n^{2,1}|
\leq
C\tau^{\beta-\varepsilon} \int_{t_n}^{t_{n+1}} \left(1+(t_m-t)^{-1+\frac\varepsilon2}\right)\left(1+\min\left\{t^{-1+\varepsilon}, t^{-2}\right\}\right)e^{-c(t_m-t)}\,\dd t.
\end{align}
Next we treat easy terms $b_n^{2,2}$ and $b_n^{2,3}$. Following similar arguments as used in the proof of \eqref{eq:estimate-cn32} and
thanks to  Lemmas \ref{lem:bound-Komogrolv-equation}, \ref{lem:eq-smooth-ENm} and Theorems \ref{th:uniform-moment-bound}, \ref{them:property-solution}, one can show
\begin{align}
\begin{split}
|b_n^{2,2}|
\leq
&
\int_{t_n}^{t_{n+1}}\left|(t-t_n)\mathbb{E}\left<A_hE_{\tau,h}F_{\tau,h}(X_{\tau,h}^n)
,
D\mu^N(t_m-t,X_{\tau,h}(t))\right>\right|\,\dd t
\\
\leq
&
C\tau\int_{t_n}^{t_{n+1}}(1+(t_m-t)^{-1+\varepsilon})e^{-c(t_m-t)}\|A^{-1+\varepsilon}A_h^{1-\varepsilon}P_h\|_{\mathcal{L}(H)}
\\
&
\qquad
\mathbb{E}\Big[(1+\sup_{s\in[0,t_m-t]}\mathbb{E}\big[\|X(s,X_{\tau,h}(t))\|^{2q}_V])
\|A_h^{\varepsilon}E_{\tau,h}F_{\tau,h}(X_{\tau,h}^n)\|\Big]
\,\dd t
\\
\leq
&
C\tau^{1-\varepsilon}\mathbb{E}\int_{t_n}^{t_{n+1}}(1+(t_m-t)^{-1+\varepsilon})e^{-c(t_m-t)}
\mathbb{E}[\|F_{\tau,h}(X_{\tau,h}^n)\|]\,\dd t\\
\leq
&
C\tau^{1-\varepsilon}\int_{t_n}^{t_{n+1}}(1+(t_m-t)^{-1+\varepsilon})e^{-c(t_m-t)}\,\dd t.
\end{split}
\end{align}
The estimation of $b_n^{2,3}$ is similar to that of $c_n^{2,3}$:
\begin{align}
\begin{split}
|b_n^{2,3}|
=
&
\int_{t_n}^{t_{n+1}}\int_{t_n}^t\left|\mathbb{E}\left<A_hE_{\tau,h}P_h,
D^2\mu^N(t_m-t,X_{\tau,h}(t))D_sX_{\tau,h}(t)\right>_{\mathcal{L}_2^0}\right|\,\dd t
\\
\leq
&
C\int_{t_n}^{t_{n+1}}\int_{t_n}^t(1+(t_m-t)^{-1+\varepsilon})e^{-c(t_m-t)}
\mathbb{E}\Big[(1+\sup_{s\in[0,t_m-t]}\mathbb{E}\big[\|X(s,X_{\tau,h}(t))\|_V^{8q-2}\big]
\\
&
\qquad
\|A^{-\frac{1+\beta}2+\varepsilon}A_h^{\frac{1+\beta}2-\varepsilon}P_h\|_{\mathcal{L}(H)}
\|A_h^{-\frac{1+\beta}2+\varepsilon}A_hE_{\tau,h}\|_{\mathcal{L}_2^0}
\|A^{\frac{\beta-1}2}D_sX_{\tau,h}(t)\|_{\mathcal{L}_2^0}\Big]\,\dd s\,\dd t
\\
\leq
&
C\tau\int_{t_n}^{t_{n+1}}\left(1+(t_m-t)^{-1+\varepsilon}\right)e^{-c(t_m-t)}
\|A^{1-\beta+\varepsilon}_hE_{\tau,h}\|_{\mathcal{L}(H)}\|A_h^{\frac{\beta-1}2}P_h\|_{\mathcal{L}_2^0}
\,\dd t
\\
\leq
&
C\tau^{\beta-\varepsilon}\int_{t_n}^{t_{n+1}}\left(1+(t_m-t)^{-1+\varepsilon}\right)e^{-c(t_m-t)}\,\dd t.
\end{split}
\end{align}
Combining the above estimates, we  conclude
\begin{align}\label{eq:bound-bn2}
\sum_{n=1}^{m-1}|b_n^2|
\leq
C\tau^{\beta-\varepsilon}\sum_{n=1}^{m-1}\int_{t_n}^{t_{n+1}}(1+(t_m-t)^{-1+\varepsilon})
(1+\min\{t^{-1+\varepsilon},t^{-2}\})e^{-c(t_m-t)}\,\dd t
\leq
C\tau^{\beta-\varepsilon}.
\end{align}
Regarding the term $b_n^3$,   we  use the relation $A_hR_h=P_hA$ to write
\begin{align}
\begin{split}
&
\left<AX_{\tau,h}(t)-A_hX_{\tau,h}(t),
D\mu^N(t_m-t,X_{\tau,h}(t))\right>
\\
=
&
\left<X_{\tau,h}(t),(P_hA-A_hP_h)D\mu(t_m-t,X_{\tau,h}(t))\right>
\\
=
&
\left<X_{\tau,h}(t),A_h(R_h-P_h)D\mu^N(t_m-t,X_{\tau,h}(t))\right>,
\end{split}
\end{align}
which together with the definition of $X_{\tau,h}(t)$ in \eqref{eq:continouse-verstion-Xtauh} yields the following decomposition:
\begin{align}\label{eq:decompose-bn3}
\begin{split}
b_n^3
=
&
\mathbb{E}\int_{t_n}^{t_{n+1}}\left<X_{\tau,h}(t),A_h(R_h-P_h)D\mu^N(t_m-t,\widetilde{X}_{\tau,h}(t))\right>\,\dd t
\\
=
&
\mathbb{E}\int_{t_n}^{t_{n+1}}\Big<(t-t_n)E_{\tau,h}P_hF_{\tau,h}(X_{\tau,h}^n)
,A_h(R_h-P_h)D\mu^N(t_m-t,X_{\tau,h}(t))\Big>\,\dd t
\\
&
+
\mathbb{E}\int_{t_n}^{t_{n+1}}\Big<
\int_{t_n}^tE_{\tau,h}P_h\,\dd W(s),A_h(R_h-P_h)D\mu^N(t_m-t,X_{\tau,h}(t))\Big>\,\dd t
\\
&
+
\mathbb{E}\int_{t_n}^{t_{n+1}}\Big<(1-(t-t_n)A_hE_{\tau,h})E_{\tau,h}^nP_hX_0, A_h(R_h-P_h)D\mu^N(t_m-t,
X_{\tau,h}(t))\Big>\,\dd t
\\
&
+
\mathbb{E}\int_{t_n}^{t_{n+1}}\Big<(1-(t-t_n)A_hE_{\tau,h})\sum_{j=0}^{n-1}\tau E_{\tau,h}^{n-j}P_hF_{\tau,h}(X_{\tau,h}^j),
A_h(R_h-P_h)D\mu^N(t_m-t,X_{\tau,h}(t))\Big>\,\dd t
\\
&
+
\mathbb{E}\int_{t_n}^{t_{n+1}}\Big<(1-(t-t_n)A_hE_{\tau,h})\sum_{j=0}^{n-1}\tau \int_{t_j}^{t_{j+1}}E_{\tau,h}^{n-j}P_h\,\dd W(s),
A_h(R_h-P_h)D\mu^N(t_m-t,X_{\tau,h}(t))\Big>\,\dd t
\\
=:
&b_n^{3,1}
+
b_n^{3,2}
+
b_n^{3,3}
+
b_n^{3,4}
+
b_n^{3,5}.
\end{split}
\end{align}
Next, we will bound $b_n^{3,i}, i=1,2,3,4,5$, separately.
  From \eqref{eq:property-Ih}, the smoothing property  of $E_{\tau,h}$ in Lemma \ref{lem:eq-smooth-ENm}, Malliavin regularity in Lemma \ref{lem:bound-Komogrolv-equation} and the a priori estimate of $X_{\tau,h}(t)$, it follows that for any  $\varepsilon\in (0, \beta)$
\begin{align}\label{eq:bound-bn31}
\begin{split}
|b_n^{3,1}|
=
&
\int_{t_n}^{t_{n+1}}(t-t_n)\left|\mathbb{E}\Big<A_hE_{\tau,h}P_hF_{\tau,h}(X_{\tau,h}^n)
,(R_h-P_h)A^{-1+\varepsilon}A^{1-\varepsilon}D\mu^N(t_m-t,X_{\tau,h}(t))\Big>\right|\,\dd t
\\
\leq
&
C\tau\int_{t_n}^{t_{n+1}}(1+(t_m-t)^{-1+\varepsilon})e^{-c(t_m-t)}
\mathbb{E}\Big[(1+\sup_{s\in[0,t_m-t]}\mathbb{E}[\|X(s,X_{\tau,h}(t))\|_V^{2q}])
\\
&
\qquad
\|A_hE_{\tau,h}P_h\|_{\mathcal{L}(H)}
\|F_{\tau,h}(X_{\tau,h}^n)\|
\|(P_h-R_h)A^{-1+\varepsilon}\|_{\mathcal{L}(H)}\Big]\,\dd t
\\
\leq
&
Ch^{2-2\varepsilon}\int_{t_n}^{t_{n+1}}(1+(t_m-t)^{-1+\varepsilon})e^{-c(t_m-t)}\,\dd t.
\end{split}
\end{align}
Utilizing \eqref{eq:property-R_h},  Malliavin calculus integration by parts, the regularity of $\mu^N(\cdot,\cdot)$, Malliavin differentiability of $X_{\tau,h}(t)$ and the a priori estimate of $X_{\tau,h}(t)$, one can observe that for any $\varepsilon\in (0,\beta)$
\begin{align}\label{eq:bound-bn32}
\begin{split}
|b_n^{3,2}|
&\leq
\int_{t_n}^{t_{n+1}}\int_{t_n}^t\Big|\mathbb{E}\Big<A^{\frac{\beta-1}2}A_h^{\frac{1-\beta}2}A_hE_{\tau,h}A_h^{\frac{\beta-1}2}P_h,
\\
&
\qquad
A^{\frac{1-\beta}2}(P_h-R_h)A^{-\frac{1+\beta}2+\varepsilon}A^{\frac{1+\beta}2-\varepsilon}
D^2\mu^N(t_m-t,X_{\tau,h}(t))A^{\frac{1-\beta}2}A^{\frac{\beta-1}2}D_sX_{\tau,h}(t)\Big>_{\mathcal{L}_2^0}\Big|\,\dd s\,\dd t
\\
&\leq
\int_{t_n}^{t_{n+1}}\int_{t_n}^t\left(1+(t_m-t)^{-1+\varepsilon}\right)e^{-c(t_m-t)}
\mathbb{E}\Big[(1+\sup_{s\in[0,t_m-t]}\mathbb{E}[\|X(s,X_{\tau,h}(t))\|_V^{8q-2}])
\|A_hE_{\tau,h}\|_{\mathcal{L}(H)}
\\
&
\qquad
\|A^{\frac{\beta-1}2}A_h^{\frac{\beta-1}2}P_h\|_{\mathcal{L}(H)}\|A_h^{\frac{\beta-1}2}P_h\|_{\mathcal{L}_2^0}
\|A^{\frac{1-\beta}2}(P_h-R_h)A^{-\frac{1+\beta}2+\varepsilon}\|_{\mathcal{L}(H)}
\|A^{\frac{\beta-1}2}D_sX_{\tau,h}(t)\|_{\mathcal{L}_2^0}\Big]\,\dd s\,\dd t
\\
&\leq
Ch^{2\beta-2\varepsilon}\int_{t_n}^{t_{n+1}}\left(1+(t_m-t)^{-1+\varepsilon}\right)e^{-c(t_m-t)}\,\dd t.
\end{split}
\end{align}
Then following similar arguments as used in the estimation of $b_n^{3,1}$, $b_n^{2,1,1}$ and $b_n^{2,1,2}$ yields
\begin{align}\label{eq:bound-bn33-bn34}
|b_n^{3,3}|
+
|b_n^{3,4}|
\leq
Ch^{2\beta-2\varepsilon}
\int_{t_n}^{t_{n+1}} \left(1+(t_m-t)^{-1+\varepsilon}+(t_m-t)^{-1+\frac\varepsilon2}\right)\min\{t^{-1+\frac\varepsilon2},t^{-2}\}e^{-c(t_m-t)}\,\dd t.
\end{align}
Similar arguments as used in estimating $b_n^{3,2}$ implies
\begin{align}\label{eq:bound-bn35}
\begin{split}
b_n^{3,5}
\leq
&
\int_{t_n}^{t_{n+1}}\sum_{j=0}^{n-1} \int_{t_j}^{t_{j+1}}\Big|\mathbb{E}\Big<A_h^{\frac{\beta+1-\varepsilon}2}
(I-(t-t_n)A_hE_{\tau,h})E_{\tau,h}^{n-j}A_h^{\frac{1-\beta}2}
A_h^{\frac{\beta-1}2}P_h,
\\
&\quad
A_h^{\frac{1-\beta+\varepsilon}2}(R_h-P_h)A^{-\frac{1+\beta-\varepsilon}2}
A^{\frac{1+\beta-\varepsilon}2}D^2\mu^N(t_m-t,X_{\tau,h}(t))A^{\frac{1-\beta}2}
A^{\frac{\beta-1}2}D_sX_{\tau,h}(t)\Big>_{\mathcal{L}_2^0}\,\dd s\Big|\,\dd t
\\
\leq
&
Ch^{2\beta-2\varepsilon}\int_{t_n}^{t_{n+1}}\sum_{j=0}^{n-1}\tau \min\{t_{n-j}^{-1+\frac\varepsilon2},t_{n-j}^{-2}\}\left(1+(t_m-t)^{-1+\frac{\varepsilon}2}\right)e^{-c(t_m-t)}\,\dd t
\\
\leq
&Ch^{2\beta-2\varepsilon}\int_{t_n}^{t_{n+1}}\left(1+(t_m-t)^{-1+\frac\varepsilon2}\right)e^{-c(t_m-t)}\,\dd t.
\end{split}
\end{align}
Thanks to \eqref{eq:bound-bn31}-\eqref{eq:bound-bn35} and \eqref{eq:decompose-bn3}, one can deduce
\begin{align}\label{eq:bound-bn3-sum}
\begin{split}
\sum_{n=1}^{m-1}|b_n^3|
\leq
&
Ch^{2\beta-2\varepsilon}
\sum_{n=1}^{m-1}\int_{t_n}^{t_{n+1}} \left(1+(t_m-t)^{-1+\varepsilon}+(t_m-t)^{-1+\frac\varepsilon2}\right)(1+\min\{t^{-1+\frac\varepsilon2},t^{-2}\})e^{-c(t_m-t)}\,\dd t
\\
\leq
&
Ch^{2\beta-2\varepsilon}.
\end{split}
\end{align}
Employing similar techniques used in the proof of \eqref{eq:bound-bn31}, we show
\begin{align}\label{eq:bound-sum-bn4}
\begin{split}
\sum_{n=1}^{m-1}|b_n^4|
=
&
\sum_{n=1}^{m-1}\left|\mathbb{E}\int_{t_n}^{t_{n+1}}\left<A^{-\frac\beta2+\varepsilon}(I-P_N)A^{\frac\beta2}X_{\tau,h}(t)
,
A^{1-\varepsilon}D\mu^N(t_m-t,X_{\tau,h}(t))\right>\,\dd t\right|
\\
\leq
&
C\lambda_N^{-\frac\beta2+\varepsilon}\sum_{n=1}^{m-1}\int_{t_n}^{t_{n+1}}\left(1+(t_m-t)^{-1+\varepsilon}\right)e^{-c(t_m-t)}\,\dd t
\\
\leq
&
C\lambda_N^{-\frac\beta2+\varepsilon}.
\end{split}
\end{align}
By \eqref{eq:bound-bn1}, \eqref{eq:bound-bn2}, \eqref{eq:bound-bn3-sum} and \eqref{eq:bound-sum-bn4}, we conclude
\begin{align}
\sum_{n=1}^{m-1}|b_n|
\leq
C(h^{2\beta-2\varepsilon}+\tau^{\beta-\varepsilon}+C\lambda_N^{-\frac\beta2+\varepsilon}).
\end{align}
Therefore, combining all the estimates of $a_n, b_n, c_n$ together gives
\begin{align}\label{eq:XNTMXO-XNXTAUHX0}
\mathbb{E}\big[\mu^N(t_m,P_hX_0)\big]
-
\mathbb{E}\big[\mu^N(0,X_{\tau,h}^m)\big]
\leq
C(h^{2\beta-2\varepsilon}+\tau^{\beta-\varepsilon}+C\lambda_N^{-\frac\beta2+\varepsilon}).
\end{align}
Finally, by \eqref{eq:PnXm-XM}, \eqref{eq:xtm-xntm}  and \eqref{eq:XNTMXO-XNXTAUHX0} and taking $N\rightarrow \infty$, we obtain
 \begin{align}
\left |\mathbb{E}\big[\varphi(X(t_m))\big]-\mathbb{E}\big[\varphi(X^m_{\tau,h})\big]\right|
\leq
C(h^{\min\{2\beta-2\varepsilon,\rho\}}+\tau^{\min\{\beta-\varepsilon,\theta\}}),
 \end{align}
as required.
 $\square$

Let $\mu$ be the  unique invariant measure of the SPDE \eqref{eq:parabolic-SPDE}. If the fully discrete finite element scheme \eqref{eq:full-discretization} possesses a unique invariant measure $\mu_{\tau,h}$, then we have the following  convergence rate between $\mu$ and $\mu_{\tau,h}$.
\begin{proposition}
Let Assumptions \ref{assum:linear-operator-A}-\ref{assum:intial-value-data} and Assumption \ref{assum:derivate-ftauh-bound} are valid for $\gamma\in(\frac d2,2]$ or $\gamma\in(0,\frac12)$ with $Q=I$
 in dimension one. Then, there exists a constant $c$ such that for any $\varphi\in C_b^2$ and $m$ it holds
\begin{align}
\left|\mathbb{E}\left[\varphi(X_{\tau,h}^m)-\int_H\varphi\,\dd \mu\right]\right|
\leq
C(X_0,Q,D,\varphi)(\tau^\delta+h^\iota+e^{-cm\tau})
\end{align}
with $\delta\in(0,\min\{\theta,\beta\})$, $\iota\in(0,\min\{\rho,2\beta\})$, where $\beta=\min\{\gamma,1\}$.
Furthermore, if $\mu_{\tau,h}$ is an ergodic invariant measure of the numerical solution $\{X_{\tau,h}^m\}_{m\geq 0}$, we have
\begin{align}
\left|\mathbb{E}\left[\int_{P_h(H)} \varphi\,\dd \mu_{\tau,h}-\int_H\varphi\,\dd \mu\right]\right|
\leq
C(X_0,Q,\phi)(\tau^\delta+h^\iota).
\end{align}
\end{proposition}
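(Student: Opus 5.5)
The plan is to combine the weak error bound of Theorem \ref{them:weak-vonvergence} with the exponential convergence to equilibrium of Proposition \ref{pro:exponential-convergence-equli}. Since $\mu$ is invariant, for every $t\ge 0$ we have $\int_H\varphi\,\dd\mu=\int_H \mathbb{E}[\varphi(X(t,x))]\,\mu(\dd x)$. Hence
\begin{align*}
\mathbb{E}[\varphi(X(t_m))]-\int_H\varphi\,\dd\mu
=\int_H\mathbb{E}\Big[\mathbb{E}[\varphi(X(t_m,X_0))]-\mathbb{E}[\varphi(X(t_m,x))]\Big]\mu(\dd x).
\end{align*}
By Proposition \ref{pro:exponential-convergence-equli} this is bounded by $C\|\varphi\|_{C_b^1}e^{-ct_m}\big(\|X_0\|_{L^1(\Omega;H)}+\int_H\|x\|\,\mu(\dd x)\big)$. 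The second moment of $\mu$ is finite: it is the weak limit of the laws of $X(t)$, whose moments are bounded uniformly in $t$ by Theorem \ref{them:property-solution}, and Fatou's lemma transfers the bound to $\mu$. This yields the term $e^{-cm\tau}$.

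Next we insert the weak error. Taking $\iota'\in(0,\beta)$ in Theorem \ref{them:weak-vonvergence} close to $\beta$ gives
\begin{align*}
\big|\mathbb{E}[\varphi(X_{\tau,h}^m)]-\mathbb{E}[\varphi(X(t_m))]\big|\le C\big(1+t_m^{-\iota'}+(t_m-\tau)^{-\frac12}\big)(h^{\iota}+\tau^{\delta}),
\end{align*}
with $\delta<\min\{\theta,\beta\}$ and $\iota<\min\{\rho,2\beta\}$. The main obstacle is the singular prefactor at small $t_m$. For $t_m\ge 2\tau$ and $t_m\geq 1$ the prefactor is bounded, and the triangle inequality finishes the first claim.

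For small times we do not use the theorem at all. Instead we bound $|\mathbb{E}\varphi(X_{\tau,h}^m)-\int\varphi\,\dd\mu|\le 2\|\varphi\|_\infty$. Since $t_m\le 1$ gives $e^{-cm\tau}\ge e^{-c}$, this bound is $\le 2e^{c}\|\varphi\|_\infty e^{-cm\tau}$, and the constant is absorbed into $C$. The range $2\tau\le t_m\le1$ is handled the same way. Consequently the first estimate holds for all $m$ with $C$ independent of $m,\tau,h$.

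For the second claim we use ergodicity. By the Birkhoff ergodic theorem, time averages $\frac1M\sum_{m=1}^M\mathbb{E}\varphi(X_{\tau,h}^m)$ converge to $\int\varphi\,\dd\mu_{\tau,h}$ for $\mu_{\tau,h}$-a.e. initial datum. Simpler still, we can start the scheme from $X_{\tau,h}^0\sim\mu_{\tau,h}$, which is supported in $V_h=P_h(H)$ and stationary, so $\mathbb{E}\varphi(X^m_{\tau,h})=\int\varphi\,\dd\mu_{\tau,h}$ for all $m$. In either case we apply the first estimate and let $m\to\infty$, so that $e^{-cm\tau}\to0$.

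The remaining point is that the constant $C(X_0,\dots)$ must stay finite for this stationary initial datum. Its moments in the norms of Assumption \ref{assum:intial-value-data} are controlled by the uniform-in-time bounds of Theorem \ref{th:uniform-moment-bound}, passed to the invariant law via Fatou and a Krylov–Bogoliubov-type averaging argument. If that is delicate, we would instead use the time-average route with $X_0$ deterministic and regular. Either way we obtain $|\int\varphi\,\dd\mu_{\tau,h}-\int\varphi\,\dd\mu|\le C(\tau^\delta+h^\iota)$.
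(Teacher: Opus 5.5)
The paper states this proposition without proof. Your first estimate follows the natural argument and is sound. In particular, absorbing the range $t_m\le 1$ into the $e^{-cm\tau}$ term via $2\|\varphi\|_\infty\le 2e^{c}\|\varphi\|_\infty e^{-cm\tau}$ is correct, and it is needed, because the prefactor $(t_m-\tau)^{-1/2}$ in Theorem~\ref{them:weak-vonvergence} is infinite at $m=1$.

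\textbf{The gap is in the second estimate.} You must transfer a bound whose constant is uniform in $(\tau,h)$ only for a \emph{fixed} admissible initial datum to the measure $\mu_{\tau,h}$, and neither of your routes does this.
\begin{itemize}
\item Birkhoff's theorem gives convergence of $\frac1M\sum_{m=1}^M\mathbb{E}\varphi(X^m_{\tau,h})$ to $\int\varphi\,\dd\mu_{\tau,h}$ only for $\mu_{\tau,h}$-a.e.\ starting point. It says nothing about the particular point $P_hX_0$ coming from a deterministic regular $X_0$, so your fallback does not close.
\item The stationary start $X_0\sim\mu_{\tau,h}$ makes the constant depend on a law that varies with $(\tau,h)$. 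Theorem~\ref{th:uniform-moment-bound} controls $\|A_h^{\gamma/2}\cdot\|$, whereas Assumption~\ref{assum:intial-value-data} asks for $\|A^{\gamma/2}\cdot\|$. The paper proves equivalence of these norms on $V_h$ only for orders in $[-1,1]$, and $V_h\not\subset H^\gamma$ once $\gamma\ge\frac32$, which is forced when $d=3$.
\item Krylov--Bogoliubov plus Fatou only bounds the moments of invariant measures obtained as Ces\`aro limits from regular data. These coincide with $\mu_{\tau,h}$ only if you already know uniqueness.
\end{itemize}
Both routes lack the same ingredient: that the numerical chain started from a fixed regular datum converges in law to $\mu_{\tau,h}$.

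\textbf{A fix using the paper's own estimates.} Take $x,y\in V_h$ driven by the same noise, and set $e^m:=X^m_{\tau,h}(x)-X^m_{\tau,h}(y)$. It solves $e^m-e^{m-1}+\tau A_he^m=\tau P_h(g^{m-1}e^{m-1})$ with $g^{m-1}=\int_0^1 f_{\tau,h}'\big(X^{m-1}_{\tau,h}(y)+\lambda e^{m-1}\big)\,\dd\lambda$.
\begin{itemize}
\item Since $a\mapsto 2a+\tau a^2$ is convex, Jensen's inequality and Lemma~\ref{assum:derivate-ftauh-bound} give $2g^{m-1}+\tau (g^{m-1})^2\le\lambda_1+L_f$ pointwise.
\item The computation leading to \eqref{eq:bound-1+tauAh-DF} then yields the pathwise contraction $\|e^m\|\le e^{-\tilde c\, t_m}\|x-y\|$, with $\tilde c>0$ independent of $(\tau,h)$.
\item By invariance of $\mu_{\tau,h}$ and dominated convergence, for every $x$,
\[
\Big|\mathbb{E}\varphi\big(X^m_{\tau,h}(x)\big)-\int\varphi\,\dd\mu_{\tau,h}\Big|
\le\int\min\big\{2\|\varphi\|_\infty,\;\|\varphi\|_{C_b^1(H)}e^{-\tilde c\, t_m}\|x-y\|\big\}\,\mu_{\tau,h}(\dd y)\longrightarrow 0 .
\]
No moment of $\mu_{\tau,h}$ is needed here.
\item Now fix a deterministic $X_0$ satisfying Assumption~\ref{assum:intial-value-data} (e.g.\ $X_0=0$) and let $m\to\infty$ in your first estimate. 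This gives $\bigl|\int\varphi\,\dd\mu_{\tau,h}-\int\varphi\,\dd\mu\bigr|\le C(\tau^\delta+h^\iota)$ with $C$ depending only on that fixed $X_0$.
\end{itemize}
The contraction also shows that $\mu_{\tau,h}$ is the unique invariant measure, so the ergodicity hypothesis holds automatically. Alternatively, uniqueness, which the paper's lead-in sentence assumes, together with the Feller property and Krylov--Bogoliubov, would give convergence of Ces\`aro averages from every starting point and rescue your time-average route.
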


\section{Numerical experiments}
In this section, we present some numerical experiments to confirm our previous theoretical finding. For simplicity, we consider the stochastic partial differential  equation in one dimension as follows
\begin{align}\label{eq:sswe-one-dimension}
\begin{split}
\left\{\begin{array}{ll}
\dd u(t)=  u_{xx}(t) \dd t
+
(u-u^3) \dd t + \dd W(t),& (t,x)\in [0, T]\times [0,1],
\\
u(0)=0,\;v(0)=0,&x\in [0,1],
\end{array}\right.
\end{split}
\end{align}
where $\{W(t)\}_{t\in[0,1]}$
stands for a standard $Q$-Wiener process with the covariance operator $Q=\Lambda^{-s}$, $s=\{0, 0.5005\}$. One can easily check that Assumption \ref{assum:noise-term} is valid and the condition \eqref{eq:ass-AQ-condition}
is fulfilled with $\gamma=0.5$ for $Q= I$ and $\gamma=1$ for $Q=A^{-0.5005}$.
In the following experiments, we consider the fully discrete scheme \eqref{eq:full-discretization} with $f_{\tau,h}(\cdot)$ given by \eqref{eq:definiton-I-ftauh}, where we take $\beta_1=\beta_2=1$, $\theta=1$, $\rho=2$, $\alpha=0.25$. Throughout the numerical tests, the expectation is approximated by the Monte-Carlo approximation using average over $2000$ samples.


We first test the long-time behaviors of the scheme \eqref{eq:full-discretization}.
To this end, we take  $h=2^{-5}$ and $\tau=2^{-6}$ and the expectation $\mathbb{E}[\sin(\pi/4-\|X_{\tau,h}^m\|^2)]$ is approximated by taking average over $5000$ samples.  Figure \ref{fig:long-time-behav} shows that  the average $\mathbb{E}[\sin(\pi/4-\|X_{\tau,h}^m\|^2)]$  started  from different values with the terminal time $T=8$ and it converges to the same equilibrium in a short time.
\begin{figure}[!ht]\label{fig:long-time-behaveior-results}
\centering
 \scalebox{0.5}[0.5]{\includegraphics[0.3in,
0.1in][5.3in,4.1in]{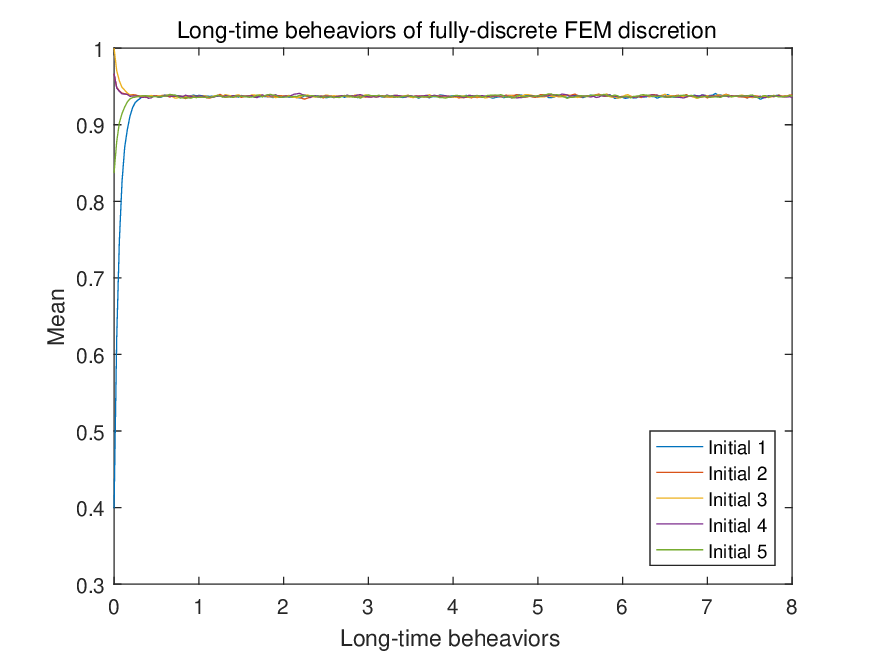}}
 \caption{Long-time behaviors for the scheme in the trace noise case ($\mathrm{Tr}(Q )<\infty$)}
 \label{fig:long-time-behav}
\end{figure}

Next, we test the long-time strong convergence rate. 
We conduct relevant experiments on a large time interval $[0, T]$ with $T=8$.
 To test the convergence rate
 in time, numerical simulations are performed with four different time  step-sizes $\tau=T/2^{m}, m\in\{6,7,8,9\}$ and a fixed space step-size $h=2^{-7}$. The resulting strong errors against mesh sizes in logarithmic scale are depicted in   Figure \ref{fig:strong-converbence} and  one can observe that the strong error converges with order 0.5 in the space-time white noise  case and with order 1 in the trace class noise case, which is  consistent with the previous theoretical findings. The "true" solution is identified with numerical ones using small step-sizes $h_{exact}=2^{-7}$ and $\tau_{exact}=T/2^{12}$, $T=8$.
\begin{figure}[!ht]\label{fig:long-time-strong-temporal-results}
\centering
 \scalebox{0.5}[0.5]{\includegraphics[0.3in,
0.1in][5.3in,4.1in]{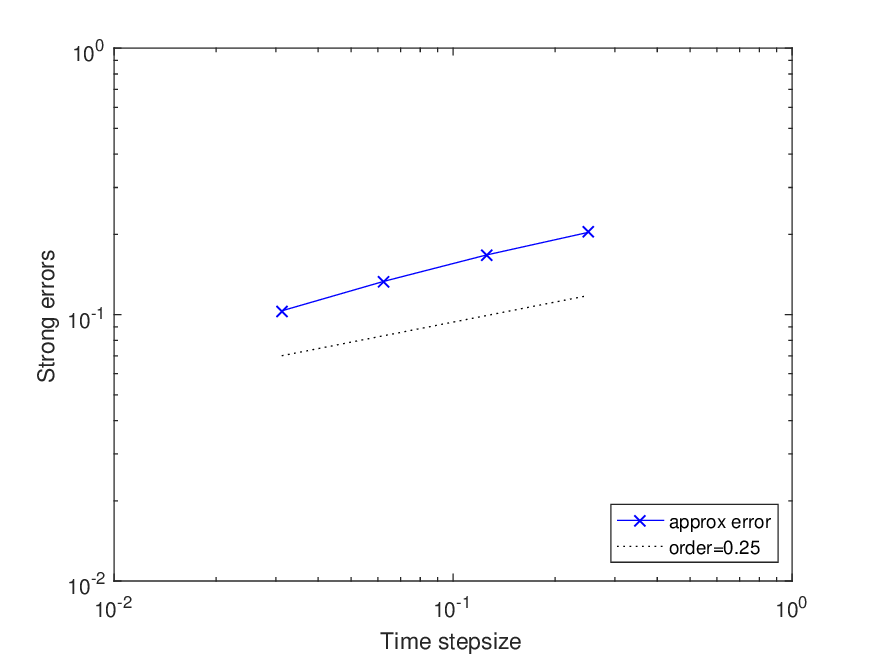}}\qquad\qquad
 \scalebox{0.5}[0.5]{\includegraphics[0.3in,
0.1in][5.4in,4.1in]{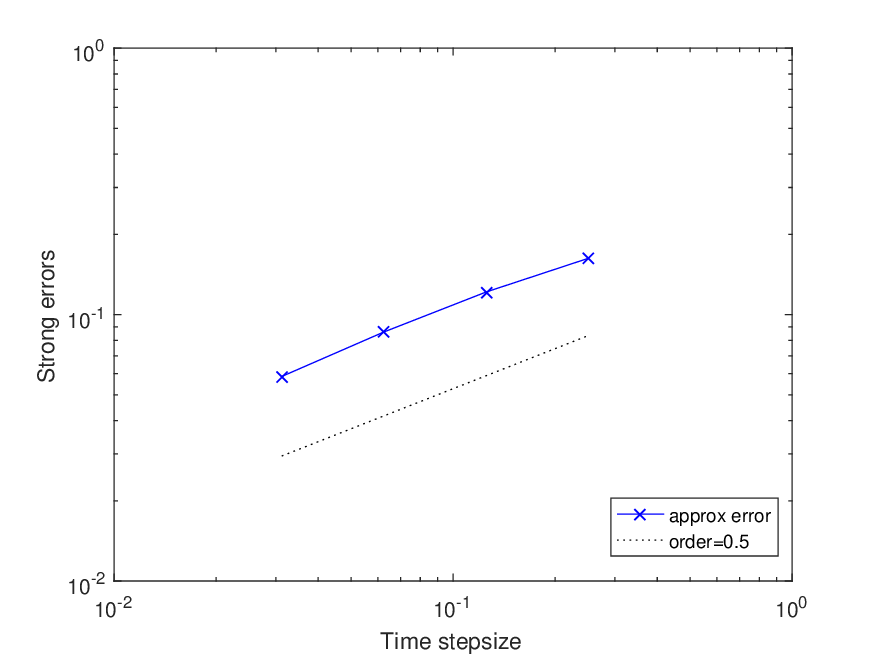}}
 \caption{Long-time strong convergence rates in time (Left: $Q=I$; Right: $Q=A^{-0.5005} $)}
\label{fig:strong-converbence}
\end{figure}

As simulating the strong errors,  we  test the weak convergence rates in time at the endpoint $T=8$. Here,  the "true" solutions are computed by numerical solutions using small step-size $h_{exact} = 2^{-6}$
and $\tau_{exact}=T/2^{12}$. In Figure \ref{fig:weak-converbence},  we present the resulting errors  of the proposed method \eqref{eq:full-discretization} in time direction under four time mesh sizes $\tau=T/2^{m},m\in\{5,6,7,8\}$
 with a small space step-size $h_{exact} = 2^{-6}$.
 As expected, the numerical performance is all consistent with the previous theoretical results.
\begin{figure}[!ht]\label{fig:long-time-weak-temporal-results}
\centering
 \scalebox{0.5}[0.5]{\includegraphics[0.3in,
0.1in][5.3in,4.1in]{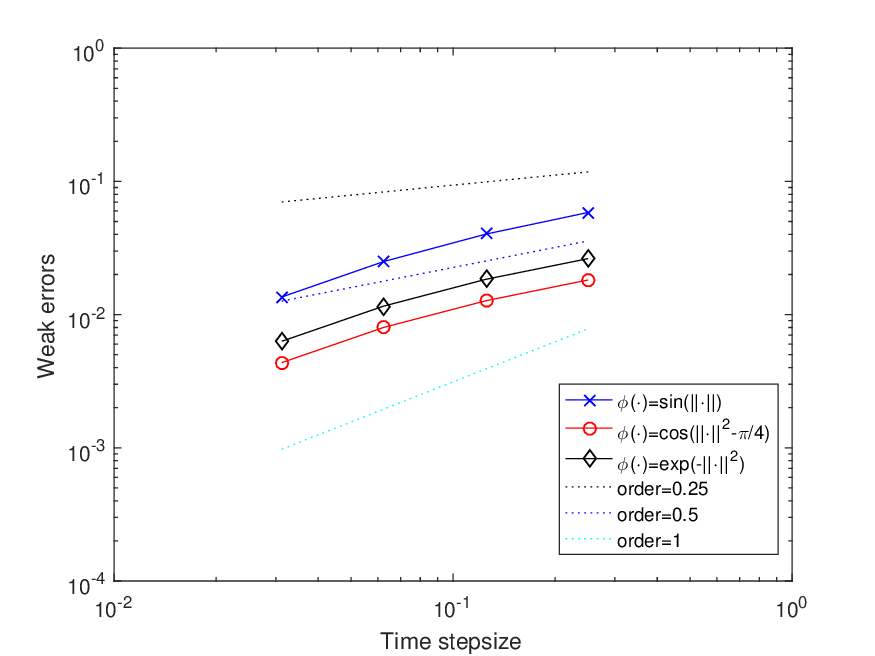}}\qquad\qquad
 \scalebox{0.5}[0.5]{\includegraphics[0.3in,
0.1in][5.4in,4.1in]{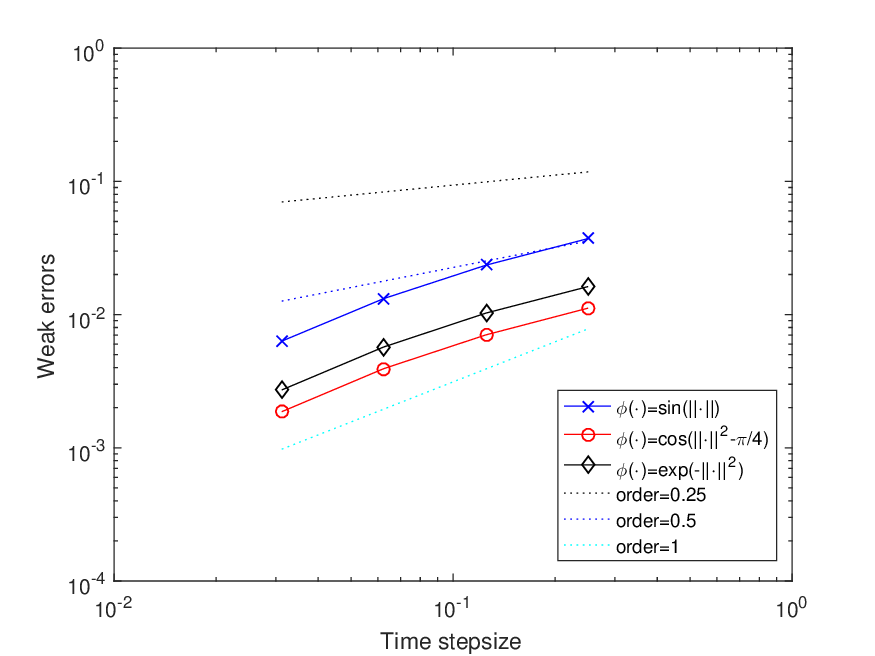}}
 \caption{Long-time weak convergence rates in time  (Left: $Q=I$; Right: $Q=A^{-0.5005} $)}
 \label{fig:weak-converbence}
\end{figure}

\section{Conclusion}
In this work, we propose and analyze new fully discrete schemes for long-time approximations of SPDEs with additive noises and non-globally Lipschitz coefficients in a bounded domain $D \subset \R^d, d =1,2,3 $.
Based on a standard Galerkin finite element spatial semi-discretization, a novel family of linearly implicit time-stepping schemes is introduced, preserving uniform-in-time moment bounds in Banach spaces, without requiring any restriction on the time-space discretization stepsize ratio. Both strong and weak errors are analyzed for the proposed fully discrete schemes, with uniform-in-time strong and weak convergence rates obtained. As an ongoing project, we are focusing on long-time finite element approximations of SPDEs with multiplicative trace-class noises in multiple dimensions.

\section*{Appendix}
{\it Proof of Lemma \ref{lem:eq-smooth-ENm}.}
  For the proof of \eqref{lem:eq-temporal-regu-ENm}, please refer to  \cite[Lemma 5.2]{qi2020error}. The assertion \eqref{lem:eq-spatial-regu-ENm-sum} in the case   $\rho=1$ can be found in  \cite[Lemma 5.2]{qi2020error} and the case $\rho=0$ is obvious by the stability of $E_{\tau,h}^m$. Thus, the
interpolation technique implies the intermediate cases.  For the proof of \eqref{lem:eq-spatial-regu-ENm} in the case $t_m\in(0,1]$, one can consult \cite[Lemma 5.2]{qi2020error}. It remains to show
 \eqref{lem:eq-spatial-regu-ENm}  for $t_m\geq 1$.  By using the expansion of $P_hv$ in terms of $\{e_{j,h}\}_{j=1}^{\mathcal{N}_h}$, one knows
 \begin{align}\label{eq:expansion-full-operator}
 \|A_h^{\frac\mu2}E_{\tau,h}^mP_hv\|^2
 =
 \sum_{j=1}^{\mathcal{N}_h}\lambda_{j,h}^\mu r(\lambda_{j,h}\tau )^{2m}
 \big<v,e_{j,h}\big>^2.
 \end{align}
Thus, it is sufficient  to show that  for any $\mu\in[0,2]$
\begin{align}\label{eq:bound-r}
\lambda_{j,h}^{\frac\mu2} r(\lambda_{j,h}\tau )^m
\leq
C t_m^{-2},
\end{align}
where $C$ is a positive constant, independent of $m$, $h$ and $\tau$.
To prove it, we consider two cases: $\tau \lambda_{j,h}\leq 1$ and $\tau \lambda_{j,h}>1$. For the case $\tau \lambda_{j,h}\leq 1$, we get by \eqref{eq:r(z)-ez}
\begin{align}\label{eq:bound-full-operator-bound-klambda-notbig1}
\lambda_{j,h}^{\frac\mu2} r(\lambda_{j,h}\tau )^m
\leq
C\lambda_{j,h}^{\frac\mu2} e^{-\frac{ct_m\lambda_{1}}2}
\leq
C\tau^{-\frac\mu2} t_m^{-2}
\leq
Ct_m^{-2}.
\end{align}
For the case $\tau \lambda_{j,h}>1$, it is easy to show that, for
$m=1$
\begin{align}\label{eq:r(A)-taulambda-equality=1}
\lambda_{j,h}^{\frac\mu2}r(\lambda_{j,h}\tau)
\leq
C \tau^{-1}\lambda_{j,h}^{-1+\frac\mu2}
\leq
C\tau^{-\frac\mu2}
=
C\tau^{2-\frac\mu2}t_1^{-2},
\end{align}
and
for any $m\geq 2$
\begin{align}\label{eq:r(A)-taulambda-big-1}
\begin{split}
\lambda_{j,h}^{\frac\mu2} r(\lambda_{j,h}\tau )^m
\leq
&
C\tau^{-\frac\mu2} (1+\tau \lambda_{j,h} )^{-m+1}
\leq
 C\tau^{-\frac\mu2}\sup_{\lambda\geq 1}(1+\lambda )^{-m+1}
 \\
\leq
&
 C\tau^{-\frac\mu2} (m-1)^{-2}
\leq
 C\tau^{-\frac\mu2} m^{-2}
\leq
C \tau^{2-\frac\mu2}t_m^{-2}.
\end{split}
\end{align}
Hence the desired assertion \eqref{eq:bound-r} follows and the proof of this lemma is complete. $\square$

{\it Proof of Lemma \ref{lem:bound-discrete-stochastic-convulution}.}
The assertion \eqref{lem:bound-discrete-stochastic-convolution} can be directly proven by using \eqref{eq:relation-Lp-Ah-relatoin}, \eqref{lem:eq-spatial-regu-ENm-sum-II} with $\varrho=\gamma$
and the Burkholder inequality. In view of \eqref{lem:bound-discrete-stochastic-convolution} and \eqref{eq:V-norm-control-by-Ah-norm},  \eqref{lemeq:V-bound-discrete-stoch-conv} in the case $\gamma \in (\frac d2,2]$ follows. To show \eqref{lemeq:V-bound-discrete-stoch-conv} for the case $\gamma\in (0,\frac12)$ with $Q=I$ in dimension one, we assume the domain $D=[0, L]$ and the quasi-uniform triangulation  of $D$ is a uniform mesh. Let $h=\frac{L}{\mathcal{N}_h+1}$. The eigen-system $\{(\lambda_{j,h},e_{j,h})\}_{j=1}^{\mathcal{N}_h}\}$ of $A_h$ in $V_h$ satisfies the following property (see \cite{waslsh2005galerkin}):
\begin{align}\label{eq:bound-discrete-eigen-system-I}
\tfrac{4j^2}{L^2}
\leq
\lambda_{j,h}
\leq
\tfrac{3\pi^2j^2}{L^2}, j=1,2,\cdots, \mathcal{N}_h,
\end{align}
and
\begin{align}
\|e_{j,h}\|_{L^\infty(D)}
\leq
&
\sqrt{\tfrac{6}{L}},
\label{eq:bound-discrete-value-I}
\\
|e_{j,h}(x)-e_{j,h}(y)|
\leq
&
\sqrt{\tfrac{6}{L}}\tfrac{\pi j}{L}|x-y|,
\label{eq:bound-discrete-value-II}
\end{align}
for  $\forall x,y\in [ih,(i+1)h], i=0,1,2,\cdots, \mathcal{N}_h+1$.
Therefore, $W_{\tau,h}^m$ can be written as
\begin{align}
W_{\tau,h}^m
=
\sum_{k\in \mathbb{N}^+}\sum_{j=1}^{m}\sum_{l=1}^{\mathcal{N}_h}
(1+\tau \lambda_{l,h})^{-(m+1-j)}\left<e_k, e_{l,h}\right>e_{l,h}(\beta_k(t_j)-\beta_k(t_{j-1})).
\end{align}
This together with \eqref{eq:bound-discrete-eigen-system-I}-\eqref{eq:bound-discrete-value-II} and using the It\^{o} isometry imply
\begin{align}
\begin{split}
&\mathbb{E}[|W_{\tau,h}^m(x)
-
W_{\tau,h}^m(y)|^2]
\\
\leq
&
\mathbb{E}\left[\sum_{ k\in \mathbb{N}^+}\left(\sum_{j=1}^{m}\sum_{l=1}^{\mathcal{N}_h}
(1+\tau \lambda_{l,h})^{-(m+1-j)}\left<e_k,e_{l,h}\right>\left(e_{l,h}(x)-e_{l,h}(y)\right)(\beta_k(t_j)-\beta_k(t_{j-1}))
\right)^2\right]
\\
\leq
&
\sum_{ k\in \mathbb{N}^+}\sum_{j=1}^{m}
\tau\left[\left(\sum_{l=1}^{\mathcal{N}_h}(1+\tau \lambda_{l,h})^{-(m+1-j)}
\left<e_k,e_{l,h}\right>\left(e_{l,h}(x)-e_{l,h}(y)\right)\right)^2\right]
\\
\leq
&
\sum_{j=1}^{m}
\tau \sum_{l=1}^{\mathcal{N}_h}\sum_{i=1}^{\mathcal{N}_h}(1+\tau \lambda_{l,h})^{-(m+1-j)}(1+\tau \lambda_{i,h})^{-(m+1-j)}
\left<e_{l,h},e_{i,h}\right>\left(e_{l,h}(x)-e_{l,h}(y)\right)\left(e_{i,h}(x)-e_{i,h}(y)\right)
\\
\leq
&
\sum_{l=1}^{\mathcal{N}_h}\sum_{j=1}^{m}\tau(1+\tau \lambda_{l,h})^{-2(m+1-j)}
\left(e_{l,h}(x)-e_{l,h}(y)\right)^2.
\end{split}
\end{align}
To proceed further, one should bound $\sum_{j=1}^{m}\tau(1+\tau \lambda_{l,h})^{-2(m+1-j)}$. To this end, we consider two cases: $ \tau \lambda_{l,h}\leq 1$ and $ \tau \lambda_{l,h}> 1$.
For the case $\tau \lambda_{l,h} \leq 1$, we obtain by \eqref{eq:r(z)-ez} and \eqref{eq:bound-discrete-eigen-system-I}:
\begin{align}
\sum_{j=1}^{m}\tau(1+\tau \lambda_{l,h})^{-2(m+1-j)}
\leq
C\sum_{j=1}^{m}\tau  e^{-2c\lambda_{l,h}t_{m+1-j}}
\leq
C\lambda_{l,h}^{-1}
\leq
Cl^{-2}.
\end{align}
For the case $\tau\lambda_{l,h}>1$, we have
\begin{align}
\sum_{j=1}^{m}\tau(1+\tau \lambda_{l,h})^{-2(m+1-j)}
\leq
\tau(1+\tau \lambda_{l,h})^{-2} \sum_{j=0}^{m-1}(1+\tau \lambda_{l,h})^{-2j}
\leq
C \lambda_{l,h}^{-1} \sum_{j=0}^{m-1}2^{-2j}
\leq
C \lambda_{l,h}^{-1}
\leq Cl^{-2}.
\end{align}
The above two estimates imply
\begin{align}\label{eq:sum-discrete-value}
\sum_{j=1}^{m}\tau(1+\tau \lambda_{l,h})^{-2(m+1-j)}
\leq Cl^{-2}.
\end{align}
Additionally, for $i>j$, $x\in[ih,(i+1)h]$ and $y\in[jh,(j+1)h]$,
it follows from \eqref{eq:bound-discrete-value-I} that
\begin{align}\label{eq:spatial-regularity-discrete-value}
\begin{split}
|e_{l,h}(x)-e_{l,h}(y)|
\leq
&
|e_{l,h}(x)-e_{l,h}(ih)|
+
|e_{l,h}(ih)-e_{l,h}((i-1)h)|
+
\cdots
\\
&
+
|e_{l,h}(£¨j+2)h)-e_{l,h}((j+1)h)|
+
|e_{l,h}(£¨j+1)h)-e_{l,h}(y)|
\\
\leq
&
\sqrt{\frac 6 L}\frac{\pi l} L |x-y|.
\end{split}
\end{align}
As a consequence, using \eqref{eq:bound-discrete-value-I}, \eqref{eq:bound-discrete-value-II},
\eqref{eq:spatial-regularity-discrete-value} and
\eqref{eq:sum-discrete-value}, one can derive
\begin{align}
\begin{split}
\mathbb{E}[|W_{\tau,h}^m(x)
-
W_{\tau,h}^m(y)|^2]
\leq
&
C \sum_{l=1}^{\mathcal{N}_h} l^{-2}
|e_{l,h}(x)-e_{l,h}(y)|^{\frac 45}(|e_{l,h}(x)|+|e_{l,h}(y)|)^{\frac 65}
\\
\leq
&
C \sum_{l=1}^{\mathcal{N}_h}
l^{-2} l^{\frac 45} |x-y|^{\frac 45}
\leq
C |x-y|^{\frac 45}.
\end{split}
\end{align}
In the same way,
\begin{align}
\sup_{m\in  \mathbb{N}^+ }\sup_{x\in D}\mathbb{E}[|W_{\tau,h}^m(x)|^2]
<\infty.
\end{align}
Using the Sobolev embedding inequality $W^{\frac15,p} \subset V, p> 5$,  one can find, for any $m\in  \mathbb{N}^+$
\begin{align}
\begin{split}
\mathbb{E}[\|W_{\tau,h}^m\|_V^p]
\leq
&
C\int_0^L\mathbb{E} [|W_{\tau,h}^m(x)|]^p \,\dd x
+
C \int_0^L\int_0^L \frac{\mathbb{E} [|W_{\tau,h}^m(x)
-W_{\tau,h}^m(y)|]^p}{|x-y|^{\frac p5+1}}\,\dd x \dd y
\\
\leq
&
C\int_0^L(\mathbb{E} [|W_{\tau,h}^m(x)|^2])^{\frac p2} \,\dd x
+
C \int_0^L\int_0^L \frac{(\mathbb{E} [|W_{\tau,h}^m(x)
-W_{\tau,h}^m(y)|]^2)^{\frac p2}}{|x-y|^{\frac p5+1}}\,\dd x \dd y
\\
\leq
&
C\left(1+ \int_0^L\int_0^L |x-y|^{\frac p5-1}\,\dd x \dd y\right)
<\infty,
\end{split}
\end{align}
where in the second inequality we also used the fact
that the discrete stochastic convolution $W_{\tau,h}^m$
 is Gaussian.
The desired assertion in the case $p\in[2,5]$ can be immediately shown by the H\"{o}lder inequality. $\square$

\bibliography{bibfile}

\bibliographystyle{abbrv}
 \end{document}